\documentclass[reqno]{amsart}
\usepackage{amssymb, amsfonts, stmaryrd, color,fancyhdr, amsmath,amsthm,amsbsy,amsfonts, titletoc,  mathrsfs, mathabx, verbatim,extarrows, wasysym, euscript, enumerate, latexsym, mathtools, epsfig, subfigure, mathabx, graphicx, color, wrapfig, hyperref,graphicx,float, bm, setspace}
\usepackage{CJKutf8}
\evensidemargin 0.0in \oddsidemargin 0.0in 
\textwidth 6.5in
%\topmargin - 0.3 in \overfullrule = 0pt
%\textheight 8.5 in
%\onehalfspacing

\usepackage[pagewise,displaymath,mathlines]{lineno}
%\linenumbers
\newcommand*\patchAmsMathEnvironmentForLineno[1]{%
   \expandafter\let\csname old#1\expandafter\endcsname\csname #1\endcsname
   \expandafter\let\csname oldend#1\expandafter\endcsname\csname end#1\endcsname
   \renewenvironment{#1}%
      {\linenomath\csname old#1\endcsname}%
      {\csname oldend#1\endcsname\endlinenomath}}% 
\newcommand*\patchBothAmsMathEnvironmentsForLineno[1]{%
   \patchAmsMathEnvironmentForLineno{#1}%
   \patchAmsMathEnvironmentForLineno{#1*}}%
\AtBeginDocument{%
\patchBothAmsMathEnvironmentsForLineno{equation}%
\patchBothAmsMathEnvironmentsForLineno{align}%
\patchBothAmsMathEnvironmentsForLineno{flalign}%
\patchBothAmsMathEnvironmentsForLineno{alignat}%
\patchBothAmsMathEnvironmentsForLineno{gather}%
\patchBothAmsMathEnvironmentsForLineno{multline}%
}

%%%%% User Defined Macros %%%%%%%%%%%%%%%%%%%%%%%%%%%%%%%%%%%%%%%%%%%%%%%%%%%

\numberwithin{equation}{section}
\theoremstyle{plain}
\DeclareMathAlphabet{\mathpzc}{OT1}{pzc}{m}{it}

\def\mcU{\mathcal U}
\def\mcV{\mathcal V}

\def\mbn{\mathbf n}

\def\mrJ{\mathrm J}
\def\mrF{\mathrm F}

\def\msF{\mathscr F}
\def\bE{ {\bf E}}

\def\mbE{\mathbb E}
\def\msG{\mathscr G}
\def\mbR{{\mathbb R}}
\def\msR{\mathscr R}
\def\mbQ{\mathbb Q}
\def\oB{\overline{B}}

\def\mbM{\mathbb M}
\def\mrN{\mathrm N}
\def\mrp{{\mathrm p}}
\def\mrq{{\mathrm q}}
\def\mrL{{\mathrm L}}
\def\mcL{{\mathcal L}}

\def\mbp{{\mathbf p}}
\def\mbq{{\mathbf q}}
\def\msI{{\mathscr I}}
\def\mcI{{\mathcal I}}
\def\mbV{{\mathbb  V}}
\def\mbL{{\mathbb L}}
\def\mrd{{\mathrm d }}
\def\mcZ{{\mathcal Z}}
\def\mbfn{{\mathbf{0}}}

\def\cG{\mathcal G}
\def\cD{\mathcal D}

\def\cO{\mathcal O}%{\mathcal D}^\circ}
\def\mbT{\mathbb T}
\def\cM{\mathcal M}
\def\cMb{\mathcal M_b}
\def\rodd{\rm odd}
\def\reven{\rm even}
\def\Htwoin{ H^2_{\rm in}}
\def\muckmuck{distribution }
\def\muckmucks{distributions }
\def\kpp{}

\newcommand{\beq}{\begin{equation}}
\newcommand{\eeq}{\end{equation}}
\newcommand{\beqs}{\begin{equation*}}
\newcommand{\eeqs}{\end{equation*}}
\newcommand{\h}{\hspace}
\newcommand{\normmm}[1]{{\left\vert\kern-0.15ex\left\vert\kern-0.15ex\left\vert #1 
    \right\vert\kern-0.15ex\right\vert\kern-0.15ex\right\vert}}

\newcommand{\dd}{\, \mathrm{d}}
 
\newcommand{\ep}{\epsilon}
\newcommand{\varep}{\varepsilon}
\def\eps{\varepsilon}
\newcommand{\p}{\partial}
\newcommand{\la}{\lambda} 
\newcommand{\La}{\Lambda}

\newtheorem{thm}{Theorem}[section]
\newtheorem{lemma}[thm]{Lemma}
\newtheorem{cor}[thm]{Corollary}
\newtheorem{remark}[thm]{Remark}
\newtheorem{prop}[thm]{Proposition}
\newtheorem{defn}[thm]{Definition}

\newtheorem{notation}{Notation}[section]

\begin{document}
\title{\textbf{Manifolds of Amphiphilic Bilayers: Stability up to the Boundary} }
\author{Yuan Chen}
%    Address of record for the research reported here
\address{Department of Mathematics, Michigan State University, East Lansing, MI 48824, U.S.}
%    Current address
\curraddr{}
\email{chenyu60@msu.edu}
%    \thanks will become a 1st page footnote.
%\thanks{The first author was supported in part by NSF Grant \#000000.}

%    Information for second author
\author{Keith Promislow}
\address{Department of Mathematics, Michigan State University, East Lansing, MI 48824, U.S.}
\email{promislo@msu.edu}
\thanks{K.P. acknowledges support from NSF grants DMS-1409940 and DMS-1813203}

%    General info
\subjclass[2010]{Primary 35K25, 35K55; 
Secondary 35Q92}

\date{\today}

\dedicatory{}

\keywords{ Functionalized Cahn-Hilliard, Interfacial dynamics, Curve lengthening}
%\newpage

%%%%%%%%%%%%%%%%%%%%%%%%%%%%%%%%%%%%%%%%%%%%%%%%%%%%%%%%%%%%%%%%%%%%%%%%%%%%%%%%%%%%
\begin{abstract} 

We consider the mass preserving $L^2$-gradient flow of the strong scaling of the functionalized Cahn Hilliard gradient flow and establish the nonlinear stability of a manifold comprised of quasi-equilibrium bilayer \muckmucks up to the manifold's boundary. In the limit of thin but non-zero interfacial width, $\varep\ll1,$ the bilayer manifold is parameterized by meandering modes that describe the interfacial evolution and  ``pearling'' modes that control the structure of the profile near the interface. The pearling modes are weakly damped and can lead to the dynamic rupture of the interface. Amphiphilic interfaces can lengthen to decrease energy. We introduce an implicitly defined parameterization of the interfacial shape that uncouples this growth from the parameters describing the shape and introduce a nonlinear projection onto the manifold from a surrounding neighborhood. The bilayer manifold has asymptotically large but finite dimension tuned to maximize normal coercivity while preserving the wave-number gap between the meandering and the pearling modes. Modulo a pearling stability assumption, we show that the manifold attracts nearby orbits into a tubular neighborhood about itself so long as the interfacial shape remains sufficiently smooth and far from self-intersection. In a companion paper, \cite{CP-nonlinear}, we identify open sets of initial data whose orbits converge to circular equilibrium after a significant transient, and derive a singularly perturbed interfacial evolution comprised of motion against curvature regularized by an asymptotically weak Willmore term.   

\end{abstract}
\maketitle

%\tableofcontents 

\section{Introduction}

\noindent

The functionalized Cahn-Hilliard (FCH) free energy models the free energy of mixtures of amphiphilic molecules and solvent. Amphiphilic
molecules are formed by chemically bonding two components whose individual interactions with the solvent are energetically favorable and unfavorable, respectively. 
When blended with the solvent,  amphiphilic molecules have a propensity to phase separate, forming thin amphiphilic rich domains that are generically the thickness of two molecules in at least one direction. 
On a periodic domain $\Omega\subset\mbR^2$ the FCH free energy is given in terms of the volume fraction $u-b_-$ of the amphiphilic molecule 
\begin{equation}\label{FCH}
\mathcal F(u):=\int_\Omega \frac{\varep}{2} \left(\Delta u-\frac{1}{\varep^2}W'(u)\right)^2-\varep^{p-1}\left(\frac{\eta_1}{2}|\nabla u|^2+\frac{\eta_2}{\varep^2} W(u)\right) \mrd x,
\end{equation}
where $W:\mathbb R\mapsto\mathbb R$ is a smooth tilted double well potential with local minima at $u=b_{\pm}$ with $b_-<b_+$, $W(b_-)=0>W(b_+),$ and $W''(b_-)>0$.  The state $u\equiv b_-$ corresponds
to pure solvent, while $u\equiv b_+$ denotes a maximum packing of amphiphilic molecules. The system parameters $\eta_1>0$ and $\eta_2$ 
characterize key structural properties of the amphiphilic molecules. The small positive parameter $\varep\ll 1$ characterizes the ratio of the length of the molecule to the domain size,  and $p=1$ or $2$ selects a balance between the Willmore-type residual of the dominant squared term and the amphiphilic structure terms.  We select the strong scaling $p=1$, in which the amphiphilic structure terms dominate the Willmore residual.  The FCH energy  was introduced in \cite{GHPY-11}, motivated by the work of Gommper \cite{GS-90, GK-93a, GK-93b}. In particular the form of the energy in \cite{GG-94} corresponds to the FCH with $\eta_1=0$ and 
$\eps^p\eta_2=-f_0$, where $f_0$ is a key bifurcation parameter.  
%The interested reader may refer to \cite{GJXCP-12} for a detailed discussion of the FCH parameters.  
A central feature of the functionalized Cahn-Hilliard  energy  \eqref{FCH} is that its approximate minima include vast families of saddle points of a Cahn-Hilliard type energy. Within the FCH the competitors for minima include codimension one bilayers, codimension two pores, and codimension three micelles that are the building blocks of many biologically relevant organelles.

 The chemical potential of  $\mathcal F$, denoted $\mathrm F=\mrF(u)$, is a rescaling of its variational derivative
\beq
\label{eq-FCH-L2-p}
\mathrm F(u):= \varep^3\frac{\delta \mathcal F}{\delta u}=(\varep^2\Delta -W''(u))(\varep^2\Delta u-W'(u))+\varep^p (\eta_1\varep^2\Delta u-\eta_2 W'(u)).
\eeq
We take the strong, $p=1$, scaling of the FCH and consider the mass-preserving $L^2$ gradient flow
 \begin{equation}\label{eq-FCH-L2}
\p_t u=-\Pi_0  \mathrm F(u),
%, \quad\hbox{with}\quad \mathrm F(u)=(\varep^2\Delta -W''(u))(\varep^2\Delta u-W'(u))+\varep^p (\eta_1\varep^2\Delta u-\eta_2 W'(u)).
\end{equation}
subject to periodic boundary conditions on $\Omega\subset \mathbb R^2$. Here $\Pi_0$ is the zero-mass projection given by
\begin{equation}
 \Pi_0 f:=f-\left\langle f\right\rangle_{L^2}, %\quad\hbox{with}\quad \left<f\right>_{L^2}:=\frac{1}{|\Omega|}\int_\Omega f\dd x,
\end{equation}
where we have introduced the averaging operator
% Note the Mass requires we subtract $b_-$
\beq
\label{def-massfunc}
\left\langle f\right\rangle_{L^2}:=\frac{1}{|\Omega|}\int_\Omega f\dd x.
\eeq

We consider the mass-preserving $L^2$ gradient flow of the strong scaling of the FCH free energy, \eqref{eq-FCH-L2-p}-\eqref{eq-FCH-L2}, and construct a bilayer manifold, $\cM_b$, with boundary contained in $H^2(\Omega)$ that is comprised of quasi-equilibrium of the system, called bilayer \muckmucks\!\!. Each bilayer \muckmuck is associated to an immersed interface in $\Omega$, and varies predominantly through the $\varep$-scaled signed distance to that interface.  The bilayer manifold has a nonlinear projection that maps an open neighborhood of the bilayer manifold onto itself and decomposes functions $u$ in the open neighborhood of the bilayer manifold into a point on the manifold (a bilayer \muckmuck\!\!) and a perturbation that is orthogonal to the tangent plane of $\cM_b$. The bilayer \muckmuck is parameterized by a finite but asymptotically large set of ``meander modes'' that characterize the shape of its associated interface and a single bulk density parameter that characterizes the excess amphiphilic mass in the bulk. The orthogonal perturbation is further decomposed, through a linear projection, into an asymptotically large but finite dimensional set of ``pearling modes'' and an infinite dimensional set of ``fast modes.'' The pearling modes modify the internal structure of the bilayer \muckmuck near its interface and are weakly damped, subject to a pearling stability condition. The fast modes are uniformly damped under the flow.  The meander modes perturb the shape of a predefined base interface, so that $\cM_b$ accommodates interfaces whose range of shapes is independent of $\varep.$

The bilayer manifold is defined as a graph over a bounded domain of meander modes. We show that initial data that start asymptotically close  to $\cM_b$ will remain close unless the meander modes become sufficiently large that the they hit the boundary of the domain. This domain is selected to insure that the associated interfaces do not self-intersect, that their curvatures remain uniformly bounded, independent of $\eps$, and that the pearling stability condition holds uniformly. Establishing the stability of the manifold up to the meander mode boundary requires two classes of sharp bounds. The first are upper bounds on the coupling of the evolution of the interfacial geometry, characterized by the meander modes, upon the pearling and the fast modes. The second are lower bounds on the coercivity of the second variation of the energy evaluated at points on the manifold when restricted to act on the pearling and meander spaces. 

In a companion paper, \cite{CP-nonlinear}, we rigorously analyze the evolution of the interfaces. In particular we identify an open set of initial data of \eqref{eq-FCH-L2} whose projection defines interfaces that are sufficiently close to circular, and show that the evolving curvature of the interfaces satisfies the curvature bounds for all $t>0$, and that after a transient in which the deviation of the interface from circularity may \emph{grow} by an $o(1)$ amount, the interface ultimately converges to a nearly circular equilibrium.  Together these results partially validate the formal  results obtained in \cite{CKP-18}, where the authors applied multiscale analysis to the $H^{-1}$ gradient flow of the strong FCH free energy.   They considered the evolution of bilayer \muckmuck with high density of amphiphilic material that separate bulk regions of low density via a codimension one interface $\Gamma$.  On the $\varep^{-3}$ time scale they formally derived the evolution of the curvature $\kappa$ of $\Gamma$  
\begin{equation}\label{RCL-flow}
\p_t \kappa=-(\Delta_s+\kappa^2)V,
\end{equation}
in terms of the $\varep$-scaled normal velocity
\begin{equation}\label{RCL-flow-V}
V=(\sigma(t)-\sigma_1^*)\kappa +\varep \Delta_s \kappa + \mathrm {H.O.T.}.
\end{equation}
The normal velocity is proportional to the curvature $\kappa$ through a time-dependent coefficient that can be positive or negative depending upon the initial data.  Here $\Delta_s$ is the Laplace-Beltrami  operator associated to $\Gamma$, and for simplicity we have omitted positive constants that are independent of time and $\ep$. 
The bulk density parameter $\sigma=\sigma(t)$ controls the spatially constant density of amphiphilic material in the bulk. This couples strongly to the length of the interface in a relation that is determined by conservation of mass. The critical value  $\sigma_1^*$ , is a constant depending only upon the system parameters $\eta_1, \eta_2,$ and the well $W$.  When the bulk density is above this critical value, $\sigma>\sigma_1^*$, the interface absorbs mass from the bulk, and moves {\sl against} curvature,  in a singularly perturbed meandering or buckling motion that is regularized by the higher order diffusion, $\Delta_s$. This regime is called regularized curve lengthening (RCL), and the weak surface diffusion plays an essential role in the local existence. Conversely, when $\sigma<\sigma_1^*$, the interface releases mass to the bulk and contracts under a mean curvature driven flow (MCF). In both cases the flow drives $\sigma$ towards $\sigma_1^*$ and the curve attains an equilibrium length set by the mass of the initial data.

\begin{comment}
%*****************************************************************
\begin{center}
\begin{tabular}{cccc}
\includegraphics[height=1.in]{Circle1.pdf} &
\includegraphics[height=1.in]{Circle16.pdf}&
\includegraphics[height=1.in]{Circle39.pdf}&
\includegraphics[height=1.in]{Circle49.pdf}\\
(a) $t=0$ & (b) $t=367$ & (c) $t=930$ & (d) $t=1175$
\end{tabular}
\vskip 0.05in
\begin{tabular}{p{4.8in}}
Figure 1) Numerical simulation of the strong FCH mass preserving $L^2$ gradient flow on $\Omega=[-2\pi,2\pi]^2$ from a circular equilibrium perturbed
by addition of a spatial constant that raises the bulk density to $u=2\varep\sigma_1^*/W''(b_-)^2$, double the equilibrium value. Left to right, color coded contours of the evolving interface at indicated times, 
show a meandering transient followed by relaxation to a circular equilibrium with larger radius. System parameters are $\varep=0.20$, $\eta_1=1.45$ and $\eta_2=2.0.$
% Check out director Man-Pearl-Pig under Pearling/Strong-Noa
\end{tabular}
\vskip 0.15in
\end{center}
%#################################################################
 \end{comment}

In the absence of a maximum principle for the fourth-order system \eqref{eq-FCH-L2}, we use energy estimates and modulation methods. 
The modulation methods for extended manifolds, \cite{ZM, GK-20} consider the linearization of the flow about points on the manifold and establish lower bounds on 
decay rates based upon coercivity estimates of the linearization restricted to subspaces that are approximately tangent and approximately normal to the manifold. 
As a form of normal hyperbolicity they require a spectral gap between eigenvalues associated to the tangent plane of the manifold, the slow modes, and those associated to the normal direction, the fast modes. These results refine earlier estimates in \cite{DHPW-14, HP-15, NP-17}, which introduced the slow space comprised of pearling and meander modes. In particular \cite{NP-17}, conducted spectral analysis of the linearized operators restricted to the slow space and,  modulo the pearling stability condition, used a slow space with dimension  $O(\eps^{-1/2})$ to establish a fast space coercivity that scales with $\sqrt{\varep}$. However, these results are too rough to close our nonlinear estimates.  

Our analysis requires two significant modifications. First, the ansatz that defines the  manifold is constructed implicitly in the parameters that define the shape of the interface. A single parameter controls interfacial length, uncoupling those that control the shape and making the natural basis modes of the tangent plane of the ansatz substantially closer to orthogonal. This allows us to extend the size of the slow spaces while preserving the diagonal dominance of the correlation matrix obtained from restricting the linearization of the FCH equation to the slow space. We combine the implicit ansatz with higher-order corrections to the slow space to build the improved estimates for the slow-fast, and pearling-meander coupling. We enlarge the dimension of the slow space, to $O(\rho\eps^{-1})$, where the spectral cut-off $\rho\ll1$ is independent of $\eps.$ This yields a fast space coercivity that scales with $\rho$ and is independent of $\eps$. Indeed, the choice of the size of the slow space requires a delicate balance. A larger slow space allows for stronger coercivity of the fast space. However the pearling modes have asymptotically short in-plane wave-length, while the meander modes are relatively long in-plane wave-length. \emph{The asymptotically large gap between the in-plane wave lengths of the pearling and meander modes decreases the strength of their coupling by one order of magnitude in $\eps$}. 

%The implicit definition of the perturbed interfaces is non-standard. It serves to condition the tangent plane of the bilayer manifold. The rescaling of arc-length extracts the impact of growth of the interface on the perturbations to its shape. By separating out the stretching of the shape perturbation as the underlying interface grows we identify a convective term that drives lateral motion of the shape of the perturbation as the interface grows. This effect is presented in \cite{CP-nonlinear} and represents a term that was missed by the formal reduction \eqref{RCL-flow-V}.

It is illuminating to compare the estimates derived here for the bilayer manifold to the classic results that establish rigorous results for 
front evolution in the scalar Cahn-Hilliard (CH) equation, such as  \cite{ABC-94, AF-03, AFK-04}. The bilayer \muckmucks are not fronts. An immediate distinction is that the limiting curve motion for the FCH is singularly perturbed and ill-posed in the $\eps\to0$ limit, while the CH interfacial motion is locally well posed in this sharp interface limit.  This requires us to fix 
$\eps>0$ small but nonzero, and to perform detailed analysis in the regions near the interface.  A second distinction is that the bilayer manifold has asymptotically weak relaxation rates to perturbations that incite the pearling modes that regulate the width of the interface. The coercivity associated to fronts in CH is uniform with respect to $\eps$. For the FCH the pearling modes have the capacity to destabilize the interfacial structure, modulating its width to the point that it can perforate. Their amplitude must be controlled through tight bounds on the coupling between the front evolution and the pearling modes. There is no analogue for these structural dynamics of the interface within the fronts of the scalar CH models.

The singular nature of the interface motion and the weak damping of the internal pearling modes generate significant technical obstacles whose resolution requires restrictions. The most striking of these is that the interfaces must be sufficiently close to a base point interface $\Gamma_0$ that is far from self intersection. Self intersection in the RCL regime is a real possibility.  Numerical benchmark calculations have identified bulk parameter values for initial data that initiate the formation of defects within the bilayer \muckmucks\!\!, \cite{Bench-20}. These results show that an initial bulk density state $\sigma$ that deviates from the scaled equilibrium  $\sigma_1^*$ by an $O(1)$ amount can lead to defect formation, suggesting that the restriction $|\sigma-\sigma_1^*|\ll1$ we require here, see (\ref{assump-Ap}) and Lemma\,\ref{lem-sigma}, is not far from optimal. 

The remainder  of this article is organized as follows. In Section \ref{sec-pre}, we present the local coordinates and estimates on the variation of the interface through the meander parameters. In particular we introduce the implicitly defined perturbed interfaces $\Gamma_\mbp$ in Definition\,\ref{def-P-interface} and show that they are well posed in Lemma\,
\ref{lem-def-gamma-p}. In Lemma\,\ref{lem-def-Phi-p} of Section \ref{sec-profile} we construct the quasi-equilibrium bilayer \muckmucks $\Phi_\mbp$ as the dressing (Definition\,\ref{def-dressing}) of the perturbed interface $\Gamma_{\mathbf p}$, and estimate their residual $\mrF(\Phi_\mbp)$. The bilayer  manifold $\cMb$ is presented in Definition\,\ref{def-bM0} as the graph of the map $\mbp\mapsto \Phi_\mbp(\cdot;\sigma(\mbp))$ with the bulk density parameter $\sigma$ slaved to constrain the mass of $\Phi_\mbp.$  Section \ref{sec-linear}  introduces the slow space in Definition\,\ref{def-slow-space} and the spectral cut-off parameter 
$\rho$. Tbe modified slow space is presented in Lemma\, \ref{lem-def-Z*}, followed by a characterization of the spectrum of the operator  $\Pi_0\mbL$ arising from the linearization of the flow (\ref{eq-FCH-L2-p}) about $\Phi_\mbp.$  In Lemma\,\ref{lem-eigen-bM} we establish the $O(\eps)$ weak coercivity of the pearling spaces modulo the pearling stability condition and in Theorem\,\ref{thm-coupling est} we give sharp bounds on the pearling-meander and fast-slow coupling required for closure of the nonlinear estimates. In Theorem\,\ref{lem-coer} we establish the strong coercivity for the fast modes in terms of the spectral cut-off parameter $\rho$. In Section\,\ref{s:nonstab-BLM}, we define the bilayer manifold that includes the pearling modes, and the nonlinear projection onto the manifold. It concludes with the main result, Theorem\,\ref{thm:Main}, which establishes the nonlinear stability of the bilayer manifold  up to its boundary. We emphasize that there are three small parameters used in this work, $\varep_0, \rho, \delta>0.$ The first, $\varep_0$ sets the upper bound on the size of the dominant small parameter $\varep.$ The spectral parameter $\rho>0$ controls the dimension of slow spaces, while $\delta$ is a technical parameter used to close the nonlinear estimates. We first fix $\delta$ sufficiently small in Lemma\,\ref{lem-Manifold-Projection}, and then $\rho$ sufficiently small in Theorem\,\ref{thm-coupling est} and Lemma\,\ref{lem-IC-est}. The value $\varep_0$ in set in terms of these fixed values in Theorem\,\ref{thm:Main}.

The companion paper \cite{CP-nonlinear} establishes the unconditional stability of the bilayer manifold built form a circular base point interface, and recovers the evolution of the meander modes and associated interfacial motion, as well as the scope of the transient and the rate of convergence to equilibrium. 

%%%%%%%%%%%%%%%%Notation%%%%%%%%%%%%%%%%%%%%%%%%%%%%%%
\subsection{Notation}\label{ssec-Notation} We present some general notation. 
\begin{enumerate}
\item  The symbol $C$ generically denotes a positive constant whose value depends only on the system parameters $\eta_1, \eta_2$, the domain $\Omega$, and 
geometric quantities of initial curve $\Gamma_0$. In particular its value is independent of $\varep, \rho,$ and $\delta$, so long as they are sufficiently small. 
The value of $C$  may vary line to line without remark.  In addition, $A\lesssim B$ indicates that quantity $A$ is less than quantity $B$ up 
to a multiplicative constant $C$ as above, and $A\sim B$ if $A\lesssim B$ and $B\lesssim A$.  The notation $A\wedge B$ denotes the minimum of $A$ and $B$.   The expression $f=O(a)$  indicates the existence of a constant $C$, as above,
and a norm $|\cdot|$ for which
\begin{equation*}
| f| \leq C |a|.
\end{equation*}
We also use $f=O(a,b)$ for the case $f\leq C|a|+C|b|$. 
\item The quantity $\nu$ is a positive number, independent of $\varep$, that denote an exponential decay rate. It may vary from line to line.
\item If a function space $X(\Omega)$ is comprised of functions defined on the whole spatial domain $\Omega$, we will drop the symbol $\Omega$.
\item We use $\bm 1_{E}$ as the characteristic function of an index set $E\subset \mathbb N$,  i.e. $\bm 1_{E}(x)=1$ if $x\in E$;  $\bm 1_{E}(x)=0$ if $x\notin E$.
We denote the usual Kronecker delta by 
\beqs
\delta_{ij}=\left\{\begin{array}{ll} 1,& i=j \\
                                                       0, & i\neq j
                                                       \end{array} \right.
                                                       \eeqs
\item For a vector $\mathbf q=(\mathrm q_j)_j$,  we denote the norms
\begin{equation*}
%\begin{aligned}
\|\mathbf q\|_{l^k}=\left(\sum_j  |\mathrm q_j|^k\right)^{1/k}, \qquad 
 \hbox{for $k\in \mathbb N^+$}, 
 \end{equation*}
 and  $\|\mathbf q\|_{l^\infty}=\max_j |\mathrm q_j|.$ For a matrix $\mbQ=(\mbQ_{ij})_{ij}$ as a map from $l^2$ to $l^2$ has operator norm $l^2_*$ defined by
 \begin{equation*}
%\begin{aligned}
 \|\mbQ\|_{l^2_*}=\sup_{\{\|\mbq\|_{l^2=1}\}} \|\mathbb Q \mbq\|_{l^2},\quad
 \hbox{for $k\in \mathbb N^+$}, 
 \end{equation*}
We write
\beqs \mrq_j = O(a)e_j,\quad \mbQ_{ij}=O(a) \mbE_{ij},\eeqs
where ${\bm e}=(e_j)_j$ is a vector with $\|{\bm e}\|_{l^2}=1$ or ${\mbE}$ is a matrix with operator norm $\|\mbE\|_{l^2_*}=1$ 
to imply that $\|\mbq\|_{l^2} = O(a)$ or $\|\mbQ\|_{l^2_*}=O(a)$ respectively. See \eqref{def-e-i}-\eqref{est-e-i,j} of Notation\,\ref{Notation-e_i,j} for usage.
\item  The matrix $e^{\theta\mathcal R}$ denotes rotation through the  angle $\theta$ with the generator $\mathcal R$.  More explicitly, 
\begin{equation*}\mathcal R=\left(\begin{array}{cc}0& -1\\1&0\end{array}\right), \quad e^{\theta\mathcal R}= \left(\begin{array}{cc}\cos \theta& -\sin \theta\\\sin \theta &\cos \theta \end{array}\right).
\end{equation*}
\end{enumerate}

\section{Coordinates and preliminary estimates}\label{sec-pre}
In this section we recall the local coordinates associated to a general smooth interface and use them to define a finite dimensional family of perturbations of the interface. In particular we establish bounds controlling the variation of the interface in terms of the parameters.  
\subsection{ The local coordinates} 
\label{ss-wc}
We consider a closed, smooth, non-intersecting curve  $\Gamma\subset \mathbb R^2$ which divides $\Omega=\Omega_+\cup \Omega_-$ into an exterior $\Omega_+$ and an interior $\Omega_-$.   The interface $\Gamma$ is given parametrically as 
\begin{equation*}
\Gamma=\big\{\bm \gamma(s): s\in \mathscr I \subset \mathbb R \big\},
\end{equation*}
with tangent vector $\mathbf T(s)\in \mathbb R^2$ 
\begin{equation}\label{re-T-gamma}
\mathbf T(s):= \bm \gamma'/|\bm \gamma'|.
\end{equation}
Denoting the outer normal to $\Gamma$ by $\mathbf n(s)$ we have the relations
\begin{equation}\label{re-T-n}
 \mathbf T'=\kappa \mathbf n, \quad  \mathbf n'=-\kappa \mathbf T,
\end{equation}
where $\kappa=\kappa(s)$ is the curvature of $\Gamma$ at ${\bm \gamma}(s)$. 
%Let $\bm f(x; r, s)=\bm \gamma(s) +r\mathbf n(s)-x$, then $\bm f(x_0; 0, s_0)=0$ for each $x_0=\bm \gamma(s_0)\in \Gamma$. 
By the implicit function theorem, there exists an open set $\mathcal N$ containing $\Gamma$ such that for each $x\in \mathcal N$ 
%we have $\bm f(x; r(x), s(x))=0$ for some $r\in \mathbb R$ and $s\in \mathbb R$, that is
we may write
\begin{equation}\label{coord-wh}
  x=\bm \gamma(s)+r\mathbf n(s) 
  %\qquad \hbox{for some}\quad (r,s)=(r(  x), s(x)).
\end{equation}
%Without loss of generality, one can still choose $s$ such that $|\bm \gamma'(s)|=1$.
where $r=r(x)$ is the well-known signed distance of the point $x$ to the curve $\Gamma$ and $s=s(x)$ is determined by the choice of parameterization $\bm \gamma.$  In this neighborhood, we define the scaled signed distance  $z=r/\varep$ and  ``whiskers'' of length $\ell$: 
\begin{equation*}
w_{\ell}(s):=\big\{\bm \gamma(s)+\varep z \mathbf n(s): r\in [-\ell, \ell]\big\},
\end{equation*}
and the $\ell$-reach of $\Gamma$, 
\begin{equation}
\label{def-reach}
\Gamma^\ell=\bigcup_{s\in \msI } w_\ell (s). 
\end{equation}
In the following, $(z,s)$ will be referred as the local coordinates near $\Gamma$.  

\subsubsection{Dressings}

We say that a curve $\Gamma$ is $\ell$-far from self intersection if none of the whiskers of length $\ell$ intersect each other nor with $\p\Omega$, and if the set $\Gamma^\ell$ contains all points of $\Omega$ whose distance to $\Gamma$ is at most $\ell.$
 We introduce the following class of curves and show they have a uniform distance from self-intersection.

\begin{defn}
\label{def-GKl}
Given $K, \ell>0$ the class $\cG_{K,\ell}^k$ consists of those curves $\Gamma$ whose parameterization $\bm\gamma$ satisfies (a) $\|\bm\gamma\|_{W^{k,\infty}(\mathscr I)}\leq K$ and (b) if points two points on $\Gamma$ satisfy $|s_1-s_2|_\msI>1/(2K)$ then $|\bm \gamma(s_1)-\bm\gamma(s_2)|>\ell$. Here $|\cdot|_{\msI}$ denotes the periodic distance 
$|s|_\msI=\min\left\{\Big|s- |\msI|k\Big|\, : k\in \mathbb Z\right\}.$
\end{defn}

\begin{lemma}
If $\tilde\ell<\ell$ then $\cG^{k}_{K,\tilde \ell}\subset \cG^{k}_{K,\ell}$. If $\ell\leq\sqrt{2}\pi/K$ then every curve in $\cG_{K,\ell}^{2}$ is $\ell$-far from self-intersection.
\end{lemma}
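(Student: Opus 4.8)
The plan is to treat the two assertions separately; the nesting of the classes is immediate, so the non-self-intersection statement is the substantive one. For the nesting, observe that in Definition~\ref{def-GKl} the length parameter enters only through condition~(b), and that (b) is monotone in it — demanding that the spatial separation $|\bm\gamma(s_1)-\bm\gamma(s_2)|$ of well-separated parameter pairs exceed a larger value is a strictly stronger requirement — while condition~(a) is independent of the length. Hence the family $\{\cG^{k}_{K,\cdot}\}$ is nested in the length parameter with no computation needed.

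For the second claim I would first unpack ``$\ell$-far from self-intersection.'' On the periodic domain at hand $\p\Omega=\emptyset$, so the clause about whiskers meeting $\p\Omega$ is vacuous, and what remains is (i) the whiskers $\{w_\ell(s):s\in\msI\}$ are pairwise disjoint, and (ii) $\Gamma^\ell$ contains every point of $\Omega$ lying within distance $\ell$ of $\Gamma$. Assertion~(i) is precisely the injectivity on $\msI\times[-\ell,\ell]$ of the normal map
\begin{equation*}
 X\colon \msI\times[-\ell,\ell]\longrightarrow\Omega,\qquad X(s,r):=\bm\gamma(s)+r\,\mathbf n(s),
\end{equation*}
and (ii) follows once (i) is in hand: $\Gamma$ is closed, hence compact, so any $x$ with $\mathrm{dist}(x,\Gamma)\le\ell$ has a nearest point $\bm\gamma(s_\ast)\in\Gamma$; the first-order optimality condition gives $x-\bm\gamma(s_\ast)\perp\mathbf T(s_\ast)$, whence $x=X(s_\ast,\pm\,\mathrm{dist}(x,\Gamma))\in w_\ell(s_\ast)\subset\Gamma^\ell$. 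Thus the whole statement reduces to the injectivity of $X$.

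To prove $X$ injective, suppose $X(s_1,r_1)=X(s_2,r_2)=:x$ with $(s_1,r_1)\neq(s_2,r_2)$, and dichotomize on whether the periodic parameter distance $d:=|s_1-s_2|_\msI$ exceeds the threshold $1/(2K)$ built into Definition~\ref{def-GKl}. If $d>1/(2K)$, condition~(b) forces $|\bm\gamma(s_1)-\bm\gamma(s_2)|>\ell$, while $x$ lies within distance $|r_i|\le\ell$ of each base point $\bm\gamma(s_i)$; since each whisker is confined to a ball about its base point, comparing $|\bm\gamma(s_1)-\bm\gamma(s_2)|\le|r_1|+|r_2|$ against condition~(b) and the whisker length excludes this case. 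If $d\le 1/(2K)$, then $s_1$ and $s_2$ lie on a single short sub-arc on which the bound $\|\bm\gamma\|_{W^{2,\infty}}\le K$ controls the curvature $\kappa$, the rotation of the normal field $\mathbf n$, and the chord length; here I would run a quantitative inverse-function argument — the Jacobian of $X$, equal to $\pm\,|\bm\gamma'|(1-r\kappa)$ in the local coordinates, is bounded away from zero for $|r|\le\ell$ under the stated restriction on $\ell$, and a uniform Lipschitz bound on $DX$ across a parameter window of length $1/(2K)$ promotes the local-diffeomorphism property to injectivity on that window. This is exactly where the sharp threshold $\ell\le\sqrt{2}\,\pi/K$ is consumed: it is the largest $\ell$ for which the available $W^{2,\infty}$ bounds keep the normal segments of half-length $\ell$ issuing along a sub-arc of parameter-length $1/(2K)$ from folding over one another.

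I expect the small-separation case to be the main obstacle. One must extract from $\|\bm\gamma\|_{W^{2,\infty}}\le K$ a sufficiently sharp lower bound on the focal radius of each short sub-arc — simultaneously tracking the speed $|\bm\gamma'|$, the curvature, and the turning of $\mathbf n$ — and carry the constants carefully enough to land on $\sqrt{2}\,\pi/K$ rather than a cruder factor. By contrast, the large-separation case is essentially a triangle inequality against condition~(b), and the coverage of the $\ell$-neighborhood in~(ii) is the standard nearest-point-projection argument once injectivity is known.
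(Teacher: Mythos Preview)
Your overall architecture matches the paper: nesting is immediate, coverage follows from nearest-point projection, and you dichotomize whisker non-intersection on $|s_1-s_2|_\msI$ versus $1/(2K)$, with the large-separation case handled by the triangle inequality against condition~(b). The substantive divergence is the small-separation case. You propose an inverse-function-theorem argument from the Jacobian $\pm|\bm\gamma'|(1-r\kappa)$ of the normal map; the paper instead runs a direct planar computation. From $\|\bm\gamma\|_{W^{2,\infty}}\le K$ it first deduces $\mathbf T(s_1)\cdot\mathbf T(s_2)>\tfrac12$ on the short arc and hence the chord lower bound $|\bm\gamma(s_1)-\bm\gamma(s_2)|\ge|s_1-s_2|_\msI/\sqrt2$; then, if the two whiskers meet at $x$, the base points lie on a circle of radius $\ell$ about $x$, and arc-versus-chord combined with the tangent-angle Lipschitz bound $|\nu(s_1)-\nu(s_2)|\le K|s_1-s_2|_\msI$ (since $\nu'=\kappa$) gives a competing upper bound on the chord; cancelling $|s_1-s_2|_\msI$ between the two forces $\ell\ge\sqrt2\pi/K$. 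Your Jacobian route is standard and would certainly yield \emph{some} tubular radius, but you have not indicated how it lands on the particular constant $\sqrt2\pi/K$, and you acknowledge this is where the work lies; the paper's concrete geometric argument is tailored exactly to extract that value, which is what the explicit tangent-angle route buys over the abstract inverse-function one.

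Two minor caveats, both shared with the paper. Your monotonicity reasoning for the nesting actually yields $\cG^k_{K,\ell}\subset\cG^k_{K,\tilde\ell}$ when $\tilde\ell<\ell$ (larger $\ell$ is the stronger requirement), opposite to what is written; this looks like a typo in the statement. And in the large-separation case the triangle inequality gives only $|\bm\gamma(s_1)-\bm\gamma(s_2)|\le|r_1|+|r_2|\le 2\ell$ while condition~(b) supplies only $>\ell$, so a factor of two is being absorbed implicitly in the definitions; the paper's proof simply asserts ``more than $2\ell$ apart'' at this step.
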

\begin{proof}
The first statement follows directly from the definition of $\cG_{K,\ell}^k.$ 
Pick $\Gamma\in\cG_{K,\ell}^{2}$ with parameterization $\bm \gamma$. The reach $\Gamma^\ell$ contains the set of points whose distance to $\Gamma$ is less than or equal to $\ell$. Indeed, if $x\in\Omega$ lies within $\ell$ of $\Gamma$, then there exists a least one point $s\in\mathscr I$ such that $\bm\gamma(s)$ is the closest point on $\Gamma$ to $x$. Since the tangent $\mathbf T$ has a smooth derivative, it follows that $(x-\bm\gamma(s))\cdot \mathbf T(s)=0.$ If not, then $(|x-\bm\gamma(s)|^2)^\prime\neq0$, which contradicts $\bm\gamma(s)$ being the closest point on $\Gamma$ to $x$. Consequently $x\in w_\ell(s)\subset\Gamma^\ell.$ To see that the whiskers of length $\ell$ do not intersect consider two points $s_1,s_2\in\mathscr I$. Since $ | \mathbf T(s)|=1$ and 
$$\left|\frac{\dd}{\dd s}  \mathbf T(s_1)\cdot \mathbf T(s)\right|\leq K,$$
we deduce that $|s_1-s_2|_\msI<1/(2K)$ implies $ \mathbf T(s_1)\cdot \mathbf T(s_2)>\frac12.$ It follows from planar geometry that \beq\label{e:kp1}
|\bm\gamma(s_1)-\bm\gamma(s_2)|\geq \frac{|s_1-s_2|_\msI}{\sqrt{2}}.
\eeq
If the whiskers from $\bm\gamma(s_1)$ and $\bm\gamma(s_2)$ intersect at a distance $\ell$ from $\Gamma$, then $\bm\gamma(s_1)$ and $\bm\gamma(s_2)$ lie on a circle of radius $\ell$. In particular the straight line distance between these two points must be less than the arc-length along that circle. If $\nu(s)$ denotes the angle of $\mathbf T(s)$ to the horizontal, then this distance inequality implies
$$ \frac{\ell}{2\pi} |\nu(s_1)-\nu(s_2)| > |\bm\gamma(s_1)-\bm\gamma(s_2)|.$$
Since $\nu^\prime=\kappa$, we have the Lipschitz estimate  $|\nu(s_1)-\nu(s_2)|\leq K|s_1-s_2|_\msI,$ and combining these estimates and dividing by $|s_1-s_2|_\msI$ yields the bound
$$\ell\geq \frac{\sqrt{2}\pi}{K}.$$
This shows that the whiskers of two points whose arc-length distance is less than $1/(2K)$ can only intersect at a distance of at least $\frac{\sqrt{2}\pi}{K}$ from $\Gamma.$  However since $\Gamma\in\cG_{K,\ell}^{2}$, if $|s_1-s_2|_\msI>1/(2K)$ then by condition (b) the whiskers through $\bm \gamma(s_1)$ and $\bm \gamma(s_2)$ of length $\ell$ cannot intersect as the points $\bm \gamma(s_1)$ and $\bm\gamma(s_2)$ are more than $2\ell$ apart. By assumption $\ell\leq\min\{\frac{\sqrt{2}\pi}{K},\ell\}$, and we deduce that $\Gamma$ is $\ell$-far from self intersection.
\end{proof}

\begin{defn}[Dressing]\label{def-dressing}
Fix a smooth cut-off function $\chi:\mathbb R\rightarrow \mathbb R$ satisfying: $\chi(r)=1$ if $r\leq 1$ and $\chi(r)=0$ if $r\geq 2$. Given an interface 
$\Gamma$ which is $2\ell$-far from self intersection and a smooth function $f(z): \mathbb R\rightarrow \mathbb R$ which tends to a constant $f^\infty$ and whose
derivatives tend to zero at an $\varep$ independent exponential rate as $z\rightarrow \pm \infty$, then we define the dressed function, $f^d:\Omega\mapsto \mathbb R$, of $f$ with respect to $\Gamma$ as
\begin{equation*}
f^d(x)=  f(z(  x)) \chi(\varep |z(  x)|/\ell)+f^\infty(1-\chi(\varep |z(  x)|/\ell)).
\end{equation*} 
\end{defn}
%In the above definition, we also introduced the notation $f^\infty:= \lim_{|z|\to \infty} f(s, z)$  for a function $f$ of $(s,z)$. 
From this definition the dressed function satisfies
\begin{equation*}
f^d (  x)=\left\{\begin{aligned}
f(z(  x)), &\quad \hbox{if $|z(x)|\leq \ell/\varep$};\\
f^\infty,\quad\; &\quad \hbox{if $|z(x)|\geq 2\ell/\varep$}.
\end{aligned}\right.
\end{equation*}

\begin{defn}[Dressed operator]\label{def-Ld}
Let $\mrL: {\mathrm D}\subset L^2(\mbR)\mapsto L^2(\mbR)$  be a self-adjoint differential operator with smooth coefficients whose derivatives of all order decay to zero at an exponential rate
at $\infty.$ We define the space $\mathcal D $ to consist of the functions $f:\mathbb R\mapsto\mathbb R$ as in Definition \ref{def-dressing}, and the 
dressed operator $\mrL_d:\mathrm D\cap\mathcal D \mapsto L^2(\Omega)$ and its $r$'th power, $r\in\mathbb N$,
\beq
\mrL_{d}^r f := (\mrL^r f)^d .
\eeq
If $r<0$ then we assume that $f\in\mathcal R(\mrL)$ and the inverse $\mrL_d^{-1} f$ decays exponentially to a constant at $\pm\infty.$ For simplicity we abuse notation and drop the subscript '$d$' in both the dressed operator and the dressed function where the context is clear. 
\end{defn}
%We remark that $L_{d}$ depends upon $\mbp$ through the dressing step.

A function $f=f(  x)\in L^1(\Omega)$ is said to be {\it localized} near the interface $\Gamma$ if there exists $\nu>0$ such that for all $  x\in \Gamma^{2\ell}$, 
\begin{equation*}
|f(  x(s, z))|\lesssim e^{-\nu|z|}.
\end{equation*} 

\subsubsection{The Jacobian}
Let $\mathbb J(s, z)$ be the Jacobian matrix with respect to the change to the whiskered coordinates and denote the Jacobian by $\mathrm J(s, z)=\det \mathbb J(s, z)$.  In two dimensions, $\mathbf n'\parallel \bm\gamma'$ so that
\begin{equation*}
\begin{aligned}
&\mathbb J=\Big(\bm \gamma'(s)+\varep z \mathbf n'(s)\quad \varep \mathbf n(s) \Big)=\Big(\bm \gamma'(s)\quad  \mathbf n(s)\Big)^T\left(\begin{array}{cc}
1-\varep z \kappa(s) &0\\
0& \varep
\end{array}\right)
\end{aligned}
\end{equation*}
where the curvature
\beq\label{def-kappa-first}
 \kappa(s)=-\frac{\bm\gamma'(s)\cdot \mathbf n'(s)}{|\bm \gamma'(s)|^2}.
\eeq
We decompose the Jacobian as:  $\mathrm J(s, z)=\tilde{\mathrm J}(s, z)\mathrm J_0(s)$ where 
\begin{equation}
\tilde{\mathrm J}(s, z)%=\varep\sqrt{ 1+2\varep z\frac{\bm \gamma'(s)\cdot \mathbf n'(s)}{|\bm\gamma'(s)|^2}+\varep^2 z^2\frac{|\mathbf n'(s)|^2}{|\bm \gamma'(s)|^2}}
=\varep (1-\varep z\kappa(s)), \quad \mathrm J_0(s)= |\bm \gamma'(s)|. 
\end{equation}  
The metric tensor takes the form $\mathbb G=\mathbb J^T \mathbb J$.
 
If $f, g\in L^2(\Omega)$ have support in $\Gamma^{2\ell}$,  then the usual $L^2( \Omega)$-inner product can be rewritten as
\begin{equation}
\left<f, g\right>_{L^2}=\int_{\mathbb R_{2\ell}}\int_{\msI}  f(s, z) g(s, z) \mathrm J(s, z) \, \mrd s\mrd z. %=\int_{\msI} \left<f, g\right>_{\tilde{\mathrm J}} \mathrm J_0(s)\, \mrd  s,
\end{equation}
%where we have introduced the whisker weighted inner product \begin{equation} \left<f, g\right>_{\tilde{\mathrm J}}(s):=\int_{\mathbb R_{2\ell}} f(s, z)g(s, z) \tilde{\mathrm J} (s,z)\, \mrd z, \end{equation}
where $\mathbb R_{a}:=[-a, a]$ for $a\in \mathbb R^+$.  If $\tilde s$ denotes the arc-length reparameterization of $\Gamma$ over the interval 
$\msI_{\Gamma}=[0, |\Gamma|]$,  then $\dd \tilde s=\mathrm J_0(s)\dd s$ and the $L^2$-inner product becomes
\begin{equation}
\left<f, g\right>_{L^2}=\int_{\mathbb R_{2\ell}}\int_{\msI_{\Gamma}}  f(s, z) g(s, z) \tilde{\mathrm J}(s, z) \, \mrd \tilde s\mrd z. %=\int_{\msI} \left<f, g\right>_{\tilde{\mathrm J}} \mathrm J_0(s)\, \mrd  s,
\end{equation}
%The $\mrJ_0$ weighted inner product over $\msI$ is denoted by  \begin{equation}\label{def-bracket-J0} \left<f, g\right>_{\mathrm J_0}=\int_{\msI} f(s,z) g(s,z) \mathrm J_0 (s)\, \mrd s. \end{equation}
Moreover, if $f\in L^2$ is localized near the interface $\Gamma$, then
 \begin{equation*}
 \int_\Omega f\dd   x=\int_{\mathbb R_{2\ell}}\int_{\msI} f (  x(s,z))\mathrm J\dd s\mrd z + O(e^{-\nu \ell/\varep}).
 \end{equation*}

\subsubsection{Laplacian} 
%Recall that $r=\varep z$ and  $\p_r=\p_r z\,\p_z=\varep^{-1}\p_z$. 
The $\varep$-scaled Laplacian can be expressed in the local  coordinates near $\Gamma$ as 
\begin{equation}\label{eq-Lap-induced}
\varep^2 \Delta_{  x}=\mathrm J^{-1}\p_z(\mathrm J \,\p_z)+\varep^2\Delta_g= \p_z^2+\varep \mathrm H \p_z+\varep^2\Delta_g.%=\p_z^2+\ep\mathrm H \p_z+\ep^2(\Delta_s+ \ep z\mathrm D_{s,2}).
\end{equation}
Here $\mathrm H$  is the extended curvature
\begin{equation}\label{def-H}
\mathrm H(s,z):= -\frac{\kappa(s)}{1-\varep z\kappa(s)}=\frac{\p_z \mathrm J}{\varep \mathrm J},
\end{equation}
and $\Delta_g$ is the induced Laplacian under metric tensor $\mathbb G$, which can be decomposed into 
\begin{equation*}
\Delta_{g}:=\frac{1}{\sqrt{\det \mathbb G}} \p_s (\mathbb G^{11} \sqrt{\det \mathbb G}\, \p_s)= \Delta_s+\varep z  D_{s,2},\;\;  \;\; \mathbb G=\left(\begin{array}{cc}
|\bm \gamma'+\varep z\mathbf n'|^2 &0\\
0& \varep^2
\end{array}\right).
\end{equation*}
Here $\mathbb G^{ij}$ denotes the $(i,j)$-component of the inverse matrix $\mathbb G^{-1}$; $\Delta_s$ is the Laplace-Beltrami operator on   the surface $\Gamma$ and $D_{s,2}$ is a relatively bounded perturbation of $\Delta_s$. In particular, since $|\bm \gamma'+\varep z\mathbf n'|=|\bm \gamma'||1-\varep z\kappa|$, we have
\begin{equation}\label{def-Ds}
\Delta_s= \frac{1}{|\bm \gamma'(s)|} \p_s \left(\frac{1}{|\bm \gamma'(s)|}\, \p_s\right), \quad D_{s,2} =a(s, z)\Delta_s  + b(s, z) \p_s,
\end{equation}
where the coefficients $a, b$ have the explicit formulae
\begin{equation}\label{def-a,b}
a(s, z)=(\varep z)^{-1} \left( \frac{1}{|1-\varep z\kappa|^2}-1\right), \qquad b(s, z)=\frac{(\varep z)^{-1}}{2|\bm \gamma'|^2}\p_s a(s, z).
\end{equation}
%and hence obey \begin{equation}\label{est-Ds} \max_{i,j} \left(|\nabla_s^k\p_z^l a|+|\nabla_s^{k-1}\p_z^lb\right)\leq C\varep^{l-1} \sum_{i\leq k}|z^{i-1}\p_s^i \kappa|.\end{equation}
\subsection{Perturbed interfaces}
We construct families of interfaces by perturbation from a fixed base curve which we label $\Gamma_0$ with parameterization $\bm \gamma_0$, curvature $\kappa_0(s)$ and length $|\Gamma_0|$.  Without loss of generality we assume that $s$ is the arc length parameterization of $\Gamma_0$ and takes values in $\msI=[0, |\Gamma_0|],$ and  that $\Gamma_0$ is centered at the origin in the sense that the average value of $\bm\gamma_0$ is $(0,0)$.
The effective radius
\beqs
R_0:=\frac{|\Gamma_0|}{2\pi},
\eeqs
forms a natural scaling parameter. 
 The Laplace-Beltrami operator $-\Delta_s: H^2(\msI)\rightarrow L^2(\msI)$ has in-plane wave numbers $\{\beta_i\}_{i=0}^\infty$ whose squares are the scaled eigenvalues of the $|\Gamma_0|$-periodic eigenfunctions $\{\Theta_i\}_{i=0}^\infty$
 %that satisfy
\begin{equation}\label{def-beta}
-\Delta_s  \Theta_i =\beta_i^2 \Theta_i\big/R_0^2.
\end{equation}
The ground state eigenmode is spatially constant: 
\beq
\label{def-Theta0}
\Theta_0=1/\sqrt{2\pi R_0}, \qquad \beta_0=0;
\eeq 
and for $k\geq 1$, we normalize the eigenmodes in $L^2(\msI)$,
\begin{equation}\label{def-Theta-beta}
\Theta_{2k-1}=\frac{1}{\sqrt{2\pi R_0}}\cos\left(\frac{ks}{R_0}\right),\quad \Theta_{2k}=\frac{1}{\sqrt{2\pi R_0}}\sin \left(\frac{ks}{R_0}\right); \quad \hbox{and}\;\;  \beta_{2k-1}=\beta_{2k}=k.
\end{equation}
To control the smoothness of the perturbed interface we introduce the weighted $\mathbf p$-norms.
\begin{defn}[Weighted $\mathbf p$-norms]\label{def-Vrk}
Given ${N_1}, k>0$, the weighted 
space $\mbV_k^r= \mbV_k^r(N_1)$ is defined on the $N_1$-vectors $\mathbf p=(\mathrm p_0,\cdots, \mathrm p_{N_1-1})^T\in \mathbb R^{N_1}$ as
\begin{equation*}
\|\mathbf p\|_{\mbV_k^r({N_1})}^r:= \sum_{j=0}^{{N_1}-1} \beta_j^{kr} |\mathrm p_j|^r<\infty.
\end{equation*}
The elements of $\mbp$ starting with $\mrp_3$ that control the shape of the interface are denoted 
\beq \label{def-hatp}
\hat{\mathbf p}:=( \mathrm p_3, \cdots , \mathrm p_{{N_1}-1})^T, 
\eeq
and by abuse of notation we apply the same norm to $\hat\mbp$, starting the sum with $j=3.$
When $r=1$, we omit the superscript $r$ and denote the space by $\mbV_k$.
\end{defn}

%\beq \mbp=(\mrp_0, \mrp_{1,2}, \hat\mbp), \qquad \hbox{with} \qquad \mrp_{1,2} =(\mrp_1,\mrp_2),  \quad \hat\mbp=(\mrp_3, \cdots, \mrp_{N_1-1});\eeq

 The following definition introduces $\Gamma_{\mathbf p}$ the $\mathbf p$-variation of $\Gamma_0$, through an implicit construction that incorporates perturbations so that the change in length of $\Gamma_{\mbp}$ is controlled solely by $\mrp_0$ which scales the effective radius $R_0$. This definition is shown to be well-posed in Lemma\,\ref{lem-def-gamma-p}.
\begin{defn}[Perturbed interfaces]\label{def-P-interface}
Fix a smooth interface $\Gamma_0.$
\begin{enumerate}[(a)]
\item  Given  $\mathbf p\in \mbV_2$, we define the $\mathbf p$-variation of $\Gamma_0$, denoted by $\Gamma_{\mathbf p}$, through the  parametric form: 
\begin{equation}\label{def-gamma-p}
\bm\gamma_{\mathbf p}(s):=\frac{(1+\mrp_0)}{A(\mbp)} \bm\gamma_{\bar p}(s) +\mrp_1\Theta_0 \bE_1 +\mrp_2\Theta_0 \bE_2,  \quad \hbox{for} \quad s\in \msI,
\end{equation}
where $\{\bE_1, \bE_2\}$ are the canonical basis for $\mathbb R^2$, the scaling constant $A(\mbp)$ normalizes the length of $\bm \gamma_{\bar p}$
\beq\label{def-A(p)}
A(\mbp):=|\Gamma_0|^{-1} \int_{\msI} |\bm\gamma_{\bar p}'(s)|\dd s,
\eeq
and the perturbed curve $\bm\gamma_{\bar p}$ is
\beq\label{def-gamma-barp}
\bm\gamma_{\bar p}(s)= \bm \gamma_0 + \bar p(\tilde s) \mbn_0(s)
\eeq
where the vector $\mathbf n_0(s)$ denotes the outer normal vector of $\Gamma_0$ parameterized by  $s$.  The definition is made implicit through the relations
\begin{equation}\label{def-barp}
 \bar p(\tilde s):= \sum_{i=3}^{{N_1}-1} \mathrm p_i \tilde \Theta_i(\tilde s), \qquad \tilde \Theta_i(\tilde s):=\Theta_i\left(\frac{2\pi R_0 \tilde s}{|\Gamma_\mbp|}\right),\end{equation}   
where $\tilde s=\tilde s(s;\mbp)\in \msI_\mbp=[0,|\Gamma_\mbp|]$ is the  arc length parametrization of the perturbed curve $\bm \gamma_\mbp$ solving
\beq \label{def-ts}
\frac{\mrd \tilde s}{\mrd s}=|\bm \gamma_\mbp'|,\qquad \tilde s(0)=0.
\eeq
\end{enumerate}
\end{defn}

\begin{remark}
%\begin{enumerate}
%\item The definition of $\Gamma_\mbp$ is implicit in $\mbp$ through \eqref{def-gamma-p} and \eqref{def-barp}, we show that this is well posed in Lemma \ref{lem-def-gamma-p} when $\mbp$ is suitably small in the sense of \eqref{A-00}. The advantage of introducing the implicit definition is it insures the orthogonality \eqref{ortho-tTheta} of $\tilde \Theta_i$ along the moving frame, that is, in the $L^2(\msI_\mbp)$.
% The $\mathrm p_0$-term scales the length of $\Gamma_\mbp$ to $\left(1+\mathrm p_0\right)|\Gamma_0|$. 
The parameters $\mathrm p_1$ and $\mathrm p_2$ rigidly translate the interface, and are the only terms
that contribute motion along the tangent to $\Gamma_0$. In particular their normal components recover $\{\Theta_1, \Theta_2\}$,
\begin{equation}\label{proj-n0-E-basis}
\Theta_0 \bE_1\cdot \mathbf n_0=\frac{1}{\sqrt{2\pi R_0}}\cos{\frac{s}{R_0}}=\Theta_1, \quad \Theta_0\bE_2\cdot \mathbf n_0=\frac{1}{\sqrt{2\pi R_0}}\sin\frac{s}{R_0}=\Theta_2.
\end{equation}
%\item We  had the choice to introduce the scaling parameter $\mrp_0$ linearly along the normal direction. But  it turns out then the length dependence on $\bar p$ is too high, and it really hurts due to the leading order effect of length. Although it's not the same for FCH, one can see \eqref{RCL-flow-V}.
%\item It's worth noting similar setting  are also introduced by authors  in \cite{AF-03, AFK-04} with a general curve. Indeed, they investigated  Mullins-Sekerka dynamics, which is the sharp interface limit of Cahn-Hillard established rigorously in \cite{ABC-94}. The limiting system enable them to study a general curve. On the other hand, the FCH dynamics is much more subtle  in two folds: first,  we can not take the limit of $\varep$ to zero due to the higher order Willmore regularization term. In the case we need be more careful dealing with the nonlinear effects of the local coordinates near a noncircular curve; second, there are also coupling from the pearling slow modes(introduced in Section \ref{sec-linear}), except the translation modes which changes the curve shape. 
%\end{enumerate}
\end{remark}
%Here we mention $\Theta_i\sim\tilde \Theta_i$ by \eqref{def-barp} and \eqref{def-ts} under certain assumption on $\mbp$. The advantage of introducing  $\mathrm p_1, \mathrm p_2$ as in Definition 
%\ref{def-P-interface} is the  derivatives of $\bm \gamma_{\mathbf p}$ with respect to $s$ do not involve  $\mathrm p_1, \mathrm p_2$. 
As will be shown in Lemma \ref{lem-def-gamma-p} the curve $|\Gamma_\mbp|$ has length $(1+\mrp_0)|\Gamma_0|$, and the role of $\mrp_0$ is to scale the effective radius $R_\mbp:=(1+\mrp_0)R_0$ of $\Gamma_\mbp$. Indeed it follows from \eqref{def-beta} and \eqref{def-barp} that the arc-length scaled Laplace-Beltrami eigenmodes $\{\tilde\Theta_j\}_{j\geq0}$ of $\Gamma_\mrp$ satisfy
\beq \label{tTheta''}
-\tilde \Theta_j''(\tilde s)=\beta_{\mbp,j}^2 \tilde \Theta_j(\tilde s), \qquad \beta_{\mbp,j}=\frac{ \beta_j}{(1+\mrp_0)R_0},
\eeq 
where we address $^\prime$ of $\tilde \Theta_j$ always  denotes differentiation with respect to $\tilde s$.
%The quantity $\beta_{\mbp,j}$ is referred to as the in-plane wavenumber of the eigenmode $\tilde\Theta_j$ while $\beta^2_{\mbp,j}$ is its Laplace-Beltrami eigenvalue.  
The most significant contribution of the rescaling is that it renders the perturbed eigenmodes mutually orthogonal in $L^2(\mathscr I_\mbp)$,  satisfying
\beq\label{ortho-tTheta}
\int_{\msI_{\mbp}}\tilde \Theta_j\tilde\Theta_k|\bm \gamma_\mbp'|\dd  s=\int_{\msI_{\mbp}}\tilde \Theta_j\tilde\Theta_k\dd \tilde s=(1+\mrp_0)\delta_{jk}.
\eeq
%We remark that  $\beta_{\mbp,j}$ equals $\beta_j/R_0$ when $\mbp=\bm 0$, rather than $\beta_j$.  It's useful to introduce the notation
%\beq \label{def-hatp}
%\hat{\mathbf p}:=( \mathrm p_3, \cdots , \mathrm p_{{N_1}-1})^T, 
%\eeq
%for the meander-modes that control the distortion of $\Gamma_\mbp$ from circularity. These terms contribute to $\bar p(s)$. 
The weighted norms are equivalent to usual Sobolev norms of $\bar p$. Indeed the orthogonality \eqref{ortho-tTheta} implies  
\beq\label{re-barp-hatp}
\|\hat{\mathbf p}\|_{\mathbb V_k^2} \sim \|\bar p\|_{H^k(\mathscr I_\mbp)}, \quad \|\bar p^{(k)}\|_{L^\infty(\mathscr I_\mbp)}\lesssim \|\hat{\mathbf p}\|_{\mathbb V_k},
\eeq
where the constants depend only upon the $|\Gamma_\mbp.|$ The following embeddings are direct results of H\"older's inequality and the bound $\beta_j\leq j/2$ for $j\geq 3$, details are omitted. 
\begin{lemma}\label{lem-est-V}
It holds that  
\begin{equation*}
\begin{aligned}
%\|\hat{\mathbf p}\|_{\mbV_0}+\|\hat{\mathbf p}\|_{\mbV_1}\lesssim \|\hat{\mathbf p}\|_{\mbV_2^2},  \quad
\|\hat{\mathbf p}\|_{\mbV_k}\lesssim  \|\hat{\mathbf p}\|_{\mbV_{k+1}^2}, \quad \|\hat\mbp\|_{\mbV_k}\lesssim  N_1^{1/2}\|\hat\mbp\|_{\mbV_k^2},\quad \|\hat{\mathbf p}\|_{\mbV_{k+1}^r}\lesssim N_1\|\hat{\mathbf p}\|_{\mbV_k^r}.
\end{aligned}
\end{equation*}
In addition, for any vector $\mathbf a \in l^2(\mathbb R^{m})(m\in \mathbb Z^+)$  we have the dimension dependent bound
\begin{equation}\label{est-l1-l2}
\|\mathbf a\|_{l^1}\leq {m}^{1/2}\|\mathbf a\|_{l^2}.
\end{equation}
\end{lemma}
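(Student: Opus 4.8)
The plan is to deduce all four estimates from the Cauchy--Schwarz (equivalently H\"older) inequality together with the explicit form \eqref{def-Theta-beta} of the in-plane wave numbers, which gives $\beta_j=\lceil j/2\rceil$ and hence the two-sided comparison $j/2\le\beta_j\le j$ for every $j\ge1$. None of the arguments is delicate; the one point to watch is that the multiplicative constant in the first inequality be genuinely independent of the dimension $N_1$ (a constant $C$ in the sense of the paper, possibly depending on $\Gamma_0$ and $\Omega$ but not on $\varep,\rho,\delta$, and in particular not on $N_1=O(\rho\varep^{-1})$).

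I would first record \eqref{est-l1-l2}, which is immediate: for $\mathbf a\in l^2(\mathbb R^{m})$, Cauchy--Schwarz applied to $\sum_{i=1}^m|a_i|\cdot1$ yields $\|\mathbf a\|_{l^1}\le m^{1/2}\|\mathbf a\|_{l^2}$. The second claimed bound, $\|\hat{\mathbf p}\|_{\mbV_k}\lesssim N_1^{1/2}\|\hat{\mathbf p}\|_{\mbV_k^2}$, is then nothing but \eqref{est-l1-l2} applied to the $(N_1-3)$-vector $\mathbf a=(\beta_j^k\mathrm p_j)_{j=3}^{N_1-1}$: by Definition\,\ref{def-Vrk} one has $\|\mathbf a\|_{l^1}=\|\hat{\mathbf p}\|_{\mbV_k}$ and $\|\mathbf a\|_{l^2}=\|\hat{\mathbf p}\|_{\mbV_k^2}$, while $(N_1-3)^{1/2}\le N_1^{1/2}$.

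For the first bound, $\|\hat{\mathbf p}\|_{\mbV_k}\lesssim\|\hat{\mathbf p}\|_{\mbV_{k+1}^2}$, I would split $\beta_j^k|\mathrm p_j|=\beta_j^{-1}\cdot\beta_j^{k+1}|\mathrm p_j|$ and apply Cauchy--Schwarz in the index $j$,
\begin{equation*}
\sum_{j=3}^{N_1-1}\beta_j^k|\mathrm p_j|\le\Big(\sum_{j=3}^{N_1-1}\beta_j^{-2}\Big)^{1/2}\Big(\sum_{j=3}^{N_1-1}\beta_j^{2(k+1)}|\mathrm p_j|^2\Big)^{1/2}.
\end{equation*}
Since $\beta_j=\lceil j/2\rceil$, the first factor is bounded, for every $N_1$, by $\big(2\sum_{m\ge2}m^{-2}\big)^{1/2}<\infty$, a constant independent of $N_1$; this is exactly the role of the restriction $j\ge3$, which discards the zero mode $\beta_0=0$ and leaves a convergent tail. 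For the third bound, $\|\hat{\mathbf p}\|_{\mbV_{k+1}^r}\lesssim N_1\|\hat{\mathbf p}\|_{\mbV_k^r}$, I would use $\beta_j\le N_1$ for $3\le j\le N_1-1$ to estimate termwise $\beta_j^{(k+1)r}=\beta_j^{kr}\,\beta_j^{r}\le N_1^{r}\,\beta_j^{kr}$, sum in $j$, and take $r$-th roots, obtaining the inequality (indeed with constant $1$).

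As the above shows, there is no real obstacle: the lemma merely assembles the pairwise comparisons among the weighted $\mathbf p$-norms that are used repeatedly later. The only step worth flagging is the control of constants: the first estimate must hold with a constant that does not grow with $N_1$ --- which it does, because $\sum_{j\ge3}\beta_j^{-2}<\infty$ --- whereas the second and third are intrinsically dimension-dependent, which is why the explicit factors $N_1^{1/2}$ and $N_1$ appear in their statements.
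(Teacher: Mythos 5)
Your proof is correct, and it follows exactly the route the paper indicates (the paper omits the details, stating only that the embeddings follow from H\"older's inequality and the wave-number bound): Cauchy--Schwarz with the convergent weight $\sum_{j\ge3}\beta_j^{-2}$ for the first estimate, the $l^1$--$l^2$ bound for the second and for \eqref{est-l1-l2}, and the termwise bound $\beta_j\lesssim N_1$ for the third. Your remark that the first constant is $N_1$-independent precisely because the zero mode is excluded is the right point to flag.
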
 

We assume that the base interface $\Gamma_0\in\cG_{K_0,\ell_0}^4$ with $\ell_0$ sufficiently small that $\Gamma_0$ is $\ell_0$-far from self intersection.  To insure that the implicit construction of $\Gamma_\mbp$ is well posed and the resultant curves are uniformly far from self-intersection we assume that %$N_1\leq \varep^{-1}$ and  
the meander parameters $\mbp$ lies in the set
\beq\label{A-00}
\cD_{\delta}:=\left\{ \mbp\in\mbR^{N_1} \bigl |\, \|\hat{\mbp}\|_{\mbV_2}\leq C  ,\; \|\hat{\mbp}\|_{\mbV_1} \leq   C\delta,\; \mrp_0>-1/2\right\}
\eeq
for some positive constant $C\lesssim 1$. The quantity $\delta>0$ will be chosen sufficiently small, depending only upon the system parameters and the choice of  $\ell$ in $\cG^4_{K,\ell}$. 
The lower bound on $\mrp_0$ is chosen to prevent the curve being scaled to a point, any fixed value greater than $-1$ is sufficient.  We assume that $\mbp\in \cD_\delta $ throughout the sequel.
\begin{remark}
Dimension $N_1$ is asymptotically large in this article, in fact, $N_1\leq \varep^{-1}$.  The uniform $\mbV_2$ bound on $\hat\mbp\in\cD_\delta $ implies that 
\beqs
\|\hat\mbp\|_{\mbV_3}\lesssim \varep^{-1}, \qquad \|\hat\mbp\|_{\mbV_4}\lesssim \varep^{-2}. 
\eeqs
This affords finite but asymptotically large bounds on the third and fourth derivatives of $\bm\gamma_\mbp$ in Lemma\,\ref{lem-Gamma-p}. 
\end{remark}
\begin{lemma}\label{lem-def-gamma-p}
Suppose that $\Gamma_0\in\cG_{K_0,\ell_0}^{4}$ for some $K_0,\ell_0>0$. Then for all $\mbp\in\cD_\delta $ the system \eqref{def-ts} defined through \eqref{def-gamma-p} has a unique solution and the resulting interface $\Gamma_\mbp$ is well defined provided that $\delta$ is suitably small in terms of $K_0,\ell_0$. Moreover, the length of the curve $\Gamma_\mbp$ is 
\beq
\label{Gamma-p-length}
|\Gamma_\mbp|=(1+\mrp_0)|\Gamma_0|.
\eeq
\end{lemma}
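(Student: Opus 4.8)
\textbf{Proof proposal for Lemma~\ref{lem-def-gamma-p}.}

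The plan is to treat the implicit definition \eqref{def-gamma-p}--\eqref{def-ts} as a fixed-point problem for the arc-length map $\tilde s(\cdot\,;\mbp)$, or equivalently for the reparameterized perturbation amplitude $\bar p$, and to run a contraction-mapping argument in a suitable Sobolev (or weighted $\mbV_k$) ball whose radius is governed by $\delta$. The subtlety is that $\bar p$ in \eqref{def-barp} depends on $\tilde s$, which depends on $|\bm\gamma_\mbp'|$, which depends on $\bar p$; so I would set up the map $T:\bar p\mapsto \bar p_{\text{new}}$ where, given a candidate $\bar p$, one forms $\bm\gamma_{\bar p}$ via \eqref{def-gamma-barp}, computes $|\bm\gamma_\mbp'|$ (through \eqref{def-gamma-p}, noting that the translations $\mrp_1\Theta_0\bE_1+\mrp_2\Theta_0\bE_2$ and the constant rescaling $(1+\mrp_0)/A(\mbp)$ do not affect $|\bm\gamma_\mbp'|$ up to the scalar factor), solves the ODE \eqref{def-ts} for $\tilde s$, and then re-evaluates \eqref{def-barp} to get $\bar p_{\text{new}}$. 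Since $\hat\mbp$ has only finitely many components ($N_1\le\varep^{-1}$) and each $\tilde\Theta_i$ is smooth with derivatives controlled by $\beta_i\le i/2$, the map $T$ is well-defined and smooth on functions; the content is the quantitative estimate.

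The key steps, in order: (i) Record that $\bm\gamma_{\bar p}' = \bm\gamma_0' + \bar p'(\tilde s)\,\tilde s'\,\mbn_0 + \bar p(\tilde s)\mbn_0'$, and using $\bm\gamma_0'=\mbT_0$, $\mbn_0'=-\kappa_0\mbT_0$ (arc-length parameterization of $\Gamma_0$), obtain $|\bm\gamma_{\bar p}'|^2 = (1-\kappa_0\bar p)^2 + (\bar p' \tilde s')^2$; combined with $\tilde s' = |\bm\gamma_\mbp'| = \tfrac{1+\mrp_0}{A(\mbp)}|\bm\gamma_{\bar p}'|$ this gives a closed implicit scalar relation for $\tilde s'$. (ii) Use the embeddings in \eqref{re-barp-hatp} and Lemma~\ref{lem-est-V} to convert the $\mbV_1$ smallness $\|\hat\mbp\|_{\mbV_1}\le C\delta$ into $\|\bar p\|_{L^\infty}+\|\bar p'\|_{L^\infty}\lesssim\delta$ (on the relevant interval), so that $1-\kappa_0\bar p$ is bounded away from $0$ uniformly (here one uses $\Gamma_0\in\cG^4_{K_0,\ell_0}$ to bound $\kappa_0$ by $K_0$) and the scalar relation for $\tilde s'$ has a unique positive solution depending smoothly and Lipschitz-continuously on $\bar p$, with $\tilde s'=1+O(\delta)$ and $A(\mbp)=1+O(\delta^2)$. (iii) Show $T$ maps the ball $\{\|\bar p\|_{H^2}\lesssim 1,\ \|\bar p\|_{W^{1,\infty}}\lesssim\delta\}$ into itself: the output amplitudes are $\mrp_i\tilde\Theta_i$, and since $\tilde\Theta_i$ are just the original $\Theta_i$ rescaled by $2\pi R_0/|\Gamma_\mbp|=1/(1+\mrp_0)$, the $\mbV_2$ norm of the output is essentially $\|\hat\mbp\|_{\mbV_2}$ up to a factor $1+O(\delta)$ coming from the change of the domain length; the $\mbV_1$ (hence $W^{1,\infty}$) smallness is likewise preserved. (iv) Show $T$ is a contraction: differences $\tilde s_1 - \tilde s_2$ obey a linear ODE forced by $|\bm\gamma_{\mbp,1}'|-|\bm\gamma_{\mbp,2}'|$, which by step (ii) is $\lesssim\delta\|\bar p_1-\bar p_2\|_{W^{1,\infty}}$, and then $\|\bar p_{1,\text{new}}-\bar p_{2,\text{new}}\|\lesssim (\|\hat\mbp\|_{\mbV_2})\|\tilde s_1-\tilde s_2\|\lesssim \delta\|\bar p_1-\bar p_2\|$, a contraction for $\delta$ small. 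This yields the unique solution and the well-definedness of $\Gamma_\mbp$. Finally, for the length formula \eqref{Gamma-p-length}: once $\tilde s$ is the arc-length of $\bm\gamma_\mbp$ we have $|\Gamma_\mbp|=\int_\msI|\bm\gamma_\mbp'|\,\mrd s = \tfrac{1+\mrp_0}{A(\mbp)}\int_\msI|\bm\gamma_{\bar p}'|\,\mrd s = (1+\mrp_0)|\Gamma_0|$ by the definition \eqref{def-A(p)} of $A(\mbp)$ — this is an exact identity, not an estimate, and is the reason for the normalization.

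The main obstacle I expect is the circularity in step (ii)--(iv): the interval $\msI_\mbp=[0,|\Gamma_\mbp|]$ on which $\bar p$ lives is itself determined by $\mrp_0$ only (via \eqref{Gamma-p-length}), which is a pleasant decoupling, but the reparameterization $\tilde s(s;\mbp)$ entering \eqref{def-barp} genuinely couples back, so one must be careful to set up the fixed point on a fixed function space (e.g. functions of $s\in\msI$, or of a normalized variable in $[0,1]$) rather than on a $\mbp$-dependent domain, and to verify that the smallness constants can be chosen uniformly over all $\mbp\in\cD_\delta$ — in particular uniformly in the asymptotically large dimension $N_1$, which is exactly where the $\mbV_2$ (rather than merely $\ell^2$) control of $\hat\mbp$ and the inequality $\beta_j\le j/2$ are used to keep $\|\bar p'\|_{L^\infty}$ and $\|\bar p''\|_{L^\infty}$ finite. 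Everything else is routine ODE theory and Sobolev bookkeeping.
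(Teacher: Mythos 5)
Your proposal is essentially the paper's own proof: the explicit solve for $|\bm\gamma_\mbp'|$ in your step (i) is exactly \eqref{est-|gamma'|-0}, the $\mbV_1$-smallness and $\mbV_2$-boundedness of $\hat\mbp$ are used in the same way to make the right-hand side of \eqref{def-ts} Lipschitz in $\tilde s$, after which the paper invokes Picard--Lindel\"of plus continuation (using the a priori bound $\tilde s\leq(1+\mrp_0)|\Gamma_0|$) --- i.e.\ the same contraction you run globally --- and the length identity \eqref{Gamma-p-length} follows from the identical exact cancellation through the normalization $A(\mbp)$. Two harmless slips: for a general base curve $A(\mbp)=1+O(\|\hat\mbp\|_{\mbV_1})$ (the $O(\delta^2)$ gain holds only when $\Gamma_0$ is a circle, cf.\ Lemma~\ref{lem-A}), and in your step (iv) the forcing $|\bm\gamma_{\mbp,1}'|-|\bm\gamma_{\mbp,2}'|$ is only $O(1)\|\bar p_1-\bar p_2\|_{W^{1,\infty}}$ because of the $\kappa_0(\bar p_1-\bar p_2)$ term, so the contraction factor $\delta$ must instead come from the $\|\hat\mbp\|_{\mbV_1}$ factor in the return map $\tilde s\mapsto\sum_i\mrp_i\tilde\Theta_i(\tilde s)$ (and the quadratic dependence on $\bar p'$), which is also most cleanly organized as a fixed point in $\tilde s$ itself, your first option, since evaluating the right-hand side requires $\bar p'(\tilde s)$ and not merely the composite $\bar p(\tilde s(\cdot))$.
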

\begin{proof}
The construction of $\bm\gamma_\mbp$ given in Definition \ref{def-P-interface} requires only that the ODE \eqref{def-ts} is well posed. The issue is that the right-hand side of this expression is implicit in $\tilde s$. To apply the general ODE existence theory we must establish a Lipschitz estimate on $|\bm\gamma_\mbp'|$. From the definition of $\bm \gamma_\mbp$ in \eqref{def-gamma-p}, we take the derivative with respect to $s$, obtaining
\beq\label{gamma'-1}
\bm \gamma_\mbp'=\frac{1+\mrp_0}{A(\mbp)} \left(\bm \gamma_0'+\bar p'(\tilde s) \frac{\dd\tilde s}{\dd s}\mbn_0 +\bar p(\tilde s) \mbn_0'(s) \right).
\eeq  
Here and below, primes of $\bar p$ denote derivatives with respect to $\tilde s$. Recalling \beq\label{bn0-bn0'}
%\mathbf n_0(s)=\left(\cos\frac{s}{R_0}, \sin\frac{s}{R_0}\right)= \frac{1}{R_0}\bm \gamma_0(s), \qquad 
\mathbf n_0'(s)=-\kappa_0\bm \gamma_0'(s),
\eeq
and combining \eqref{gamma'-1} with \eqref{def-ts} implies
\beq \label{est-gamma'}
\bm \gamma_\mbp'=\frac{1+\mrp_0}{A(\mbp)} \left(\bm\gamma_0'+\bar p'(\tilde s) |\bm\gamma_\mbp'| \mbn_0(s)-\kappa_0(s) \bar p(\tilde s)\bm \gamma_0'(s) \right).
\eeq
Since $\bm \gamma_0'$ tangent to $\Gamma_0$ while $\mathbf n_0$ is the outer normal, we have  $\bm \gamma_0'\cdot \mathbf n_0=0 $. Taking the norm of \eqref{gamma'-1}, squaring, expanding, and solving for $|\bm \gamma_\mbp'|$, find the equality
%\beq
%|\bm \gamma_\mbp'|^2=\left(1+\frac{\mrp_0\Theta_0}{R_0} +\frac{\bar p}{R_0}\right)^2+(\bar p')^2 |\bm \gamma_\mbp'|^2.
%\eeq
%Solving for $|\bm \gamma_\mbp'|$, we obtain
\beq\label{est-|gamma'|-0}
|\bm \gamma_\mbp'|=\frac{1+\mrp_0}{A(\mbp)}\left(1-\kappa_0\bar p(\tilde s) \right)\left(1-\left(\frac{1+\mrp_0}{A(\mbp)}\right)^2|\bar p'(\tilde s)|^2\right)^{-1/2} .
\eeq
Taking derivative with respect to $\tilde s$, and using the weighted-$\mbp$ bounds on the Sobolev norms of $\bar p$ from Lemma\,\ref{lem-est-V} we bound the $L^\infty$ norms of $\bar p$ and its derivatives, to deduce
\begin{equation}
\left|\p_{\tilde s}|\bm \gamma_\mbp'|\right|\lesssim \|\hat \mbp\|_{\mbV_1}(1+\|\hat\mbp\|_{\mbV_2})
\end{equation}
for $1+\mrp_0>1/2, \|\hat\mbp\|_{\mbV_1}<  A^2/(1+\mrp_0)^2$ and $A(\mbp)$ bounded as in the Appendix Lemma \ref{lem-A}. That is, $|\bm \gamma_\mbp'|$ is globally(uniformly) Lipschitz with respect to $\tilde s$ provided that $\hat{\mbp}\in \mbV_2$ satisfying $\|\hat\mbp\|_{\mbV_1}< A^2/(1+\mrp_0)^2 $ and $1+\mrp_0>1/2$. Hence by classical Picard–Lindel\"of existence theory, the system \eqref{def-ts} is solvable  on a  small interval with smallness depending on the Lipschitz constant only.  In addition, by construction the length of $\Gamma_\mbp$ satisfies \eqref{Gamma-p-length} which implies that $\tilde{s}$ is uniformly bounded independent of $\hat{\mbp}$, and the solution is extendable to the whole finite interval $\msI$  for all $\hat{\mbp}\in \mbV_2$ satisfying $\|\hat\mbp\|_{\mbV_1}< A^2/(1+\mrp_0)^2 $ and all $1+\mrp_0>1/2.$

%Hence by classical Picard–Lindel\"of existence theory, the system \eqref{def-ts} 
%is solvable  on the finite interval $\mathscr I$ provided that $\|\hat{\mbp}\|_{\mbV_2}$ is bounded and $\|\hat{\mbp}\|_{\mbV_1}$ is sufficiently small. However by construction the length of $\Gamma_\mbp$ satisfies \eqref{Gamma-p-length} which implies that $\tilde{s}$ is uniformly bounded independent of $\hat{\mbp}$, and the solution is defined for all $\hat{\mbp}$ and all $\mrp_0>-1.$ To see that the $\Gamma_\bmp$ does not self-intersect, we recall that the whiskers of $\Gamma_0$ of length $\ell>0$ do not self-intersect. From the form \eqref{def-gamma-barp} it is clear that the points of $\gamma_{\bar  p}$ are distinct if $\|\bar p\|_{L^\infty}<\ell,$ as they reside on distinct, non-intersecting whiskers. From tbe embedding \eqref{re-barp-hatp} this holds if $\|\hat{\mbp}\|_{\mathbb V_k} \ll \ell$. {\bf Since $\gamma_\mbp$ is a dilation and rigid translation of $\gamma_{\bar p}$, it also not self-intersect.}
\end{proof}

The following Lemma establishes uniform bounds on the smoothness and distance from self-intersection of the interfaces $\Gamma_\mbp$.
\begin{lemma}[Smoothness of $\Gamma_{\mathbf p}$]\label{lem-Gamma-p}  Suppose that $\Gamma_0\in\cG_{K_0,\ell_0}^{4}$ for some $K_0>0$ and $\ell_0\in(0,\sqrt{2}\pi/K_0)$.  Then there exist $K,\ell>0$ and $\delta$ suitably small depending on $\Gamma_0$, independent of $\varep>0$ such that for all $\mbp\in\cD_\delta $  the associated $\Gamma_{\mbp}$ resides in $\cG_{K,\ell}^{2}$ and is $\ell$-far from self-intersection.
Moreover the perturbed curves $\bm\gamma_\mbp$ satisfy the bounds 
\beq\label{est-de-gamma-p}
|\bm \gamma_\mbp^{(k)}|\lesssim 1+\sum_{l=1}^k|\bar p^{(l)}(\tilde s)|\lesssim 1+\|\hat\mbp\|_{\mbV_k}, \qquad k=1,2, \cdots 4. 
\eeq
The curvature and normal of $\Gamma_{\mathbf p}$, defined by 
\begin{equation}\label{def-kappa}
\kappa_{\mathbf p}:=\bm \gamma_{\mathbf p}''\cdot \mathbf n_{\mathbf p}/|\bm\gamma'_{\mathbf p}|^2, \qquad  \mathbf n_{\mbp}=e^{-\pi \mathcal R/2}\bm \gamma'_{\mathbf p}\big/|\bm \gamma'_{\mathbf p}|,
\end{equation}
admit the bounds
\begin{equation}
\label{eq-kappa-est}
 |\mathbf n_{\mathbf p}|\lesssim 1+\|\hat\mbp\|_{\mbV_1}; \qquad |\kappa_{\mathbf p}|\lesssim1+\|\hat{\mathbf p}\|_{\mbV_{2}}; \qquad \|\kappa_\mbp\|_{H^k(\msI_\mbp)}\lesssim 1+\|\hat\mbp\|_{\mbV_{k+2}^2}
\end{equation}\
for $k=1,2$. 
\end{lemma}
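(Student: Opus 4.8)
The plan is to split the statement into (i) pointwise derivative bounds on $\bm\gamma_\mbp$, which are bookkeeping on top of Lemma~\ref{lem-def-gamma-p}, and (ii) the geometric claim $\Gamma_\mbp\in\cG^2_{K,\ell}$, which combines a local ``nearly straight'' estimate with a global perturbative comparison to $\Gamma_0$.

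I would first record the consequences of Lemma~\ref{lem-def-gamma-p}: the speed $|\bm\gamma_\mbp'|$ has the closed form \eqref{est-|gamma'|-0} with $\tilde s'(s)=|\bm\gamma_\mbp'|$, and since $\mbp\in\cD_\delta$ forces $\|\bar p\|_{L^\infty},\|\bar p'\|_{L^\infty}\lesssim\|\hat\mbp\|_{\mbV_1}\leq C\delta$ while $A(\mbp)\sim1$ (the Appendix Lemma~\ref{lem-A}) and $1+\mrp_0>\tfrac12$, both the ratio $\lambda:=(1+\mrp_0)/A(\mbp)$ and $|\bm\gamma_\mbp'|$ are bounded above and below by $\varep$-independent constants. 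Differentiating \eqref{def-gamma-p} $k$ times and applying the Leibniz rule over the factors $\bm\gamma_0(s)$, $\mathbf n_0(s)$ — whose derivatives up to orders $4$ and $3$ are controlled by $K_0$ since $\Gamma_0\in\cG^4_{K_0,\ell_0}$ — and the composition $\bar p(\tilde s(s))$, the chain rule on the last factor produces the top-order term $\bar p^{(k)}(\tilde s)(\tilde s')^k$ plus lower-order terms, each a product of some $\bar p^{(m)}(\tilde s)$ with $\p_s$-derivatives of $|\bm\gamma_\mbp'|$ of total order $\le k$. Since $\p_s^{j}|\bm\gamma_\mbp'|$ involves no derivative of $\bar p$ beyond order $j+1$, nothing past $\bar p^{(k)}$ appears, and on $\cD_\delta$ every factor of order $\le 2$ is bounded by $\|\hat\mbp\|_{\mbV_2}\le C\lesssim1$, so each lower-order product is dominated by its single highest-order factor. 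This gives the pointwise bound $|\bm\gamma_\mbp^{(k)}|\lesssim1+\sum_{l=1}^k|\bar p^{(l)}(\tilde s)|$ in \eqref{est-de-gamma-p}, after which \eqref{re-barp-hatp} and $\|\hat\mbp\|_{\mbV_l}\le\|\hat\mbp\|_{\mbV_k}$ for $l\le k$ (using $\beta_j\ge1$, $j\ge3$) give the bound by $1+\|\hat\mbp\|_{\mbV_k}$. The estimates \eqref{eq-kappa-est} then drop out of the definitions \eqref{def-kappa}: $\mathbf n_\mbp$ is unit and, expanding $\bm\gamma_\mbp'$ via \eqref{est-gamma'}, equals $\mathbf n_0+O(\|\hat\mbp\|_{\mbV_1})$; $\kappa_\mbp=\bm\gamma_\mbp''\!\cdot\!\mathbf n_\mbp/|\bm\gamma_\mbp'|^2$ is bounded using $|\bm\gamma_\mbp''|\lesssim1+\|\hat\mbp\|_{\mbV_2}$ and the lower bound on $|\bm\gamma_\mbp'|$; and $\|\kappa_\mbp\|_{H^k(\msI_\mbp)}$ comes from differentiating this quotient $k$ more times, estimating the top-order term $\bm\gamma_\mbp^{(k+2)}$ in $L^2(\msI_\mbp)$ by $\|\bar p^{(k+2)}\|_{L^2}\sim\|\hat\mbp\|_{\mbV_{k+2}^2}$ and the remaining products in $L^\infty\!\cdot\!L^2$ through the $\cD_\delta$ bounds.

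For the membership $\Gamma_\mbp\in\cG^2_{K,\ell}$, condition (a) is the $k=0,1,2$ content above (the zeroth-order bound holding since $\Gamma_\mbp\subset\Omega$). For condition (b), fix $s_1,s_2$ with $|s_1-s_2|_\msI>1/(2K)$ and distinguish whether $|s_1-s_2|_\msI$ is below or above $1/(2K_0)$. If $|s_1-s_2|_\msI<1/(2K_0)$: the tangent $\mathbf T_0=\bm\gamma_0'$ turns at rate $\le K_0$, and the expansion of $\bm\gamma_\mbp'$ shows $\mathbf T_\mbp=\mathbf T_0+O(\|\hat\mbp\|_{\mbV_1})=\mathbf T_0+O(\delta)$, so for $\delta$ small $\mathbf T_\mbp(s_1)\!\cdot\!\mathbf T_\mbp(s)>\tfrac14$ for all $s$ between $s_1$ and $s_2$; projecting $\bm\gamma_\mbp(s_1)-\bm\gamma_\mbp(s_2)=\int|\bm\gamma_\mbp'|\mathbf T_\mbp\,\mathrm ds$ onto $\mathbf T_\mbp(s_1)$, exactly as in the preceding lemma, gives $|\bm\gamma_\mbp(s_1)-\bm\gamma_\mbp(s_2)|\gtrsim|s_1-s_2|_\msI>1/(2K)$. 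If $|s_1-s_2|_\msI\ge1/(2K_0)$: writing $\bm\gamma_\mbp=\lambda\bm\gamma_{\bar p}+\mathrm{const}$ with $\bm\gamma_{\bar p}=\bm\gamma_0+\bar p(\tilde s)\mathbf n_0$ and $\|\bar p\|_{L^\infty}\lesssim C\delta$, the bound $|\bm\gamma_0(s_1)-\bm\gamma_0(s_2)|>\ell_0$ from $\Gamma_0\in\cG^4_{K_0,\ell_0}$ survives the perturbation and the scaling, giving $|\bm\gamma_\mbp(s_1)-\bm\gamma_\mbp(s_2)|\ge\lambda(\ell_0-C\delta)\gtrsim\ell_0$ once $\delta$ is small. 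Taking $K$ to be the maximum of $K_0$, the $W^{2,\infty}$ bound, and the curvature bound $1+C$, and $\ell$ the minimum of the two lower bounds just found and $\sqrt2\,\pi/K$, yields $\Gamma_\mbp\in\cG^2_{K,\ell}$, and the earlier lemma then shows $\Gamma_\mbp$ is $\ell$-far from self-intersection. All of $K,\ell,\delta$ depend only on $K_0,\ell_0$ and the $\cD_\delta$ constant $C$, hence are independent of $\varep$.

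The \emph{main obstacle} is condition (b): one must cover all separations $|s_1-s_2|_\msI$ up to $|\Gamma_0|/2$, while the perturbative comparison to $\Gamma_0$ directly handles only $|s_1-s_2|_\msI\gtrsim1/(2K_0)$ and the local straightness estimate appears to handle only short separations. The point is that the latter in fact reaches the whole scale $1/(2K_0)$, precisely because $\mathbf T_\mbp$ stays $O(\delta)$-close to the \emph{slowly turning} tangent $\mathbf T_0$ on exactly that scale. A secondary care is that the resulting $K,\ell,\delta$ may involve only $K_0,\ell_0$ and the $\varep$-independent bound $C$ on $\|\hat\mbp\|_{\mbV_2}$, not the $\varep$-dependent bounds $\|\hat\mbp\|_{\mbV_3}\lesssim\varep^{-1}$, $\|\hat\mbp\|_{\mbV_4}\lesssim\varep^{-2}$, which enter only the third- and fourth-order derivative and $H^2$-curvature estimates — these are not claimed to be $\varep$-uniform.
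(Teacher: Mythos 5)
Your proposal is correct and follows essentially the same route as the paper: pointwise bounds on $\bm\gamma_\mbp^{(k)}$ obtained from the closed form for $|\bm\gamma_\mbp'|$ together with the $\cD_\delta$ smallness and the equivalence \eqref{re-barp-hatp}, curvature bounds from the quotient formula \eqref{def-kappa}, and condition (b) of Definition\,\ref{def-GKl} by combining a chord-arc/straightness estimate on the intermediate scale with a perturbative comparison to $\Gamma_0$ on long scales, before invoking the earlier lemma to conclude $\Gamma_\mbp$ is $\ell$-far from self-intersection. The only difference is organizational: the paper applies the chord-arc bound \eqref{e:kp1} to $\Gamma_0$ itself (upgrading it to $\cG^2_{K,\tilde\ell}$) and then perturbs uniformly over all $|s_1-s_2|_\msI>1/(2K)$, whereas you apply the straightness estimate directly to $\Gamma_\mbp$ via $\mathbf T_\mbp=\mathbf T_0+O(\delta)$ on the range below $1/(2K_0)$ and perturb only on the longer range — both yield the same constants up to relabeling.
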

\begin{proof} 
We first establish the bounds on $\bm\gamma_\mbp$ and its curvatures. From \eqref{est-|gamma'|-0}, we have upper and lower bound of the metric $|\bm \gamma_\mbp'|$
\beq\label{Ubd-de-gamma-0}
\frac{1}{2} \leq |\bm \gamma_\mbp'|\leq 2(1+|\mrp_0|),
\eeq
 provided that $\|\hat\mbp\|_{\mbV_1}$ is suitably small. This further implies that the first derivative of the metric has the bound
 \beq\label{Ubd-de-|gamma'|-0}
 ||\bm \gamma_\mbp'|'| =  \frac{|\bm \gamma_\mbp' \cdot \bm \gamma_\mbp''|}{|\bm \gamma_\mbp'|} \lesssim |\bm \gamma_\mbp''|.
 \eeq
The higher derivatives of the metric $|\bm \gamma_\mbp'|$ enjoy the bounds
 \beq\label{Ubd-de-|gamma'|}
 \left||\bm \gamma_\mbp'|^{''} \right|\lesssim |\bm \gamma_\mbp''|^2 +|\bm \gamma_\mbp'''|, \qquad \left||\bm \gamma_\mbp'|'''\right| \lesssim |\bm \gamma_\mbp^{(4)}| + |\bm \gamma_\mbp''| |\bm \gamma_\mbp'''|. 
 \eeq
Moreover the definition \eqref{def-gamma-p} of $\bm\gamma_\mbp(s)$ with $\bar p=\bar p(\tilde s)$ and $\tilde s=\tilde s(s)$, and the smoothness of $\Gamma_0$ imply
 \beq\label{Ubd-de-gamma}
 \begin{aligned}
 & |\bm \gamma_\mbp''|\lesssim 1+|\bar p''| +|\bar p'|\cdot||\bm \gamma_\mbp'|'| ; \qquad |\bm \gamma_\mbp'''|\lesssim 1 +|\bar p'''|+ |\bar p'|\cdot||\bm \gamma_\mbp'|''| +|\bar p''|\cdot|  |\bm \gamma_\mbp'|'|; \\
 &|\bm \gamma_\mbp^{(4)}|\lesssim 1+|\bar p'''|+|\bar p^{(4)}|  +|\bar p'|||\bm \gamma_\mbp'|'''| +|\bar p''| \left(||\bm \gamma_\mbp'|'|^2 +||\bm \gamma_\mbp'|''|\right) +|\bar p'''|\cdot ||\bm \gamma_\mbp'|'|,
 \end{aligned}
 \eeq
 provided that $\hat\mbp \in \mbV_2$.  Combining the second estimate in \eqref{Ubd-de-gamma} with the estimate \eqref{Ubd-de-|gamma'|-0} yields the bound
 \beq\label{Ubd-de-gamma-1}
 |\bm \gamma_\mbp''|\lesssim 1+|\bar p''(\tilde s)|
 \eeq
 for $\|\hat\mbp\|_{\mbV_1}$ suitably small. In a similar manner, combining the last two estimates of \eqref{Ubd-de-gamma} with \eqref{Ubd-de-|gamma'|-0}-\eqref{Ubd-de-|gamma'|} and \eqref{Ubd-de-gamma-1} yields 
 \beq\label{Ubd-de-gamma-2}
 |\bm \gamma_\mbp'''|\lesssim 1+ |\bar p'''(\tilde s)|, \qquad |\bm \gamma_\mbp^{(4)}|\lesssim 1+ |\bar p^{(4)}(\tilde s)| +|\bar p'''(\tilde s)|
 \eeq
 for $\hat \mbp \in \mbV_2$. Now the curvature $\kappa_\mbp$ in \eqref{def-kappa}  admits bound
 \beqs
 |\kappa_\mbp|\lesssim |\bm \gamma_\mbp''|\lesssim 1 +|\bar p''(\tilde s)|, 
 \eeqs
 so that the $L^\infty$ and $L^2(\msI_\mbp)$ bounds of the curvature follow 
 from \eqref{re-barp-hatp}. Taking the derivative of the curvature and using the bounds \eqref{Ubd-de-gamma-0}, \eqref{Ubd-de-gamma-1} and \eqref{Ubd-de-gamma-2} with $\hat\mbp\in \mbV_2$ implies
 \beqs
 |\kappa_\mbp'| =\left|  \frac{e^{-\pi \mathcal R/2}\bm \gamma_\mbp''\cdot \bm \gamma_\mbp'' +e^{-\pi \mathcal R/2}\bm \gamma_\mbp''\cdot \bm \gamma_\mbp''' }{|\bm \gamma_\mbp'|^3} -3 \frac{e^{-\pi \mathcal R/2}\bm \gamma_\mbp''\cdot \bm \gamma_\mbp''}{|\bm\gamma_\mbp'|^5} \bm \gamma_\mbp' \cdot \bm \gamma_\mbp'' \right|\lesssim 1+|\bar p'''|. 
 \eeqs
 The $L^2(\msI_\mbp)$-bound of  $\kappa_\mbp'$ now follows from \eqref{re-barp-hatp}. The $L^2(\msI_\mbp)$ bound of $\kappa_\mbp''$ is obtained from similar calculations, the details are omitted. 
 
 To see that $\Gamma_\mbp\in\cG_{K,\ell}^2$, we remark from \eqref{est-de-gamma-p} that $\bm\gamma_\mbp$ is uniformly bounded in $W^{2,\infty}(\mathscr I)$ by some $K>K_0.$ In particular $\|\kappa_\mbp\|_{L^\infty}$ inherits this uniform bounded.  To establish condition (b) of Definition\,\ref{def-GKl} we first establish that $\Gamma_0\in\cG^2_{K,\tilde\ell}$ for some $\tilde\ell>0$. We have condition (b) for $\Gamma_0$ with $K_0$ and $\ell_0$. If $1/(2K)<|s_1-s_2|_\msI<1/(2K_0)$ then by \eqref{e:kp1} we have $|\bm\gamma(s_1)-\bm\gamma(s_2)|>1/(2\sqrt{2}K)$. Combining these cases we have $\Gamma_0\in\cG_{K,\tilde\ell}$ with $\tilde\ell=\min\{\ell_0,1/(2\sqrt{2}K)\}$. For $|s_1-s_2|_\msI>1/(2K)$, by \eqref{def-gamma-p} with $\Theta_0$ independent of $s$ and \eqref{def-barp} we derive
 \beqs
 \begin{aligned}
 |\bm \gamma_\mbp(s_1)-\bm \gamma_\mbp(s_2)|&= \frac{1+\mrp_0}{ A(\mbp)} |\bm \gamma_{\bar p}(s_1)-\bm \gamma_{\bar p}(s_2)|\geq  \frac{1+\mrp_0}{ A(\mbp)} \left(\tilde \ell - 2\|\hat \mbp\|_{\mbV_1}\right).
 \end{aligned}
 \eeqs
 Here we used \eqref{re-barp-hatp} to bound the $L^\infty$-norm $\bar p$ and $\|\hat\mbp\|_{\mbV_0}\leq \|\hat\mbp\|_{\mbV_1}$. Lemma\,\ref{lem-A} affords the bound $A(\mbp)=O(1+\|\hat\mbp\|_{\mbV_1})$, and we deduce that $\Gamma_\mbp\in \cG_{K,\ell}^2$ for $\ell$ less than $\tilde \ell/4$ for all $\mbp\in\cD_\delta $ by choosing $\delta$  suitably small. %$\|\hat\mbp\|_{\mbV_1}$ sufficiently  small and $\mrp_0$ away from $-1$. 
 We deduce from Lemma\,\ref{def-GKl} that each $\Gamma_\mbp$ is $\ell$-far from self-intersection.
\end{proof}

The dressing of interfaces requires a $2\ell$ reach. From Lemma\,\ref{lem-Gamma-p} we may choose $\ell>0$ such that the collection of perturbed interfaces belong to $\cG_{K,2\ell}^2$ with associated reach $\Gamma_\mbp^{2\ell}$. To each $\mbp\in\cD_\delta $ this allows us to introduce the local whiskered coordinates $(s_{\mathbf p}, z_{\mathbf p})$ associated to $\Gamma_\mbp$. 
Similarly, the geometric structures $\mbn_\mbp, \bm\gamma_\mbp$ and $\kappa_\mbp$ associated to $\Gamma_\mbp$ have natural extensions to $\Gamma_\mbp^{2\ell}$.
The domain $\Gamma_\mbp^{2\ell}$ of $(s_\mbp,z_\mbp)$ overlaps with the domain $\Gamma_0^{2\ell_0}$ of the local coordinates $(s,z)$ associated to the base point $\Gamma_0.$  On the interface $\Gamma_\mbp$, corresponding to $z_\mbp=0$, the whiskered variable $s_\mbp$ reduces to $s$, that is $s_\mbp\big|_{z_\mbp=0}= s$.  The quantity $\tilde s$, and not $s$, corresponds to arc-length on $\Gamma_\mbp$. In the sequel the term ``local coordinates of $\Gamma_\mbp$'' refers to $(s_\mbp,z_\mbp)$ on $\Gamma_\mbp^{2\ell}$, however  it is convenient to introduce $\tilde s_\mbp$, the extension of $\tilde s$ to $\Gamma_\mbp^{2\ell}$, as this is the natural variable for the  Laplace-Beltrami eigenmodes $\{\tilde\Theta_j\}_{j\geq 0}$ of $\Delta_{s_\mbp}$, and of their integrals.

\begin{notation}\label{Notation-h}
To simplify the presentation of the subsequent calculations, we will use the blanket notation $h(z_\mbp, \bm \gamma_{\mathbf p}^{(k)})$ for any smooth function defined in $\Gamma_\mbp^{2\ell}$ that depends upon $s_{\mbp}$ only through the first $k$ derivatives of $\bm \gamma_{\mbp}$. If the function is independent of $z_\mbp$ we will denote it by $h(\bm \gamma_{\mathbf p}^{(k)}).$
\end{notation} 

The following Lemma presents a common use of Notation\,\ref{Notation-h}.
\begin{lemma}\label{lem-e_ij}
If  function $f=f(s_\mbp)$ defined on $\Gamma_\mbp^{2\ell}$ depends upon $s_\mbp$ only through $|\bm \gamma_\mbp'|,  \kappa_\mbp,  \mbn_\mbp\cdot \mbn_0$, $\varep^k\nabla^k_{ s_\mbp}$, and their derivatives, then under the assumptions \eqref{A-00} there exists $h=h(\bm \gamma_\mbp'')$ in the sense of Notation \ref{Notation-h} such that $f(s_\mbp)=h(\bm \gamma_\mbp'')$,
where $h$ satisfies
\begin{equation}\label{est-h-p-V-2+2}
\| h(\bm \gamma_{\mbp}'')\|_{L^2(\msI_{\mbp})} +\|h(\bm \gamma_\mbp'')\|_{L^\infty} \lesssim 1;
\end{equation}
and for $l\geq 1$,
\begin{equation}\label{est-h-gamma''-de}
\left\|\varep^{l-1}\nabla_{ s_{\mbp}}^{l} h( \bm \gamma_{\mbp}'')\right\|_{L^2(\msI_{\mbp})}\lesssim 1+ \|\hat{\mbp}\|_{\mbV_{3}^2}, \qquad \|\varep^{l-1}\nabla_{s_\mbp}^l h(\bm \gamma_\mbp'')\|_{L^\infty}\lesssim 1+\|\hat\mbp\|_{\mbV_3}.
\end{equation}
%Moreover if $\|\hat\mbp\|_{\mbV_3^2}\lesssim 1$, then\beqs \|\nabla_{s_\mbp}^2 h(\bm \gamma_\mbp'')\|_{L^2(\msI_\mbp)}\lesssim 1+\|\hat\mbp\|_{\mbV_4^2}.  \eeqs 
\end{lemma}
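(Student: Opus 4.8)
The plan is to express the function $f(s_\mbp)$ directly in terms of the quantities it is allowed to depend upon and then check that each is of the form $h(\bm\gamma_\mbp'')$ satisfying the claimed bounds, combining the Leibniz rule with the chain rule to handle derivatives. First I would observe that all the basic building blocks are functions of $\bm\gamma_\mbp$ and its derivatives up to second order: from \eqref{def-kappa} the curvature $\kappa_\mbp$ and normal $\mbn_\mbp$ depend only on $\bm\gamma_\mbp'$ and $\bm\gamma_\mbp''$ through the formulae $\kappa_\mbp=\bm\gamma_\mbp''\cdot\mbn_\mbp/|\bm\gamma_\mbp'|^2$ and $\mbn_\mbp=e^{-\pi\mathcal R/2}\bm\gamma_\mbp'/|\bm\gamma_\mbp'|$; the metric $|\bm\gamma_\mbp'|$ depends only on $\bm\gamma_\mbp'$; and $\mbn_\mbp\cdot\mbn_0$ depends on $\bm\gamma_\mbp'$ together with $\bm\gamma_0'$, which by the smoothness of $\Gamma_0$ is itself an admissible (bounded, smooth) coefficient. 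Since $|\bm\gamma_\mbp'|$ is bounded above and below by \eqref{Ubd-de-gamma-0}, each of these is a smooth function of $(\bm\gamma_\mbp',\bm\gamma_\mbp'')$ with no small denominators, so $f(s_\mbp)=h(\bm\gamma_\mbp'')$ in the sense of Notation\,\ref{Notation-h}. The $L^\infty$ and $L^2(\msI_\mbp)$ bounds \eqref{est-h-p-V-2+2} then follow because $\|\bm\gamma_\mbp'\|_{L^\infty}$ and $\|\bm\gamma_\mbp''\|_{L^\infty}$ are $O(1+\|\hat\mbp\|_{\mbV_2})$ by \eqref{est-de-gamma-p}—wait, this only gives $O(1)$ once we use the $\cD_\delta$ bound $\|\hat\mbp\|_{\mbV_2}\leq C$, so indeed $\lesssim 1$; for $L^2(\msI_\mbp)$ we additionally use that $|\msI_\mbp|=(1+\mrp_0)|\Gamma_0|$ is bounded.

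Next I would address the derivative bounds \eqref{est-h-gamma''-de}. Differentiating $h(\bm\gamma_\mbp'')$ in $s_\mbp$ produces, by the chain rule, terms of the schematic form (smooth bounded function of $\bm\gamma_\mbp',\bm\gamma_\mbp''$) times a product of derivatives $\bm\gamma_\mbp^{(j)}$ with the total order of differentiation equal to $l+2$—in particular at most one factor of $\bm\gamma_\mbp'''$ or $\bm\gamma_\mbp^{(4)}$ can appear in each term for $l=1,2$, the remaining factors being $\bm\gamma_\mbp'$ or $\bm\gamma_\mbp''$ which are $O(1)$. Here one must also recall that $s_\mbp$-derivatives differ from $\tilde s_\mbp$-derivatives by factors of $|\bm\gamma_\mbp'|$ and its derivatives, all controlled by \eqref{Ubd-de-gamma-0}–\eqref{Ubd-de-|gamma'|}, and that the geometric quantities extend off the interface into $\Gamma_\mbp^{2\ell}$ via the expansions in Section\,\ref{ss-wc} with the $z_\mbp$-dependence entering only through factors of $1-\varep z_\mbp\kappa_\mbp$, which contributes the explicit $\varep^{l-1}$ prefactor once one accounts for the admissible operators $\varep^k\nabla_{s_\mbp}^k$ in the hypothesis. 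Using \eqref{est-de-gamma-p} to bound $\|\bm\gamma_\mbp'''\|_{L^\infty}\lesssim 1+\|\hat\mbp\|_{\mbV_3}$ and $\|\bm\gamma_\mbp^{(4)}\|_{L^\infty}\lesssim 1+\|\hat\mbp\|_{\mbV_4}$ would naively give $\mbV_4$ dependence for $l=2$; the point of the stated bound is that taking $L^2(\msI_\mbp)$ in \eqref{est-de-gamma-p} and invoking the equivalence \eqref{re-barp-hatp} converts $\|\bar p^{(k)}\|_{L^2(\msI_\mbp)}$ into $\|\hat\mbp\|_{\mbV_k^2}$, so that the single highest-order factor is measured in $L^2$ and bounded by $\|\hat\mbp\|_{\mbV_3^2}$, while for the $L^\infty$ estimate one settles for $\|\hat\mbp\|_{\mbV_3}$.

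The main obstacle, and the step requiring genuine care, is the bookkeeping that shows no term in $\varep^{l-1}\nabla_{s_\mbp}^l h(\bm\gamma_\mbp'')$ carries more than one "expensive" derivative factor—i.e. that the worst term for $l=2$ is (bounded)$\times\bm\gamma_\mbp^{(4)}$ or (bounded)$\times\bm\gamma_\mbp'''\times\bm\gamma_\mbp''$ rather than $\bm\gamma_\mbp'''\times\bm\gamma_\mbp'''$, which would require $\mbV_3\times\mbV_3$ and spoil the bound. This follows because the total differentiation order in any Leibniz/chain-rule term equals $l+2$ and $h$ is built from expressions of homogeneity at most $2$ in derivatives of $\bm\gamma_\mbp$, so two factors of order $\geq 3$ would force total order $\geq 6 > l+2$ for $l\leq 2$; but making this rigorous means writing down the structure of $h$ carefully (as a rational function with denominators powers of $|\bm\gamma_\mbp'|^2$) and tracking how each $s_\mbp$-derivative distributes, which is the tedious-but-routine core of the argument. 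Once that structural claim is in hand, \eqref{est-h-p-V-2+2} and \eqref{est-h-gamma''-de} follow by combining \eqref{Ubd-de-gamma-0}, \eqref{est-de-gamma-p}, \eqref{re-barp-hatp}, and the boundedness of $|\msI_\mbp|$, and the details may be safely omitted.
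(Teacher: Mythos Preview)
Your approach matches the paper's: its proof is the single sentence ``the estimates are direct results of Lemma~\ref{lem-Gamma-p},'' and you are simply filling in those details via the chain rule and the bounds \eqref{Ubd-de-gamma-0}, \eqref{est-de-gamma-p}, \eqref{re-barp-hatp}.

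There is, however, a genuine error in what you call ``the main obstacle.'' Your homogeneity count is wrong: applying the chain rule twice to $h(\bm\gamma_\mbp',\bm\gamma_\mbp'')$ does produce a term
\[
\partial_{22}h\cdot(\bm\gamma_\mbp''')^2,
\]
so two factors of order $\geq 3$ \emph{do} appear when $l=2$. The claim that the ``total order of differentiation'' equals $l+2$ is not how Fa\`a di Bruno works here---each derivative of $h$ can hit either argument, and when both land on the $\bm\gamma_\mbp''$ slot you get exactly the $(\bm\gamma_\mbp''')^2$ term you tried to rule out.

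The good news is that this term is harmless once you use the $\varep^{l-1}$ prefactor rather than combinatorics. For $l=2$,
\[
\varep\,\|(\bm\gamma_\mbp''')^2\|_{L^2(\msI_\mbp)}
\leq \varep\,\|\bm\gamma_\mbp'''\|_{L^\infty}\|\bm\gamma_\mbp'''\|_{L^2(\msI_\mbp)}
\lesssim \varep\,(1+\|\hat\mbp\|_{\mbV_3})\,(1+\|\hat\mbp\|_{\mbV_3^2}),
\]
and by Lemma~\ref{lem-est-V} together with $N_1\lesssim\varep^{-1}$ one has $\varep\|\hat\mbp\|_{\mbV_3}\lesssim\varep N_1\|\hat\mbp\|_{\mbV_2}\lesssim 1$, so the right-hand side is $\lesssim 1+\|\hat\mbp\|_{\mbV_3^2}$ as required. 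The same mechanism---each extra factor of $\varep$ trades one power of $N_1$ via the last embedding of Lemma~\ref{lem-est-V}---handles the general $l\geq 1$ case and the $\bm\gamma_\mbp^{(4)}$ term. So your strategy is right; replace the erroneous combinatorial paragraph with this $\varep$-for-$N_1$ trade-off and the argument closes.
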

\begin{proof}
%The estimate \eqref{est-h(gamma+k)-de} can be derived directly form the approximations of $|\bm \gamma_\mbp'|, \kappa_\mbp$ and $\mbn_\mbp$ in \eqref{est-|gamma'|}, \eqref{def-kappa-p,0}.
The estimates \eqref{est-h-p-V-2+2}-\eqref{est-h-gamma''-de}
are direct results of Lemma \ref{lem-Gamma-p}. 
%are merely explicit statements of this result. 
\end{proof}
The following lemma is used frequently to establish bounds on vector and operator norms in various error terms.
\begin{lemma}\label{Notation-e_i,j}
Recalling the notation of section\,\ref{ssec-Notation}, if $f\in L^2(\msI_\mbp)$, then there exists a unit vector $\bm e=(e_i)\in l^2$ such that
\beq\label{def-e-i}
\int_{\msI_\mbp}f \tilde \Theta_i \dd \tilde s_\mbp=O(\|f\|_{L^2(\msI_\mbp)})e_i.
\eeq
If  in addition $f\in L^\infty$ on $\msI_\mbp$, then for any vector $\mathbf a=(\mathrm a_j)\in l^2$, we have
\beq\label{est-e-i,j}
\left|\sum_{j}\int_{\msI_\mbp} f\tilde \Theta_i \mathrm a_j\tilde \Theta_j\dd \tilde s_\mbp\right|\lesssim \|\mathbf  a\|_{l^2}\|f\|_{L^\infty}e_i,
\eeq
and there exists a matrix $\mbE=(\mbE_{ij})$ with operator norm $l^2_*$ norm equal to one, such that
\beq\label{def-e-i,j}
\int_{\msI_\mbp}f\tilde \Theta_i \tilde \Theta_j \dd \tilde s_\mbp =O(\|f\|_{L^\infty}) \mbE_{ij}.
\eeq
\end{lemma}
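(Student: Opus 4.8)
The plan is to prove the three displayed identities of Lemma\,\ref{Notation-e_i,j} by reading them as statements about the coefficients of $f$ (or of a multiplication operator by $f$) in the orthonormal basis of Laplace--Beltrami eigenmodes associated to $\Gamma_\mbp$, and then invoking Bessel's inequality together with the $L^2$--$L^\infty$ duality for the operator norm. The one preliminary point to record is that, by \eqref{ortho-tTheta}, the family $\{(1+\mrp_0)^{-1/2}\tilde\Theta_i\}_{i\geq 0}$ is an orthonormal system in $L^2(\msI_\mbp)$ with respect to arc length $\dd\tilde s_\mbp$; since $\mbp\in\cD_\delta$ forces $1+\mrp_0>1/2$, the factor $(1+\mrp_0)^{\pm 1/2}$ is bounded above and below by constants $C$ of the type fixed in Section\,\ref{ssec-Notation}, and so may be absorbed into the implicit constants throughout.

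For \eqref{def-e-i}, I would set $c_i:=\int_{\msI_\mbp} f\tilde\Theta_i\dd\tilde s_\mbp$ and note that, up to the harmless $(1+\mrp_0)$ factor, $c_i$ is the $i$-th Fourier coefficient of $f\in L^2(\msI_\mbp)$ in the above orthonormal system. Bessel's inequality gives $\sum_i c_i^2\lesssim \|f\|_{L^2(\msI_\mbp)}^2$, so if $\|f\|_{L^2(\msI_\mbp)}\neq 0$ the vector $\bm e:=\bigl(c_i/\|\bm c\|_{l^2}\bigr)_i$ is a genuine unit vector in $l^2$ and $c_i=\|\bm c\|_{l^2}\,e_i=O(\|f\|_{L^2(\msI_\mbp)})e_i$, which is exactly \eqref{def-e-i}. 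If $f\equiv 0$ on $\msI_\mbp$ the statement is vacuous and one may take any fixed unit vector. For \eqref{est-e-i,j}, I would apply \eqref{def-e-i} with $f$ replaced by $g:=f\cdot\bigl(\sum_j \mathrm a_j\tilde\Theta_j\bigr)$: the left-hand side of \eqref{est-e-i,j} is $\bigl|\int_{\msI_\mbp} g\tilde\Theta_i\dd\tilde s_\mbp\bigr|=O(\|g\|_{L^2(\msI_\mbp)})e_i$, and since $\bigl\|\sum_j\mathrm a_j\tilde\Theta_j\bigr\|_{L^2(\msI_\mbp)}\lesssim\|\mathbf a\|_{l^2}$ by orthonormality (again modulo the $1+\mrp_0$ factor), H\"older's inequality yields $\|g\|_{L^2(\msI_\mbp)}\leq\|f\|_{L^\infty}\bigl\|\sum_j\mathrm a_j\tilde\Theta_j\bigr\|_{L^2(\msI_\mbp)}\lesssim\|f\|_{L^\infty}\|\mathbf a\|_{l^2}$, giving the claimed bound with the same unit vector $\bm e$.

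For \eqref{def-e-i,j} I would define the (infinite) matrix $\mbQ$ by $\mbQ_{ij}:=\int_{\msI_\mbp} f\tilde\Theta_i\tilde\Theta_j\dd\tilde s_\mbp$, recognizing it as the matrix, in the orthonormal basis, of the multiplication operator $\varphi\mapsto f\varphi$ on $L^2(\msI_\mbp)$ (up to bounded $1+\mrp_0$ rescalings on each side). The $l^2_*$ operator norm of this matrix equals the $L^2(\msI_\mbp)\to L^2(\msI_\mbp)$ operator norm of multiplication by $f$, which is $\|f\|_{L^\infty}$; more directly, for unit vectors $\mathbf a,\mathbf b\in l^2$ one has $\bigl|\sum_{ij}\mathrm b_i\mbQ_{ij}\mathrm a_j\bigr|=\bigl|\int_{\msI_\mbp} f\,(\sum_i\mathrm b_i\tilde\Theta_i)(\sum_j\mathrm a_j\tilde\Theta_j)\dd\tilde s_\mbp\bigr|\leq\|f\|_{L^\infty}\|\sum_i\mathrm b_i\tilde\Theta_i\|_{L^2}\|\sum_j\mathrm a_j\tilde\Theta_j\|_{L^2}\lesssim\|f\|_{L^\infty}$, so $\|\mbQ\|_{l^2_*}\lesssim\|f\|_{L^\infty}$. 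Setting $\mbE:=\mbQ/\|\mbQ\|_{l^2_*}$ (or any fixed unit-norm matrix if $f\equiv 0$) gives $\mbQ_{ij}=\|\mbQ\|_{l^2_*}\mbE_{ij}=O(\|f\|_{L^\infty})\mbE_{ij}$ as required. There is no real obstacle here; the only care needed is the bookkeeping of the $1+\mrp_0$ weights from \eqref{ortho-tTheta} and the observation that all sums converge (Bessel / boundedness of the multiplication operator), so that $\bm e$ and $\mbE$ are legitimate elements of $l^2$ and of the space of bounded matrices, respectively.
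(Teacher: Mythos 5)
Your argument is correct and is exactly the paper's approach: the paper's proof is the one-line remark that the estimates "follow from Plancherel and classic applications of Fourier theory," and your use of the near-orthonormality \eqref{ortho-tTheta} with Bessel's inequality for \eqref{def-e-i}, substitution of $f\sum_j\mathrm a_j\tilde\Theta_j$ for \eqref{est-e-i,j}, and the $L^\infty$ bound on the multiplication-operator matrix for \eqref{def-e-i,j} is precisely the intended expansion of that remark. The only cosmetic point is that, under the paper's convention in Section\,\ref{ssec-Notation}, the unit vector in \eqref{est-e-i,j} need not be literally the same $\bm e$ as in \eqref{def-e-i}, so your phrase "with the same unit vector" is an unnecessary (but harmless) claim.
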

\begin{proof} The estimates follow from Plancherel and classic applications of Fourier theory.
\end{proof}

\section{Quasi-Equilibrium Profiles and the Bilayer  Manifold}\label{sec-profile}
Fix $K_0,\ell_0>0$ and a base interface $\Gamma_0\in\cG_{K_0,2\ell_0}^4$. 
We  associate the collection of perturbed interfaces $\{\Gamma_\mbp\}_{\mbp\in\cD_\delta}$ and construct the bilayer manifold as the graph of the quasi-equilibrium bilayer \muckmuck $\Phi_{\mathbf p}$ over the set $\cD_\delta.$ The bulk density parameter $\sigma$ is slaved to  the meander parameters to enforce a prescribed total mass constraint. The  construction of the quasi-equilibrium bilayer \muckmuck begins with $\phi_0$ defined on $L^2(\mathbb R)$ as the nontrivial solution of 
\begin{equation}\label{def-phi0n2}
\p_{z_{\kpp}}^2 \phi_{0}-W'(\phi_{0})=0,
\end{equation} that is homoclinic to the left well $b_-$ of $W$. In particular $\phi_0$ is unique up to translation, even about its maximum, and  $\phi_0-b_-$ converges to $0$ as $z_{\kpp}$ tends to $\pm \infty$ at the exponential rate $\sqrt{W''(b_-)}>0.$ 

The linearization  $\mathrm L_{\kpp 0}$ of  \eqref{def-phi0n2} about $\phi_0$, 
\begin{equation}\label{def-rL-p0}
\mathrm L_{\kpp 0}:=-\p_{z_{\kpp}}^2+W''(\phi_{0}(z_{\kpp})),
\end{equation}
is a Sturm-Liouville operator on the real line whose coefficients decay exponentially fast to constants at $z_{\kpp}=\infty$. 
The following Lemma follows from classic results and direct calculations, see for example Chapter 2.3.2 of \cite{KP-13}. 
\begin{lemma}\label{lem-L0}
The spectrum of \,$\mathrm L_{\kpp 0}$ is real, and uniformly positive except for two point spectra: $\la_0<0$ and $\la_1=0$ and associated ortho-normal eigenmodes $\psi_0$ and $\psi_1.$
Moreover, it holds that
\begin{equation*}
\mathrm L_{\kpp 0} \phi_{0}'=0, \quad \mathrm L_{\kpp 0}\phi_{0}''=-W'''(\phi_{0})\left| \phi_{0}' \right|^2, \quad \mathrm L_{\kpp 0}\left( z_{\kpp}\phi_{0}' \right)=-2\phi_{0}''.
\end{equation*}
The ground state eigenmode $\psi_0$ is even and positive. The  kernel of $L_{\kpp 0}$ is spanned by $\psi_1=\phi_0^\prime/\|\phi_0^\prime\|_{L^2}.$ The operator $\mrL_0$ is invertible  on the $L^2$ perp of its kernel, and both $\mrL_{\kpp 0}$ and its inverse preserve parity.
\end{lemma}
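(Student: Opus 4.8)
The plan is to treat Lemma~\ref{lem-L0} as a package of standard Sturm--Liouville facts plus a few explicit differentiations, proving each assertion in turn. First I would record that $\mathrm L_{\kpp 0}=-\p_{z_{\kpp}}^2+W''(\phi_0)$ is a self-adjoint Schr\"odinger operator on $L^2(\mathbb R)$ whose potential $W''(\phi_0(z_{\kpp}))$ is smooth and converges exponentially fast to the positive constant $W''(b_-)>0$ as $z_{\kpp}\to\pm\infty$. Standard spectral theory for such operators then gives that the spectrum is real, that the essential spectrum is $[W''(b_-),\infty)$, and that below $W''(b_-)$ there are at most finitely many simple eigenvalues; in particular the spectrum is uniformly positive apart from the discrete eigenvalues lying in $(-\infty,W''(b_-))$.

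Next I would identify the kernel. Differentiating the profile equation \eqref{def-phi0n2} in $z_{\kpp}$ gives $\p_{z_{\kpp}}^2\phi_0'-W''(\phi_0)\phi_0'=0$, i.e.\ $\mathrm L_{\kpp 0}\phi_0'=0$, so $\phi_0'\in\Ker\mathrm L_{\kpp 0}$, and since $\phi_0-b_-$ decays exponentially so does $\phi_0'$, hence $\phi_0'\in L^2(\mathbb R)$. Because $\phi_0$ is homoclinic to $b_-$ and even about its unique maximum, $\phi_0'$ has exactly one sign change (a single simple zero at the maximum). By Sturm oscillation theory an eigenfunction of $\mathrm L_{\kpp 0}$ with exactly one nodal zero is the \emph{second} eigenfunction, so $\la=0$ is the second eigenvalue $\la_1$, it is simple, and $\Ker\mathrm L_{\kpp 0}=\Span\{\psi_1\}$ with $\psi_1:=\phi_0'/\|\phi_0'\|_{L^2}$. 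Consequently there is exactly one eigenvalue below it: a simple $\la_0<0$, whose eigenfunction $\psi_0$ has no zeros and may be normalized to be positive (and even, since $\mathrm L_{\kpp 0}$ commutes with the reflection $z_{\kpp}\mapsto -z_{\kpp}$ and the ground state of a reflection-symmetric operator is symmetric). All remaining spectrum lies at or above the third eigenvalue, hence is uniformly positive.

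Then I would verify the three operator identities. The first, $\mathrm L_{\kpp 0}\phi_0'=0$, is the kernel computation just done. For the second, differentiate \eqref{def-phi0n2} twice: $\p_{z_{\kpp}}^2\phi_0''=W'''(\phi_0)(\phi_0')^2+W''(\phi_0)\phi_0''$, which rearranges to $\mathrm L_{\kpp 0}\phi_0''=-W'''(\phi_0)|\phi_0'|^2$. For the third, compute $\p_{z_{\kpp}}^2(z_{\kpp}\phi_0')=z_{\kpp}\phi_0'''+2\phi_0''$ and use $\phi_0'''=W''(\phi_0)\phi_0'$ from the first identity differentiated once more, giving $\mathrm L_{\kpp 0}(z_{\kpp}\phi_0')=-z_{\kpp}\phi_0'''+z_{\kpp}W''(\phi_0)\phi_0'-2\phi_0''=-2\phi_0''$. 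Finally, invertibility of $\mathrm L_{\kpp 0}$ on $(\Ker\mathrm L_{\kpp 0})^\perp$ follows from self-adjointness together with the spectral gap: $0$ is an isolated simple eigenvalue, so $\mathrm L_{\kpp 0}$ restricted to the orthogonal complement of $\psi_1$ is boundedly invertible; parity preservation of both $\mathrm L_{\kpp 0}$ and $\mathrm L_{\kpp 0}^{-1}$ is immediate from commutation with the reflection operator.

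The main obstacle, such as it is, is the Sturm oscillation argument pinning $\la=0$ as exactly the \emph{second} eigenvalue rather than merely \emph{an} eigenvalue: this is what forces $\la_0$ to be the unique negative eigenvalue and gives the clean two-point-spectrum structure. It relies on knowing $\phi_0'$ has precisely one zero, which in turn uses that $\phi_0$ is even about a single nondegenerate maximum --- a property of the homoclinic orbit that one reads off from the phase-plane analysis of \eqref{def-phi0n2}. Everything else is routine, and indeed the statement itself notes this follows from classic results (e.g.\ Chapter~2.3.2 of \cite{KP-13}), so I would cite that reference for the oscillation-theoretic count and present the explicit derivative identities in full.
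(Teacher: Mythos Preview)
Your proposal is correct and matches exactly what the paper intends: the paper does not give a detailed proof but simply remarks that the lemma ``follows from classic results and direct calculations, see for example Chapter~2.3.2 of \cite{KP-13},'' and your write-up supplies precisely those classic Sturm--Liouville/oscillation-theory facts and the explicit differentiation identities the paper alludes to. There is nothing to add or correct.
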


Some care must be taken to distinguish between functions in $L^2(\mathbb R)$ and their dressings that reside in $L^2(\Omega)$.  As an example, since $1$ is $L^2(\mathbb R)$ orthogonal to $\phi_0^\prime$ we may define $B_{\kpp k}=\mathrm L_{\kpp 0}^{-k} 1\in L^\infty(\mathbb R)$
and its dressing subject to $\Gamma_\mbp$,
\beq \label{def-B+dp,k}
B_{\mathbf p,k}^d (x):=(\mrL_0^{-k}1)^d \in L^\infty,
\eeq 
defined on all of $\Omega$.  Recalling the averaging operator, \eqref{def-massfunc} we introduce
\begin{equation}
\label{def-Bdp-k-mass}
\oB _{\mathbf p, k}^d:=|\Omega|\left\langle B_{\mathbf p,k}^d \right\rangle_{L^2}. % =\int_\Omega B_{\mbp, k}^d \dd x.
\end{equation} 
Here and below, we drop the $d$ superscript on the dressed function to simplify notation when no ambiguity arises. Introducing  $\eta_d:=\eta_1-\eta_2$, we define the first dressed correction $\phi_1$ to the pulse profile
\begin{equation}\label{def-phi1}
\begin{aligned}
\phi_1(\sigma)=\phi_{1}(z_{\mathbf p};\sigma)&:=\sigma B_{\mbp,2}+\frac{\eta_d}{2} \mathrm L_{0}^{-1}\left( z_\mbp\phi_{0}' \right),
\end{aligned}
\end{equation}
which depends upon the bulk density and meander parameters, $\sigma\in\mbR$ and $\mbp$. The bulk density parameter controls the value of $\phi_1$ outside of $\Gamma_\mbp^{2\ell}$, where the profile is constant. In the construction of the bilayer manifold $\sigma=\sigma(\mbp)$, adjusting the bulk density state to  make bilayer mass $|\Omega|\langle\Phi_\mbp-b_-\rangle_{L^2}$ independent of $\mbp.$ Viewed as a function on $\mathbb R$,  $\phi_1$ is smooth and is even with respect to $z$, while as a function on $\Omega$ it is smooth and even in $z_\mbp$ to leading order.   

The second order correction $\phi_2$ is composed of products of whisker independent dressed functions and the whisker dependent curvature $\kappa_\mbp=\kappa_\mbp(s_\mbp)$.  
As such $\phi_2$ is not strictly the dressing of a function of one variable, indeed for each fixed value of $s_\mbp$, we define it as the $L^2(\mbR)$ solution of
\begin{equation}\label{def-phi-2-0}
\begin{aligned}
\mathrm L_{0}^2\phi_2(z, s_{\mathbf p})&=g_2(z, s_\mbp):=-\mathrm L_{0}\left(z\kappa_{\mathbf p}^2 \phi_0' +\frac{W'''(\phi_0)}{2}\phi_1^2\right) -\bigg(\kappa_{\mathbf p}^2\phi_0''+( -\eta_1+W'''(\phi_0)\phi_1)\mathrm L_{0} \phi_1\\
&\qquad+\eta_d W''(\phi_0)\phi_1\bigg) -\kappa_{\mathbf p}\Big(2\mathrm L_{0}\phi_1'(\sigma_1^*) +(-\eta_1+2W'''(\phi_0)\phi_1(\sigma_1^*))\phi_0'\Big), 
\end{aligned}
\end{equation}
The constant $\sigma_1^*$ determined below to insure the right-hand side of \eqref{def-phi-2-0} is in the range of $\mrL_0$ on each whisker. Since each $s_\mbp$ dependent term decays exponentially to zero in $z_\mbp$, the resulting whisker-dependent function extends to a smooth dressing $\phi_2$ around $\Gamma_\mbp$.  We denote this extension by 
\begin{equation}\label{def-phi2}
\begin{aligned}
\phi_2%(z_{\mathbf p},s_\mbp)
      &:=-\mathrm L_{0}^{-1} \left(z_{\mathbf p}\kappa_{\mathbf p}^2 \phi_0' +\frac{W'''(\phi_0)}{2}\phi_1^2\right) -\mathrm L_{0}^{-2}\bigg(\kappa_{\mathbf p}^2\phi_0''+( -\eta_1+W'''(\phi_0)\phi_1)\mathrm L_{0} \phi_1\\
&\qquad+\eta_d W''(\phi_0)\phi_1\bigg)  -\kappa_{\mathbf p}\mathrm L_{0}^{-2}\Big(2\mathrm L_{0}\phi_1'(\sigma_1^*) +(-\eta_1+2W'''(\phi_0)\phi_1(\sigma_1^*))\phi_0'\Big).
\end{aligned}
\end{equation}
%This construction requires that the right-hand side of \eqref{def-phi-2-0} lies in the range of $\mrL_0$. 
%It also necessitates some care with the dressing as a priori $\phi_2$ is not independent of the whisker variable, $s_\mbp$ and may converge to distinct values as $z_\mbp\to\pm\infty.$ For this later concern we remark that the $\kappa_\mbp$ terms in $\phi_2$ multiply functions that decay to zero as $z_\mbp\to\infty$, consequently the $\lim z_\mbp\to\infty$ of $\phi_2$ is independent of $s_\mbp$ and the dressing process is well defined.
%As will be seen below, this motivates the choice of  $\sigma=\sigma_1^*$ in  the $\phi_1$ terms in the odd part of $\phi_2$. Indeed, 
To verify that the inverses of $\mrL_0$ are well defined we observe that the first two applications of $\mrL_0^{-1}$ in the right-hand side of \eqref{def-phi2} are to functions that are even in $z$, 
and hence orthogonal in $L^2(\mathbb R)$ to $\phi_0^\prime.$ 
 The third application is to a function that is odd in $z$, which we denote by $(g_2)^{\rm odd}$.
%As $\kappa_{\mbp}$ is a function only of $s_\mbp$, and $\mrL_0$ is self-adjoint, with $\mrL_0\phi_0^\prime=0$, 
In $L^2(\mathbb R)$ we calculate the projection of $g_2^{\rm odd}$ onto the kernel of $\mrL_0$,
\begin{equation}\label{proj-phi2-1}
\int_{\mathbb R} g_2^{\rm odd}\,\phi_0' \, \mrd z_{}=  -\eta_1m_1^2+ 2\int_{\mathbb R} W'''(\phi_0) |\phi_0'|^2\phi_1(\sigma_1^*)  \mrd  z_{},
\end{equation}
where $m_1$ is defined as
\beq
\label{def-m1}
m_1:= \|\phi_0'\|_{L^2(\mbR)}.
\eeq
In light of Lemma \ref{lem-L0}, we have $\mrL_{0}^2(z_{}\phi_0')=-2\mathrm L_{0}(\phi_0'') = 2W'''(\phi_0)|\phi_0'|^2$. Using the definition of $\phi_1$ and integration by parts, we have
\begin{equation}\label{proj-phi2-2}
\begin{aligned}
2\int_{\mathbb R} W'''(\phi_0) |\phi_0'|^2\phi_1(\sigma_1^*) \dd z_{}=\int_{\mathbb R}\mathrm L_{0}^2 (z_{}\phi_0')\phi_1(\sigma_1^*)\dd z_{}%=-2\int_{\mathbb R} \mathrm L_0 \phi_1 \phi_0'' \dd z=\int_{\mathbb R }\mathrm L_0^2 \phi_1 z\phi_0' \dd z
=-m_0\sigma_1^*+\frac{\eta_d}{2}m_1^2.
\end{aligned}
\end{equation}
For $\sigma_1^*$ given by
\beq\label{def-sigma1*}
\sigma_1^*:= -\frac{(\eta_1+\eta_2) m_1^2}{2m_0}\qquad \hbox{with} \quad m_0:=\int_{\mbR} (\phi_0 (z)-b_-) \dd z,
\eeq 
we see that the terms on the right-hand side of \eqref{proj-phi2-1} cancel, and we deduce the bounded invertibility of $\mrL_0$
in \eqref{def-phi2}. We are in position to introduce the  profile.

%\bQues{Do we need the exponentially small term to make $\Phi_\mbp$ periodic? See Remark 3.3}
\begin{lemma}\label{lem-def-Phi-p} Let %the bulk density parameter $\sigma$ and
meander parameters $\mbp$ satisfy \eqref{A-00}. Then for 
$\phi_0$, $\phi_1,$ and $\phi_2$ defined in \eqref{def-phi0n2}, \eqref{def-phi1}, and \eqref{def-phi2} respectively, 
we define the bilayer \muckmuck
\begin{equation}
\label{def-Phi-p}
\Phi_{\mathbf p}(x; \sigma):=\phi_{0}(z_{\mathbf p})+\varep \phi_{1}( z_{\mathbf p}; \sigma)+\varep^2\phi_{ 2}(s_{\mathbf p}, z_{\mathbf p};\sigma,\sigma^*),% +e^{-\ell \nu/\varep}\phi_{\mbp, e},
\end{equation}
which has the following residual 
\begin{equation}\label{exp-mF}
\begin{aligned}
\mathrm F(\Phi_{\mathbf p})= %\mrF_m+e^{-\ell\nu/\varep}\mrF_e \quad \hbox{with}\quad  \mrF_m=
\varep\sigma+\varep^2\mathrm F_2+\varep^3\mathrm F_3+\varep^4\mathrm F_{\geq 4}.
\end{aligned}
\end{equation}
Here the expansion terms in the main residual $\mrF_m$ have the form
\begin{equation}\label{F-234}
\begin{aligned}
\mathrm F_2&=\kappa_{\mathbf p}(\sigma-\sigma_1^*) f_2(z_{\mathbf p});\qquad \qquad 
\mathrm F_3=-\phi_0'\Delta_{s_{\mathbf p}}\kappa_{\mathbf p} +f_3(z_{\mathbf p}, \bm \gamma_{\mbp}''), \\
&\mathrm F_{\geq 4}=f_{4,1}(z_{\mathbf p}, \bm \gamma_{\mbp}'') \Delta_g f_{4,2}(z_{\mathbf p}, \bm \gamma_{\mbp}'')+f_{4,2}(z_{\mathbf p}, \bm \gamma_{\mbp}''),
\end{aligned}
\end{equation}
where $f=f(z, \bm \gamma_\mbp'')$ with various subscripts  %and $f_3$ %, f_{4,1}, f_{4,2}$ and $f_{4,3}$  
are smooth functions which decay exponentially fast to a constant as $|z|\to \infty$. In particular, $f_2(z)$ is odd with respect to $z$ and decays to zero as $|z|\to \infty$.  In addition, $\mathrm F_2, \mathrm F_3$ satisfy the following projection properties:
\begin{equation}\label{est-proj-rF2,3}
\begin{aligned}
&\int_{\mathbb R_{2\ell}} \mathrm F_2\,\phi_0'\dd z_{\mathbf p}=m_0(\sigma_1^*-\sigma)\kappa_{\mathbf p}+O(e^{-\ell\nu/\varep}); %& \int_{\mathbb R_{\ell}}\mathrm F_2\phi_0'z_{\mathbf p}\dd z_{\mathbf p}%=\int_{\mathbb R_{\ell}}\mathrm F_2\psi_0\dd z_{\mathbf p};\qquad\qquad
\\
&\int_{\mathbb R_{2\ell}} \mathrm F_3 \,\phi_0'\, \mrd z_{\mathbf p}=m_1^2\left(-\Delta_{s_{\mathbf p}}\kappa_{\mathbf p} -\frac{\kappa_{\mathbf p}^3}{2} +\alpha \kappa_{\mathbf p}\right)+O(e^{-\ell\nu/\varep}).%&\int_{\mathbb R_{\ell}}\mathrm F_{\geq 4}\phi_0' \dd z_{\mathbf p}=h_1(\kappa_{\mathbf p}) \Delta_{s_{\mathbf p}}\kappa_{\mathbf p}+h_2(\kappa_{\mathbf p}).
\end{aligned}
\end{equation}
Here  $\alpha =\alpha (\sigma; \eta_1, \eta_2)$ is a smooth function of $\sigma.$ % is given explicitly by 
%\begin{equation}\label{def-alpha*}
%\begin{aligned}
%\alpha&=\frac{1}{m_1^{2}}\Bigg(\int_{\mathbb R} W'''(\phi_0)\phi_0' \phi_1\left(\mathrm L_{0}^{-1}\Big((-\eta_1+2W'''(\phi_0)\phi_1(\sigma_1^*))\phi_0'\Big)+2\phi_1'(\sigma_1^*)\right)\dd z\\
%&+\int_{\mathbb R }\left(\frac{\eta_1}{2}\mathrm L_0\phi_1\phi_0'-\eta_d  W''(\phi_0)\phi_1-\mathrm L_{0}\left(\frac{W'''(\phi_0)}{2}\phi_1^2\right)-W'''(\phi_0) \phi_1\mathrm L_{0}\phi_1\right)\phi_0'z\dd z\\
%&+\int_{\mathbb R} W'''(\phi_0)\phi_0' \phi_1\left(\mathrm L_{0}^{-1}\Big((-\eta_1+2W'''(\phi_0)\phi_1(\sigma_1^*))\phi_0'\Big)+2\phi_1'(\sigma_1^*)\right)\dd z\\
%&+\eta_d\int_{\mathbb R}W''(\phi_0)\phi_0'\mathrm L_{0}^{-2}\left(2\mathrm L_{0}\phi_1'(\sigma_1^*) +(-\eta_1+2W'''(\phi_0) \phi_1(\sigma_1^*))\phi_0'\right)\dd z \Bigg).
%\end{aligned}
%\end{equation}
\end{lemma}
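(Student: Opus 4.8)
The plan is to substitute the ansatz \eqref{def-Phi-p} into the chemical potential \eqref{eq-FCH-L2-p} and organize the result in powers of $\varep$, using the local-coordinate expansion \eqref{eq-Lap-induced} of $\varep^2\Delta$ and Taylor expanding the nonlinearities $W'(\Phi_\mbp)$ and $W''(\Phi_\mbp)$ about $\phi_0$. First I would record that $\mrF(u)=(\varep^2\Delta-W''(u))(\varep^2\Delta u-W'(u))+\varep(\eta_1\varep^2\Delta u-\eta_2 W'(u))$, and note that at leading order $\varep^2\Delta\Phi_\mbp-W'(\Phi_\mbp)$ picks up $\p_{z_\mbp}^2\phi_0-W'(\phi_0)=0$ from \eqref{def-phi0n2}; the $O(\varep)$ term is $(\p_z^2-W''(\phi_0))\phi_1 = -\mrL_0\phi_1$, and the $O(\varep)$ part of the $\varep\mathrm H\p_z+\varep^2\Delta_g$ pieces contributes $\kappa_\mbp$-proportional terms via $\mathrm H = -\kappa_\mbp/(1-\varep z\kappa_\mbp) = -\kappa_\mbp + O(\varep)$ from \eqref{def-H}. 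One then applies the outer operator $\varep^2\Delta-W''(\Phi_\mbp)\approx \p_z^2 - W''(\phi_0) = -\mrL_0$ to the inner expansion, and tracks how the definitions \eqref{def-phi1} and \eqref{def-phi2} were designed precisely so that $\mrL_0$ hitting these correction terms cancels the would-be $O(\varep^2)$ and $O(\varep^3)$ obstructions. The bookkeeping is routine but lengthy; the key structural point is that $\phi_1$ is chosen so the $O(\varep^2)$ residual reduces to $\kappa_\mbp(\sigma-\sigma_1^*)f_2$, and $\phi_2$ (through the $\mrL_0^{-1}$ and $\mrL_0^{-2}$ in \eqref{def-phi2}) is chosen so the $O(\varep^3)$ residual reduces to $-\phi_0'\Delta_{s_\mbp}\kappa_\mbp$ plus lower-order curvature terms collected in $f_3(z_\mbp,\bm\gamma_\mbp'')$.

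Next I would verify the constant-leading-order claim $\mrF(\Phi_\mbp) = \varep\sigma + O(\varep^2)$: outside $\Gamma_\mbp^{2\ell}$ the dressings are constant, $\phi_0 = b_-$, $W'(b_-)=0$, and the only surviving term at $O(\varep)$ is $-\eta_2 W'(\phi_0) + \sigma$-type contributions from $\phi_1$; inside $\Gamma_\mbp^{2\ell}$ one checks the $O(\varep)$ coefficient is $-\mrL_0(\mrL_0^{-1}$ of the defining right-hand side of $\phi_1)$ plus the amphiphilic term, which by construction of $\phi_1$ in \eqref{def-phi1} collapses to the spatial constant $\sigma$ (note $-\mrL_0 B_{\mbp,2} = B_{\mbp,1}$ and a further $-\mrL_0$ gives $1$, so $\sigma B_{\mbp,2}$ contributes $\sigma$ after two applications; the $\eta_d$ term is handled by $\mrL_0(z\phi_0')=-2\phi_0''$ from Lemma \ref{lem-L0}). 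This is where the choice $\phi_1 = \sigma B_{\mbp,2} + \tfrac{\eta_d}{2}\mrL_0^{-1}(z\phi_0')$ earns its keep. The form \eqref{F-234} of $\mrF_{\geq4}$ follows by collecting all remaining terms: they involve at worst one application of $\Delta_g$ (hence the $f_{4,1}\Delta_g f_{4,2}$ structure, since $\varep^2\Delta = \p_z^2 + \varep\mathrm H\p_z + \varep^2\Delta_g$ and the $\varep^2\Delta_g$ acting on an $O(\varep^2)$ term lands in $\varep^4$), and the coefficient functions are smooth, exponentially decaying in $z$, and depend on $s_\mbp$ only through $\bm\gamma_\mbp''$ by Lemma \ref{lem-e_ij}; the exponential decay of $\phi_0-b_-$, $\psi_0$, and all $\mrL_0^{-k}$ applied to decaying or even functions (Lemma \ref{lem-L0}, Definition \ref{def-Ld}) propagates through.

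For the projection identities \eqref{est-proj-rF2,3}, I would pair $\mrF_2$ and $\mrF_3$ against $\phi_0'$ in $L^2(\mbR_{2\ell})$, the integral over the cut reach differing from the integral over $\mbR$ by $O(e^{-\ell\nu/\varep})$ because $\phi_0'$ and the $f_j$ decay exponentially. The first identity is immediate from $\mrF_2 = \kappa_\mbp(\sigma-\sigma_1^*)f_2$ and the definition of $m_0$ once one checks $\int_\mbR f_2\phi_0' = -m_0$ (this uses the same integration-by-parts computation already carried out in \eqref{proj-phi2-2}, since $f_2$ is built from $\phi_1'$-type and $\phi_0'$-type pieces whose pairing with $\phi_0'$ reduces via $\mrL_0^2(z\phi_0') = 2W'''(\phi_0)|\phi_0'|^2$). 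For the second identity, $\int_\mbR(-\phi_0'\Delta_{s_\mbp}\kappa_\mbp)\phi_0' = -m_1^2\Delta_{s_\mbp}\kappa_\mbp$ directly from \eqref{def-m1}, and the remaining $f_3(z,\bm\gamma_\mbp'')$, which is a curvature-cubic-plus-curvature expression, contributes $m_1^2(-\tfrac{\kappa_\mbp^3}{2} + \alpha(\sigma)\kappa_\mbp)$ after its even/odd parts are integrated against $\phi_0'$; the coefficient $\alpha$ is a smooth function of $\sigma$ because it is assembled from inner products of $\sigma$-dependent dressings $\phi_1(\sigma)$, $\phi_2(\sigma)$ with fixed profiles. \textbf{The main obstacle} I anticipate is not any single estimate but the sheer bookkeeping of keeping the $z$-even/$z$-odd parity tracked correctly through every term of the double expansion, since the solvability of each $\mrL_0^{-1}$ (Lemma \ref{lem-L0}: $\mrL_0$ is invertible only on $(\phi_0')^\perp$, i.e. on the even subspace for the relevant terms) hinges on exactly the parity cancellations that also produce the clean forms in \eqref{F-234}; the one genuinely delicate point is re-deriving that the $O(\varep^3)$ odd obstruction vanishes, which is precisely the role of the constant $\sigma_1^*$ fixed in \eqref{def-sigma1*} and must be re-checked against the $\phi_2$ definition.
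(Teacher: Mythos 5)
Your proposal follows essentially the same route as the paper: substitute the ansatz into $\mrF$, expand in powers of $\varep$ via the local-coordinate Laplacian and Taylor expansion of $W$ about $\phi_0$, use the constructions of $\phi_1,\phi_2$ (and the choice of $\sigma_1^*$ guaranteeing solvability of the odd obstruction) to collapse the $O(\varep)$ term to $\sigma$ and shape $\mrF_2,\mrF_3$, and obtain the projections by parity and the integration-by-parts identities of \eqref{proj-phi2-1}--\eqref{proj-phi2-2}. The only blemish is the intermediate sign slip ($\mrL_0 B_{\mbp,2}=B_{\mbp,1}$, not $-\mrL_0 B_{\mbp,2}=B_{\mbp,1}$), which cancels in your composite conclusion $\mrL_0^2B_{\mbp,2}=1$ and does not affect the argument.
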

\begin{proof}
For brevity of notation, we drop the subscript $\mbp$ in the proof. 
%Without loss of generality, we may assume the support of the exponential correction $\phi_e$ lies outside of $\Gamma^{2\ell}$. Within $\Gamma^{2\ell}$ the  $\varep$-scaled Laplacian takes the form \eqref{eq-Lap-induced} and 
the variational derivative  $\mathrm F(\Phi)$ can be written as
\begin{eqnarray}\label{eq-F-1}
\begin{aligned}
\mathrm F(\Phi)%=\mathcal T_2\mathcal T_1(\Phi)%&=(\varep^2\Delta-W''(\Phi) +\varep \eta_1)(\varep^2 \Delta \Phi-W'(\Phi))+\varep \eta_d W'(\Phi)\\&
=&\left[\p_z^2+\varep \mathrm H\p_z+\varep^2 \Delta_g-W''(\Phi)+\varep \eta_1\right]\left[\p_z^2 \Phi+\varep \mathrm H \p_z\Phi +\varep^2 \Delta_g\Phi-W'(\Phi)\right]\\
&+\varep\eta_dW'(\Phi).
\end{aligned}
\end{eqnarray}
The components of the profile $\Phi$ were chosen to make the residual $\Pi_0\mathrm F(\Phi)$ small to  $O(\varep^2)$.  We expand $\mathrm F(\Phi)$ in powers of $\varep$, 
and introduce $\phi_{\geq 1}:=\phi_1+\varep\phi_2$. 
Taylor expanding the $k$-th derivative of $W(\Phi)$ around 
$\phi_0$ for $k=1,2$ and keeping terms up to third order we find,
\begin{equation}
\begin{aligned}
W^{(k)}(\Phi) %= W^{(k)}(\phi_0)+\varep W^{(k+1)}(\phi_0)(\phi_1+\varep\phi_2+\varep^2\phi_{\geq 3})+\frac{\varep^2}{2}W^{(k+2)}(h_k)(\phi_1+\varep\phi_2+\varep^2\phi_{\geq 3})^2\\
=&W^{(k)}(\phi_0)+\varep W^{(k+1)}(\phi_0)\phi_1+\varep^2 \left(W^{(k+1)}(\phi_0) \phi_2+\frac{W^{k+2}(h_k)}{2}\phi_1^2\right)+\\
&+\frac{\varep^3}{2}W^{(k+2)}(\phi_0)(2\phi_1+\varep\phi_{ 2})\phi_{ 2} +\varep^3 \frac{W^{(k+3)}(\phi_0)}{3!}\phi_{\geq 1}^3.
\end{aligned}
\end{equation}
Similarly the expansion of the extended curvature $\mathrm H$ to third order takes the form
\begin{equation*}
\mathrm H=-\frac{\kappa}{1-\varep z\kappa}=-\kappa -\varep z\frac{\kappa^2}{1-\varep z\kappa}= -\kappa- \varep z\kappa^2-\varep^2 z^2 \kappa^3-\varep^3 z^3\frac{\kappa^4}{1-\varep z\kappa}.
\end{equation*}
% In light of Lemma \ref{lem-L0}, we have $\mathrm L_0 (\phi_0')=0$ and $\mathrm L_0(z\phi_0')=-2\phi_0''$, then 
The whiskered coordinate expression \eqref{eq-F-1} of $\mathrm F(\Phi)$  admits the expansion
\begin{equation}\label{exp-bF-1}
\mathrm F(\Phi)=\varep \left(\mathrm L_0^2\phi_1+\eta_d W'(\phi_0)\right) +\varep^2\mathrm F_2+\varep^3\mathrm F_{3}+\varep^4\mathrm F_{\geq 4}.
\end{equation}
 Using the identities from Lemma \ref{lem-L0}, $\mathrm F_2$ and $\mathrm F_3$ reduce to
\begin{equation*}
\begin{aligned}
\mathrm F_2& =\mathrm L_0\left(\kappa\phi_1'+\mathrm L_0\phi_2 +z\kappa^2\phi_0'+\frac{W'''(\phi_0)}{2}\phi_1^2\right)+\big(\kappa \p_z -\eta_1+W'''(\phi_0)\phi_1\big)\\
&\qquad \times(\kappa\phi_0'+\mathrm L_0\phi_1) +\eta_d W''(\phi_0)\phi_1; \\
%\end{aligned}
%\end{equation*}
%and $\mathrm F_3$  can be explicitly written as
%\begin{equation*}
%\begin{aligned}
\mathrm F_3&=%\mathrm L_0^2\phi_{\geq 3}+
\mathrm L_{0}\left(\kappa\p_z \phi_2+W'''(\phi_0) \phi_1\phi_2+z^2\kappa^3\phi_0'+z\kappa^2 \phi_1' +\frac{W^{(4)}(\phi_0)}{3!}\phi_1^3\right)\\
&\quad +(\kappa \p_{z}-\eta_1+W'''(\phi_0)\phi_1)  \bigg(\mathrm L_0\phi_2+\kappa\phi_1'+\kappa^2 z\phi_0'  +\frac{W'''(\phi_0)}{2}\phi_1^2\bigg) \\
&\quad-\Delta_{s}\kappa\phi_0' +\left(\frac{W^{(4)}(\phi_0)}{2}\phi_1^2+W'''(\phi_0)\phi_2\right)(\kappa\phi_0' +\mathrm L_0\phi_1)\\
&\quad +\kappa^3 z\phi_0'' +z\kappa^2 \p_ z\mathrm L_0\phi_1+\eta_d\left(W''(\phi_0)\phi_2+\frac{W'''(\phi_0)}{2}\phi_1^2\right).
\end{aligned}
\end{equation*}

Within $\Gamma_\mbp^\ell$ using the expressions for $\phi_1, \phi_2$ in \eqref{def-phi1} and \eqref{def-phi2} we see that the $O(\varep)$ term in \eqref{exp-bF-1} reduces to the constant $\sigma$.
Using the definition of $\phi_2$ given in \eqref{def-phi2}, the term  $\mathrm F_2$ further reduces to
\begin{equation*}
\mathrm F_2=\kappa\mathrm L_0(\phi_1-\phi_1(\sigma_1^*))'+\kappa\p_z\mathrm L_0(\phi_1-\phi_1(\sigma_1^*))+W'''(\phi_0)(\phi_1-\phi_1(\sigma_1^*))\kappa\phi_0',
\end{equation*}
and the final expression for $\mrF_2$ in \eqref{F-234} follows from \eqref{def-phi1} with the reductions for $\mathrm F_3$ and $\mathrm F_4$ obtained from similar calculations.
%
% $\mathrm F_3=\phi_0'\Delta_{s_{\mathbf p}}\kappa_{\mathbf p}+f_3(z_{\mathbf p},\kappa_{\mathbf p})$  $\mathrm F_{\geq 4}=\left(f_{4,1}(\bm \gamma_{\mbp}'', z) +\varep^2 \Delta_{g}\right)\Delta_g f_{4,2}(\bm \gamma_{\mbp}'', z_{\mbp})+f_{4,3}(\bm \gamma_{\mbp}'', z)$ due to its  explicit form given by
%\bQues{Do we need the form of $F_{\geq 4}$? I commented it out. We may need to adjust $F_3$ to $F_\geq 3$ or include $F_4$. See Lemma 3.4}
In particular ${\mathrm F_{\geq 4}}$ takes the exact form:
\begin{equation*}
\begin{aligned}
\mrF_{\geq 4}=&-(\p_z^2+\varep \mathrm H\p_z+\varep^2\Delta_g-W''(\Phi)+\varep\eta_1)\bigg(\frac{W'''(\phi_0)}{2}\phi_{ 2}^2+\Delta_g\phi_{2}+\frac{W^{(4)}(h)}{3!}(3\phi_1^2\phi_{ 2}  +3\varep\phi_1\phi_{ 2}^2+\varep^2\phi_{2}^3)\\[3pt]
&+\frac{(z^2\kappa^3\phi_0'+z\kappa^2\phi_1'+\kappa\p_z \phi_2)z\kappa}{1-\varep z\kappa}\bigg)+\left(\frac{z^2\kappa^3}{1-\varep z\kappa} \p_z+\frac{W^{(4)}(h) }{2}(2\phi_1+\varep\phi_{2}) \phi_{ 2}\right)(\kappa\phi_0'+\mathrm L_0\phi_1)%-(\mathrm H\p_z+\varep\Delta_g-W'''(h)\phi_{\geq 1} +\eta_1)
\\
&-(\mathrm H\p_z+\varep\Delta_g-W'''(h)\phi_{\geq 1} +\eta_1) \left(W'''(\phi_0)\phi_1\phi_2 +z^2\kappa^3\phi_0'+z\kappa^2\phi_1'+\kappa\p_z\phi_2+\frac{W^{(4)}(\phi_0)}{3!}\phi_1^3 \right)\\
&-\left(\Delta_g-z\kappa^2\p_z-\frac{W^{(4)}(h)}{2}\phi_{\geq 1}^2-W'''(\phi_0)\phi_{ 2}\right)\Big(\kappa\phi_1'+z\kappa^2\phi_0'+\mathrm L_0\phi_2 +\frac{W'''(\phi_0)}{2}\phi_1^2\Big),
\end{aligned}
\end{equation*}
where the $h$ terms denote remainders from Taylor expansion. The highest derivative with respect to $s$ arises from $\Delta_g\phi_2$ where $\phi_2=\phi_2(s,z)$ through its definition in \eqref{def-phi2}. 

The projection of $\mathrm F_2$ onto $\phi'_0$ is similar to the calculation of \eqref{proj-phi2-1} and \eqref{proj-phi2-2} and omitted.
 $L^2(\mathbb R_{\ell})$ to $z\phi_0'$ is zero.
To estimate the projection of  $\mathrm F_3$ in $\mbL^2(\mathbb R_{\ell})$ to the function $\phi'_0=\phi_0'(z)$, it suffices to consider the odd part of $\mathrm F_{3}$. 
Indeed, since $\phi_0, \phi_1$ are all even functions with respect to $z$, we have
\begin{equation*}
\begin{aligned}
\mathrm F_{3}^{\rodd}=& \mathrm L_0\left( \kappa\p_z\phi_2^{\reven}+z^2\kappa^3\phi_0'+W'''(\phi_0)\phi_1\phi_2^{\rodd}\right)-\Delta_s \kappa \phi_0' +\left( -\eta_1+W'''(\phi_0)\phi_1\right)\\
&\times(\mathrm L_0\phi_2^{\rodd}+ \kappa \phi_1')+\kappa\p_z\left(\mathrm L_0\phi_2^{\reven}++z\kappa^2\phi_0'+\frac{W'''(\phi_0)}{2}\phi_1^2\right)\\
&+\kappa\frac{W^{(4)}(\phi_0) }{2}\phi_1^2\phi_0'+\kappa W'''(\phi_0)\phi_2^{\reven} \phi_0'+\kappa^3 z\phi_0''+\eta_d W''(\phi_0)\phi_2^{\rodd}.
\end{aligned}
\end{equation*}
 Integrating by parts, using properties of $\mathrm L_0$ from Lemma \ref{lem-L0}  and re-organizing, we obtain 
\begin{align*}
\int_{\mathbb R_{\ell}} \mathrm F_{3} \phi_0'\dd z=&-\Delta_s\kappa m_1^2-\frac{\eta_1\kappa}{2}\int_{\mathbb R_{\ell}} \mathrm L_0\phi_1 \phi_0'z\dd z +\mathcal I_1+\mathcal I_2+\mathcal I_3+O(e^{-\frac{\ell\nu}{\varep}})
\end{align*}
where
\begin{align*}
\mathcal I_1:=\kappa\int_{\mathbb R_{\ell}}\mathrm L_0^2 \phi_2 \phi_0'z\dd z; \qquad \mcI_2:=\int_{\mathbb R_{\ell}}W'''(\phi_0)\phi_0' \phi_1 \mathrm L_0\phi_2^{odd}\dd z; \\
\mcI_3:=\eta_d \int_{\mathbb R_{\ell}} W''(\phi_0)\phi_0'\phi_2^{odd}\dd z.\h{60pt}
\end{align*}
%\end{equation*}
For $\phi_1=\phi_1(\sigma)$ and some smooth function $\alpha=\alpha(\sigma)$,  the projection of $\mathrm F_3$ in \eqref{est-proj-rF2,3}  follows from the  identities:
\begin{equation*}
\begin{aligned}
\mathcal I_1=&-\frac{\kappa^3}{2}m_1^2+\eta_1\kappa\int_{\mathbb R_{\ell}} \mathrm L_0\phi_1\phi_0' z\dd z-\kappa\int_{\mathbb R_{\ell}}  W'''(\phi_0)\phi_1\mathrm L_0\phi_1\phi_0' z\dd z\\
&-\eta_d\kappa \int_{\mathbb R_{\ell}} W''(\phi_0)\phi_1\phi_0' z \dd z+\kappa\int_{\mathbb R_\ell} W'''(\phi_0)\phi_1^2\phi_0''\dd z;
\end{aligned}
\end{equation*}
\begin{equation*}
\begin{aligned}
\mcI_2=&-\kappa\int_{\mathbb R_\ell} W'''(\phi_0)\phi_0'\phi_1 \mathrm L_0^{-1}((-\eta_1+2W'''(\phi_0)\phi_{1}(\sigma_1^*))\phi_0')\\
&
-2\kappa\int_{\mathbb R_\ell} W'''(\phi_0)\phi_0' \phi_1\phi_1'(\sigma_1^*)\dd z;
\end{aligned}
\end{equation*}
\begin{equation*}
\mcI_3=-\eta_d\kappa\int_{\mathbb R_\ell}W''(\phi_0)\phi_0' \mathrm L_0^{-2}\left(2\mathrm L_0\phi_1'(\sigma_1^*)+(-\eta_1+2W'''(\phi_0)\phi_1(\sigma_1^*))\phi_0'\right)\dd z.
\end{equation*}

%The corrections $\phi_e$ and transition from $\Gamma^\ell$ to $\Gamma^{2\ell}$ induce the
%exponentially small terms $\mrF_e$ in \eqref{exp-mF}. 
\end{proof}

Outside of $\Gamma_\mbp^{2\ell}$, the profile $\Phi_\mbp$ reduces to a constant value that admits the expansion
\beq
\Phi=b_-+\varep \phi_1^\infty +\varep^2\phi_2^\infty, % +\varep^3\phi_3^\infty, %+e^{-\ell\nu /\varep}\phi_{e},
\eeq  
where the leading order correction relates to the bulk density parameter
$$\phi_1^\infty = B_2^\infty \sigma.$$
The flow \eqref{eq-FCH-L2} conserves the system mass,
making it a key parameter that is fixed by the initial data.  As we study bilayers of length $O(1)$ it is natural to scale the mass
\beq\label{def-M0}
\int_\Omega (u-b_-) \dd x= \varep M_0.
\eeq
We adjust the bulk density parameter so that $\Phi_\mbp$ has mass $\varep M_0$, and a solution $u$ of \eqref{eq-FCH-L2} satisfies
\beq 
\label{e:ConsMass}
0=\langle u(t)-\Phi_{\mathbf p} \rangle_{L^2}=
\frac{\varep M_0}{|\Omega|}-\langle \Phi_{\mathbf p}-b_- \rangle_{L^2}. 
\eeq
The exact relation of $\sigma$ required to guarantee \eqref{e:ConsMass} is determined from the expansion \eqref{def-Phi-p} of $\Phi_{\mathbf p}$ with $\phi_1=\phi_1(\sigma)$ given by \eqref{def-phi1},
\begin{equation}\label{def-hatla}
\begin{aligned}
\sigma(\mathbf p)= \frac{1}{\oB_{\mathbf p,2} } \Bigg\{ M_0- \int_\Omega \bigg[&\frac{1}{\varep}\Big(  \phi_0(z_{\mathbf p}) -b_-  +\varep^2\phi_{2} (s_{\mathbf p}, z_{\mathbf p})\Big) +\frac{\eta_d}{2} \mathrm L_{\mathbf p, 0}^{-1}(z_{\mathbf p}\phi_0') \bigg]\, \mrd   x\Bigg\}.
\end{aligned}
\end{equation}

The bilayer  manifold of perturbations from $\Phi_0$ is constructed as the graph of $\Phi_\mbp$ over the domain $\cD_\delta$ subject to the mass condition $\langle \Phi_\mbp-b_-\rangle_{L^2}=\varep M_0/|\Omega|.$
\begin{defn}[Bilayer  Manifold]
\label{def-bM0}
Fix $K_0,\ell_0,\delta>0$.  Given a base point interface $\Gamma_0\in\cG_{K_0,2\ell_0}^4$ and system mass $M_0$, we define the bilayer  manifold $\cMb(\Gamma_0,M_0)$  to be the graph of the map $\mbp\mapsto \Phi_\mbp(\sigma)$ over the domain $\cD_\delta$ with $\sigma=\sigma(\mbp)$ given by \eqref{def-hatla}.
\end{defn}

From Lemma\,\ref{lem-Gamma-p} for each fixed $K_0,\ell_0$ there exists $K,\ell>0$ such that for all $\mbp\in\cD_\delta$ the interfaces $\Gamma_\mbp\in\cG_{K,2\ell}^2$ are $2\ell$ far from self-intersection and each bilayer \muckmuck $\Phi_\mbp$ has the mass $(b_-|\Omega| +\varep M_0)$.

\begin{lemma}\label{lem-sigma}  For a given bilayer  manifold, the map $\sigma=\sigma(\mbp)$ over $\cD_\delta$ can be approximated by 
\beqs
\sigma(\mbp) = 
\frac{M_0-m_0|\Gamma_0|}{B_2^\infty|\Omega|} - \frac{m_0|\Gamma_0| }{B_2^\infty|\Omega|}\mrp_0 +O(\varep),
 \eeqs
 where $B_2^\infty$ is the nonzero far field of $B_2$ introduced in \eqref{def-B+dp,k} and $\mrp_0$ is the first component of $\mbp$ that scales the length of $\Gamma_\mbp$.
\end{lemma}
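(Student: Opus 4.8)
The plan is to read off the leading-order behavior of the exact slaving relation \eqref{def-hatla} term by term, the workhorse being the representation of integrals of functions localized near $\Gamma_\mbp$ in the whiskered coordinates: for a function $g$ supported in, or exponentially small outside, $\Gamma_\mbp^{2\ell}$,
\beqs
\int_\Omega g \dd x = \varep \int_{\msI_\mbp} \int_{\mbR} g(s_\mbp, z)\,(1-\varep z\kappa_\mbp(s_\mbp))\dd z\,\dd \tilde s_\mbp + O(e^{-\nu\ell/\varep}),
\eeqs
combined with the uniform bounds on $\kappa_\mbp$ and $|\Gamma_\mbp| = (1+\mrp_0)|\Gamma_0|$ from Lemma\,\ref{lem-Gamma-p} and \eqref{Gamma-p-length}, and the exponential decay in $z$ of $\phi_0-b_-$, of $\mrL_0^{-1}(z\phi_0')$, and of $\phi_2-\phi_2^\infty$ (uniformly in $s_\mbp$ since $\kappa_\mbp$ is uniformly bounded over $\cD_\delta$).

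First I would handle the denominator. Since $B_{\mbp,2}$ is the dressing of $\mrL_0^{-2}1$, which converges exponentially fast to its far field $B_2^\infty = W''(b_-)^{-2}>0$, the difference $B_{\mbp,2}-B_2^\infty$ is localized near $\Gamma_\mbp$, so by the representation above (and parity: $B_2-B_2^\infty$ is even, so the $\varep z\kappa_\mbp$ term integrates to zero) one gets $\oB_{\mbp,2} = \int_\Omega B_{\mbp,2}\dd x = B_2^\infty|\Omega| + O(\varep)$, which is bounded away from zero for $\varep$ small, uniformly over $\cD_\delta$. For the numerator, the leading contribution comes from $\tfrac1\varep\int_\Omega(\phi_0(z_\mbp)-b_-)\dd x$: here $\phi_0-b_-$ is even in $z$, so the $\varep z\kappa_\mbp$ correction drops out, $\int_\mbR(\phi_0(z)-b_-)\dd z = m_0$ by \eqref{def-sigma1*}, and using $|\Gamma_\mbp| = (1+\mrp_0)|\Gamma_0|$ gives $\tfrac1\varep\int_\Omega(\phi_0(z_\mbp)-b_-)\dd x = (1+\mrp_0)m_0|\Gamma_0| + O(e^{-\nu\ell/\varep})$. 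The term $\tfrac{\eta_d}{2}\int_\Omega \mrL_{\mbp,0}^{-1}(z_\mbp\phi_0')\dd x$ is again the integral of a localized function (the far field of $\mrL_0^{-1}(z\phi_0')$ vanishes since $z\phi_0'\to 0$), hence $O(\varep)$; and $\tfrac1\varep\int_\Omega\varep^2\phi_2\dd x = \varep\bigl(\phi_2^\infty|\Omega| + \int_\Omega(\phi_2-\phi_2^\infty)\dd x\bigr) = O(\varep)$, the boundedness of $\phi_2^\infty$ following once $\sigma=O(1)$; since the $\phi_2$ contribution is the only place $\sigma$ re-enters \eqref{def-hatla} and it does so only at order $\varep$ with polynomially-bounded dependence, the relation \eqref{def-hatla} is solved for $\sigma$ by a one-line fixed point / implicit function argument, giving an a priori $O(1)$ bound and closing the estimate without circularity.

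Finally I would assemble the pieces: the braced quantity in \eqref{def-hatla} equals $M_0 - (1+\mrp_0)m_0|\Gamma_0| + O(\varep)$, and dividing by $\oB_{\mbp,2} = B_2^\infty|\Omega| + O(\varep)$ and expanding $(B_2^\infty|\Omega| + O(\varep))^{-1} = (B_2^\infty|\Omega|)^{-1}(1+O(\varep))$ yields
\beqs
\sigma(\mbp) = \frac{M_0 - (1+\mrp_0)m_0|\Gamma_0|}{B_2^\infty|\Omega|} + O(\varep) = \frac{M_0 - m_0|\Gamma_0|}{B_2^\infty|\Omega|} - \frac{m_0|\Gamma_0|}{B_2^\infty|\Omega|}\mrp_0 + O(\varep),
\eeqs
as claimed. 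There is no deep obstacle here; the real work is the bookkeeping of which corrections are genuinely $O(\varep)$ versus exponentially small and tracking the mildly implicit $\sigma$-dependence of $\phi_2$, while the only point that truly invokes the hypotheses is that all of the localized-integral estimates, the bound $|\Gamma_\mbp|\lesssim 1$, and the lower bound on $\oB_{\mbp,2}$ hold uniformly in $\mbp\in\cD_\delta$ — which is exactly what the uniform control of $\kappa_\mbp$, $|\Gamma_\mbp|$ and the distance from self-intersection in Lemma\,\ref{lem-Gamma-p} supplies.
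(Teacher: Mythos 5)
Your argument is correct and is essentially the paper's own proof in expanded form: both compute the mass of $\Phi_\mbp$ in the whiskered coordinates, identifying the leading contributions $m_0|\Gamma_\mbp|$ from $\phi_0-b_-$ and $B_2^\infty|\Omega|\sigma$ from the bulk part of $\varep\phi_1$, and then combine the mass constraint with $|\Gamma_\mbp|=(1+\mrp_0)|\Gamma_0|$ from \eqref{Gamma-p-length}. The only difference is bookkeeping detail — your explicit treatment of the localized correction terms and of the mildly implicit $\sigma$-dependence through $\phi_2$ is a careful elaboration of steps the paper compresses into the single estimate \eqref{mass-Phi-p}.
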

 \begin{proof}
 At leading order, the mass per unit length of interface associated to $\Phi_\mbp$ is independent of $\mbp$ and given by $m_0$, defined in \eqref{def-sigma1*}. 
The mass of $\Phi_\mbp$ satisfies
\beq\label{mass-Phi-p}
M_0= \frac{|\Omega|\langle\Phi_\mbp-b_-\rangle_{L^2}}{\varep} =  m_0 |\Gamma_\mbp| + B_2^\infty |\Omega |\sigma+O(\varep),
\eeq
Combining this with \eqref{Gamma-p-length}   yields the result.
 \end{proof}
%*************************************
\begin{comment}
\begin{lemma}\label{lem-sigma}  Let  $u_0\in L^2$  in the form of $u_0=\Phi_0(\sigma_0) +v_0$ for $\Phi_0(\sigma_0) \in \cM_b(\Gamma_0, M_0)$ and $\left<v_0\right>_{L^2}=0$. %and  $\left<u_0-b_-\right>_{L^2}=\varep M_0$.  
Then for any $\mbp$ such that  $\Phi_\mbp(\sigma(\mbp))\in \cM_{b}(\Gamma_0, M_0)$, the bulk density difference 
 \beqs
 \sigma(\mbp) -\sigma_0= W''(b_-)^2 m_0 \mrp_0 |\Gamma_0| +O(\varep). 
 \eeqs
\end{lemma}

\begin{proof}
For $u_0=\Phi_0(\sigma_0) +v_0$ and $\Phi_0(\sigma_0), \Phi_\mbp\in \cM_b(\Gamma_0, M_0)$ with the same mass,  we have
\beqs
0=\left<u_0-\Phi_\mbp\right>_{L^2}
=\left<v_0\right>_{L^2}+\left<\Phi_0(\sigma_0)-b_-\right>_{L^2}-\left<\Phi_\mbp(\sigma(\mbp))-b_-\right>_{L^2}.
\eeqs
The first item is zero by assumption. The Lemma then follows from \eqref{mass-Phi-p}, $|\Gamma_\mbp|=(1+\mrp_0)|\Gamma_0|$ and some algebraic deductions. 
\end{proof}
\end{comment}
%########################################################

\begin{remark}
In the companion paper \cite{CP-nonlinear} we present a refinement of $\Phi_\mbp$ which reduces to an equilibrium of the system for $\mbp=(\mrp_0,\mrp_1,\mrp_2, 0, \ldots, 0)$, e.g., when $\hat{\mbp}=\bm 0$, and
 $\Gamma_0$ is a circle.
\end{remark}

 At leading order the residual of $\Phi_\mbp$ is controlled by the deviation of the bulk parameter $\sigma$ from $\sigma_1^*.$ 
 \begin{lemma}\label{lem-est-residual} Under assumption \eqref{A-00}, the residual satisfies
 \beqs
 \begin{aligned}
 \|\Pi_0\mrF(\Phi_\mbp)\|_{L^2}\lesssim \varep^{5/2}|\sigma-\sigma_1^*| 
 +\varep^{7/2}(1+\|\hat\mbp\|_{\mbV_4^2}).
%&\|\Pi_0\mrF_2\|_{L^2}\lesssim \varep^{1/2} |\sigma- \sigma_*| +\varep^{3/2}, \\
%&\|\Pi_0\mrF_3\|_{L^2} +\|\Pi_0 \mrF_{\geq 4}\|_{L^2} + \|\mrF_e\|\lesssim \varep^{1/2}(1+\|\hat\mbp\|_{\mbV_4^2}).
 \end{aligned}
 \eeqs
 \end{lemma}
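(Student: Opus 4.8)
The plan is to combine the $\varep$-expansion of the residual in Lemma~\ref{lem-def-Phi-p} with two elementary facts: $\Pi_0$ annihilates spatial constants and, being the $L^2$-orthogonal projection onto mean-zero functions, has operator norm at most one; and, for a function localized near $\Gamma_\mbp$, the whiskered Jacobian $\mathrm J=\varep(1-\varep z_\mbp\kappa_\mbp)\mathrm J_0$ supplies an extra power $\varep^{1/2}$ in $\|\cdot\|_{L^2(\Omega)}$. Concretely, for $g$ supported in $\Gamma_\mbp^{2\ell}$ one has $\|g\|_{L^2(\Omega)}^2=\varep\int_{\msI_\mbp}\int_{|z_\mbp|\le 2\ell/\varep}g^2(1-\varep z_\mbp\kappa_\mbp)\dd z_\mbp\dd\tilde s_\mbp+O(e^{-\nu\ell/\varep})$, so a product of an exponentially decaying (possibly $\bm\gamma_\mbp''$-dependent) $z_\mbp$-profile with an $s_\mbp$-function inherits a factor $\varep^{1/2}$ from $\sqrt{\tilde{\mathrm J}}$.

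First I reduce to a localized function. Let $\Phi^\infty:=b_-+\varep\phi_1^\infty+\varep^2\phi_2^\infty$ be the constant value of $\Phi_\mbp$ on $\Omega\setminus\Gamma_\mbp^{2\ell}$, and set $c_\infty:=\mrF(\Phi^\infty)$, a constant. Since $\Pi_0(c_\infty\mathbf{1})=0$ and $\|\Pi_0\|_{L^2\to L^2}\le1$, $\|\Pi_0\mrF(\Phi_\mbp)\|_{L^2}\le\|\mrF(\Phi_\mbp)-c_\infty\|_{L^2(\Omega)}$. The function $\mrF(\Phi_\mbp)-c_\infty$ vanishes identically on $\Omega\setminus\Gamma_\mbp^{2\ell}$; on $\Gamma_\mbp^{2\ell}\setminus\Gamma_\mbp^\ell$ it is $O(e^{-\nu\ell/\varep})$ because $\Phi_\mbp-\Phi^\infty$ and its derivatives are (by the exponential decay of $\phi_0-b_-$, $\phi_1-\phi_1^\infty$, $\phi_2-\phi_2^\infty$ and the fact that $\mrF$ is a local nonlinear differential operator with smooth coefficients); and on $\Gamma_\mbp^\ell$ it equals, by \eqref{exp-mF}, $\varep^2\mrF_2+\varep^3(\mrF_3-\mrF_3^\infty)+\varep^4(\mrF_{\ge4}-\mrF_{\ge4}^\infty)+O(e^{-\nu\ell/\varep})$, where $\mrF_j^\infty:=\lim_{|z_\mbp|\to\infty}\mrF_j$. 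Matching the two representations at $|z_\mbp|=\ell/\varep$ forces $\varep\sigma+\varep^3\mrF_3^\infty+\varep^4\mrF_{\ge4}^\infty=c_\infty+O(e^{-\nu\ell/\varep})$; in particular $\mrF_3^\infty$ and $\mrF_{\ge4}^\infty$ are $s_\mbp$-independent, so each bracketed term decays exponentially in $z_\mbp$ (note $\mrF_2^\infty=0$ since $f_2\to0$).

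It then remains to estimate the $L^2(\Omega)$ norms of the three localized terms. Since $f_2\in L^2(\mbR)$ and $\|\kappa_\mbp\|_{L^2(\msI_\mbp)}\lesssim1$ (Lemma~\ref{lem-Gamma-p}, using $\hat\mbp\in\cD_\delta$), the Jacobian gain gives $\|\varep^2\mrF_2\|_{L^2}=\varep^2|\sigma-\sigma_1^*|\,\|\kappa_\mbp f_2(z_\mbp)\|_{L^2}\lesssim\varep^{5/2}|\sigma-\sigma_1^*|$. Next, $\mrF_3-\mrF_3^\infty=-\phi_0'\Delta_{s_\mbp}\kappa_\mbp+(f_3-f_3^\infty)$ with $\|\phi_0'\|_{L^2(\mbR)}\lesssim1$, $\|f_3-f_3^\infty\|_{L^\infty}\lesssim1$, and $\|\Delta_{s_\mbp}\kappa_\mbp\|_{L^2(\msI_\mbp)}\lesssim\|\kappa_\mbp\|_{H^2(\msI_\mbp)}\lesssim1+\|\hat\mbp\|_{\mbV_4^2}$ by Lemma~\ref{lem-Gamma-p}, so $\|\varep^3(\mrF_3-\mrF_3^\infty)\|_{L^2}\lesssim\varep^{7/2}(1+\|\hat\mbp\|_{\mbV_4^2})$. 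Finally, in $\mrF_{\ge4}=f_{4,1}\Delta_g f_{4,2}+f_{4,2}$ the dominant piece is $\varep^4 f_{4,1}\Delta_g f_{4,2}$; $f_{4,1}$ is $z_\mbp$-localized, and writing $f_{4,2}=h(z_\mbp,\bm\gamma_\mbp'')$ in the sense of Notation~\ref{Notation-h}, the bound \eqref{est-h-gamma''-de} of Lemma~\ref{lem-e_ij} gives $\|\varep\nabla_{s_\mbp}^2 f_{4,2}\|_{L^2(\msI_\mbp)}\lesssim1+\|\hat\mbp\|_{\mbV_3^2}$, whence (using $\Delta_g=\Delta_{s_\mbp}+\varep z_\mbp D_{s_\mbp,2}$ with $|\varep z_\mbp|\lesssim1$ on $\Gamma_\mbp^{2\ell}$) $\|\Delta_g f_{4,2}\|_{L^2(\msI_\mbp)}\lesssim\varep^{-1}(1+\|\hat\mbp\|_{\mbV_3^2})$; the $\varep^{1/2}$ Jacobian gain then yields $\varep^4\cdot\varep^{1/2}\cdot\varep^{-1}(1+\|\hat\mbp\|_{\mbV_3^2})=\varep^{7/2}(1+\|\hat\mbp\|_{\mbV_3^2})\lesssim\varep^{7/2}(1+\|\hat\mbp\|_{\mbV_4^2})$, while $\varep^4(f_{4,2}-f_{4,2}^\infty)$ contributes only $O(\varep^{9/2})$. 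Summing the three contributions and absorbing the exponentially small remainders into $\varep^{7/2}$ gives the claim.

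The main obstacle is the bookkeeping in the last step: one must track precisely how many $s_\mbp$-derivatives land on $\kappa_\mbp$ and on $f_{4,2}$, convert each into the correct weighted $\mathbb V$-norm via Lemmas~\ref{lem-Gamma-p} and~\ref{lem-e_ij}, and verify that the outcome is controlled by $\|\hat\mbp\|_{\mbV_4^2}$ (here $\|\hat\mbp\|_{\mbV_3^2}\le\|\hat\mbp\|_{\mbV_4^2}$ since $\beta_j\ge1$ for $j\ge3$); a subsidiary point is to confirm that subtracting the single constant $c_\infty$ removes \emph{all} non-localized content of $\mrF(\Phi_\mbp)$, i.e. that the higher-order far fields $\mrF_3^\infty$, $\mrF_{\ge4}^\infty$ are genuinely $s_\mbp$-independent --- a fact forced by the localization argument but worth checking directly against the explicit formulas for $\phi_2$ and $f_3$, since every $\kappa_\mbp$-dependent term there carries a factor decaying in $z_\mbp$.
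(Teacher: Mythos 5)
Your argument is correct and follows essentially the same route as the paper's (very terse) proof: both rest on the expansion $\mrF(\Phi_\mbp)=\varep\sigma+\varep^2\mrF_2+\varep^3\mrF_3+\varep^4\mrF_{\geq4}$ of Lemma\,\ref{lem-def-Phi-p}, the curvature bounds $\|\kappa_\mbp\|_{L^\infty}\lesssim 1$ and $\|\Delta_{s_\mbp}\kappa_\mbp\|_{L^2(\msI_\mbp)}\lesssim 1+\|\hat\mbp\|_{\mbV_4^2}$ of Lemma\,\ref{lem-Gamma-p}, and the $\varep^{1/2}$ gain from the whiskered Jacobian for localized functions. Your additional bookkeeping (subtracting the far-field constant before applying $\Pi_0$, and the $\mrF_{\geq4}$ term via Lemma\,\ref{lem-e_ij}) simply makes explicit what the paper leaves implicit, and is sound.
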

% \bQues{Keith asks:  put the term $|\sigma-\sigma_1^*|$ back into the expression above. Is that correct? I like it there, as it emphasises the point that the residual is driven by the far-field. If its not correct, then i want to change the line before the Lemma.}
 \begin{proof}
 The second estimate  results directly from the form of $\mrF_2$ and $\mathrm F_3$ 
 %\mathrm F_{\geq 4}$ 
 in \eqref{F-234} and the use of the estimates 
 \beqs
 \|\kappa_\mbp\|_{L^\infty}\lesssim 1+ \|\hat\mbp\|_{\mbV_2}\lesssim 1,\qquad  \|\Delta_{s_\mbp} \kappa_\mbp\|_{L^2(\mathscr I_\mbp)}\lesssim 1+ \|\hat \mbp\|_{\mbV_4^2}.
 \eeqs
 
%It remains to bound the $L^2$-norm of $\Pi_0\mathrm F_2$. By the definition of $\mrF_2$, we decompose   \beqs \mrF_2= (\sigma-\sigma^*) f_2(z_\mbp) +(\sigma^*-\sigma_1^*) f_2(z_\mbp). \eeqs The the Lemma follows directly by triangle inequality by noting $\sigma^*-\sigma_1^*=O(\varep)$.
 \end{proof}

\section{Fast and Slow Spaces and Coercivity}\label{sec-linear}

The nonlinear stability of the bilayer manifold hinges upon the properties  of the linearization of the flow \eqref{eq-FCH-L2} about each fixed quasi-steady bilayer \muckmuck $\Phi_\mbp$ constructed in Lemma \ref{lem-def-Phi-p}.  
In this section we establish the coercivity properties of the linearized operators that allows the nonlinear control established in Section\,\ref{s:nonstab-BLM}.

We fix $K_0,\ell_0$ and a base point interface 
$\Gamma_0\in\cG_{K_0,2\ell_0}^4$ and choose $K,\ell>0$ such that $\Gamma_\mbp\in\cG^2_{K,2\ell}$ for all $\mbp\in\cD_\delta.$
The linearization of \eqref{eq-FCH-L2} about $\Phi_\mbp$ takes the form $\Pi_0\mbL$ where
\begin{equation}\label{def-bLp}
\begin{aligned}
\mbL:=\frac{\delta^2\mathcal F}{\delta u^2}\Big|_{u=\Phi_{\mathbf p} }=&(\varep^2\Delta-W''(\Phi_{\mathbf p} )+\varep \eta_1)(\varep^2\Delta -W''(\Phi_{\mathbf p} )) \\
&- (\varep^2 \Delta \Phi_{\mathbf p} - W'(\Phi_{\mathbf p} ))W'''(\Phi_{\mathbf p} )+\varep\eta_dW''(\Phi_{\mathbf p} ),
\end{aligned}
\end{equation}
denotes the second variational derivative of $\mathcal F$ at $\Phi_{\mathbf p} $ and recall that $\eta_d=\eta_1-\eta_2$. 
%Associated to each $\mbp$ is the reach $\Gamma_\mbp^{2\ell}$ of the perturbed interface $\Gamma_\mbp$, defined in \eqref{def-gamma-p}.
When restricted to functions with support within the reach $\Gamma_\mbp^{2\ell}$, the Cartesian Laplacian admits the local coordinate expression
 \eqref{eq-Lap-induced} in terms of  $(s_{\mathbf p},z_{\mathbf p})$ which induces the expansion
\begin{equation}\label{exp-bL+d}
\mbL =\mbL_0 +\varep \mbL_{ 1} +\varep^2 \mbL_{\geq 2}.\quad 
\end{equation}
The leading order operator takes the form
\begin{equation}
\begin{aligned}
\label{def-cLp0}
\mbL_{ 0} := \left(\mrL_{0}-\varep^2\Delta_{s_{\mbp}}\right)^2=\mathcal L^2. %=: (-\mathcal L )^2
\end{aligned}
\end{equation}
where we have introduced $\mathcal L := \mathrm L_{0} -\varep^2 \Delta_{s_{\mbp}}$. Much of the structure of the FCH flow stems
from $\mbL_{0}$, and its balancing of the $\Gamma_\mbp$ dressed operator $\mrL_{0}$, defined in \eqref{def-rL-p0}, against the Laplace-Beltrami operator associated to $\Gamma_\mbp$.  The next correction to $\mbL$ takes the form
\begin{equation}\label{def-bLp,1}
\begin{aligned}
\mbL_{1} = &(\kappa_{\mathbf p} \p_{z_{\mathbf p}} +W'''(\phi_0)\phi_1-z_{\mathbf p}\varep^2D_{{s_{\mathbf p}},2}-\eta_1)\mathcal L +\mathcal L (\kappa_{\mathbf p}\p_{z_{\mathbf p}} +W'''(\phi_0)\phi_1 \\
&-z_{\mathbf p}\varep^2 D_{{s_{\mathbf p}},2})+W'''(\phi_{ 0}) \left(\kappa_{\mathbf p} \left(\phi_{ 0}'\right) +\mathrm L_{0} \phi_1\right)+\eta_d W''(\phi_0).
\end{aligned}
\end{equation}
The second and higher order correction term, $ \mbL_{\geq 2}$, is relatively compact with respect to $\mbL_{0}$ and its precise form is not material.
%When $\mathbf p=\bm 0$, i.e. $\Gamma_{\mathbf p}=\Gamma_0$, then we drop the subscript $\mathbf p$ and simplify the notations: $\mathrm L_{\mathbf p, 1}$, $\mbL $ and $\Phi_{\mathbf p}$ to $\mathrm L_1$, $\mbL$ and $\Phi$ respectively.
We use the expansion \eqref{exp-bL+d} to construct approximate slow spaces, the meander and pearling spaces, that characterize the small spectrum of $\mbL$ in the sense that the operator is uniformly coercive on their compliment.  We tune the spectral cut-off parameter $\rho$ that controls the size of the pearling and meander spaces and to preserve the asymptotically large gap in their in-plane wave numbers while obtaining optimal coercivity.   
%===========Slow space==========
\subsection{Approximate slow spaces} Up to exponentially small terms, the approximate slow space $\mathcal Z$ is a product of functions of $z_\mbp$ and $s_\mbp$ that exploit the balance of the
operator $\mbL_{0}$ viewed as acting on the tensor product space $L^2(\mathbb R) \times L^2(\msI_\mbp)$.  As both $\mbL_{0}$ and $\mathcal L$
are self-adjoint, it is sufficient to establish coercivity of $\mathcal L.$ The spectrum of $\mathrm L_{0}$, in particular its first two eigenmodes $\{\psi_k\}_{k=0,1}$ of $\mrL_0$ are introduced in Lemma\,\ref{lem-L0}. The Laplace-Beltrami eigenvalues $\{\beta_{\mbp,j}^2\}_{j\geq0}$ of $\Delta_{s_\mbp}=\p_{\tilde s_\mbp}^2$ are discussed in \eqref{tTheta''}. 
\begin{defn}\label{def-slow-space}

For $k=0, 1$, we introduce the disjoint index sets:
\begin{equation}\label{def-Sigma}
\Sigma_k=\Sigma_k(\mbp, \rho)= \left\{j \bigl| \,\La_{kj}^2:=(\la_k+\varep^2\beta_{\mathbf p,j}^2)^2 \leq \rho\right\}, 
\end{equation}
their union, $\Sigma:=\Sigma_0\cup \Sigma_1,$ and the index function $I:\Sigma\mapsto\{0,1\}$ which takes the value $I(j)=k$ if $j\in\Sigma_k.$

The preliminary pearling and meander spaces, denoted  by $\mcZ^0(\mbp, \rho)$ and $\mcZ^1(\mbp, \rho)$ respectively, are defined in terms of their basis functions
\beq
Z_{\mathbf p}^{ I(j)j}:=\tilde\psi_{I(j)} (z_{\mathbf p}(  x)) \tilde \Theta_j(\tilde s_{\mathbf p}(  x)),\quad j\in\Sigma,
\eeq
where the dressed and scaled versions of the eigenmodes of $\mathrm L_{0}$ are defined by
\begin{equation*}
\tilde \psi_{ k} (z_{\mathbf p}):=\varep^{-1/2}\psi _{ k}(z_{\mathbf p})\quad k=0,1.
\end{equation*}
In particular,
$$\mcZ^k(\mbp,\rho)=\mathrm{span}\left\{ \mcZ^{kj}_\mbp\,\bigl|\, j\in\Sigma_k\right\},$$
and the preliminary slow space $\mcZ(\mbp, \rho):= \mcZ^0(\mbp, \rho)\cup\mcZ^1(\mbp, \rho) \subset L^2(\Omega)$, is their union. For simplicity of notation, we use $\mcZ^k, \mcZ$ to denote $\mcZ^k(\mbp, \rho)$ and $\mcZ(\mbp, \rho)$, respectively when there is no ambiguity. 
\end{defn}

The exponential decay of $\tilde\psi_k$ to zero away from the interface implies that the corrections arising from dressing are exponentially small, in particular there exist $\nu>0$ such that
\beq\label{bL0-Z}
\mathbb L_{0} Z_{\mbp}^{I(i)i}=\La_{I(i)i}^2 Z_{\mbp}^{I(i)i} +O(e^{-\ell\nu/\varep}),
\eeq
for all $i\in\Sigma$.
Since the set $\{\La_{I(i)i}^2\}_{i\in\Sigma}$ lies in the interval $(0,\rho)$ the 
functions in the slow space are compressed by a factor of $\rho$ under the action of $\mbL_{ 0}.$

To estimate the sizes $N_0$ and $N_1$ of $\Sigma_0$ and $\Sigma_1$, we remark from \eqref{tTheta''} and \eqref{def-Theta-beta} that
%\begin{equation*}
$\beta_{\mathbf p,j}^2\sim Cj^2.$
%\end{equation*}
The ground-state eigenvalue $\la_0<0$, hence $k$ lies in $\Sigma_0(\rho)$ if and only if
\begin{equation}\label{est-N0}
\varep^{-1}\sqrt{-\la_0-\rho^{1/2}}\lesssim j\lesssim \varep^{-1} \sqrt{-\la_0+\rho^{1/2}},\quad \implies %\hbox{which implies}
\quad N_0:=|\Sigma_0(\rho)|\sim \varep^{-1}\rho^{1/2}.
\end{equation}
On the other hand $\lambda_1=0$, so $j$ lies in $ \Sigma_1(\rho)$ if and only if
\begin{equation}\label{est-N1}
0\leq j\lesssim \varep^{-1} \rho^{1/4},\quad \implies %\hbox{which implies}
\quad  N_1:=|\Sigma_1(\rho)|\sim \varep^{-1}\rho^{1/4}.
\end{equation}
The lower bound of elements in $\Sigma_0$, $\varep^{-1}\sqrt{-\la_0-\rho^{1/2}}$, is of order $\varep^{-1}$ while  the upper bound of $\Sigma_1$ is of order $\varep^{-1}\rho^{1/4}$. We deduce
that  $\Sigma_0$ and $\Sigma_1$ are  disjoint for $\rho$  suitably small. Indeed there exists $\rho_0, c>0$ such that for $\rho<\rho_0$ the associated in-plane wave numbers $\{\beta_i\}$ satisfy
\beq\label{eq-wavenumgap} 
 |\beta_i-\beta_j|\geq c\varep^{-1}, \quad \forall
i\in\Sigma_0, j\in \Sigma_1.
\eeq
The slow space $\mcZ$ has dimension 
\begin{equation*}
N:=|\Sigma(\rho)|=|\Sigma_0(\rho)|+|\Sigma_1(\rho)|\sim \varep^{-1}\rho^{1/4}.
\end{equation*}

\begin{remark}
The wave-number gap \eqref{eq-wavenumgap} plays an important role in bounding interactions between meander and pearling modes. In particular it yields the factor of $\varep$ in the upper bound of \eqref{est-Theta-2}. This is used in Proposition\,\ref{prop-M*} and is required to close the nonlinear estimates in the follow-on paper \cite{CP-nonlinear}.
\end{remark}

Using the formalism of Notation\,\ref{Notation-h} we have the following estimates.
 \begin{lemma}\label{lem-Theta}
Assume $\mbp\in \cD_\delta$ with $\cD_\delta$ introduced  in \eqref{A-00}, $\rho$ suitably small and $h=h(\bm \gamma_\mbp^{(k)})$ is a function satisfying  Notation \ref{Notation-h}. Then there exists a matrix $\mbE=(\mbE_{ij})$ which is
bounded in $l^2_*$ as a map from $l^2(\mbR^{N})$ to $ l^2(\mbR^N)$ such that
\begin{equation}\label{est-Theta}
\left|\int_{\msI_\mbp} h(\bm \gamma^{(k)}_{\mathbf p})\tilde \Theta_i \tilde \Theta_j\dd \tilde s_\mbp \right| \lesssim  \mbE_{ij}
\end{equation}
hold for  $i, j\in \Sigma=\Sigma_0\cup \Sigma_1$,  and  $k=1,2.$ Moreover for all $i,j$ such that $I(i)\neq I(j)$ we have
\begin{equation}\label{est-Theta-2}
\left|\int_{\msI_\mbp} h(\bm \gamma^{(k)}_{\mathbf p})\tilde \Theta_i \tilde \Theta_j\dd \tilde s_\mbp \right| \lesssim \varep(1+ \|\hat{\mathbf p}\|_{\mbV_{4}^2} )\mbE_{ij}.
\end{equation}
\end{lemma}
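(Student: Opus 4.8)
The plan is to establish \eqref{est-Theta} first, then refine it to \eqref{est-Theta-2} by exploiting the wave-number gap \eqref{eq-wavenumgap}. For \eqref{est-Theta}, I would argue exactly as in Lemma \ref{Notation-e_i,j}: the quantity $\int_{\msI_\mbp} h(\bm\gamma_\mbp^{(k)})\tilde\Theta_i\tilde\Theta_j\,\dd\tilde s_\mbp$ is, up to the factor $(1+\mrp_0)$ from \eqref{ortho-tTheta}, the $(i,j)$ Fourier coefficient of the multiplication operator by $h$ acting on the orthonormal (in $L^2(\msI_\mbp)$, after rescaling) trigonometric system $\{\tilde\Theta_j\}$. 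By Plancherel, the matrix $M_{ij}:=\int_{\msI_\mbp} h\,\tilde\Theta_i\tilde\Theta_j\,\dd\tilde s_\mbp$ is the matrix of multiplication by $h$ in this basis, hence has $l^2_\ast$ operator norm at most $\|h\|_{L^\infty(\msI_\mbp)}$, and by Lemma \ref{lem-Theta} applied to $h=h(\bm\gamma_\mbp'')$ (case $k=2$; the case $k=1$ is a sub-case since $h(\bm\gamma_\mbp')$ is of the allowed form) we have $\|h\|_{L^\infty}\lesssim 1$ from \eqref{est-h-p-V-2+2}. Thus defining $\mbE_{ij}:=M_{ij}/\|M\|_{l^2_\ast}$ (restricted to $i,j\in\Sigma$, padded by zeros) gives a matrix with unit $l^2_\ast$ norm, and \eqref{est-Theta} follows.

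For \eqref{est-Theta-2}, the improvement comes from the fact that when $I(i)\neq I(j)$, one index lies in $\Sigma_0$ and the other in $\Sigma_1$, so by \eqref{eq-wavenumgap} the in-plane wave numbers satisfy $|\beta_i-\beta_j|\geq c\varep^{-1}$; in particular $|\beta_{\mbp,i}^2-\beta_{\mbp,j}^2|=|\beta_i-\beta_j||\beta_i+\beta_j|/((1+\mrp_0)R_0)^2\gtrsim \varep^{-2}$ since both wave numbers are of order $\varep^{-1}$. I would integrate by parts twice in $\tilde s_\mbp$: using $-\tilde\Theta_j''=\beta_{\mbp,j}^2\tilde\Theta_j$ from \eqref{tTheta''}, one gets
\begin{equation*}
(\beta_{\mbp,i}^2-\beta_{\mbp,j}^2)\int_{\msI_\mbp} h\,\tilde\Theta_i\tilde\Theta_j\,\dd\tilde s_\mbp = \int_{\msI_\mbp}\left(h''\,\tilde\Theta_i\tilde\Theta_j + 2h'\,\tilde\Theta_i'\tilde\Theta_j\right)\dd\tilde s_\mbp,
\end{equation*}
after moving the two derivatives off $\tilde\Theta_j$ and onto $h\tilde\Theta_i$ and re-expanding; here $h',h''$ denote $\nabla_{s_\mbp}h,\nabla_{s_\mbp}^2 h$. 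Dividing by $\beta_{\mbp,i}^2-\beta_{\mbp,j}^2$ produces the gain of $\varep^2$, while the term with $\tilde\Theta_j'$ costs back one factor of $\beta_{\mbp,j}\sim\varep^{-1}$, giving a net $\varep^1$. The sizes of $h',h''$ are controlled by Lemma \ref{lem-Theta}: from \eqref{est-h-gamma''-de} with $l=1,2$, $\|\varep^{l-1}\nabla_{s_\mbp}^l h\|_{L^\infty}\lesssim 1+\|\hat\mbp\|_{\mbV_3}$, so $\|h'\|_{L^\infty}\lesssim 1+\|\hat\mbp\|_{\mbV_3}$ and $\|h''\|_{L^\infty}\lesssim \varep^{-1}(1+\|\hat\mbp\|_{\mbV_3})$; combined with Lemma \ref{lem-Theta} again (applied to $h'$ and $h''$, which are also functions depending on $s_\mbp$ only through the admissible geometric quantities) the $l^2_\ast$-norm bounds transfer. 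Tracking the powers: $\varep^2\cdot(\varep^{-1}+\varep^{-1}\cdot\varep^{-1}\cdot\varep)=O(\varep)$ times $(1+\|\hat\mbp\|_{\mbV_3})$, and since $\|\hat\mbp\|_{\mbV_3}\lesssim\|\hat\mbp\|_{\mbV_4^2}$ by Lemma \ref{lem-est-V} (or directly $\|\hat\mbp\|_{\mbV_3}\lesssim\varep^{-1}$), one obtains \eqref{est-Theta-2} with the stated right-hand side, again normalizing the resulting matrix to unit $l^2_\ast$ norm to define $\mbE$.

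The main obstacle is bookkeeping the dimension dependence: the index sets $\Sigma_0,\Sigma_1$ have sizes of order $\varep^{-1}\rho^{1/2}$ and $\varep^{-1}\rho^{1/4}$, and one must be careful that the operator-norm bound on the truncated matrix $(\mbE_{ij})_{i,j\in\Sigma}$ does not pick up hidden factors of $N$. This is handled precisely because Plancherel gives the $l^2_\ast$ bound by $\|h\|_{L^\infty}$ uniformly in the truncation — truncating a bounded operator to a coordinate subspace does not increase its norm — so no dimensional loss occurs, and the same is true when we integrate by parts since we never pass through $l^1$-type estimates. A secondary point to check is that in the integration by parts the boundary terms vanish by $|\Gamma_\mbp|$-periodicity of $\tilde\Theta_i,\tilde\Theta_j$ and $h$ (all are periodic functions of $\tilde s_\mbp$), which is immediate.
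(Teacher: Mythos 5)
Correct, and essentially the paper's own argument: \eqref{est-Theta} is the $L^\infty$ bound \eqref{est-h-p-V-2+2} fed into Lemma \ref{Notation-e_i,j}, and \eqref{est-Theta-2} is the same integration by parts against the eigen-relation \eqref{tTheta''} combined with the wave-number gap \eqref{eq-wavenumgap}. The only (harmless) differences are that the paper keeps the Wronskian form $(\beta_{\mbp,j}^2-\beta_{\mbp,i}^2)\int_{\msI_\mbp}h\,\tilde\Theta_i\tilde\Theta_j\dd\tilde s_\mbp=\int_{\msI_\mbp}\nabla_{s_\mbp}h\,\bigl(\tilde\Theta_i\tilde\Theta_j'-\tilde\Theta_i'\tilde\Theta_j\bigr)\dd\tilde s_\mbp$, so that only the $l=1$ case of \eqref{est-h-gamma''-de} is needed and the factor $\beta_{\mbp,i}+\beta_{\mbp,j}$ cancels explicitly, whereas you also invoke the $l=2$ bound on $\nabla_{s_\mbp}^2h$ and divide by $|\beta_{\mbp,i}^2-\beta_{\mbp,j}^2|\gtrsim\varep^{-2}$ (which indeed holds, though because the $\Sigma_0$ wave number --- not both --- is of order $\varep^{-1}$, the gap supplying the other factor); both ledgers close to the same $\varep(1+\|\hat\mbp\|_{\mbV_4^2})\mbE_{ij}$.
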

\begin{proof}  With $h$ satisfying Notation \ref{Notation-h},  we bound the $L^\infty$-norm from Lemma \ref{lem-e_ij} as
\beq\label{L-INF-diff}
|h(\bm \gamma_{\mbp}^{(k)})|\lesssim 1, \qquad k=1,2
\eeq
and deduce from Lemma \ref{Notation-e_i,j} that these terms are $O(1)\mbE_{ij}$ for a matrix $\mbE$ as above; the estimate \eqref{est-Theta} follows.  
For \eqref{est-Theta-2}, when $I(i)\neq I(j)$ we have $\beta_i\neq \beta_j$.    Integrating by parts twice we transfer the highest derivative of $\tilde \Theta_i$ to $\tilde \Theta_j$, which generates lower derivative terms from the product rule with $h$. Noting $\p_{\tilde s_\mbp}=\nabla_{s_\mbp}$, we write the result in the form
\begin{equation}\label{est-h-Theta-i,j-0}
\begin{aligned}
\int_{\msI_\mbp} h\left(\bm \gamma_\mbp'' \right)\tilde \Theta_i''\tilde \Theta_j \dd  \tilde s_{\mbp}&=-\int_{\msI_\mbp} h\left(\bm \gamma_\mbp''\right) \tilde \Theta_i' \tilde \Theta_j' \dd \tilde s_{\mbp}-\int_{\msI_\mbp} \nabla_{s_\mbp}h \tilde \Theta_i'\tilde \Theta_j   \dd \tilde s_{\mbp}\\
&=\int_{\msI_\mbp}h\left(\bm \gamma_\mbp''\right)\tilde \Theta_i\tilde \Theta_j''   |\bm \gamma_\mbp'| \dd s_{\mbp}+\int_{\msI_\mbp} \nabla_{s_\mbp}h \left(\tilde \Theta_i\tilde \Theta_j'-\tilde \Theta_i'\tilde \Theta_j  \right) \dd \tilde s_{\mbp}.
\end{aligned}
\end{equation}
Applying identity \eqref{tTheta''}  and after some algebraic rearrangement we obtain 
\begin{equation}\label{est-h-Theta-i,j-1}
\begin{aligned}
(\beta_{\mbp, j}^2-\beta_{\mbp, i}^2)\int_{\msI_\mbp}h\left(\bm \gamma_\mbp''\right)\tilde \Theta_i\tilde \Theta_j   \dd \tilde s_{\mbp}=\int_{\msI_\mbp}\nabla_{s_\mbp}h\left(\tilde \Theta_i\tilde \Theta_j'- \tilde \Theta_i'\tilde \Theta_j\right) \dd \tilde s_{\mbp}.
\end{aligned}
\end{equation}
%Here and below $\p_{\tilde s_\mbp}h$ denote $\nabla_{s_\mbp} h$.  
By the relation \eqref{Theta'}, the right hand side can be rewritten as
\beqs
\int_{\msI_\mbp}\nabla_{s_\mbp}h\left(\tilde \Theta_i\tilde \Theta_j'- \tilde \Theta_i'\tilde \Theta_j\right) \dd \tilde s_{\mbp}= \beta_{\mbp, j}\int_{\msI_\mbp} \nabla_{s_\mbp} h \tilde \Theta_i  \tilde  \Theta_j'/\beta_{\mbp, j} \dd\tilde s_\mbp - \beta_{\mbp, i}\int_{\msI_\mbp} \nabla_{s_\mbp} h \tilde \Theta_i'/\beta_{\mbp, i}  \tilde \Theta_j \dd\tilde s_\mbp. 
\eeqs 
Note that from \eqref{Theta'} that $\{\pm \tilde \Theta_j'/\beta_{\mbp,j}, j\in \Sigma\}$ is equivalent to the set $\{\pm\tilde \Theta_j, j\in \Sigma\}$. Hence Lemma \ref{Notation-e_i,j} applies and there exists a matrix $\mbE=(\mbE_{ij})$ with $l_2^*$ norm one such that 
\beqs
\left|\int_{\msI_\mbp}\nabla_{s_\mbp}h\left(\tilde \Theta_i\tilde \Theta_j'- \tilde \Theta_i'\tilde \Theta_j\right) \dd \tilde s_{\mbp}\right|\lesssim (\beta_{\mbp,i}+\beta_{\mbp, j})(1+\|\hat\mbp\|_{\mbV_3})\mbE_{ij}. 
\eeqs
Here we also used Lemma \ref{lem-e_ij} to bound the $L^\infty$-norm of $\nabla_{s_\mbp} h$, or $\nabla_{s_\mbp}h$. We divide both sides of the equality \eqref{est-h-Theta-i,j-1}  by the quantity $\beta_{\mbp, j}^2-\beta_{\mbp, i}^2$  to obtain
\begin{equation}\label{est-h-Theta-i,j-2}
\begin{aligned}
 \int_{\msI_\mbp}h\left(\bm \gamma_\mbp''\right)\tilde \Theta_i\tilde \Theta_j   \dd \tilde s_{\mbp}\lesssim  
 \frac{1+\|\hat\mbp\|_{\mbV_3}}{|\beta_{\mbp,i}-\beta_{\mbp,j}|} \mbE_{ij}\lesssim  \frac{1+\|\hat\mbp\|_{\mbV_4^2}}{|\beta_{\mbp,i}-\beta_{\mbp,j}|}\mbE_{ij}.
 \end{aligned}
\end{equation}
Here we used embedding Lemma \ref{lem-est-V}. Moreover,  $\beta_{\mbp,k}= \frac{\beta_k}{(1+\mrp_0)R_0}$ by \eqref{ortho-tTheta}, the bound \eqref{est-Theta-2} follows from  \eqref{eq-wavenumgap} and \eqref{est-h-Theta-i,j-2}.
\end{proof}

The estimates of Lemma  \ref{lem-Theta} are  central to controlling the action of the operator $\mbL $ when restricted to the asymptotically large slow space $\mathcal Z$.
 A benefit of conducting our analysis in $\mathbb R^2$ is that single derivatives of the Laplace-Beltrami eigenmodes behave well. Indeed, from \eqref{def-Theta-beta} we have 
\beq\label{Theta'}
\tilde \Theta_i'= \left\{ \begin{array}{lc} -\beta_{\mbp,i}\tilde \Theta_{i+1}, & i \quad \hbox{odd},\\
                                                        \hspace{0.1in} \beta_{\mbp,i}\tilde \Theta_{i-1}, & i\quad \hbox{even},
                                                        \end{array}\right.
\eeq
which furthermore implies
\begin{equation}\label{Theta+(k)}
\tilde \Theta_i^{(k)}\in \mathrm {span}\{\tilde \Theta_i, \tilde \Theta_i'\}.
\end{equation}
%These facts will be used throughout the remainder of this section.
The following lemma provides the asymptotic form of the restriction of $\mbL$. It uses shape parameter
\begin{equation}\label{def-S1}
\begin{aligned}
&S_1:=\int_{\mathbb R} W'''(\phi_0(z)) B_{1}(z)  \, |\psi_0 (z)|^2\, \mrd z,
\end{aligned}
\end{equation}
where $B_{\mbp,1}$ and $\phi_1=\phi_1(z_\mbp; \sigma)$ are introduced in \eqref{def-B+dp,k} and \eqref{def-phi1} respectivly. This parameter is independent of choice of $\mbp\in\cD_\delta.$
In addition, for $k=0,1$, we have the $\sigma$ dependent parameters
\beq\label{def-S2}
S_{2,k}(\sigma)= 2\int_{\mbR} W'''(\phi_0(z)) \phi_1(z; \sigma) |\psi_k(z)|^2 \dd z -\eta_1. 
\eeq

\begin{lemma}\label{lem-prop-bL_p}
Let $\mbp\in\cD_\delta$ with $\cD_\delta$ defined in \eqref{A-00}  and $\rho$ suitably small. The basis functions of the slow space $\left\{Z_{\mathbf p}^{I(k)k}, k\in \Sigma\right\}$  are  approximately orthonormal in $L^2$. More precisely
there exists a matrix $\mbE$ with $l^2_*$-norm one for which %$\left<Z_{\mathbf p}^{I(i)i}, Z_{\mathbf p}^{I(j)j}\right>_{L^2}$
\begin{equation}\label{ortho-Z}
\begin{aligned}
\left<Z_{\mathbf p}^{I(i)i}, Z_{\mathbf p}^{I(j)j}\right>_{L^2}=
\left\{\begin{aligned} &\left(1+ \mrp_0 \right) \delta_{ij},  & I(i)=I(j);\\
& O\left( \varep^2,\varep^2 \|\hat\mbp\|_{\mbV_4^2}\right)\mbE_{ij}, \qquad  &I(i)\neq I(j).
\end{aligned}\right.
\end{aligned}
\end{equation}
Moreover,   the action of the linear operator $\mbL  $  restricted to the preliminary slow space $ \mathcal Z(\mbp, \rho)$ is given by  
$$\mathbb M_{ij}:=\left<\mbL  Z_{\mathbf p}^{I(i)i}, Z_{\mathbf p}^{I(j)j}\right>_{L^2},$$
whose entries have the leading order approximations
\begin{equation*}
\mathbb M_{ij}=\left\{\begin{aligned}
&(1+\mrp_0) \Big(\La_{0 i}^2 +\varep (\sigma S_1+\eta_d \la_0) +\varep S_{2,0}(\sigma) \La_{0i} \Big) +O(\varep^2)&\hbox{ $ i=j$ and $ I(i)=0$;}\\[3pt]
&\qquad  (1+\mrp_0 )\Big(\La_{1 i}^2 +\varep S_{2,1} (\sigma)\La_{1i} \Big)+O(\varep^2)\qquad   &\hbox{if $ i=j $ and $I(i)=1$;}\\
&\qquad\qquad    O\!\left(\varep^2\right)\mbE_{ij} 
&\hbox{ $i\neq j$ and $I(i)=I(j)$;}\\
&\qquad    O\!\left(\varep^2, \varep^2\|\hat\mbp\|_{\mbV_4^2}\right)\mbE_{ij} 
&\hbox{  $I(i)\neq I(j)$.} 
\end{aligned}\right.
\end{equation*}
\end{lemma}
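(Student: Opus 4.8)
The plan is to reduce every pairing to a product of a one–dimensional whisker integral and an arc–length integral over $\Gamma_\mbp$, using the exponential localization of $\tilde\psi_k=\eps^{-1/2}\psi_k$. Writing the $L^2(\Omega)$ pairing of two slow modes in the local coordinates $(s_\mbp,z_\mbp)$ and absorbing one power of the Jacobian $\tilde{\mathrm J}=\eps(1-\eps z_\mbp\kappa_\mbp)$ into $\eps\tilde\psi_k\tilde\psi_l=\eps^{-1}\psi_k\psi_l$ gives
\[ \langle f,g\rangle_{L^2}=\int_{\mathbb R_{2\ell}}\!\!\int_{\msI_\mbp}f\,g\,(1-\eps z_\mbp\kappa_\mbp)\,\dd\tilde s_\mbp\,\dd z_\mbp+O(e^{-\ell\nu/\eps}). \]
For the orthonormality \eqref{ortho-Z} this separates into $\bigl(\int_{\mathbb R}\psi_k\psi_l\dd z\bigr)\bigl(\int_{\msI_\mbp}\tilde\Theta_i\tilde\Theta_j\dd\tilde s_\mbp\bigr)=\delta_{kl}(1+\mrp_0)\delta_{ij}$ by Lemma\,\ref{lem-L0} and \eqref{ortho-tTheta}, plus the correction $-\eps\bigl(\int z_\mbp\psi_k\psi_l\dd z\bigr)\bigl(\int\kappa_\mbp\tilde\Theta_i\tilde\Theta_j\dd\tilde s_\mbp\bigr)$. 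When $I(i)=I(j)$ the $z$–integrand is odd, so this correction vanishes identically; when $I(i)\neq I(j)$ I would bound $\int\kappa_\mbp\tilde\Theta_i\tilde\Theta_j\dd\tilde s_\mbp$ by \eqref{est-Theta-2} of Lemma\,\ref{lem-Theta} (with $h=\kappa_\mbp=h(\bm\gamma_\mbp'')$), which supplies the extra factor $\eps(1+\|\hat\mbp\|_{\mbV_4^2})$ and hence the claimed $O(\eps^2,\eps^2\|\hat\mbp\|_{\mbV_4^2})\mbE_{ij}$.

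For $\mathbb M_{ij}$ I would use $\mbL=\mbL_0+\eps\mbL_1+\eps^2\mbL_{\geq2}$. The leading piece contributes $\La_{I(i)i}^2\langle Z_\mbp^{I(i)i},Z_\mbp^{I(j)j}\rangle$ by \eqref{bL0-Z}, which via \eqref{ortho-Z} and $\La_{I(i)i}^2\leq\rho<1$ yields exactly the $(1+\mrp_0)\La_{I(i)i}^2$ diagonal term together with off–diagonal errors of the stated sizes. The relatively compact remainder $\eps^2\mbL_{\geq2}$ contributes $O(\eps^2)$ on the diagonal and $O(\eps^2,\eps^2\|\hat\mbp\|_{\mbV_4^2})\mbE_{ij}$ off it, using that $\eps^2\Delta_{s_\mbp}$ is $O(1)$ on $\mathcal Z$ (the only genuinely dangerous surface term, $\eps^4\Delta_{s_\mbp}^2$, already lives in $\mbL_0$) and, again, the parity of $\psi_k$ to kill the potentially large surface–derivative cross terms on the diagonal. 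The substance is the $\eps\mbL_1$ term. Splitting $\mbL_1=A_L\mathcal L+\mathcal L A_R+\mathcal B$ with $A_R=\kappa_\mbp\p_{z_\mbp}+W'''(\phi_0)\phi_1-z_\mbp\eps^2D_{s_\mbp,2}$, $A_L=A_R-\eta_1$, $\mathcal B=W'''(\phi_0)(\kappa_\mbp\phi_0'+\mrL_0\phi_1)+\eta_dW''(\phi_0)$, and using $\mathcal L Z_\mbp^{I(k)k}=\La_{I(k)k}Z_\mbp^{I(k)k}+O(e^{-\ell\nu/\eps})$ (with $\La_{ki}:=\la_k+\eps^2\beta_{\mbp,i}^2$ the $\mathcal L$–eigenvalue, so $\La_{ki}^2$ is as in \eqref{def-Sigma}) together with the self–adjointness of $\mathcal L$, one reduces to
\[ \langle\mbL_1Z_\mbp^{ki},Z_\mbp^{lj}\rangle=\La_{ki}\langle A_LZ_\mbp^{ki},Z_\mbp^{lj}\rangle+\La_{lj}\langle A_RZ_\mbp^{ki},Z_\mbp^{lj}\rangle+\langle \mathcal BZ_\mbp^{ki},Z_\mbp^{lj}\rangle+O(e^{-\ell\nu/\eps}). \]
On the diagonal I would check term by term that $\langle\kappa_\mbp\p_{z_\mbp}Z_\mbp^{ki},Z_\mbp^{ki}\rangle=\tfrac{\eps}{2}\int\kappa_\mbp^2\tilde\Theta_i^2\dd\tilde s_\mbp=O(\eps)$ (integrate $\tfrac12(\psi_k^2)'(1-\eps z_\mbp\kappa_\mbp)$ in $z$), that the $z_\mbp\eps^2D_{s_\mbp,2}$ terms are also $O(\eps)$ — this needs the parity cancellation $\int z_\mbp\psi_k^2\dd z=0$, since $\eps^2\beta_{\mbp,i}^2$ is only $O(1)$, not small, on $\Sigma_0$ — and that $\langle W'''(\phi_0)\kappa_\mbp\phi_0'Z_\mbp^{ki},Z_\mbp^{ki}\rangle=O(\eps)$ by oddness. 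What survives at order $\eps^0$ is $(1+\mrp_0)\bigl[\,2\La_{ki}\!\int W'''(\phi_0)\phi_1\psi_k^2\dd z-\eta_1\La_{ki}+\int(W'''(\phi_0)\mrL_0\phi_1+\eta_dW''(\phi_0))\psi_k^2\dd z\bigr]$, whose $\La_{ki}$–bracket is $S_{2,k}(\sigma)$ by \eqref{def-S2}.

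The remaining identity to establish is that the $\La$–free part equals $\sigma S_1+\eta_d\la_0$ for $k=0$ and $0$ for $k=1$. Using $\mrL_0\phi_1=\sigma B_1+\tfrac{\eta_d}{2}z\phi_0'$ (from \eqref{def-phi1} and $\mrL_0B_{\mbp,2}=B_{\mbp,1}$), the $\sigma$–part is $\sigma\int W'''(\phi_0)B_1\psi_k^2\dd z$, which is $\sigma S_1$ for $k=0$ by \eqref{def-S1} and vanishes for $k=1$ because $W'''(\phi_0)|\phi_0'|^2=-\mrL_0\phi_0''$ and $\mrL_0B_1=1$ give $\int W'''(\phi_0)|\phi_0'|^2B_1\dd z=-\int\phi_0''\dd z=0$. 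For the $\eta_d$–part I would establish the Sturm–Liouville identity $\int W'''(\phi_0)z\phi_0'\psi_k^2\dd z=2\int(\psi_k')^2\dd z$ (integrate $W'''(\phi_0)\phi_0'=\tfrac{d}{dz}W''(\phi_0)$ by parts and use $W''(\phi_0)\psi_k=\psi_k''+\la_k\psi_k$) which, combined with $\int W''(\phi_0)\psi_k^2\dd z=\la_k-\int(\psi_k')^2\dd z$, collapses $\tfrac{\eta_d}{2}\int W'''(\phi_0)z\phi_0'\psi_k^2\dd z+\eta_d\int W''(\phi_0)\psi_k^2\dd z$ to $\eta_d\la_k$, giving $\eta_d\la_0$ for $k=0$ and $0$ for $k=1$. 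For the off–diagonal entries, every surviving $z$–integral is either a $\delta_{ij}$ (killed when $i\neq j$) or is paired with an $\tilde s$–integral $\int(\text{function of }\kappa_\mbp)\tilde\Theta_i\tilde\Theta_j\dd\tilde s_\mbp$ carrying a free $\eps$: for $I(i)=I(j)$ this gives $O(\eps^2)\mbE_{ij}$ via \eqref{est-Theta}, and for $I(i)\neq I(j)$ the extra $\eps(1+\|\hat\mbp\|_{\mbV_4^2})$ from \eqref{est-Theta-2} yields $O(\eps^2,\eps^2\|\hat\mbp\|_{\mbV_4^2})\mbE_{ij}$. The main obstacle is precisely this diagonal bookkeeping at order $\eps$: isolating which $O(1)$ constituents of $\mbL_1$ (and $\mbL_{\geq2}$) survive the parity of $\psi_k$ and the exponential localization, and pinning down the handful of Sturm–Liouville identities that force the $\eta_d$– and $\sigma$–contributions to collapse to $\eta_d\la_k$ and $\sigma S_{1,k}$; a secondary subtlety is confirming that the surface–derivative term $z_\mbp\eps^2D_{s_\mbp,2}$, naively $O(1)$ on the pearling modes, is in fact $O(\eps)$.
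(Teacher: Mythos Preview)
Your proposal is correct and follows essentially the same route as the paper: pass to local coordinates, split $\mbL=\mbL_0+\eps\mbL_1+\eps^2\mbL_{\geq2}$, kill the Jacobian correction by the odd parity of $z\psi_k^2$ when $I(i)=I(j)$, invoke the wave-number-gap estimate \eqref{est-Theta-2} when $I(i)\neq I(j)$, and reduce the $\eps\mbL_1$ diagonal to a handful of one-dimensional Sturm--Liouville identities that collapse to $S_{2,k}\La_{ki}+\sigma S_1\delta_{k0}+\eta_d\la_k$. Your organization via $\mbL_1=A_L\mathcal L+\mathcal L A_R+\mathcal B$ and self-adjointness of $\mathcal L$ is a clean shortcut that avoids writing out $\mbL_1(\psi_k\tilde\Theta_i)$ in full (the paper instead computes this explicitly and separates it into parity components $g_1^\bot,g_2^\bot,g^*$), but the two are equivalent reorganizations of the same computation and lead to the same integrals $\mathcal I_{01},\mathcal I_{02},\mathcal I_{03}$.
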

\begin{remark}
In the absence of the asymptotic gap between $\Sigma_0$ and $\Sigma_1$, then the leading term in $\mathbb M_{ij}$ for $I(i)\neq I(j)$ generically increases to  $O\!\left(\eps\right)$.  %, which is too large to establish diagonal dominance.
\end{remark}
\begin{proof} Using the localization of the basis functions, we establish the approximate orthonormality \eqref{ortho-Z}  by  integrating over $\Gamma_{\mathbf p}^{2\ell}$. Recalling that $\dd x=\tilde \mrJ \dd \tilde s_\mbp \dd z_\mbp$ with $\tilde{\mrJ}=\varep(1-\varep z_{\mathbf p}\kappa_{\mathbf p})$ in local coordinates, we write
\begin{equation}\label{est-ortho-Z-1}
\begin{aligned}
\left<Z_{\mathbf p}^{I(i)i}, Z_{\mathbf p}^{I(j)j}\right>_{L^2}&=\int_{\mathbb R_{2\ell}}\int_{\msI_\mbp}  \psi_{I(i)} \psi_{I(j)}  \tilde \Theta_i \tilde \Theta_j  \dd  \tilde s_{\mathbf p}\mrd z_{\mathbf p}-\varep\int_{\msI_\mbp} \kappa_{\mathbf p}\tilde \Theta_i\tilde  \Theta_j\dd \tilde s_\mbp \int_{\mathbb R_{2\ell}}\psi_{I(i)} \psi_{I(j)} z_{\mathbf p} \mrd z_{\mathbf p}.
%&=\delta_{ij}+O(\varep^2).
\end{aligned}
\end{equation}
The orthogonality of $\{\tilde \Theta_i\}$ given in \eqref{ortho-tTheta} shows that the first term  on the right-hand side contributes the main $\delta_{ij}$ term in \eqref{ortho-Z}.
We claim second term on the right hand side can be bounded by
\beq\label{est-ortho-Z-2}
\varep\int_{\msI_\mbp} \kappa_{\mathbf p}\tilde \Theta_i\tilde  \Theta_j\dd \tilde s_\mbp\int_{\mathbb R_{2\ell}}\psi_{I(i)} \psi_{I(j)} z_{\mathbf p} \mrd z_{\mathbf p}= \left\{\begin{aligned}& 0, \quad& \hbox{if $I(i)=I(j)$}\\
& O( \varep^2 , \varep^2\|\hat\mbp\|_{\mbV_4^2} ) \mbE_{ij}, \qquad& \hbox{if $I(i)\neq I(j)$}.
\end{aligned}\right.
\eeq
 Indeed, if $I(i)=I(j)$ \eqref{est-ortho-Z-2} holds by parity  since  $|\psi_{I(i)} |^2z_{\mathbf p}$ is odd. On the other hand,  if $I(i)\neq I(j)$ we use estimate \eqref{est-Theta-2} from Lemma \ref{lem-Theta} to 
 bound the projection of $\kappa_\mbp$ to $\tilde \Theta_i\tilde \Theta_j$ in $L^2(\msI_\mbp)$, and \eqref{est-ortho-Z-2} follows.  
% \beq\label{est-ortho-Z-3} \left|\varep \left<\kappa_{\mathbf p}\tilde\Theta_i, \tilde \Theta_j\right>_{\mathrm J_{\mathbf p,0}}\int_{\mathbb R_{2\ell}}\psi_{I(i)} \psi_{I(j)} z_{\mathbf p} \mrd z_{\mathbf p}\right|\lesssim \varep^{2} \|\hat{\mbp}\|_{\mbV_4^2}\mbE_{ij},\qquad \hbox{for $I(i)\neq I(j)$}. \eeq
Returning back to \eqref{est-ortho-Z-1} implies the approximate orthogonality \eqref{ortho-Z}.
 
To establish the estimates of   $\mbL_\mbp$ on $\mathcal Z$ we apply the expansion  \eqref{exp-bL+d} of $\mbL $ to the inner product:
\begin{equation}\label{exp-bM-0}
\begin{aligned}
\left<\mbL  Z_{\mathbf p}^{I(i)i}, Z_{\mathbf p}^{I(j)j}\right>_{L^2}=&\left<\mbL_{0} Z_{\mathbf p}^{I(i)i},  Z_{\mathbf p}^{I(j)j}\right>_{L^2}+\varep\left<\mbL_{ 1} Z_{\mathbf p}^{I(i)i},  Z_{\mathbf p}^{I(j)j}\right>_{L^2}\\
&+\varep^2\left<\mbL_{\geq 2} Z_{\mathbf p}^{I(i)i},  Z_{\mathbf p}^{I(j)j}\right>_{L^2}.
\end{aligned}
\end{equation}
Recalling \eqref{bL0-Z} and employing the approximate orthogonality identity  \eqref{ortho-Z}, we obtain the leading order approximation
\begin{equation}\label{est-bM-0}
\left<\mbL_{0}Z_{\mathbf p}^{I(i)i}, Z_{\mathbf p}^{I(j)j}\right>_{L^2} = \left\{\begin{aligned}
&\left(1+\mrp_0\right)\La_{I(j)j}^2 \delta_{ij}+O\left( e^{-\ell\nu/\varep}\right)\mbE_{ij},  & I(i)=I(j); \\
&\qquad O\left( \varep^2, \varep^2 \|\hat\mbp\|_{\mbV_4^2}\right), & I(i)\neq I(j).
\end{aligned}\right.
\end{equation}
Estimates on $\mbL_{1}$ restricted to $\mathcal Z$ are more complicated. Recalling \eqref{def-bLp,1}, direct calculations establish
\beq\label{eq-bL{p,1}-1}
\begin{aligned}
&\mbL_{1}(\psi_{I(i)}\tilde \Theta_i)=\La_{I(i)i}\Big(\kappa_{\mathbf p} \psi_{I(i)}' \tilde \Theta_i +W'''(\phi_0)\phi_1 \psi_{I(i)}\tilde \Theta_i-z_{\mathbf p} \varep^2 D_{s_{\mathbf p, 2}}\tilde  \Theta_{i} \psi_{I(i)} \\
&\quad-\eta_1\psi_{I(i)}\tilde \Theta_i\Big) +\mathcal L \Big(\kappa_{\mathbf p} \psi_{I(i)}' \tilde \Theta_i+W'''(\phi_0)\phi_1 \psi_{I(i)} \tilde \Theta_i
 -z_{\mathbf p}\varep^2D_{s_{\mathbf p},2}\tilde  \Theta_{i} \psi_{I(i)} \Big)\\
 &\quad +W'''(\phi_0 )(\kappa_{\mathbf p} \phi_0'+\mathrm L_{0} \phi_1 )\psi_{I(i)}\tilde \Theta_i+\eta_d W''(\phi_0)\psi_{I(i)} \tilde \Theta_i.
 \end{aligned}
\eeq
Since the operators $D_{s_\mbp, 2}$ and $\mcL$ incorporate derivatives with respect to $\tilde s_\mbp$ scaled with $\varep$, we apply \eqref{Theta'}-\eqref{Theta+(k)} and
we separate into cases for $\tilde \Theta_i$ and $\tilde \Theta_i'$.  We also exploit the even and odd parity of functions with respect to $z_\mbp.$
%In addition, considering the right hand side as a function of $z_\mbp$, we can also classify it at leading order into two parts: perpendicular or parallel to $\psi_{I(i)}$ with a use of the evenness or oddness of functions with respect to $z_\mbp$.  Combining these, we  can find a 
We define functions $h_{1}(z_{\mathbf p}, \bm \gamma''_{\mathbf p})$ and $h_2(z_{\mathbf p}, \bm \gamma''_{\mathbf p})$, denoting higher order terms,  that enjoy the properties of Notation \ref{Notation-h}. 
With these steps the identity \eqref{eq-bL{p,1}-1} is rewritten as
\beq\label{eq-bL{p,1}-G}
\begin{aligned}
\mbL_{1}(\psi_{I(i)}\tilde \Theta_i)=&\left(g_1^\bot(z_{\mathbf p}, \bm \gamma''_{\mathbf p})+\varep h_1 (z_{\mathbf p}, \bm \gamma''_{\mathbf p}) \right)\tilde \Theta_i+ \left(g^\bot_2( \bm \gamma_{\mbp}'', z_{\mathbf p})+\varep h_2(\bm \gamma''_{\mbp}, z_{\mbp})\right)\varep\tilde \Theta_i'\\
&+g^*(z_{\mathbf p})\tilde \Theta_i
\end{aligned}
\eeq
where the functions $g_k^\bot=g_k^\bot(z_{\mbp}, \bm \gamma''_{\mbp})$  have opposite $z_\mbp$ parity of $\psi_{I(i)}$.  Hence they satisfy
\begin{equation}\label{eq-bL{p,1}-G-1}
\int_{\mathbb R_{2\ell}} g_k^\bot(z_\mbp, \bm \gamma_{\mbp}'')\psi_{I(i)}\dd z_{\mbp}=0, \qquad k=1,2.
\end{equation}
The $z_\mbp$ dependent only function  $g^*=g^*(z_{\mbp})$ is given explicitly by
\begin{equation}\label{def-g+*}
\begin{aligned}
g^*(z_{\mbp}):=&\La_{I(i)i}\left(W'''(\phi_0)\phi_1\psi_{I(i)}-\eta_1\psi_{I(i)}\right)+\left(\mrL_{0}+\varep^2 \beta_{\mbp,i}^2\right)\left(W'''(\phi_0)\phi_1\psi_{I(i)}\right)\\
&\qquad \qquad+W'''(\phi_0 )\mathrm L_{0} \phi_1 \psi_{I(i)} +\eta_d W''(\phi_0)\psi_{I(i)}.
\end{aligned}
\end{equation}
\vskip -0.1in
From \eqref{eq-bL{p,1}-G} we decompose the $(i,j)$-th component of the bilinear form of $\mbL_{  1}$ restricted to $\mathcal Z$ as
\begin{equation}\label{decomp-L{p,1}-Z}
\begin{aligned}
\left<\mbL_{1}Z_{\mathbf p}^{I(i)i}, Z_{\mathbf p}^{I(j)j}\right>_{L^2}=\mathcal I_0+ \mathcal I_1+\mathcal I_2+\mathcal I_3,
\end{aligned}
\end{equation}
where we have defined
\beq\label{def-I-0123}
\begin{aligned}
\mathcal I_0&:=\int_{\mathbb R_{2\ell}}g^*(z_\mbp)\psi_{I(j)}\dd z_\mbp \int_{\msI_\mbp}\tilde \Theta_i \tilde \Theta_j\dd \tilde s_\mbp,\\
\mathcal I_1&:=\int_{\mathbb R_{2\ell}}\int_{\msI_\mbp} \left(g_1^\bot(z_{\mathbf p}, \bm \gamma''_{\mathbf p})+\varep h_{1}(\bm \gamma''_{\mbp}, z_{\mbp}) \right)\psi_{I(j)}\tilde \Theta_i\tilde  \Theta_j\dd \tilde s_{\mbp}\dd z_\mbp,\\
\mathcal I_2&:=\int_{\mathbb R_{2\ell}}\int_{\msI_\mbp} \left(g_2^\bot(z_{\mathbf p}, \bm \gamma''_{\mathbf p})+\varep h_{2}(\bm \gamma''_{\mbp}, z_{\mbp}) \right)\psi_{I(j)} \varep\tilde\Theta_i' \tilde\Theta_j\dd \tilde s_{\mbp}\dd z_\mbp,\\
\mathcal I_3&:=-\varep \int_{\mathbb R_{2\ell}}\int_{\msI_\mbp}  \mbL_{1} (\psi_{I(i)}\tilde \Theta_i) \psi_{I(j)}\tilde \Theta_j z_{\mbp}\kappa_\mbp\dd \tilde s_{\mbp}\mrd z_{\mathbf p}.
\end{aligned}
\eeq 
In light of orthogonality \eqref{eq-bL{p,1}-G-1}, we see that $\mathcal I_1, \mcI_2, \mcI_3$ are higher order terms. Indeed, with the aids of Lemma \ref{lem-Theta}, \eqref{Theta'}-\eqref{Theta+(k)}, and
uniform bounds on $\varep \beta_{\mbp, i}$, a direct calculation establishes
\beq\label{est-I-123}
\mcI_1+\mcI_2+\mcI_3=O(\varep) \mbE_{ij}.
\eeq
From the orthogonality of $\{\tilde \Theta_i\}$ given in \eqref{ortho-tTheta}, the term $\mathcal I_0$, is zero unless $i=j$. As  $g^*=g^*(z_{\mbp})$ defined in \eqref{def-g+*} 
decays exponentially in $z_\mbp$, we  may decompose
\begin{equation}\label{decomp-g+*}
\int_{\mathbb R_{2\ell}} g^*(z_{\mbp})\psi_{I(i)}\dd z_{\mbp}=\mathcal I_{01}+\mathcal I_{02}+\mathcal I_{03} +Ce^{-\ell\nu/\varep},
\end{equation}
where we have introduced the sub-terms
\beqs
\begin{aligned}
\mathcal I_{01}&:=\La_{I(i)i}\int_{\mathbb R}\left(W'''(\phi_0)\phi_1\psi_{I(i)}-\eta_1\psi_{I(i)}\right)\psi_{I(i)}\dd z\\
&\quad +\int_{\mathbb R}\left(\mrL_{0}+\varep^2 \beta_{\mbp,i}^2\right)\left(W'''(\phi_0)\phi_1\psi_{I(i)}\right)\psi_{I(i)}\dd z,\\
\mathcal I_{02}&:=\int_{\mathbb R}W'''(\phi_0 )\mathrm L_{0} \phi_1 \psi_{I(i)}\psi_{I(i)} \dd z,\\ 
\mathcal I_{03}&:=\eta_d\int_{\mathbb R}W''(\phi_0)\psi_{I(i)}\psi_{I(i)}\dd z.
\end{aligned}
\eeqs
Proceeding term by term, we integrate by parts in the second integral of $\mathcal I_{01}$, rewriting it as
\begin{equation}\label{est-I-01}
\mathcal I_{01}=S_{2,I(i)}\La_{I(i)i}
\end{equation}
where $S_{2, I(i)}=S_{2,I(i)}(\sigma)$ depending on $\sigma$ is introduced in \eqref{def-S2}.  Recalling the definition \eqref{def-phi1} of $\phi_1$,  we separate $\mathcal I_{02}$,
\begin{equation}\label{est-I-02-0}
\mathcal I_{02}=\mathcal I_{02,1}+\mathcal I_{02,2}:=\sigma\int_{\mathbb R}W'''(\phi_0) B_{1} |\psi_{I(i)}|^2\dd z+\frac{\eta_d}{2}\int_{\mathbb R}W'''(\phi_0)z\phi_0'|\psi_{I(i)}|^2\dd z.
\end{equation}
From the definition of $\mathrm L_{0}$ we observe that
\begin{equation*}
W'''(\phi_{0})\phi_{0}'\psi_{k}=\la_k\psi_{k}'-\mathrm L_{0} \psi_{k}', \quad \hbox{and} \quad \mathrm L_{0}(z\psi_k)=z\la_k\psi_k -2\psi_k',
\end{equation*}
which together with the self-adjointness of  $\mrL_0$ on $L^2(\mbR)$ yield
\begin{equation}\label{est-I-02-1}
\begin{aligned}
\mathcal I_{02,2}&= \frac{\eta_d}{2}\int_{\mathbb R } \left(\la_{I(i)} \psi_{I(i)}'\psi_{I(i)}z-\psi_{I(i)}'\mathrm L_{0}(z\psi_{I(i)}  )\right)\, \mrd z\\
%&=\frac{\eta_d}{2} \int_{\mathbb R}\left( \la_{I(i)} \psi_{I(i)}'\psi_{  I(i)}z-\la_{  I(i)}\psi_{I(i)}'\psi_{  I(i)}  z+2\psi_{  I(i)}'\psi_{I(i)}'\right)\, \mrd z\\
&=\eta_d\|\psi_{I(i)}'\|_{L^2(\mathbb R)}^2.
\end{aligned}
\end{equation}
When $I(i)=1$ we have $\psi_{I(i)}=\phi_0'/m_1$. Recalling the identify $W'''(\phi_0)|\phi_0'|^2=-\mathrm L_{0}\phi_0''$ from Lemma \ref{lem-L0} yields
\begin{equation}\label{est-I-02-2}
\int_{\mathbb R}W'''(\phi_0)B_{1}|\psi_1|^2\dd z=-\frac{1}{m_1^2}\int_{\mathbb R}B_{1}\mrL_{0}\phi_0''\dd z=-\frac{1}{m_1^2}\int_{\mathbb R} \phi_0''\dd z=0,
\end{equation}
and hence $\mathcal I_{02,1}=0$ when $I(i)=1$. Combining  the identity \eqref{est-I-02-1}  with \eqref{est-I-02-0}  we obtain
\begin{equation}\label{est-I-02-3}
\mathcal I_{02}=\sigma S_1\delta_{I(i)0}+\eta_d\|\psi_{I(i)}'\|_{L^2(\mathbb R)}^2,
\end{equation}
where  $S_1$ was introduced in \eqref{def-S1}. Finally, from the definitions of $\mathrm L_{0}$ and $\psi_{I(i)}$, $\mathcal I_{03}$  reduces to 
\beq
\label{est-I-03}
\mathcal I_{03}=\eta_d\int_{\mathbb R} (\mrL_{0}+\p_{z}^2)\psi_{I(i)}\psi_{I(i)}\dd z=\eta_d\la_{I(i)}-\eta_d \|\psi_{I(i)}'\|_{L^2(\mathbb R)}^2,
\eeq 
where $\psi_{I(i)}$ has been normalized in $L^2(\mathbb R)$. Combining estimates \eqref{est-I-01}, \eqref{est-I-02-3} and \eqref{est-I-03} with \eqref{decomp-g+*} yields for some bounded $\mbp$-independent  constant $C$, 
\beq\label{eq-bL{p,1}-G-2}
\int_{\mathbb R_{2\ell}} g^*(z_{\mbp})\psi_{I(i)}(z_{\mbp})\dd z_{\mbp}=(\sigma S_1+\eta_d\la_0)\delta_{I(i)0}+S_{2,I(i)}\La_{I(i)i}+Ce^{-\ell\nu/\varep },
\eeq
which  combined with the orthogonality \eqref{ortho-tTheta}  and $\mathcal I_0$ defined in \eqref{def-I-0123} furthermore implies
\beq\label{S-est-I-0}
\mathcal I_0=\left(1+\mrp_0\right)\left[(\sigma S_1+\eta_d\la_0)\delta_{I(i)0} +S_{2,I(i)} \La_{I(i)i}+O(e^{-\ell\nu/\varep})\right]\delta_{ij}.
\eeq
Combining estimates \eqref{S-est-I-0} and \eqref{est-I-123}  with \eqref{decomp-L{p,1}-Z} imply
\begin{equation}\label{est-L{p,1}-Z}
\left<\mbL_{1}Z_{\mathbf p}^{I(i)i}, Z_{\mathbf p}^{I(j)j}\right>_{L^2}=\left\{
\begin{aligned}
&\left(1+\mrp_0\right)\left[\left(\sigma S_1+\eta_d\la_0\right) \delta_{I(i)0}+S_{2,I(i)}\La_{I(i)} \right],  &\hbox{$i=j$;}\\
&\qquad  O(\varep)\mbE_{ij}, &  \hbox{$i\neq j$;}
\end{aligned} 
\right.
\end{equation}
To address the  bilinear form induced by $\mbL_{ \geq 2}$ we employ \eqref{Theta+(k)} to arrive at the general form
\beqs
\mbL_{  \geq 2}Z_{\mbp}^{I(i)i}=\varep^{-1/2}\left(h_1(z_{\mbp}, \bm \gamma_{\mbp}'')\tilde \Theta_i+h_2(z_{\mbp}, \bm \gamma_{\mbp}'')\varep\tilde \Theta_i'\right),
\eeqs
where the functions $h_1$ and $h_2$ enjoy the properties of Notation\,\ref{Notation-h} and are localized near $\Gamma_\mbp$. Integrating out $z_\mbp$, and employing Lemma \ref{lem-Theta}, \eqref{Theta'} and the uniform bounds on $\varep \beta_i$ we deduce 
\begin{equation}\label{est-L{p,2}-Z}
\left|\left<\mbL_{ \geq  2}Z_{\mbp}^{I(i)i}, Z_{\mbp}^{I(j)j}\right>\right|\lesssim \mbE_{ij}.
\end{equation}
The conclusion follows from \eqref{exp-bM-0}, the  estimates \eqref{est-bM-0} and  \eqref{est-L{p,1}-Z}-\eqref{est-L{p,2}-Z}.
\end{proof}

%\bQues{Where do we establish that the norm of $\mathbb E$ is one? ANswer: We use Lemma 2.13.}

\subsection{Modified approximate slow spaces}

The modified spaces are corrections to the preliminary spaces that make them closer to being an invariant subspace of $\mbL$. This provides the better control required to close the nonlinear estimates of Section\,\ref{s:nonstab-BLM}.
\begin{lemma}\label{lem-def-Z*}
For $i\in \Sigma$, there exist functions $\varphi_{k,i}=\varphi_{k,i}(z_{\mathbf p}, \bm \gamma''_{\mathbf p})(k=1,2)$  localized near $\Gamma_\mbp$ that enjoy the properties of Notation\,\ref{Notation-h} for which
\begin{equation}\label{ortho-varphi}
\int_{\mathbb R_{2\ell }}\varphi_{k,i}(z_\mbp,\bm \gamma_{\mbp}'') \psi_{I(i)}(z_\mbp)\dd z_\mbp=0,  \qquad k=1,2.
\end{equation}
The modified basis functions
\begin{equation}\label{eq-def-Z*}
\begin{aligned}
Z_{\mathbf p,*}^{I(i)i}&:=\left(\tilde \psi_{I(i)} +\varep \tilde \varphi_{1,i} \right)\tilde \Theta_i+\varep\tilde\varphi_{2,i}  \varep \tilde \Theta_i'=\varep^{-1/2}\left[\left(\psi_{I(i)} +\varep \varphi_{1,i} \right)\tilde \Theta_i+\varep \varphi_{2,i} \varep\tilde \Theta_i'\right],
\end{aligned}
\end{equation}
 are  $\mbL   $ is invariant up to order $\varep^2$ in $L^2(\Omega)$, satisfying
\begin{equation}\label{bL-pZ-*}
\begin{aligned}
\mbL  Z_{\mathbf p, *}^{I(i)i} =&\Big(\La_{I(i)i}^2 +\varep\delta_{I(i)0} (\sigma S_1+\eta_d \la_0)+S_{2,I(i)}(\sigma)\La_{I(i)i} \Big)Z_{\mathbf p, *}^{I(i)i}+\varep^{3/2}\left(h_1\tilde \Theta_i+h_2\varep\tilde \Theta_i'\right)\\
&+\varep^{3/2}\sum_{k=1}^4\left(\varep^{k-1}\p_{s_\mbp}^k h_{3,k}\tilde \Theta_i+\varep^{k-1}\p_{s_{\mbp}}^k h_{4,k}\varep\tilde \Theta_i'\right).
\end{aligned}
\end{equation}
Here the functions $h=h(z_{\mbp}, \bm \gamma_{\mbp}'')$  are localized  near $\Gamma_\mbp$, enjoy the properties of Notation\,\ref{Notation-h}, and have $L^2(\Omega)$ norm of $O(\sqrt{\varep}).$
\end{lemma}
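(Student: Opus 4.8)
The plan is to view Lemma~\ref{lem-def-Z*} as a single quasimode-correction step that removes the $O(\varepsilon)$ obstruction to $\mbL$-invariance of the preliminary basis that was exposed in the proof of Lemma~\ref{lem-prop-bL_p}. There it is shown that
\[
\mbL_1\bigl(\psi_{I(i)}\tilde\Theta_i\bigr)=\bigl(g_1^\bot+\varepsilon h_1\bigr)\tilde\Theta_i+\bigl(g_2^\bot+\varepsilon h_2\bigr)\varepsilon\tilde\Theta_i'+g^*\tilde\Theta_i,
\]
where $g_1^\bot,g_2^\bot$ have the $z_\mbp$-parity opposite to $\psi_{I(i)}$, hence are $L^2(\mathbb R)$-orthogonal to it; $g^*=g^*(z_\mbp)$ depends on $z_\mbp$ alone; and the projection $\langle g^*,\psi_{I(i)}\rangle_{L^2(\mathbb R)}$ equals the scalar $\delta_{I(i)0}(\sigma S_1+\eta_d\la_0)+S_{2,I(i)}(\sigma)\La_{I(i)i}$ modulo $O(e^{-\ell\nu/\varepsilon})$. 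Writing $g^*=\langle g^*,\psi_{I(i)}\rangle\psi_{I(i)}+(g^*)^\bot$ and $G_{1,i}:=g_1^\bot+(g^*)^\bot$, the pair $G_{1,i}$ (paired with $\tilde\Theta_i$) and $g_2^\bot$ (paired with $\varepsilon\tilde\Theta_i'$) constitutes, modulo $O(\varepsilon)$, the part of $\mbL_1(\psi_{I(i)}\tilde\Theta_i)$ that is not a scalar multiple of $\psi_{I(i)}\tilde\Theta_i$; both are localized near $\Gamma_\mbp$, $L^2(\mathbb R)$-orthogonal to $\psi_{I(i)}$, and depend on $s_\mbp$ only through $\bm\gamma_\mbp''$ (via $\kappa_\mbp$, $\mathbf n_\mbp\cdot\mathbf n_0$, and the $D_{s_\mbp,2}$-coefficients).

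I would define the corrections $\varphi_{1,i},\varphi_{2,i}$ as the unique solutions, in the $L^2(\mathbb R)$-orthogonal complement of $\psi_{I(i)}$, of the shifted homological equations
\[
\bigl[(\mathrm L_0+\varepsilon^2\beta_{\mbp,i}^2)^2-\La_{I(i)i}^2\bigr]\varphi_{1,i}=-G_{1,i},\qquad \bigl[(\mathrm L_0+\varepsilon^2\beta_{\mbp,i}^2)^2-\La_{I(i)i}^2\bigr]\varphi_{2,i}=-g_2^\bot.
\]
Solvability rests on the factorization $(\mathrm L_0+\varepsilon^2\beta_{\mbp,i}^2)^2-\La_{I(i)i}^2=(\mathrm L_0-\la_{I(i)})\bigl(\mathrm L_0+\la_{I(i)}+2\varepsilon^2\beta_{\mbp,i}^2\bigr)$: on $\{\psi_{I(i)}\}^\bot$ the first factor is boundedly invertible with a bound independent of $\varepsilon$ and $\rho$, while by \eqref{est-N0}--\eqref{est-N1} the quantity $\varepsilon^2\beta_{\mbp,i}^2$ lies within $O(\rho^{1/2})$ of $-\la_0$ when $I(i)=0$ and within $O(\rho^{1/2})$ of $0$ when $I(i)=1$, so for $\rho$ suitably small the second factor is bounded away from zero on $\{\psi_{I(i)}\}^\bot$, uniformly over $i\in\Sigma$ and $\mbp\in\cD_\delta$. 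Since $\mathrm L_0^{-1}$ restricted to $\{\psi_{I(i)}\}^\bot$ preserves exponential decay and smooth dependence on $\kappa_\mbp$, $\mathbf n_\mbp\cdot\mathbf n_0$ and their derivatives, the $\varphi_{k,i}$ are localized near $\Gamma_\mbp$, are of the form $\varphi_{k,i}(z_\mbp,\bm\gamma_\mbp'')$ in the sense of Notation~\ref{Notation-h}, and inherit the $L^\infty$ and $s_\mbp$-derivative bounds of Lemmas~\ref{lem-Gamma-p} and~\ref{lem-e_ij}; the orthogonality \eqref{ortho-varphi} holds by construction.

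To verify \eqref{bL-pZ-*} I would insert $Z_{\mbp,*}^{I(i)i}$ from \eqref{eq-def-Z*} and expand $\mbL=\mbL_0+\varepsilon\mbL_1+\varepsilon^2\mbL_{\geq2}$. For $Z_\mbp^{I(i)i}$ one uses \eqref{bL0-Z}, the displayed expansion of $\mbL_1(\psi_{I(i)}\tilde\Theta_i)$, the splitting of $g^*$, and the identity $\varepsilon^2\mbL_{\geq2}Z_\mbp^{I(i)i}=\varepsilon^{3/2}(h_1\tilde\Theta_i+h_2\varepsilon\tilde\Theta_i')$ from the proof of Lemma~\ref{lem-prop-bL_p}; this yields the scalar coefficient of \eqref{bL-pZ-*} times $Z_\mbp^{I(i)i}$, plus $\varepsilon^{1/2}\bigl(G_{1,i}\tilde\Theta_i+g_2^\bot\varepsilon\tilde\Theta_i'\bigr)$, plus remainders of the claimed type. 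On the correction terms $\varepsilon^{1/2}\varphi_{1,i}\tilde\Theta_i$ and $\varepsilon^{1/2}\varphi_{2,i}\varepsilon\tilde\Theta_i'$, the contributions of $\varepsilon\mbL_1$ and $\varepsilon^2\mbL_{\geq2}$ are of order $\varepsilon^{3/2}$, so only $\mbL_0=\mathcal L^2$ contributes at the governing order; using $\Delta_{s_\mbp}\tilde\Theta_i=-\beta_{\mbp,i}^2\tilde\Theta_i$ and $\Delta_{s_\mbp}\tilde\Theta_i'=-\beta_{\mbp,i}^2\tilde\Theta_i'$ (from $\tilde\Theta_i'=\pm\beta_{\mbp,i}\tilde\Theta_{i\mp1}$), one finds $\mathcal L(\varphi\tilde\Theta_i)=(\mathrm L_0+\varepsilon^2\beta_{\mbp,i}^2)\varphi\tilde\Theta_i$ up to $\varepsilon$-weighted $s_\mbp$-derivative terms, and likewise with $\varepsilon\tilde\Theta_i'$ in place of $\tilde\Theta_i$, whence $\mbL_0$ applied to $\varphi_{1,i}\tilde\Theta_i$ (resp. $\varphi_{2,i}\varepsilon\tilde\Theta_i'$) equals $(\mathrm L_0+\varepsilon^2\beta_{\mbp,i}^2)^2\varphi_{1,i}\tilde\Theta_i$ (resp. $(\mathrm L_0+\varepsilon^2\beta_{\mbp,i}^2)^2\varphi_{2,i}\varepsilon\tilde\Theta_i'$) up to the stated remainders. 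By the homological equations $(\mathrm L_0+\varepsilon^2\beta_{\mbp,i}^2)^2\varphi_{1,i}=\La_{I(i)i}^2\varphi_{1,i}-G_{1,i}$ and analogously for $\varphi_{2,i}$, so the $-G_{1,i}\tilde\Theta_i$ and $-g_2^\bot\varepsilon\tilde\Theta_i'$ pieces cancel the obstruction, the $\La_{I(i)i}^2\varphi_{k,i}$ pieces recombine with $\La_{I(i)i}^2Z_\mbp^{I(i)i}$ into $\La_{I(i)i}^2Z_{\mbp,*}^{I(i)i}$, and the $\varepsilon$-order scalar likewise reattaches to $Z_{\mbp,*}^{I(i)i}$ at the cost of an $O(\varepsilon^{3/2})$ error. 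What survives --- the exponentially small dressing errors, the $\varepsilon^{3/2}h_j\tilde\Theta_i$ and $\varepsilon^{3/2}h_j\varepsilon\tilde\Theta_i'$ terms, and the $D_{s_\mbp,2}$ and cross-derivative corrections generated by $\mathcal L^2$ acting on $\varphi_{k,i}\tilde\Theta_i$ and $\varphi_{k,i}\varepsilon\tilde\Theta_i'$ --- is then organized into the $\varepsilon^{3/2}\sum_{k=1}^4\bigl(\varepsilon^{k-1}\p_{s_\mbp}^k h_{3,k}\tilde\Theta_i+\varepsilon^{k-1}\p_{s_\mbp}^k h_{4,k}\varepsilon\tilde\Theta_i'\bigr)$ terms.

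I expect the main obstacle to be this last bookkeeping. One must confirm that the $s_\mbp$-derivatives produced when $\mathcal L^2$ (two $s_\mbp$-derivatives per factor) and the relatively compact operator $\mbL_{\geq2}$ act on the $\bm\gamma_\mbp''$-dependent corrections close at exactly four $s_\mbp$-derivatives, that the accompanying $\varepsilon$-weights are $\varepsilon^{k-1}$ for the $k$-th derivative (one power of $\varepsilon^2$ per $\Delta_{s_\mbp}$ being partly consumed by the $O(1)$ factors $\varepsilon\beta_{\mbp,i}$ arising from $\varepsilon\p_{s_\mbp}\tilde\Theta_i$), that each resulting coefficient is genuinely a function $h(z_\mbp,\bm\gamma_\mbp'')$ satisfying Notation~\ref{Notation-h} --- which uses the uniform bounds on $\bm\gamma_\mbp^{(k)}$, $k\le4$, from Lemma~\ref{lem-Gamma-p} --- and that each localized profile $h$, regarded as a function on $\Omega$, has $L^2(\Omega)$ norm $O(\sqrt\varepsilon)$, the $\sqrt\varepsilon$ coming from the Jacobian $\tilde{\mathrm J}\sim\varepsilon$ in $\dd x=\tilde{\mathrm J}\,\dd\tilde s_\mbp\,\dd z_\mbp$ together with exponential decay in $z_\mbp$. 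A secondary point requiring care is the uniform-in-$(\varepsilon,\mbp)$ invertibility of $(\mathrm L_0+\varepsilon^2\beta_{\mbp,i}^2)^2-\La_{I(i)i}^2$ on $\{\psi_{I(i)}\}^\bot$ for every $i\in\Sigma$, where the smallness of $\rho$ and the eigenvalue separation underlying \eqref{eq-wavenumgap} are exactly what is used.
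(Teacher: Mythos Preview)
Your proposal is correct and follows essentially the same approach as the paper: define the corrections $\varphi_{k,i}$ by inverting $(\mathrm L_0+\varepsilon^2\beta_{\mbp,i}^2)^2-\La_{I(i)i}^2$ on $\{\psi_{I(i)}\}^\bot$ against the $O(\varepsilon)$ obstruction identified in Lemma~\ref{lem-prop-bL_p}, then expand $\mbL Z_{\mbp,*}^{I(i)i}$ and collect remainders. Your explicit factorization $(\mathrm L_0-\la_{I(i)})(\mathrm L_0+\la_{I(i)}+2\varepsilon^2\beta_{\mbp,i}^2)$ is a clean way to verify solvability that the paper leaves implicit; one minor correction is that the uniform invertibility of the second factor relies on the spectral gap of $\mathrm L_0$ between $\la_0$, $\la_1=0$, and the rest of its spectrum, not on the wavenumber gap \eqref{eq-wavenumgap}.
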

%\vspace{-0.20in}
\begin{proof}
To establish the Lemma it suffices to construct $\varphi_{k,i}$  in the interior region as the dressing process incorporates only exponentially small errors. 
Using the expansion  \eqref{exp-bL+d} of $\mbL  $, we compute
\begin{equation}\label{eq-bLp-Z*}
\begin{aligned}
\mbL  Z_{\mbp, *}^{I(i)i}=&\mbL _{0}Z_{\mbp}^{I(i)i}+\varep \cdot \varep^{-1/2}\Big(\mbL_{1} (\psi_{I(i)}\tilde \Theta_i)+\mbL _{0} (\varphi_{1,i} \tilde \Theta_i)+\mbL_{ 0}(\varphi_{2,i}\varep \tilde \Theta_i')\Big)\\
&+\varep^{2}\cdot\varep^{-1/2}\left(\mbL_{1}(\varphi_{1,i}\tilde \Theta_i)+\mbL_{ 1}(\varphi_{2,i}\varep \tilde \Theta_i')+\varep^{1/2}\mbL_{\geq 2} Z_{\mathbf p, *}^{I(i)i}\right).
\end{aligned}
\end{equation}
The first term is calculated as in \eqref{bL0-Z}. Since $\mbL_{ 0}=\mathcal L ^2$, from \eqref{tTheta''} we see that
\begin{equation}\label{est-bLp-Z*-1}
\mbL_{ 0}(\varphi_{k,i}\tilde \Theta_i)=\left(\mrL_{0}+\varep^2 \beta_{\mbp,i}^2\right)^2\varphi_{k,i} \tilde \Theta_i+\left(\mathcal L^2-\left(\mrL_{0}+\varep^2 \beta_{\mbp,i}^2\right)^2 \right)\left(\varphi_{k,i}\tilde\Theta_i\right).
\end{equation}
We show that $\varphi_{k,i}=\varphi_{k,i}(z_{\mbp}, \bm \gamma_{\mbp}'')$ in the sense of Notation\,\ref{Notation-h}, and consequently, in \eqref{error-Lp}, bound the second term of \eqref{est-bLp-Z*-1}. 

It remains to determine $\varphi_{k, i}$ for which the $\varep$-order term in \eqref{eq-bLp-Z*} equals $\La_{I(i)i}^2(\varphi_{1,i}\Theta_i+\varphi_{2,i} \varep\Theta_i')$ to leading order.
 From \eqref{eq-bL{p,1}-G},  we define $\varphi_{k,i}(\cdot, \bm \gamma_\mbp'')$   as the $L^2(\mathbb R)$ solutions to
\begin{equation}\label{def-varphi}
\begin{aligned}
\Bigl( \left(\mrL_{0}+\varep^2 \beta_{\mbp,i}^2\right)^2-\La_{I(i)i}^2\Bigr)\varphi_{k,i}=-g_{k}^\bot(z, \bm \gamma_{\mbp}'')+\delta_{1I(i)} \Big(\left( \delta_{I(i)0}(\sigma S_1+\eta_d\la_0)+S_{2,I(i)}\La_{I(i)i}\right)\psi_{I(i)}-g^*(z)\Big).
\end{aligned}
\end{equation}
in the subspace perpendicular to $\psi_{I(i)}$.  The definition is well posed since \eqref{eq-bL{p,1}-G-1} and \eqref{eq-bL{p,1}-G-2} imply that the right-hand side of the identity is orthogonal to $\psi_{I(i)}$ in $L^2(\mathbb R)$.  Dressing these functions on $\Gamma_\mbp$, we extend $\varphi_{k,i}$ to $\Omega$. 
Applying \eqref{est-bLp-Z*-1}, identity \eqref{eq-bL{p,1}-G}  and \eqref{tTheta''} implies
\begin{equation*}
\begin{aligned}
&\mbL_{1}(\psi_{I(i)}\tilde \Theta_i)+\mbL _{0} (\varphi_{1,i}\tilde \Theta_i)+\mbL _{0} (\varphi_{2,i}\tilde\Theta_i)\\
&=\Big(\delta_{I(i)0}(\sigma S_1+\eta_d\la_0)+S_{2,I(i)}\La_{I(i)i}\Big)\psi_{I(i)}\tilde \Theta_i +\La_{I(i)i}^2 (\varphi_{1,i}\tilde\Theta_i+\varphi_{2,i}\tilde \varep\Theta_i')\\
&\quad
+\left(\mathcal L ^2-\left(\mrL_{0}+\varep^2 \beta_{\mbp,i}^2\right)^2\right)(\varphi_{1,i}\tilde \Theta_i+\varphi_{2,i}\varep\tilde\Theta_i') \\
&\quad +\varep \left(h_1(z_{\mbp}, \bm \gamma_{\mbp}'')\tilde \Theta_i+h_2(z_{\mbp},\bm \gamma_{\mbp}'') \varep\tilde\Theta_i'\right).
\end{aligned}
\end{equation*}
Returning this expansion to \eqref{eq-bLp-Z*}, we obtain 
\begin{equation}\label{bL-pZ-*-1}
\begin{aligned}
\mbL  Z_{\mathbf p, *}^{I(i)i} =&\Big(\La_{I(i)i}^2 +\varep\delta_{I(i)0} (\sigma S_1+\eta_d \la_0)+S_{2,I(i)}\La_{I(i)i} \Big)Z_{\mathbf p, *}^{I(i)i}+\varep^{2}\mbL_{1}\Big(\tilde\varphi_{1,i}\tilde \Theta_i\\
&+\tilde\varphi_{2,i}\varep \tilde \Theta_i'\Big)+\varep^2\mbL_{\geq 2}Z_{\mathbf p, *}^{I(i)i} +\varep^2\cdot \varep^{-1/2} \left(h_1(z_{\mbp}, \bm \gamma_{\mbp}'')\tilde \Theta_i+h_2(z_{\mbp},\bm \gamma_{\mbp}'') \varep\tilde \Theta_i'\right)\\
&+\varep\left(\mathcal L ^2 -\left(\mrL_{0}+\varep^2 \beta_{\mbp,i}^2\right)^2\right)(\tilde\varphi_{1,i}\tilde \Theta_i+\tilde\varphi_{2,i}\varep\tilde\Theta_i').
\end{aligned}
\end{equation}
Expanding the operators $\mbL_{1}$ and $\mbL_{\mathbf p,\geq2}$, and using  \eqref{Theta'},  we write the second and third terms as
\begin{equation*}
\mbL_{1}\left(\tilde\varphi_{1,i}\tilde \Theta_i+\tilde\varphi_{2,i}\varep\tilde \Theta_i'\right)+\mbL_{  \geq 2} Z_{\mbp, *}^{I(i)i}=\varep^{-1/2}\left(h_1(z_{\mbp}, \bm \gamma_{\mbp}'')\tilde \Theta_i+h_2(z_{\mbp},\bm \gamma_{\mbp}'')\varep\tilde \Theta_i'\right),
\end{equation*}
where $h_1$ and $h_2$ are new general functions. The conclusion follows from this identity, \eqref{bL-pZ-*-1},  and the relation 
\beq\label{error-Lp}
\begin{aligned}
&\Big(\mathcal L ^2 -\left(\mrL_{0}+\varep^2 \beta_{\mbp,i}^2\right)^2\Big)(\tilde\varphi_{1,i}\Theta_i+\tilde\varphi_{2,i}\varep\Theta_i')\\
=&\sum_{k=1}^4\left(\varep^k\p_{s_\mbp}^k h_{3,k}(z_{\mbp}, \bm \gamma_{\mbp}'')\Theta_i+\varep^k\p_{s_{\mbp}}^k h_{4,k}(z_{\mbp}, \bm \gamma_{\mbp}'')\varep\Theta_i'\right).
\end{aligned}
\eeq
Here we note the dependence of $\varphi_{k,i}$  on $\tilde s_\mbp$ are  uniform on $i$ by its definition from \eqref{def-varphi} and hence we omit the dependence of $h$s on $i$ by abusing notation.
\end{proof}
Note the dependence of $\varphi_{k,j}$ on $s_\mbp$ are uniform in $j$, we may use this fact without further  mention. 

The modified approximate slow spaces are defined as the spans of the modified basis functions of \eqref{eq-def-Z*}: 
\begin{equation}\label{def-Z*}
\mathcal Z_{*}(\mbp, \rho):=\mathcal Z_{*}^0(\mbp, \rho)\cup \mathcal Z_{*}^1(\mbp, \rho) \qquad  \hbox{with}\qquad \mathcal Z_{*}^k(\mbp, \rho)=\mathrm{span}\left\{Z_{\mbp, *}^{I(i)i}, i\in \Sigma_k\right\}.
\end{equation} 
Similarly as we used for the leading order slow spaces, we utilize $\mcZ_*^k, \mcZ_*$ to simplify the notation when there is no ambiguity.  %The $L^2(\Omega)$ orthogonal projection onto the subspaces $\mathcal Z_*^k$  are denoted by $\Pi_{\mathcal Z_*^k}$ for $k=0, 1$. 
When restricted to $\mathcal Z_*$ the bilinear form of the full linearized operator $\Pi_0\mbL  \big|_{\mathcal Z_*}$, induces an $N\times N$ matrix $\mathbb M^*$ with entries
 \begin{equation}\label{def-M+*}
 \mathbb M_{ij}^*=\left<\Pi_0 \mbL   Z_{\mathbf p, *}^{I(i)i}, Z_{\mathbf p, *}^{I(j)j}\right>_{L^2}.
 \end{equation}
%The definition of $\mathbb M^*$ is different to $\mathbb M$ by two folds: first, here we use the whole linearized operator $\Pi_0\mbL   $ of the flow instead of $\mbL  $; second, the linearized operator is restricted on $\mathcal Z_*$ instead of $\mathcal Z$.  Similarly 
By construction, $\varphi_{I(i), i}$ are perpendicular to $\psi_{I(i)}$, see \eqref{ortho-varphi}. Following the arguments that establish \eqref{ortho-Z}  it is easy to verify  that under assumption \eqref{A-00}
\begin{equation}\label{ortho-Z*}
\begin{aligned}
&\left<Z_{\mathbf p, *}^{I(i)i}, Z_{\mathbf p, *}^{I(j)j}\right>_{L^2}= \left\{ \displaystyle\begin{array}{cc}\left(1+\mrp_0\right)\delta_{ij}+O\left(\varep^2, \varep^{2}\|\hat{\mathbf p}\|_{\mbV_2^2}\right)\mbE_{ij},  &I(i)=I(j);\\[2pt]
 \qquad O\left(\varep^2,  \varep^2\|\hat{\mathbf p}\|_{\mbV_4^2}\right)\mbE_{ij}, \qquad\qquad \qquad  &I(i)\neq I(j).
  \end{array}\right.
  \end{aligned}
\end{equation}  
%We used \eqref{est-Theta-2} to derive the second estimate.
%\begin{equation}\label{ortho-Z*-1}
%\left<Z_{\mathbf p, *}^{I(i)i}, Z_{\mathbf p, *}^{I(j)j}\right>_{L^2}=O(\varep^2\|\hat{\mathbf p}\|_{\mbV_4^2})\mbE_{ij}.
%\end{equation}  
From the definition of the zero-mass projection $\Pi_0$, the identity \eqref{def-M+*} can be written as
\begin{equation}\label{def-M*ij-1}
\mathbb M_{ij}^*=\left<\mbL   Z_{\mathbf p, *}^{I(i)i}, Z_{\mathbf p, *}^{I(j)j}\right>_{L^2}-\frac{1}{|\Omega|}\int_\Omega \mbL   Z_{\mathbf p, *}^{I(i)i}\dd   x\int_\Omega Z_{\mathbf p, *}^{I(j)j}\dd   x.
\end{equation}
To estimate $\mathbb M^*_{ij}$, use the following Corollary to control the mass of $Z_{\mbp, *}^{I(j)j}$ and its image under $\mbL$.
\begin{cor} Under assumption \eqref{A-00} there exists a unit vector $\bm e=(e_j)_{j\in \Sigma}$ such that for $j\in \Sigma$,
\begin{equation}\label{est-Z*-mass}
\int_\Omega Z_{\mathbf p, *}^{I(j)j} \dd   x%=C\varep^{3/2} \left(\left<h_1(\bm \gamma_{\mbp}'')), \tilde \Theta_j\right>_{\mathrm J_{\mathbf p,0}}+ \left<h_2(\bm \gamma_{\mbp}'')),\varep \tilde \Theta_j'\right>_{\mathrm J_{\mathbf p,0}}\right)
=O(\varep^{3/2})\, e_j.
\end{equation}
Furthermore, 
\beq\label{mass-bL-pZ-*}
\int_\Omega \mbL   Z_{\mbp,*}^{I(j)j}\dd x=O\left(\varep^{3/2}(1+\|\hat{\mbp}\|_{\mbV_3^2})\right)e_j.
\eeq
\end{cor}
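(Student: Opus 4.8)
The plan is to compute both masses directly from the explicit form of $Z_{\mbp,*}^{I(j)j}$ given in \eqref{eq-def-Z*} and of $\mbL Z_{\mbp,*}^{I(j)j}$ given in \eqref{bL-pZ-*} of Lemma\,\ref{lem-def-Z*}, exploiting in each case the exponential localization near $\Gamma_\mbp$ (so that integration may be carried out in the local coordinates $(s_\mbp,z_\mbp)$ up to an $O(e^{-\ell\nu/\varep})$ error) together with the $z_\mbp$-parity of the eigenmodes $\psi_{I(j)}$ and the orthogonality of the Laplace--Beltrami eigenmodes $\{\tilde\Theta_j\}$. First I would establish \eqref{est-Z*-mass}. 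Writing $\dd x=\tilde{\mrJ}\dd\tilde s_\mbp\dd z_\mbp$ with $\tilde{\mrJ}=\varep(1-\varep z_\mbp\kappa_\mbp)$, the mass of $\varepsilon^{-1/2}\psi_{I(j)}\tilde\Theta_j$ splits into $\int\psi_{I(j)}\dd z_\mbp\cdot\int\tilde\Theta_j\dd\tilde s_\mbp$ plus an $O(\varep)$ curvature correction $\varep\int\kappa_\mbp\tilde\Theta_j\dd\tilde s_\mbp\int z_\mbp\psi_{I(j)}\dd z_\mbp$. For $j\geq 3$ the factor $\int_{\msI_\mbp}\tilde\Theta_j\dd\tilde s_\mbp=0$ kills the leading term (and Lemma\,\ref{Notation-e_i,j} controls the remaining terms as $O(\varep^{3/2})e_j$ after restoring the $\varep^{-1/2}$ normalization and the factor $\varep$ from $\tilde{\mrJ}$); for $j=0$ one uses that $\psi_0$ is even so $\int z_\mbp\psi_0\dd z_\mbp=0$, and $\int\psi_0\dd z_\mbp$ is an $O(1)$ constant, so the mass is $O(\varep^{1/2})\cdot\varep=O(\varep^{3/2})$; for $j=1,2$ (which belong to $\Sigma_1$, where $\psi_{I(j)}=\phi_0'/m_1$ is odd) the term $\int\psi_1\dd z_\mbp=0$ vanishes identically. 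The corrector pieces $\varep\tilde\varphi_{k,j}\tilde\Theta_j$ and $\varep\tilde\varphi_{2,j}\varep\tilde\Theta_j'$ in \eqref{eq-def-Z*} each carry an extra $\varep$, their $z_\mbp$-integrals are $O(1)$ by the $L^\infty$-bounds of Notation\,\ref{Notation-h}, and $\int\tilde\Theta_j'\dd\tilde s_\mbp=0$ by \eqref{Theta'}; combining these with the $\varep^{-1/2}$ normalization and the $\varep$ from the Jacobian yields the uniform $O(\varep^{3/2})e_j$ bound.

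Next I would establish \eqref{mass-bL-pZ-*} using the representation \eqref{bL-pZ-*}. The leading term is a scalar multiple $\Lambda_{I(j)j}^2+\varep\delta_{I(j)0}(\sigma S_1+\eta_d\la_0)+S_{2,I(j)}\Lambda_{I(j)j}$ times $Z_{\mbp,*}^{I(j)j}$, whose mass is $O(\varep^{3/2})e_j$ by \eqref{est-Z*-mass}; since that prefactor is $O(1)$ uniformly in $\mbp\in\cD_\delta$ (as $\Lambda_{I(j)j}^2\leq\rho$ and $\sigma$ is bounded via Lemma\,\ref{lem-sigma}), this contribution is $O(\varep^{3/2})e_j$. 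The remainder term $\varep^{3/2}(h_1\tilde\Theta_j+h_2\varep\tilde\Theta_j')$ has $z_\mbp$-integrals that are $O(1)$ by the $L^\infty$ estimate \eqref{est-h-p-V-2+2} of Lemma\,\ref{lem-e_ij}, and integrating the angular part against $1$ using Lemma\,\ref{Notation-e_i,j} (together with $\int\tilde\Theta_j'\dd\tilde s_\mbp=0$) gives a contribution of size $\varep^{3/2}\cdot\varep\cdot O(1)e_j$ after the Jacobian factor, hence $O(\varep^{5/2})e_j$, which is subdominant. The genuinely $\mbp$-sensitive piece is the sum $\varep^{3/2}\sum_{k=1}^4(\varep^{k-1}\p_{s_\mbp}^k h_{3,k}\,\tilde\Theta_j+\varep^{k-1}\p_{s_\mbp}^k h_{4,k}\,\varep\tilde\Theta_j')$: here I would integrate by parts in $\tilde s_\mbp$ to move the $s_\mbp$-derivatives off the $h$'s, use \eqref{est-h-gamma''-de} of Lemma\,\ref{lem-e_ij} to bound $\|\varep^{k-1}\p_{s_\mbp}^k h\|_{L^\infty}\lesssim 1+\|\hat\mbp\|_{\mbV_3}$, then Lemma\,\ref{Notation-e_i,j} to control the angular integral, producing $\varep^{3/2}(1+\|\hat\mbp\|_{\mbV_3})e_j$; finally $\|\hat\mbp\|_{\mbV_3}\lesssim\|\hat\mbp\|_{\mbV_4^2}$ by Lemma\,\ref{lem-est-V} converts this into the stated $O(\varep^{3/2}(1+\|\hat\mbp\|_{\mbV_3^2}))e_j$ form.

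The main obstacle will be bookkeeping the $\varepsilon$-powers correctly through the $\varepsilon^{-1/2}$ normalization of the scaled eigenmodes, the $\varepsilon$ from the Jacobian $\tilde{\mrJ}$, and the $\varepsilon^{k-1}$ scalings inside the derivative terms, while keeping track of which contributions vanish by parity of $\psi_{I(j)}$ in $z_\mbp$ and which by $\int_{\msI_\mbp}\tilde\Theta_j\dd\tilde s_\mbp=\delta_{j0}\sqrt{2\pi R_\mbp}$; the case $j=0$ must be handled separately throughout since it is the only one where the angular integral of $\tilde\Theta_j$ does not vanish, and there one leans on the $z_\mbp$-parity of $\psi_0$ and on the fact that the odd part of the error functions integrates to zero in $z_\mbp$. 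Once the $\varepsilon$-accounting is pinned down, every individual estimate is an immediate application of the localization, the parity observations, and Lemmas\,\ref{lem-e_ij} and \ref{Notation-e_i,j}, so the proof is short; I would simply remark that the computations are routine given these inputs and omit the most tedious ones.
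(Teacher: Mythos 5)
Your overall strategy (compute both masses in the local coordinates, kill the leading terms by parity in $z_\mbp$ and by $\int_{\msI_\mbp}\tilde\Theta_j\,\dd\tilde s_\mbp=0$, control the corrections with Lemma\,\ref{lem-e_ij} and Lemma\,\ref{Notation-e_i,j}, then feed \eqref{bL-pZ-*} and \eqref{est-Z*-mass} into the second estimate) is the same as the paper's, but your treatment of the index $j=0$ is a genuine error. Since $\la_0<0$ is $O(1)$, the condition $(\la_0+\varep^2\beta_{\mbp,j}^2)^2\leq\rho$ forces every $j\in\Sigma_0$ to be of size $\sim\varep^{-1}$, see \eqref{est-N0}; hence $0\notin\Sigma_0$, rather $0\in\Sigma_1$, so $I(0)=1$ and the transverse profile multiplying $\tilde\Theta_0$ is $\psi_1=\phi_0'/m_1$, which is odd --- exactly as for $j=1,2$. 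Your claim that for $j=0$ the profile is the even ground state $\psi_0$ with $\int\psi_0\,\dd z_\mbp=O(1)$ would make the leading contribution $\varep^{1/2}\int_{\mbR_{2\ell}}\psi_0\,\dd z_\mbp\int_{\msI_\mbp}\tilde\Theta_0\,\dd\tilde s_\mbp=O(\varep^{1/2})$, contradicting \eqref{est-Z*-mass}; the extra factor of $\varep$ you invoke to get $O(\varep^{1/2})\cdot\varep=O(\varep^{3/2})$ double-counts the Jacobian, whose $\varep$ is already absorbed in the $\varep^{1/2}$ prefactor, and the parity identity $\int z_\mbp\psi_0\,\dd z_\mbp=0$ removes only the curvature correction, not the leading term. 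The estimate survives precisely because $I(0)=1$: for all $j\in\Sigma\setminus\{0\}$ the angular integral vanishes, while for $j=0$ the transverse integral vanishes by oddness of $\psi_1$; this is the paper's argument and is the repair your proof needs.

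Two smaller points on \eqref{mass-bL-pZ-*}. The embedding $\|\hat\mbp\|_{\mbV_3}\lesssim\|\hat\mbp\|_{\mbV_4^2}$ does not convert your bound into the stated $\mbV_3^2$ form (that step is a non sequitur); the clean route is to skip the $L^\infty$ bound and apply the $L^2(\msI_\mbp)$ estimate in \eqref{est-h-gamma''-de}, $\|\varep^{k-1}\nabla_{s_\mbp}^k h\|_{L^2(\msI_\mbp)}\lesssim 1+\|\hat\mbp\|_{\mbV_3^2}$, together with \eqref{def-e-i}, which produces the $\mbV_3^2$ norm directly --- this is exactly why the corollary carries $\mbV_3^2$ rather than $\mbV_2^2$. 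Moreover the integration by parts you propose is unnecessary and risky: moving $\p_{s_\mbp}^k$ onto $\tilde\Theta_j$ costs a factor $\beta_{\mbp,j}^k\sim\varep^{-k}$ when $j\in\Sigma_0$ and also differentiates the Jacobian, so the direct estimate is both simpler and sharper. With these repairs your argument coincides with the paper's.
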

\begin{proof}
With $Z_{\mbp, *}^{I(j)j }$ introduced in \eqref{eq-def-Z*}, we have
\begin{equation*}
\begin{aligned}
\int_\Omega Z_{\mbp, *}^{0j}\dd x=&\varep^{1/2}\int_{\mathbb R_{2\ell}} \psi_{I(j)}(z_\mbp) \dd z_\mbp \int_{\msI_\mbp} \tilde \Theta_j\dd  \tilde s_{\mbp}\\
&+  \varep^{3/2}\int_{\mathbb R_{2\ell}}\int_{\msI_\mbp} \left(\varphi_{1,j}\tilde \Theta_j+ \varphi_{2,j} \varep\tilde \Theta_j'\right)(1-\varep z_{\mbp}\kappa_{\mbp})\dd \tilde s_\mbp \mrd z_{\mbp}.
\end{aligned}
\end{equation*}
The integration of $\tilde \Theta_j$ with respect to $\tilde s_\mbp$ is zero for $j\in \Sigma\setminus\{0\}$ while  for $j=0$, $\psi_{I(j)} =\psi_1 $ has odd parity in $z_\mbp$. We deduce that the first term on the right-hand side is zero.   After integrating with respect to $z_\mbp$,  the second integral takes the form
\begin{equation}
\begin{aligned}
\varep^{3/2}\left(\int_{\msI_\mbp} h_1(\bm \gamma_{\mbp}'') \tilde \Theta_j\dd \tilde s_\mbp+ \int_{\msI_\mbp} h_2(\bm \gamma_{\mbp}'')\varep \tilde \Theta_j'\dd \tilde s_\mbp\right).
\end{aligned}
\end{equation}
The estimate \eqref{est-Z*-mass} follows from \eqref{def-e-i} in Lemma \ref{Notation-e_i,j} and \eqref{est-h-p-V-2+2}.   
   To derive \eqref{mass-bL-pZ-*} we employ Lemma \ref{lem-def-Z*} and the estimate  \eqref{est-Z*-mass}.
The error bound involves the $\mathbb V_3^2$-norm instead of  the $\mbV_2^2$-norm of $\hat\mbp$ because there is an additional higher derivative acting on $h=h(\bm \gamma_\mbp'')$ 
as shown in \eqref{bL-pZ-*}. 
\end{proof}

  Applying the orthogonality and mass estimates  \eqref{ortho-Z*} and \eqref{est-Z*-mass} to  \eqref{def-M*ij-1} yields the expansion of $\mathbb M_{ij}^*$. This principle result gives a sharp characterization of the behaviour of the linearized operator on the modified slow space, which we summarize below.

  \begin{prop} 
  \label{prop-M*} 
  For $i,j\in \Sigma$,  the $\mathbb M^*$ with components $\mathbb M_{ij}^*$ defined in \eqref{def-M+*} can be approximated by
 \begin{equation}\label{est-bM*ij}
\mathbb M_{ij}^*= \left\{\begin{aligned}
 &(1+\mrp_0)\left( \La_{0i}^2+\varep(\sigma S_1 +\eta_d\la_0) +\varep S_{2,0}\La_{0i}\right)+O(\varep^2)& \hbox{if $i=j, I(i)=0$};\\
 &\qquad\qquad  (1+\mrp_0)\left(\La_{1i}^2+\varep S_{2,1}\La_{1i}\right) +O(\varep^2)&\hbox{if $i=j, I(i)=1$};\\
 &\qquad\qquad\qquad    O\!\left(\varep^2\right)\mbE_{ij}
 &\hbox{if $i\neq j, I(i)=I(j)$};\\
 &\qquad  \qquad   O\!\left(\varep^2, \varep^2\|\hat\mbp\|_{\mbV_4^2}\right)\mbE_{ij}
 &\hbox{if $ I(i)\neq I(j)$},
  \end{aligned}\right. 
 \end{equation}
where the matrix $\mathbb E$ is norm-one as an operator from $l^2(\mbR^{N})$ to $l^2(\mbR^{N})$.
 \end{prop}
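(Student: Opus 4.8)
The plan is to start from the decomposition \eqref{def-M*ij-1} of $\mathbb M^*_{ij}$ into the bilinear term $\langle\mbL Z_{\mbp,*}^{I(i)i},Z_{\mbp,*}^{I(j)j}\rangle_{L^2}$ and the mass-correction term $-|\Omega|^{-1}\int_\Omega\mbL Z_{\mbp,*}^{I(i)i}\,\mrd x\int_\Omega Z_{\mbp,*}^{I(j)j}\,\mrd x$, and to estimate each using the near-invariance identity \eqref{bL-pZ-*} of the modified basis, the approximate orthonormality \eqref{ortho-Z*}, the mass bounds \eqref{est-Z*-mass}--\eqref{mass-bL-pZ-*}, and the estimates of Lemma\,\ref{lem-Theta}.

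First I would dispense with the mass-correction term. Combining \eqref{est-Z*-mass} and \eqref{mass-bL-pZ-*}, it equals $O\big(\varep^3(1+\|\hat\mbp\|_{\mbV_3^2})\big)e_ie_j$, and since $\varep\|\hat\mbp\|_{\mbV_3^2}\lesssim 1$ on $\cD_\delta$ by Lemma\,\ref{lem-est-V} this is an $O(\varep^2)\mbE_{ij}$ contribution (the entries of a rank-one matrix of operator norm one), which is absorbed in the claimed error in every case. Next I would substitute \eqref{bL-pZ-*}, written as $\mbL Z_{\mbp,*}^{I(i)i}=\mu_iZ_{\mbp,*}^{I(i)i}+\varep^{3/2}\mathcal R_i$, where $\mu_i:=\La_{I(i)i}^2+\varep\big(\delta_{I(i)0}(\sigma S_1+\eta_d\la_0)+S_{2,I(i)}(\sigma)\La_{I(i)i}\big)$ is the coefficient of $Z_{\mbp,*}^{I(i)i}$ appearing there — bounded uniformly in $\mbp$ since $|\La_{I(i)i}|\le\rho^{1/2}$, $\sigma=O(1)$ on $\cD_\delta$ by Lemma\,\ref{lem-sigma}, and $S_1,S_{2,k}(\sigma)$ are bounded — and $\varep^{3/2}\mathcal R_i$ is the remainder, a sum of products of the $z_\mbp$-localized functions $h(z_\mbp,\bm\gamma_\mbp'')$ of Notation\,\ref{Notation-h} (and their $\varep$-scaled $s_\mbp$-derivatives up to fourth order) with $\tilde\Theta_i$ and $\varep\tilde\Theta_i'$. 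The principal contribution $\mu_i\langle Z_{\mbp,*}^{I(i)i},Z_{\mbp,*}^{I(j)j}\rangle_{L^2}$ is read directly off \eqref{ortho-Z*}: for $i=j$ it equals $(1+\mrp_0)\mu_i+O(\varep^2)$ (using $\|\hat\mbp\|_{\mbV_2^2}\lesssim 1$), which upon expanding $\mu_i$ is precisely the first two cases of \eqref{est-bM*ij}; for $i\neq j$ with $I(i)=I(j)$ it is $O(\varep^2)\mbE_{ij}$; and for $I(i)\neq I(j)$ it is $O(\varep^2,\varep^2\|\hat\mbp\|_{\mbV_4^2})\mbE_{ij}$ — the case where the wave-number gap has already entered, through \eqref{est-Theta-2}, in the derivation of \eqref{ortho-Z*}.

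It remains to estimate the remainder contribution $\varep^{3/2}\langle\mathcal R_i,Z_{\mbp,*}^{I(j)j}\rangle_{L^2}$. I would expand $Z_{\mbp,*}^{I(j)j}$, integrate out $z_\mbp$ against the localized factors, and then integrate by parts in $\tilde s_\mbp$ to transfer the $s_\mbp$-derivatives onto $\tilde\Theta_j$ and onto the residual $\bm\gamma_\mbp''$-dependence, using \eqref{Theta'}--\eqref{Theta+(k)}, the uniform bound $\varep\beta_{\mbp,i}=O(1)$, and the derivative estimates of Lemma\,\ref{lem-e_ij}. Applying Lemma\,\ref{lem-Theta} then closes the estimate: the generic bound \eqref{est-Theta} yields an $O(\varep^2)\mbE_{ij}$ contribution, while for $I(i)\neq I(j)$ the gap estimate \eqref{est-Theta-2} gains an extra power of $\varep$ at the cost of a factor $(1+\|\hat\mbp\|_{\mbV_4^2})$, producing an $O(\varep^2,\varep^2\|\hat\mbp\|_{\mbV_4^2})\mbE_{ij}$ contribution. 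Summing the principal and remainder parts with the mass correction gives \eqref{est-bM*ij}. Throughout, the matrix (not merely entrywise) bounds are preserved because Lemma\,\ref{lem-Theta}, Lemma\,\ref{Notation-e_i,j} and \eqref{ortho-Z*} all furnish a single norm-one matrix $\mbE$ and $\{Z_{\mbp,*}^{I(j)j}\}$ is approximately orthonormal.

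I expect the main obstacle to be the off-diagonal block $I(i)\neq I(j)$. A naive estimate of both the overlap $\langle Z_{\mbp,*}^{I(i)i},Z_{\mbp,*}^{I(j)j}\rangle$ and the remainder pairing gives only $O(\varep)$; the improvement to $O(\varep^2)$ (up to the $\|\hat\mbp\|_{\mbV_4^2}$ weight) is recovered only by first using parity in $z_\mbp$ together with the orthogonality $\langle\psi_0,\psi_1\rangle_{L^2(\mathbb R)}=0$ to annihilate the would-be $O(1)$ piece, and then invoking the asymptotically large in-plane wave-number gap \eqref{eq-wavenumgap}, which feeds \eqref{est-Theta-2}. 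The accompanying bookkeeping — checking that every loss of a power of $\|\hat\mbp\|_{\mbV_k^2}$ incurred by differentiating $h(z_\mbp,\bm\gamma_\mbp'')$ or $\kappa_\mbp$ is compensated by an accompanying power of $\varep$, so that the four $s_\mbp$-derivative error terms in \eqref{bL-pZ-*} all remain $O(\varep^2)$ after the integrations by parts — is the most laborious part, though routine given the embeddings of Lemma\,\ref{lem-est-V} and the derivative bounds of Lemmas\,\ref{lem-Gamma-p} and \ref{lem-e_ij}.
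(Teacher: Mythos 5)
Your proposal is correct and follows essentially the same route as the paper, whose own justification of Proposition\,\ref{prop-M*} is precisely the one-line application of the decomposition \eqref{def-M*ij-1} together with the near-invariance \eqref{bL-pZ-*}, the approximate orthonormality \eqref{ortho-Z*}, and the mass bounds \eqref{est-Z*-mass}--\eqref{mass-bL-pZ-*}; your treatment of the remainder via Lemma\,\ref{lem-Theta} and of the cross-block case via parity plus the wave-number gap \eqref{est-Theta-2} is exactly the intended argument. You correctly identify the bookkeeping of the $\varep^{k-1}\p_{s_\mbp}^k h$ terms as the only delicate point, which is consistent with the level of detail the paper itself supplies.
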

We decompose $\mathbb M^*$ into a block structure corresponding to the pearling and meandering spaces,
\begin{equation}
\mathbb M^*=\left(\begin{array}{cc}
\mathbb M^*(0,0) & \mathbb M^*(0,1)\\
\mathbb M^*(1,0) &\mathbb M^*(1,1)
\end{array}\right),\qquad \mathbb M_{ij}^*(k,l)=\mathbb M^*_{ij} \quad \hbox{for}\;\;\,i\in \Sigma_k, j\in \Sigma_l.
\end{equation}
%We care about the eigenvalues  of the linearized operator restricted to the approximate pearling slow space $\mathcal Z_*^0\subset Z_*$, i.e. $\Pi_0\mbL   \big|_{\mathcal Z_*^0}$, which is  expressed by the above  $N_0\times N_0$ matrix $\mathbb M^*(0,0)$. The following Lemma \ref{lem-eigen-bM} clarifies the eigenvalues of the big size matrix $\mathbb M^*(0,0)$ up to oder $\varep$ provided only that $\rho$ is suitably small independent of $\varep$.
%  By the definition of $\mathbb M^*(0,0)$, utilizing \eqref{est-bM*ij} implies
%\begin{equation}\label{bM-ij}
%\mathbb M^*_{ij}(0,0)
%=\left\{\begin{aligned}
% &(1+\mrp_0\Theta_0/R_0)\La_{0 i}^2+\varep(\sigma S_1+\eta_d\la_0) +C\varep\rho^{1/2}, &\hbox{if $ i=j$;}\\
%  &\quad O\left(\varep^2+\varep^{1/2}\|\hat{\mathbf p}\|_{\mbV_2^2}\right)\mbE_{ij},
% &\hbox{if $i\neq j$.}
%\end{aligned}\right.
%\end{equation}
%\bQues{it does not seem that $\hat\mbp$ enters into the $N_0\times N_0$ subblock. Why must this term be small to establish the diagonal dominance of the sub-operator?}
Since matrix $\mathbb E$ is norm-one, the  $N_0\times N_0$ subblock matrix $\mathbb M^*(0,0)$ is diagonally  dominant. %if  $\varep \|\hat\mbp\|_{\mbV_4^2}\ll 1$.  
In particular, under the {\sl pearling stability condition} 
\beq\label{cond-P-stab}
(\mathbf{PSC}) \qquad  \sigma S_1+\eta_d \la_0>0,
\eeq
$\mbM^*(0,0)$ is positive definite.  This pearling-mode coercivity is formulated in the following Lemma.
\begin{lemma}\label{lem-eigen-bM}
Assume $\sigma(\mbp)$ given by \eqref{def-hatla} is uniformly bounded, independent of $\varep>0$, for all  $\mbp\in\cD_\delta$. Then there exists $\varep_0$ sufficiently small such that for all $\varep\in(0,\varep_0)$ and for all $\mbp\in\cD_\delta$ for which the pearling stability condition \eqref{cond-P-stab} holds, we have %uniformly in $\varep\ll1$ and $\rho\ll1$. If in addition 
%$\varep \|\hat{\mathbf p}\|_{\mbV_4^2}\ll 1$ and $\rho\ll 1$, 
\begin{equation*}
\mbq^T \mbM^*(0,0) \mbq \geq  \frac{\varep}{4} (\sigma S_1+\eta_d \la_0) \|\mbq\|_{l^2}^2,\quad \forall \mbq \in l^2(\mbR^{N_0}).
\end{equation*}
\end{lemma}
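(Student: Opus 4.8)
The plan is to show that $\mathbb{M}^*(0,0)$ is diagonally dominant as a self-adjoint $N_0\times N_0$ matrix, with diagonal entries bounded below by $\tfrac{\varep}{2}(\sigma S_1+\eta_d\la_0)$ up to lower-order corrections, and off-diagonal entries that are summable and of strictly smaller order, so that a Gershgorin-type estimate yields the claimed coercivity. First I would invoke Proposition~\ref{prop-M*} to read off the structure of $\mathbb{M}^*(0,0)$: for $i=j\in\Sigma_0$ the diagonal entry is $(1+\mrp_0)\big(\Lambda_{0i}^2+\varep(\sigma S_1+\eta_d\la_0)+\varep S_{2,0}\Lambda_{0i}\big)+O(\varep^2)$, while for $i\neq j$ with $i,j\in\Sigma_0$ the entry is $O(\varep^2)\mathbb{E}_{ij}$. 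Since $\mrp_0>-1/2$ on $\cD_\delta$, we have $1+\mrp_0\geq 1/2$; since $\Lambda_{0i}^2\geq0$ and $\Lambda_{0i}\geq0$ (recall $\La_{0i}=|\la_0+\varep^2\beta_{\mbp,i}^2|$, but note for $i\in\Sigma_0$ one must check the sign convention — here $\La_{0i}^2=(\la_0+\varep^2\beta_{\mbp,i}^2)^2\geq0$ and the relevant quantity $S_{2,0}\La_{0i}$ is $O(\varep^0)$ times a nonnegative number bounded by $\rho^{1/4}$), the term $\varep S_{2,0}\Lambda_{0i}$ is $O(\varep\rho^{1/4})$ and hence dominated, for $\rho$ small, by the $\varep(\sigma S_1+\eta_d\la_0)$ term whenever the (\textbf{PSC}) holds; thus each diagonal entry is at least $(1+\mrp_0)\cdot\tfrac{3}{4}\varep(\sigma S_1+\eta_d\la_0)\geq \tfrac{3}{8}\varep(\sigma S_1+\eta_d\la_0)$ for $\varep,\rho$ small.

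Next I would control the off-diagonal contribution. For any $\mbq\in l^2(\mbR^{N_0})$, write $\mbq^T\mathbb{M}^*(0,0)\mbq = \sum_i \mathbb{M}^*_{ii}\mrq_i^2 + \sum_{i\neq j}\mathbb{M}^*_{ij}\mrq_i\mrq_j$. The diagonal sum is bounded below by $\tfrac{3}{8}\varep(\sigma S_1+\eta_d\la_0)\|\mbq\|_{l^2}^2$ by the previous step. For the off-diagonal sum, the entries are $O(\varep^2)\mathbb{E}_{ij}$ where $\mathbb{E}$ has $l^2_*$ operator norm one (as a map $l^2(\mbR^{N})\to l^2(\mbR^{N})$, hence its restriction to the $\Sigma_0$ block has operator norm $\leq 1$), so by Cauchy--Schwarz $|\sum_{i\neq j}\mathbb{M}^*_{ij}\mrq_i\mrq_j|\leq C\varep^2\|\mathbb{E}\mbq\|_{l^2}\|\mbq\|_{l^2}\leq C\varep^2\|\mbq\|_{l^2}^2$. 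Since $\varep^2 = \varep\cdot\varep \ll \varep(\sigma S_1+\eta_d\la_0)$ once $\varep$ is small enough (using that $\sigma S_1+\eta_d\la_0>0$ is bounded away from $0$ is \emph{not} assumed, so I must be careful: (\textbf{PSC}) only gives positivity, not a uniform lower bound — but the statement asks for a bound proportional to $(\sigma S_1+\eta_d\la_0)$, so I only need $C\varep^2 \leq \tfrac{1}{8}\varep(\sigma S_1+\eta_d\la_0)$, i.e. $\varep \leq (\sigma S_1+\eta_d\la_0)/(8C)$; this is the subtle point). To handle the possibility that $(\sigma S_1+\eta_d\la_0)$ is itself small, I would argue that the $O(\varep^2)$ error in the diagonal and the $O(\varep^2)\mathbb E_{ij}$ off-diagonal must in fact be absorbable \emph{relative to the $\varep(\sigma S_1+\eta_d\la_0)$ term}; this requires tracing through the proof of Proposition~\ref{prop-M*} to confirm the $O(\varep^2)$ errors carry an absolute constant independent of $\sigma$, and then choosing $\varep_0$ depending on the uniform bound on $|\sigma|$ — but since only positivity of (\textbf{PSC}) is available, the cleanest route is to note that the conclusion is vacuous-strength when $\sigma S_1+\eta_d\la_0$ is tiny, yet still must hold; hence one restricts $\varep_0$ so that $\varep_0 \le c$ for a fixed small $c$, and absorbs $C\varep^2\|\mbq\|^2 \le \tfrac18\varep(\sigma S_1+\eta_d\la_0)\|\mbq\|^2$ by instead \emph{keeping} a fraction of the nonnegative terms $\La_{0i}^2\mrq_i^2$ on the diagonal.

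Assembling: $\mbq^T\mathbb{M}^*(0,0)\mbq \geq \big(\tfrac{3}{8}\varep(\sigma S_1+\eta_d\la_0) - C\varep^2\big)\|\mbq\|_{l^2}^2 \geq \tfrac{1}{4}\varep(\sigma S_1+\eta_d\la_0)\|\mbq\|_{l^2}^2$ provided $\varep$ is small enough that $C\varep \leq \tfrac{1}{8}(\sigma S_1+\eta_d\la_0)$, and also small enough for the earlier reductions. The main obstacle I anticipate is precisely this last point: the $O(\varep^2)$ corrections in Proposition~\ref{prop-M*} must be shown to have constants that do not degenerate as $\sigma S_1+\eta_d\la_0\to 0^+$, and one must verify that the remaining positive quantities ($\La_{0i}^2$, which can be as small as $O(\varep^4)$ for $i$ near the lower edge of $\Sigma_0$, or up to $\rho$ for $i$ near the upper edge) together with the $\varep(\sigma S_1+\eta_d\la_0)$ gap genuinely dominate. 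Since $\Lambda_{0i}^2$ may vanish to leading order while the $O(\varep^2)$ error does not, the coercivity must come \emph{entirely} from the $\varep(\sigma S_1+\eta_d\la_0)$ term, which is why the factor $\tfrac14$ (rather than something closer to $1$) appears, and why $\varep_0$ must be chosen small relative to $\sigma S_1 + \eta_d\la_0$ after $\rho$ and $\delta$ are fixed — consistent with the order of parameter choices described in the introduction.
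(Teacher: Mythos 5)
Your proposal is correct and follows essentially the same route as the paper: read the block $\mbM^*(0,0)$ off Proposition~\ref{prop-M*}, get the diagonal lower bound from the $(1+\mrp_0)\varep(\sigma S_1+\eta_d\la_0)$ term with $1+\mrp_0\ge 1/2$, and absorb the off-diagonal block by Cauchy--Schwarz using the unit $l^2_*$-norm of $\mbE$, choosing $\varep_0$ small. The one device the paper uses that you replace is completing the square in $\La_{0i}$: writing $\La_{0i}^2+\varep S_{2,0}\La_{0i}=(\La_{0i}+\tfrac{\varep}{2}S_{2,0})^2-\tfrac{\varep^2}{4}S_{2,0}^2$ turns the cross term into an $O(\varep^2)$ loss (using only the uniform bound on $S_{2,0}$, hence the hypothesis that $\sigma$ is bounded), whereas your bound $|\varep S_{2,0}\La_{0i}|\lesssim\varep\rho^{1/2}$ asks $\rho$ to be small against the pearling margin; the paper's version is cleaner and makes your mid-proof suggestion of retaining a fraction of the $\La_{0i}^2\mrq_i^2$ terms unnecessary --- which is just as well, since those terms can vanish for $i\in\Sigma_0$, as you correctly observe at the end. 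Two small slips that do not affect the argument: for $i\in\Sigma_0$ one has $|\La_{0i}|\le\rho^{1/2}$ (not $\rho^{1/4}$), and $\La_{0i}=\la_0+\varep^2\beta_{\mbp,i}^2$ changes sign across $\Sigma_0$, so only its modulus should be used. Finally, your closing point --- that the $O(\varep^2)$ constants are independent of $\sigma S_1+\eta_d\la_0$, so $\varep_0$ is effectively chosen against a uniform lower bound on that quantity --- is exactly how the lemma is deployed in the paper (Lemma~\ref{lem-sigma-2} supplies that uniform margin on $\cO_{2,\delta}$); the paper's own one-line proof is silent on this, so flagging it is a point in your favor rather than a gap.
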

\begin{proof}
 The constants $S_{2,0}$ and $S_{2,1}$ in \eqref{est-bM*ij} depend upon $\sigma$, but are uniformly bounded, independent of $\varep$ since $\sigma$ is bounded by assumption. In view of the expansion of $\mbM_{ij}^*$ from \eqref{est-bM*ij} for $i,j\in \Sigma_0$, the Lemma follows for $\varep<\varep_0$ sufficiently small, by completing the square in $\La_{I(i)i}$ in the diagonal terms and using  the pearling stability condition $(\mathbf{PSC})$, the uniform bounds on $S_{2,0}, S_{2,1}$, and the diagonal dominance of $\mathbb M^*(0,0).$
\end{proof}

We denote by the $L^2$ projections to the finite-dimensional slow spaces $\mcZ_*^0, \mcZ_*^1$ and $\mcZ_*$ by $\Pi_{\mcZ_*^0}, \Pi_{\mcZ_*^1}, \Pi_{\mcZ_*}$ respectively. Introducing the $H^2$ inner norm
\beq
\label{H2-inner}
\| u \|_{\Htwoin}:=\|u\|_{L^2}+ \varep^2 \|u\|_{H^2}. 
\eeq
then we have the following result with regards to these projections. We state it for $\mcZ_*^0$, similar statements hold for $\mcZ_*$ and $\mcZ_*^1.$
\begin{lemma}\label{lem-proj-Z0} Suppose $\mbp\in \cD_\delta$ satisfying $\varep^2\|\hat\mbp\|_{\mbV_4^2}\leq \delta$. If $\delta$ is sufficiently small then for any $u\in L^2$  there exists a unique $N_0$-vector $\mathbf q=(\mathrm q_j)_j\in l^2$ such that  $Q:=\Pi_{\mcZ_*^0}u$, can be expressed as 
%$\Pi_{\mcZ_*} u=\sum_{j\in \Sigma} \mathrm u_j Z_{\mbp,*}^{I(j)j}$. 
  %For any $v\in L^2$, there exists  unique  $\mbq \in l^2(\mbR^{N_0})$ and $w\in (\mcZ_*^0)^\bot$  such that   $Q:=\Pi_{\mcZ_*^0}v=\sum_{j\in \Sigma_0} \mrq_j Z_{\mbp, *}^{0j}\in \mcZ_*^0$ ,  $w:=\Pi_{\mcZ_*^0} ^\bot v\in (\mcZ_*^0)^\bot$ and $v^\bot=Q+w$.  
\begin{equation}
\label{def-Q}
Q:=\sum_{j\in \Sigma_0}\mathrm q_j Z_{\mathbf p, *}^{0j}. 
\end{equation}
Moreover, there exists $\varep_0$ suitably small such that for all $\varep\in(0,\varep_0)$ we have the relations
  \beqs
 \|\mbq\|_{l^2} \lesssim \|u\|_{L^2}; \qquad 
 \|Q\|_{\Htwoin}\sim \|Q\|_{L^2}\sim \|\mbq\|_{l^2}. 
  \eeqs
The parameters $\delta,\varep_0$ depend only upon the domain,  the system parameters, and the choice of $K_0,\ell_0$.
 \end{lemma}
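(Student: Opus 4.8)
The plan is to view $\mcZ_*^0$ as a finite-dimensional subspace of $L^2(\Omega)$ and to show that, in the modified basis $\{Z_{\mathbf p,*}^{0j}\}_{j\in\Sigma_0}$ of \eqref{eq-def-Z*}, both the $L^2$ norm and the $\Htwoin$ norm are uniformly equivalent to the $l^2$ norm of the coordinate vector.

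First I would restrict the approximate orthonormality relation \eqref{ortho-Z*} to the index pairs $(i,j)\in\Sigma_0\times\Sigma_0$, so that the Gram matrix $G_{ij}:=\langle Z_{\mathbf p,*}^{0i},Z_{\mathbf p,*}^{0j}\rangle_{L^2}$ takes the form $G=(1+\mathrm p_0)\,\mathbb I+E$ with $\|E\|_{l^2_*}=O(\varepsilon^2,\varepsilon^2\|\hat{\mathbf p}\|_{\mbV_2^2})=O(\varepsilon^2)$, where I use the uniform bound $\|\hat{\mathbf p}\|_{\mbV_2^2}\le\|\hat{\mathbf p}\|_{\mbV_2}\lesssim1$ available on $\cD_\delta$. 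Since $\Gamma_{\mathbf p}\in\cG_{K,2\ell}^2$ for a $K$ depending only on $\Gamma_0$, the length $|\Gamma_{\mathbf p}|=(1+\mathrm p_0)|\Gamma_0|$ is uniformly bounded, so $\tfrac12<1+\mathrm p_0\lesssim1$; hence for $\varepsilon<\varepsilon_0$ small enough the symmetric matrix $G$ is positive definite with $\|G\|_{l^2_*},\|G^{-1}\|_{l^2_*}\lesssim1$. It follows that $\big\|\sum_{j\in\Sigma_0}\mathrm q_j Z_{\mathbf p,*}^{0j}\big\|_{L^2}^2=\mathbf q^{T}G\mathbf q\sim\|\mathbf q\|_{l^2}^2$, which shows the $Z_{\mathbf p,*}^{0j}$ are linearly independent, that $\dim\mcZ_*^0=N_0$, and that each $Q\in\mcZ_*^0$ — in particular $Q=\Pi_{\mcZ_*^0}u$ — admits a unique coordinate vector $\mathbf q$ as in \eqref{def-Q} with $\|\mathbf q\|_{l^2}\sim\|Q\|_{L^2}$. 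Since $\Pi_{\mcZ_*^0}$ is an orthogonal projection, $\|Q\|_{L^2}\le\|u\|_{L^2}$, and therefore $\|\mathbf q\|_{l^2}\lesssim\|u\|_{L^2}$.

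It remains to show $\|Q\|_{\Htwoin}\sim\|Q\|_{L^2}$. As $\|Q\|_{L^2}\le\|Q\|_{\Htwoin}$ is immediate, the content is the bound $\varepsilon^2\|Q\|_{H^2}\lesssim\|\mathbf q\|_{l^2}$. I would expand $\varepsilon^2\Delta Z_{\mathbf p,*}^{0j}$ in the whiskered coordinates of $\Gamma_{\mathbf p}$ via \eqref{eq-Lap-induced}, using the definition \eqref{eq-def-Z*}, the eigenrelation \eqref{tTheta''}, the identities \eqref{Theta'}–\eqref{Theta+(k)}, and the uniform bound $\varepsilon^2\beta_{\mathbf p,j}^2\lesssim1$ valid for $j\in\Sigma_0$ (which follows from $|\lambda_0+\varepsilon^2\beta_{\mathbf p,j}^2|\le\rho^{1/2}$), together with \eqref{est-h-gamma''-de} to control the $s_{\mathbf p}$-derivatives falling on the coefficients $\varphi_{k,j}(z_{\mathbf p},\bm\gamma_{\mathbf p}'')$; the hypothesis $\varepsilon^2\|\hat{\mathbf p}\|_{\mbV_4^2}\le\delta$ renders those contributions small and uniform. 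This places $\varepsilon^2\Delta Z_{\mathbf p,*}^{0j}$ in the form $\varepsilon^{-1/2}\big(\hat h_{1,j}(z_{\mathbf p},\bm\gamma_{\mathbf p}'')\,\tilde\Theta_j+\hat h_{2,j}(z_{\mathbf p},\bm\gamma_{\mathbf p}'')\,\varepsilon\tilde\Theta_j'\big)$ with $\hat h_{k,j}$ localized near $\Gamma_{\mathbf p}$ and satisfying the bounds of Notation \ref{Notation-h} uniformly in $j$. Exactly as in the proofs of Lemmas \ref{lem-prop-bL_p} and \ref{lem-def-Z*}, integrating out $z_{\mathbf p}$ and invoking Lemma \ref{lem-Theta} gives $\big\|\big(\langle\varepsilon^2\Delta Z_{\mathbf p,*}^{0i},\varepsilon^2\Delta Z_{\mathbf p,*}^{0j}\rangle_{L^2}\big)_{ij}\big\|_{l^2_*}\lesssim1$, hence $\|\varepsilon^2\Delta Q\|_{L^2}\lesssim\|\mathbf q\|_{l^2}$. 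Elliptic regularity on the periodic domain $\Omega$ yields $\|Q\|_{H^2}\lesssim\|\Delta Q\|_{L^2}+\|Q\|_{L^2}$, so $\varepsilon^2\|Q\|_{H^2}\lesssim\|\varepsilon^2\Delta Q\|_{L^2}+\varepsilon^2\|Q\|_{L^2}\lesssim\|\mathbf q\|_{l^2}$, which combined with the previous paragraph completes $\|Q\|_{\Htwoin}\sim\|Q\|_{L^2}\sim\|\mathbf q\|_{l^2}$.

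The main obstacle is this last step: establishing the uniform operator bound $\big\|\big(\langle\varepsilon^2\Delta Z_{\mathbf p,*}^{0i},\varepsilon^2\Delta Z_{\mathbf p,*}^{0j}\rangle_{L^2}\big)\big\|_{l^2_*}\lesssim1$ on the asymptotically large index set $\Sigma_0$. This forces a careful bookkeeping of the whiskered-coordinate form of $\varepsilon^2\Delta$ acting on $Z_{\mathbf p,*}^{0j}$ — tracking which $s_{\mathbf p}$-derivatives land on the $h(\bm\gamma_{\mathbf p}'')$-type coefficients versus on $\tilde\Theta_j$ — so that the resulting error terms are of precisely the shape to which Lemma \ref{lem-Theta} and the weighted $\mbV_k^2$-bounds \eqref{est-h-gamma''-de} apply; it is here that the smallness assumption $\varepsilon^2\|\hat{\mathbf p}\|_{\mbV_4^2}\le\delta$, together with membership in $\cD_\delta$, is used to keep all constants independent of $\varepsilon$ and $\mathbf p$.
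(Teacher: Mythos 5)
Your proposal is correct and follows essentially the same route as the paper: uniqueness of $\mathbf q$ and the $L^2$–$l^2$ equivalence come from the approximate orthonormality \eqref{ortho-Z*} (the Gram matrix being a small perturbation of $(1+\mathrm p_0)\mathbb I$), and the $\Htwoin$ bound comes from the approximate orthogonality of $\{\varep^2\Delta Z_{\mathbf p,*}^{0j}\}$ obtained via the whiskered-coordinate Laplacian expansion \eqref{eq-Lap-induced} together with Lemma\,\ref{Notation-e_i,j}. Your write-up merely makes explicit the elliptic-regularity step $\|Q\|_{H^2}\lesssim\|\Delta Q\|_{L^2}+\|Q\|_{L^2}$ that the paper leaves implicit.
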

 \begin{proof} 
 %Without loss of generality,  we take $(\mcZ_*^0, N_0, \Sigma_0)$ as an example.  
 For any $u\in L^2(\Omega)$, the $L^2$ linear projection  $Q:=\Pi_{\mcZ_*^0}u\in \mcZ_*^0$ is well-defined by the Projection theorem,  and hence there exists $\mbq=(\mrq_j)\in l^2$ satisfying \eqref{def-Q}. In particular,  the vector $\mathbf q=(\mathrm q_j)$ satisfies the linear algebraic system
 \beqs
 \sum_{j\in \Sigma_0} \mathrm q_j \left<Z_{\mbp, *}^{0j}, Z_{\mbp, *}^{0k}\right>_{L^2} = \left<u,  Z_{\mbp, *}^{0k}\right>_{L^2}, \qquad \forall k\in \Sigma_0.
 \eeqs
 Due to the approximate orthogonality afforded by \eqref{ortho-Z*} and the bound $\varep^2 \|\hat\mbp\|_{\mbV_4^2}\leq  \delta$ with $\delta$ suitably small, there exists a unique $\mbq$ solving the system and $\mbq$ can be bounded in terms of $L^2$-norm of $u$ as 
 \beqs
 \|\mbq\|_{l^2} \lesssim \|u\|_{L^2}. 
 \eeqs 
 It remains to show the norm equivalences among $Q$ and $\mbq$. First, the equivalence of the $L^2$-norm of $Q$ and $l^2$-norm of $\mbq$ follows directly from the orthogonality relation \eqref{ortho-Z*} which requires
the condition $\|\hat\mbp\|_{\mbV_2^2}\lesssim \|\hat\mbp\|_{\mbV_2}\lesssim 1$ and $\varep_0$ suitably small. The set  $\{\varep^2\Delta Z_{\mbp, *}^{0j}\}_{j\in N_0}$ is approximately $L^2(\Omega)$  orthogonal due to the local coordinates Laplacian expansion  \eqref{eq-Lap-induced}, the form of $Z_{\mbp,*}^{0j}$ and Lemma \ref{Notation-e_i,j}. Combining these implies
 \beqs
 \|\varep^2\Delta Q\|_{L^2} \sim \|\mbq\|_{l^2},
 \eeqs 
 and the Lemma follows. 
 \end{proof}
 We call  $Q=\Pi_{\mcZ_*^0}u$ and the associated vector $\mathbf q=(\mathrm q_j)_j\in l^2$ defined through \eqref{def-Q} the pearling mode component and pearling parameters of $u$, respectively. The relations \eqref{est-Z*-mass} and \eqref{mass-bL-pZ-*}  imply
\beq\label{est-Q-mass}
\int_\Omega Q\dd x =O\left(\varep^3\|\mbq\|_{l^2}\right), \qquad \int_\Omega \mbL_\mbp Q\dd x=O\left(\varep^3 (1+\|\hat\mbp\|_{\mbV_3^2})\|\mbq\|_{l^2}\right).
\eeq
%Recalling the $a\wedge b$ notation from Section\,\ref{ssec-Notation}, the pearling mode component satisfies the following bounds.
We present our principle results on the linear coupling between the pearling-meander and slow-fast modes.

\begin{thm}\label{thm-coupling est}
Assume that  $\rho, \delta$ are suitably small depending on the domain $\Omega$, the system parameters, and the choice of $K_0, \ell_0$. Then the following results hold uniformly for all $\mbp\in\cD_\delta,$ defined in \eqref{A-00}.   
\begin{enumerate}
\item  All $Q$ in the pearling slow space $\mcZ_{*}^0$ take the form \eqref{def-Q} and satisfy $\|Q\|_{\Htwoin} \sim \|\mbq\|_{l^2}$. Moreover the pearling-meander coupling satisfies the bound
\begin{equation*}
 \|\Pi_{\mathcal Z_*^1}\Pi_0 \mbL  Q\|_{L^2}\lesssim \left( \varep^2 + \varep^2 \|\hat\mbp\|_{\mbV_{4}^2}\right)\|\mathbf q\|_{l^2}.
\end{equation*}
\item  For any  function $v\in H^2$, the slow-fast coupling satisfies the bound 
\beqs
 \|\Pi_{\mathcal Z_*}^\bot\mbL   \Pi_{\mathcal Z_*}v\|_{L^2} +\|\Pi_{\mathcal Z_*}\mbL   \Pi_{\mathcal Z_*}^\bot v\|_{L^2}\lesssim \left(\varep^2+ \varep^{2}\|\hat\mbp\|_{\mbV_4^2}\right)\|v\|_{L^2}.
\eeqs
\end{enumerate}
\end{thm}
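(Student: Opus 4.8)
The plan is to handle the two bounds separately, reducing each to data already in hand: the sharp block structure of $\mathbb M^*$ from Proposition~\ref{prop-M*}, the near-invariance identity \eqref{bL-pZ-*} of Lemma~\ref{lem-def-Z*}, the orthonormality relations \eqref{ortho-tTheta} and \eqref{ortho-Z*}, and the norm equivalences of Lemma~\ref{lem-proj-Z0}. The pearling--meander bound (1) comes almost immediately from the off-diagonal block of $\mathbb M^*$, where the gain of one power of $\varepsilon$ over the generic size is supplied by the wave-number gap \eqref{eq-wavenumgap} (equivalently, by \eqref{est-Theta-2}). The slow--fast bound (2) requires controlling the $\varepsilon^{3/2}$ remainder in \eqref{bL-pZ-*} summed over the asymptotically large index set $\Sigma$; the key is that the remainder's $z_\mbp$-profiles are \emph{independent of the in-plane index}, so that orthogonality rather than a lossy triangle inequality absorbs the sum.

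\emph{Part (1).} The equivalence $\|Q\|_{\Htwoin}\sim\|\mathbf q\|_{l^2}$ is Lemma~\ref{lem-proj-Z0}. Writing $g:=\Pi_0\mbL Q$, I expand $\Pi_{\mcZ_*^1}g$ in the basis $\{Z_{\mathbf p,*}^{1k}\}_{k\in\Sigma_1}$; by the approximate orthonormality \eqref{ortho-Z*} (case $I(i)=I(j)$) its Gram matrix is $(1+\mrp_0)I+O(\varepsilon^2,\varepsilon^2\|\hat\mbp\|_{\mbV_2^2})$, hence boundedly invertible for $\varepsilon,\delta$ small, so that $\|\Pi_{\mcZ_*^1}g\|_{L^2}\sim\big\|\big(\langle g,Z_{\mathbf p,*}^{1k}\rangle_{L^2}\big)_{k\in\Sigma_1}\big\|_{l^2}$. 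By linearity in $Q=\sum_{j\in\Sigma_0}\mathrm q_j Z_{\mathbf p,*}^{0j}$ and the definition \eqref{def-M+*} of $\mathbb M^*$, $\langle g,Z_{\mathbf p,*}^{1k}\rangle_{L^2}=\sum_{j\in\Sigma_0}\mathrm q_j\mathbb M^*_{jk}$, the $k$-th entry of $\mathbb M^*(0,1)^T\mathbf q$. Since $I(j)\neq I(k)$ for every such pair, the last line of \eqref{est-bM*ij} gives $\|\mathbb M^*(0,1)\|_{l^2_*}\lesssim\varepsilon^2+\varepsilon^2\|\hat\mbp\|_{\mbV_4^2}$, and the stated bound follows at once.

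\emph{Part (2).} For the first term set $w:=\Pi_{\mcZ_*}v=\sum_{i\in\Sigma}\mathrm w_i Z_{\mathbf p,*}^{I(i)i}$, with $\|\mathbf w\|_{l^2}\lesssim\|v\|_{L^2}$ by Lemma~\ref{lem-proj-Z0}. By \eqref{bL-pZ-*}, $\mbL w$ equals $\sum_i\mathrm w_i\big(\La_{I(i)i}^2+\varepsilon\delta_{I(i)0}(\sigma S_1+\eta_d\la_0)+S_{2,I(i)}\La_{I(i)i}\big)Z_{\mathbf p,*}^{I(i)i}\in\mcZ_*$, which $\Pi_{\mcZ_*}^\bot$ annihilates, plus $\varepsilon^{3/2}\sum_i\mathrm w_i\big(h_1\tilde\Theta_i+h_2\varepsilon\tilde\Theta_i'+\sum_{k=1}^4(\varepsilon^{k-1}\p_{s_\mbp}^k h_{3,k}\tilde\Theta_i+\varepsilon^{k-1}\p_{s_\mbp}^k h_{4,k}\varepsilon\tilde\Theta_i')\big)$. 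Because the $h_\bullet=h_\bullet(z_\mbp,\bm\gamma_\mbp'')$ do not depend on $i$, each term factors as $\varepsilon^{3/2}H(z_\mbp,\bm\gamma_\mbp'')\cdot\big(\sum_i\mathrm w_i\tilde\Theta_i\big)$ — using $\varepsilon\tilde\Theta_i'=\pm\varepsilon\beta_{\mbp,i}\tilde\Theta_{i\mp1}$ with $\varepsilon\beta_{\mbp,i}=O(1)$ uniformly on $\Sigma$ by \eqref{def-Sigma}, and absorbing the $\p_{s_\mbp}^k$-derivatives through \eqref{Theta'}--\eqref{Theta+(k)}, which is what introduces $\|\hat\mbp\|_{\mbV_4^2}$ via \eqref{est-h-gamma''-de}. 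Since $\|\sum_i\mathrm w_i\tilde\Theta_i\|_{L^2(\msI_\mbp)}\sim\|\mathbf w\|_{l^2}$ by \eqref{ortho-tTheta} and $H$ carries a factor $O(\sqrt\varepsilon)$ from localization, the remainder is $O(\varepsilon^2+\varepsilon^2\|\hat\mbp\|_{\mbV_4^2})\|v\|_{L^2}$ in $L^2(\Omega)$, and $\Pi_{\mcZ_*}^\bot$ being a contraction finishes this term. For the second term, I interpret $\Pi_{\mcZ_*}\mbL\Pi_{\mcZ_*}^\bot v$ through its coordinates against the smooth basis, $c_k\propto\langle\mbL\Pi_{\mcZ_*}^\bot v,Z_{\mathbf p,*}^{I(k)k}\rangle_{L^2}=\langle\Pi_{\mcZ_*}^\bot v,\mbL Z_{\mathbf p,*}^{I(k)k}\rangle_{L^2}$, well defined since $\mbL Z_{\mathbf p,*}^{I(k)k}\in L^2$; the leading term of \eqref{bL-pZ-*} pairs to zero because $Z_{\mathbf p,*}^{I(k)k}\in\mcZ_*\perp\Pi_{\mcZ_*}^\bot v$, leaving $\varepsilon^{3/2}\langle\Pi_{\mcZ_*}^\bot v,R_k\rangle$. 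As the $z_\mbp$-profiles in $R_k$ are $k$-independent, integrating out $z_\mbp$ first (Cauchy--Schwarz, with $\int|h_\bullet|^2\varepsilon\,\mrd z_\mbp=O(\varepsilon)$ and the derivative bounds of Lemma~\ref{lem-e_ij}) turns $\sum_{k\in\Sigma}|\langle\Pi_{\mcZ_*}^\bot v,R_k\rangle|^2$ into Bessel's inequality for the Fourier coefficients $\big(\int_{\msI_\mbp}g\,\tilde\Theta_k\,\mrd\tilde s_\mbp\big)_{k\in\Sigma}$ of a single $g\in L^2(\msI_\mbp)$ with $\|g\|_{L^2(\msI_\mbp)}\lesssim\sqrt\varepsilon(1+\|\hat\mbp\|_{\mbV_4^2})\|v\|_{L^2}$; multiplying by $\varepsilon^{3/2}$ and transferring back through \eqref{ortho-Z*} gives the claimed bound.

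\emph{Main obstacle.} The delicate point — and the reason the modified space of Lemma~\ref{lem-def-Z*} was introduced in place of the preliminary one — is that $\dim\mcZ_*=N\sim\varepsilon^{-1}\rho^{1/4}$, so a crude estimate $\varepsilon^{3/2}\sum_i|\mathrm w_i|\,\|R_i\|_{L^2}\lesssim\varepsilon^{3/2}N^{1/2}\|\mathbf w\|_{l^2}$ falls a full power of $\varepsilon^{1/2}$ short of the required $O(\varepsilon^2)$. What rescues the estimate is precisely the $i$-independence of the $h$-factors in \eqref{bL-pZ-*}: it converts each remainder sum into one profile times one in-plane wave, on which \eqref{ortho-tTheta} (for the first term of (2)) or Bessel's inequality over the finite set $\Sigma$ (for the second) applies without loss; for (1) the analogous mechanism is the extra $\varepsilon$ in the cross block of $\mathbb M^*$, traceable through \eqref{est-Theta-2} to the wave-number gap \eqref{eq-wavenumgap}. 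All remaining steps are routine once these reductions are in place.
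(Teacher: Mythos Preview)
Your proof is correct and follows essentially the same approach as the paper. Part (1) is identical: invoke Lemma~\ref{lem-proj-Z0} for the norm equivalence and read off the pearling--meander bound from the off-diagonal block $\mathbb M^*(0,1)$ of Proposition~\ref{prop-M*}. For Part (2) the paper is slightly more economical: rather than treating $\|\Pi_{\mcZ_*}^\bot\mbL\Pi_{\mcZ_*}v\|_{L^2}$ and $\|\Pi_{\mcZ_*}\mbL\Pi_{\mcZ_*}^\bot v\|_{L^2}$ separately, it observes that by self-adjointness of $\mbL$ both reduce to the single bilinear estimate $|\langle\mbL v,w\rangle_{L^2}|\lesssim(\varep^2+\varep^2\|\hat\mbp\|_{\mbV_4^2})\|v\|_{L^2}\|w\|_{L^2}$ for $v\in\mcZ_*$, $w\in\mcZ_*^\bot$, then expands $v$ in the basis, uses \eqref{bL-pZ-*} and orthogonality to kill the leading term, and bounds the remainder via the $L^\infty$ estimates \eqref{est-h-gamma''-de} together with Lemma~\ref{Notation-e_i,j}. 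Your two-case treatment (direct $L^2$ norm of the remainder for the first term, duality for the second) uses the same ingredients and the same key observation you highlight in your ``Main obstacle'' paragraph, namely that the $i$-independence of the profiles $h_\bullet$ converts the large sum over $\Sigma$ into a single factored expression on which \eqref{ortho-tTheta} or Bessel applies without the $N^{1/2}$ loss; this is exactly the content the paper packages into its invocation of Lemma~\ref{Notation-e_i,j}.
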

\begin{proof}
We address the bounds in term.
\begin{enumerate}
\item The equivalence of the $\Htwoin$ and $l^2$ norms, $\|Q\|_{\Htwoin}\sim \|\mbq\|_{l^2}$ is established in Lemma \ref{lem-proj-Z0}.  For the pearling-translation coupling estimate we remark that
\begin{equation*}
\|\Pi_{\mathcal Z_*^1} \Pi_0\mbL   Q\|_{L^2}=\left(\sum_{i\in \Sigma_1}\left<\Pi_0\mbL   Q, Z_{\mathbf p, *}^{1i}\right>_{L^2}^2\right)^{1/2}=\|\mathbb M^*(0,1)\mbq\|_{l^2}.
\end{equation*}
Applying \eqref{est-bM*ij} for the case $I(i)\neq I(j)$ yields the first bound.

\item To establish the second bound it suffices to show for any $v\in \mcZ_{*}, w\in \mcZ_{*}^\bot$, we have
\beq\label{est-bLvw}
\left<\mbL   v, w\right>_{L^2}  \lesssim  \left(\varep^2+ \varep^{2}\|\hat\mbp\|_{\mbV_4^2}\right)\|v\|_{L^2}\|w\|_{L^2}.
\eeq
Writing $v\in \mcZ_*$ in the form $v= \sum_{i\in \Sigma} v_i Z_{\mbp, *}^{I(i)i}$ for  $\{v_i\} \in \mathbb R^N$, we obtain
\beq\label{est-bLvw-1}
\left<\mbL   v, w\right>_{L^2} = \sum_i v_i\left<\mbL_\mbp Z_{\mbp, *}^{I(i)i},  w\right>_{L^2}.
\eeq
We consider each component in the summation. Utilizing  Lemma \ref{lem-def-Z*} and the orthogonality of $w$ and $\mathcal Z_*$ implies
\begin{equation}\label{est-bLvw-2}
\begin{aligned}
\left<w, \mbL Z_{\mathbf p,*}^{I(j)j}\right>_{L^2}=\varep^2\left<w,  \varep^{-1/2}(h_1\tilde \Theta_j+h_2\varep\tilde \Theta_j')\right>_{L^2}\h{80pt}\\
+\varep^{3/2}\sum_{k=1}^4\left<w, \varep^{k-1}\p_{s_{\mbp}}^kh_{3,k}\tilde \Theta_i+ \varep^{k-1}\p_{s_{\mbp}}^k h_{4,k} \varep\tilde \Theta_i'\right>_{L^2},
\end{aligned}
\end{equation}
where the functions $h=h(z_{\mbp}, \bm \gamma_{\mbp}'')$ enjoy the properties of Notation\,\ref{Notation-h}, localized in $\Gamma_{\mbp}^{2\ell}$  and   can be bounded in two ways, 
\beq\label{up-bd-h}
\|\varep^k\p_{s_\mbp}^kh\|_{L^\infty}\lesssim 1+ \|\hat\mbp\|_{\mbV_2}\lesssim 1,\qquad 
\|\varep^{k-1}\p_{s_\mbp}^kh\|_{L^\infty}\lesssim 1+\|\hat\mbp\|_{\mbV_3}\lesssim 1+\|\hat\mbp\|_{\mbV_4^2}.
\eeq
Inserting \eqref{est-bLvw-1}-\eqref{up-bd-h} into \eqref{est-bLvw} and using Lemma \ref{Notation-e_i,j} completes the proof. 
\end{enumerate}
\end{proof}
We extend these results to the full linearization $\Pi_0 \mbL_\mbp$ of the mass preserving flow \eqref{eq-FCH-L2} at $\Phi_\mbp.$

\begin{cor}\label{cor-est-w-Z*}
Under the same assumptions as Theorem \ref{thm-coupling est}, if   $w\in \mcZ_{*}^\bot$ and $\mbq$ is such that $w+Q$ is mass free, with $Q$ as in \eqref{def-Q}, then
\begin{equation*}
 \begin{aligned}
 \|\Pi_{\mathcal Z_*}\Pi_0\mbL  w\|_{L^2} & \lesssim \left(\varep^2+ \varep^{2}\|\hat\mbp\|_{\mbV_4^2}\right)\|w\|_{L^2}+\varep^3\|\mathbf q\|_{l^2}.
 %\|\Pi_{\mathcal Z_*}\mbL  w\|_{L^2} &\lesssim \left(\varep^2+\left(\varep^{1/2}\|\hat{\mbp}\|_{\mbV_2^2} \wedge \varep^{2}\|\hat\mbp\|_{\mbV_4^2}\right)\right)\|w\|_{L^2}.
 \end{aligned}
\end{equation*}
\end{cor}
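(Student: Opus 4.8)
The plan is to reduce the estimate for $\Pi_{\mathcal Z_*}\Pi_0\mbL w$ to the slow--fast coupling bound already obtained in part (2) of Theorem~\ref{thm-coupling est}, the only new ingredient being the contribution of the zero-mass projection $\Pi_0$. First I would write $\Pi_0 \mbL w = \mbL w - \langle \mbL w\rangle_{L^2}$, so that for any basis function $Z_{\mbp,*}^{I(i)i}$ of $\mathcal Z_*$,
\begin{equation*}
\left\langle \Pi_0\mbL w, Z_{\mbp,*}^{I(i)i}\right\rangle_{L^2} = \left\langle \mbL w, Z_{\mbp,*}^{I(i)i}\right\rangle_{L^2} - \langle \mbL w\rangle_{L^2}\int_\Omega Z_{\mbp,*}^{I(i)i}\dd x.
\end{equation*}
Summing the squares over $i\in\Sigma$ and taking square roots gives $\|\Pi_{\mathcal Z_*}\Pi_0\mbL w\|_{L^2}\le \|\Pi_{\mathcal Z_*}\mbL w\|_{L^2} + |\langle \mbL w\rangle_{L^2}|\,\big(\sum_{i\in\Sigma}(\int_\Omega Z_{\mbp,*}^{I(i)i})^2\big)^{1/2}$. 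The first term on the right is controlled by part (2) of Theorem~\ref{thm-coupling est}, using $\Pi_{\mathcal Z_*}^\bot w = w$, which bounds it by $(\varep^2+\varep^2\|\hat\mbp\|_{\mbV_4^2})\|w\|_{L^2}$.

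The second term is where the mass condition enters. Since $w+Q$ is mass free, $\langle w\rangle_{L^2} = -\langle Q\rangle_{L^2}$, and by the first estimate in \eqref{est-Q-mass} this is $O(\varep^3\|\mbq\|_{l^2}/|\Omega|)$; in any case $|\langle w\rangle_{L^2}|\lesssim \varep^3\|\mbq\|_{l^2}$. To bound $\langle \mbL w\rangle_{L^2}$ I would use self-adjointness of $\mbL$ to write $\int_\Omega \mbL w\dd x = \langle \mbL w, 1\rangle_{L^2} = \langle w, \mbL 1\rangle_{L^2}$. The quantity $\mbL 1$ is, from \eqref{def-bLp}, a bounded and (up to the exponentially small far-field constant) localized function near $\Gamma_\mbp$, with $\|\mbL 1\|_{L^2} \lesssim \varep$ after accounting for the $\varep$-scaled Laplacian annihilating the constant and the localized coefficient factors; alternatively one writes $\mbL 1 = \mbL(1) $ explicitly and estimates directly. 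Pairing with $w$ and also accounting for the piece of $w$ not localized near $\Gamma_\mbp$ through the mass relation $\langle w\rangle_{L^2}=O(\varep^3\|\mbq\|_{l^2})$, one obtains $|\langle\mbL w\rangle_{L^2}|\lesssim \varep\|w\|_{L^2} + \varep^3\|\mbq\|_{l^2}$, which is in fact dominated by $\|w\|_{L^2}$ up to harmless factors. Combined with the mass bound $\int_\Omega Z_{\mbp,*}^{I(i)i}\dd x = O(\varep^{3/2})e_i$ from \eqref{est-Z*-mass}, which gives $\big(\sum_i(\int_\Omega Z_{\mbp,*}^{I(i)i})^2\big)^{1/2} = O(\varep^{3/2})$, the correction term contributes at most $O(\varep^{3/2})\cdot(\varep\|w\|_{L^2} + \varep^3\|\mbq\|_{l^2})$, which is absorbed into $(\varep^2+\varep^2\|\hat\mbp\|_{\mbV_4^2})\|w\|_{L^2} + \varep^3\|\mbq\|_{l^2}$.

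The main obstacle is getting the $\varep^3\|\mbq\|_{l^2}$ term on the right-hand side with the correct power of $\varep$: the naive bound on $\langle \mbL w\rangle_{L^2}$ in terms of $\|w\|_{L^2}$ alone does not see $\mbq$, so one genuinely must exploit the mass-free coupling $w+Q$ and track how the defect $\langle w\rangle_{L^2} = -\langle Q\rangle_{L^2} = O(\varep^3\|\mbq\|_{l^2})$ propagates through $\langle\mbL w\rangle_{L^2}$. This requires being careful that $\mbL$ does not amplify the mass defect by more than a bounded factor — essentially that $\langle \mbL w\rangle_{L^2}$ differs from a constant multiple of $\langle w\rangle_{L^2}$ only by terms of order $\varep\|w\|_{L^2}$, which follows from the structure of $\mbL$ in \eqref{def-bLp} since $\varep^2\Delta$ contributes nothing to the mean and the remaining coefficients are localized. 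Once this bookkeeping is done the conclusion follows by collecting the two contributions.
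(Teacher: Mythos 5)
Your proposal is correct and follows essentially the same route as the paper: split $\Pi_0\mbL w=\mbL w-\langle\mbL w\rangle_{L^2}$ against the basis $Z_{\mbp,*}^{I(j)j}$, bound the main term $\|\Pi_{\mcZ_*}\mbL w\|_{L^2}$ by part (2) of Theorem~\ref{thm-coupling est}, use \eqref{est-Z*-mass} for the $O(\varep^{3/2})$ masses of the basis functions, and control $\langle \mbL w\rangle_{L^2}$ through the mass-free coupling of $w$ and $Q$. The last step is exactly the average estimate \eqref{est-bLpw-mass} of Theorem~\ref{lem-coer}, which the paper simply cites; you re-derive it via $\int_\Omega\mbL w\,\mrd x=\langle w,\mbL 1\rangle_{L^2}$, which is the same computation in dual form.

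One intermediate claim in your re-derivation is wrong, though harmlessly so: $\|\mbL 1\|_{L^2}$ is \emph{not} $\lesssim\varep$, since $\mbL 1$ contains $(W''(\Phi_\mbp))^2$, whose far-field value $(W''(b_-))^2+O(\varep)$ is an $O(1)$ constant that is in no way exponentially small. The correct bookkeeping is: write $\mbL 1=c_\infty+g$ with $c_\infty=O(1)$ constant and $g$ localized near $\Gamma_\mbp$ with $\|g\|_{L^2}\lesssim\varep^{1/2}$ (not $\varep$); then $c_\infty\int_\Omega w=-c_\infty\int_\Omega Q=O(\varep^3\|\mbq\|_{l^2})$ by \eqref{est-Q-mass}, while $|\langle w,g\rangle_{L^2}|\lesssim\varep^{1/2}\|w\|_{L^2}$. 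This yields $|\langle\mbL w\rangle_{L^2}|\lesssim\varep^{1/2}\|w\|_{L^2}+\varep^3\|\mbq\|_{l^2}$ (consistent with \eqref{est-bLpw-mass}), and multiplying by the $O(\varep^{3/2})$ basis-mass factor still gives $\varep^2\|w\|_{L^2}+O(\varep^{9/2})\|\mbq\|_{l^2}$, so your final conclusion stands; only the stated powers $\|\mbL 1\|_{L^2}\lesssim\varep$ and $\varep\|w\|_{L^2}$ in the intermediate bound need correcting.
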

\begin{proof}
As in the slow-fast coupling estimate of Theorem\,\ref{thm-coupling est},  we need show for any $v \in \mcZ_*$, $w\in \mcZ_{*}^\bot,$ and $\mbq$ as above, that 
\beq\label{est-PibLvw}
\left<\Pi_0\mbL   w,  v\right>_{L^2}  \lesssim \left[  \left(\varep^2+ \varep^{2}\|\hat\mbp\|_{\mbV_4^2}\right) \|w\|_{L^2}  + \varep^3\|\mathbf q\|_{l^2}\right]\|v\|_{L^2}.
\eeq
 Writing as $v=\sum_{i\in \Sigma} v_i Z_{\mbp, *}^{I(i)i}$, we have the equality 
 \beqs
 \left<\Pi_0\mbL   w,  v\right>_{L^2} = \sum_{i \in \Sigma} v_i \left<\Pi_0 \mbL   Z_{\mbp, *}^{I(i)i}, w\right>_{L^2}.
 \eeqs 
 We use the definition of $\Pi_0$
\begin{equation}\label{est-bLpw-Z*}
\left<\Pi_0 \mbL w, Z^{I(j)j}_{\mathbf p, *}\right>_{L^2}%=\left<w,  \mbL \Pi_0 Z_{\mathbf p, *}^{I(j)j}\right>_{L^2}
=\left<\mbL  w, Z_{\mathbf p, *}^{I(j)j}\right>_{L^2}-\frac{1}{|\Omega|}\int_\Omega Z_{\mathbf p, *}^{I(j)j}\dd   x\int_\Omega \mbL w\dd   x,
\end{equation}
and apply the estimate \eqref{est-Z*-mass} and identity \eqref{est-bLpw-mass}  from Lemma \ref{lem-coer} below to deduce
\begin{equation}\label{est-bLpw-Z*-1}
\left\|\Pi_{\mathcal Z_*}\Pi_0 \mbL w\right\|\lesssim \|\Pi_{\mathcal Z_*}\mbL   w\|_{L^2}+\varep^2(\|w\|_{L^2}+\varep \|\mathbf q\|_{l^2}).
\end{equation}
The corollary follows from Theorem \ref{thm-coupling est} by noting $w\in \mcZ_*^\bot$.
\end{proof}

%==================Coercivity=============================
\subsection{Coercivity}

The coercivity estimates on the operator $\mbL$ restricted to the orthogonal complement of the modified slow space, $\mcZ_*^\bot$, are essential to the orbital stability of the underlying manifold. Coercivity estimates for the constrained bilinear form $\mbL \big|_{\mathcal Z}$ for the preliminary slow space were derived in  \cite{DHPW-14, NP-17} and Theorem $2.5$ of \cite{HP-15}, for the weak functionalization under the restriction $\rho\sim \sqrt{\varep}$. However, these results lead to an $\varep$ dependent coercivity estimate. Our main coercivity result, requires only $\rho=o(1)$, independent of $\varep$, and exploits the improved orthogonality of the modified slow spaces. In this subsection we establish this enhanced coercivity of the linearized operator $\mbL  $ on the space orthogonal to the modified approximate slow space $\mathcal Z_*$.

\begin{thm}\label{lem-coer}
Suppose $\rho>0$ is suitably small. Then there exists $\varep_0>0$, dependent upon $\rho$, and a coercivity constant $C$ independent of $\rho$, such that for all $\varep\in(0,\varep_0)$ and all  $w\in \mathcal Z_*^\bot$, 
\begin{equation}\label{coer-bLp}
\left< \mbL  w,w\right>_{L^2}\geq C\rho^{2} \|w\|_{\Htwoin}^2\qquad\hbox{and}\quad \|\mbL   w\|_{L^2}^2\geq  C \rho^{2}\left< \mbL  w,w\right>_{L^2}.
\end{equation}
Moreover, if for any $\mbq$ the associated $Q\in \mcZ_{*}^0$ satisfies $\langle w+Q\rangle_{L^2}=0$, then we have the average estimate
\begin{equation}\label{est-bLpw-mass}
\left|\left<\mbL   w\right>_{L^2}\right| %=\frac{1}{|\Omega|}\left|\int_\Omega \mbL  w\dd   x\right|
\lesssim \varep^{1/2}\|w\|_{L^2}+\varep^{3/2}\|\mathbf q\|_{l^2},
\end{equation}
and in addition
\begin{equation}\label{coer-PibL-p}
\begin{aligned}
%\varep^5\|\mathbf q\|_{l^2}^2+\left< \Pi_0\mbL  w,w\right>_{L^2}\geq c_*\rho^{2}\left(\varep^4\|w\|_{H^2}^2+\|w\|_{L^2}^2\right);\\
C \varep^3 \|\mathbf q\|_{l^2}^2+\left<\Pi_0\mbL   w, \mbL   w\right>_{L^2}\geq   \|\mbL   w\|_{L^2}^2. %c_* \rho^{4}\left(\varep^4\|w\|_{H^2}^2+\|w\|_{L^2}^2\right).
 \end{aligned}
\end{equation}
\end{thm}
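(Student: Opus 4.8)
<br>

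The plan is to prove the four assertions of Theorem~\ref{lem-coer} in sequence, building each on its predecessors. \textbf{Step 1: the lower bound $\langle \mbL w, w\rangle_{L^2}\geq C\rho^2\|w\|_{\Htwoin}^2$ for $w\in\mcZ_*^\bot$.} The starting point is the decomposition \eqref{exp-bL+d}, $\mbL=\mbL_0+\varep\mbL_1+\varep^2\mbL_{\geq 2}$, with $\mbL_0=\mathcal L^2$ and $\mathcal L=\mrL_0-\varep^2\Delta_{s_\mbp}$. For the leading operator one has the exact spectral picture: on the tensor product $L^2(\mbR)\times L^2(\msI_\mbp)$, $\mathcal L$ has eigenvalues $\Lambda_{kj}=\lambda_k+\varep^2\beta_{\mbp,j}^2$, and by the definition \eqref{def-Sigma} of the index sets $\Sigma_k$ the complement of the \emph{unmodified} slow space $\mcZ$ is exactly where $\Lambda_{kj}^2>\rho$, giving $\langle \mbL_0 w,w\rangle\geq \rho\|w\|_{L^2}^2$ on $\mcZ^\bot$. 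One must first pass from $\mcZ^\bot$ to $\mcZ_*^\bot$: since $\mcZ_*$ is an $O(\varep)$ perturbation of $\mcZ$ (compare \eqref{eq-def-Z*}, \eqref{ortho-Z*}), projections onto the two spaces differ by $O(\varep)$ in operator norm, so a $w\in\mcZ_*^\bot$ can be written $w = w_0 + O(\varep\|w\|_{L^2})$ with $w_0\in\mcZ^\bot$, and the coercivity transfers with a negligible correction once $\varep_0$ is small relative to $\rho$. Then I would absorb $\varep\mbL_1+\varep^2\mbL_{\geq 2}$: these are relatively bounded with respect to $\mbL_0^{1/2}=\mathcal L$ (indeed $\mbL_1$ is first order in the $\mathcal L$-scale), so $|\langle(\varep\mbL_1+\varep^2\mbL_{\geq 2})w,w\rangle|\lesssim \varep\langle\mathcal L^2 w,w\rangle^{1/2}\|w\|_{L^2}+\varep\|w\|_{L^2}^2\lesssim \varep\rho^{-1}\langle\mbL_0 w,w\rangle + \varep\|w\|_{L^2}^2$, which is a small fraction of $\rho\|w\|_{L^2}^2$ for $\varep_0\ll\rho^2$. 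To upgrade the $L^2$ bound to the $\Htwoin$ bound one uses that $\langle\mbL_0 w,w\rangle = \|\mathcal L w\|_{L^2}^2 \gtrsim \varep^4\|w\|_{H^2}^2 - C\|w\|_{L^2}^2$ via the explicit form of $\mathcal L$ and elliptic regularity in the whiskered coordinates, combining the two to get $\langle\mbL w,w\rangle\gtrsim \rho^2(\|w\|_{L^2}^2+\varep^4\|w\|_{H^2}^2)$. The factor $\rho^2$ (rather than $\rho$) enters because closing the $H^2$ estimate costs one more power of the spectral gap.

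\textbf{Step 2: the bound $\|\mbL w\|_{L^2}^2\geq C\rho^2\langle\mbL w,w\rangle_{L^2}$.} Since $\mbL$ is self-adjoint, $\langle\mbL w,w\rangle\leq \|\mbL w\|_{L^2}\|w\|_{L^2}$, so it suffices to show $\|\mbL w\|_{L^2}\gtrsim \rho^2\|w\|_{L^2}$ for $w\in\mcZ_*^\bot$, combined with the $\Htwoin$-coercivity of Step~1. For $\mbL_0=\mathcal L^2$ one has $\|\mbL_0 w\|_{L^2}\geq \rho\|w\|_{L^2}$ on $\mcZ^\bot$ by the same spectral calculation squared; the perturbation terms $\varep\mbL_1+\varep^2\mbL_{\geq 2}$ are controlled by $\|(\varep\mbL_1)w\|\lesssim \varep\|\mathcal L w\|\lesssim \varep\rho^{-1/2}\|\mbL_0 w\|^{1/2}\|w\|^{1/2}$, which is dominated once $\varep_0$ is small. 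This gives $\|\mbL w\|\gtrsim\rho\|w\|$, and then $\|\mbL w\|^2\gtrsim \rho\|\mbL w\|\,\|w\| \gtrsim \rho\langle\mbL w,w\rangle$; the stated $\rho^2$ is a (weaker, hence valid) consequence.

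\textbf{Step 3: the average estimate \eqref{est-bLpw-mass}.} Here $w+Q$ is mass-free with $Q=\sum_{j\in\Sigma_0}\mrq_j Z^{0j}_{\mbp,*}$, so $\langle w\rangle_{L^2}=-\langle Q\rangle_{L^2}$, and by \eqref{est-Q-mass} (or \eqref{est-Z*-mass}), $|\langle w\rangle_{L^2}|\lesssim \varep^3\|\mbq\|_{l^2}$. I would write $\langle \mbL w\rangle_{L^2}=|\Omega|^{-1}\langle \mbL w,1\rangle_{L^2}$ and move $\mbL$ onto the constant $1$: since $\mbL$ is self-adjoint, $\langle\mbL w,1\rangle=\langle w,\mbL 1\rangle$, and $\mbL 1$ is a localized function near $\Gamma_\mbp$ of size $O(\varep)$ in $L^2$ (from the structure \eqref{def-bLp} — differentiating the profile kills the leading terms and leaves the $\varep\eta_1,\varep\eta_d$ and curvature contributions), so $|\langle w,\mbL 1\rangle|\lesssim\varep^{1/2}\|w\|_{L^2}$ after accounting for the $\varep^{1/2}$ from the whisker Jacobian measure. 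Combined with the $Q$-mass contribution $\langle \mbL Q\rangle$ bounded by $\varep^3\|\mbq\|_{l^2}$ via \eqref{est-Q-mass}, this yields $|\langle\mbL w\rangle_{L^2}|\lesssim \varep^{1/2}\|w\|_{L^2}+\varep^{3/2}\|\mbq\|_{l^2}$.

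\textbf{Step 4: the projected coercivity \eqref{coer-PibL-p}.} Write $\mbL w = \Pi_0\mbL w + \langle\mbL w\rangle_{L^2}$, so $\|\mbL w\|_{L^2}^2 = \|\Pi_0\mbL w\|_{L^2}^2 + |\Omega|\langle\mbL w\rangle_{L^2}^2$. For the projected part, $\|\Pi_0\mbL w\|_{L^2}^2 = \langle\Pi_0\mbL w,\mbL w\rangle_{L^2}$ since $\Pi_0$ is an orthogonal projection and $\Pi_0^2=\Pi_0$ with $\Pi_0\mbL w$ mass-free; hence $\|\mbL w\|_{L^2}^2 = \langle\Pi_0\mbL w,\mbL w\rangle_{L^2} + |\Omega|\langle\mbL w\rangle_{L^2}^2$. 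The second term is bounded using Step~3 by $C(\varep\|w\|_{L^2}^2+\varep^3\|\mbq\|_{l^2}^2)$, and $\varep\|w\|_{L^2}^2$ is absorbed into $\|\mbL w\|_{L^2}^2$ via Step~2 (which gives $\|\mbL w\|^2\gtrsim\rho^2\|w\|_{\Htwoin}^2\gtrsim\rho^2\|w\|_{L^2}^2\gg\varep\|w\|_{L^2}^2$ for $\varep_0\ll\rho^2$), leaving $C\varep^3\|\mbq\|_{l^2}^2 + \langle\Pi_0\mbL w,\mbL w\rangle_{L^2}\geq \|\mbL w\|_{L^2}^2$ after rearrangement.

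The main obstacle is \textbf{Step~1}, specifically transferring the clean spectral gap of $\mbL_0$ from the ideal setting (the tensor-product operator $(\mrL_0-\varep^2\Delta_{s_\mbp})^2$ acting on functions supported in the reach $\Gamma_\mbp^{2\ell}$, where separation of variables is exact) to the genuine operator $\mbL$ on all of $\Omega$ while controlling: (i) the exponentially small dressing errors, which are harmless; (ii) the relatively compact but not small perturbations $\mbL_{\geq 2}$, which require the $\varep_0\ll\rho^2$ hierarchy; and above all (iii) the fact that $\mcZ_*^\bot$ is the orthogonal complement of the \emph{modified} slow space, not the exact eigenspace — so one never has a true spectral decomposition and must argue via near-orthogonality estimates \eqref{ortho-Z*} together with a Weyl-type min-max comparison. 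This is exactly the improvement over \cite{DHPW-14,NP-17}: the modified basis of Lemma~\ref{lem-def-Z*} makes $\mbL\mcZ_*\subset\mcZ_*+O(\varep^{3/2})$, so the off-diagonal leakage between $\mcZ_*$ and $\mcZ_*^\bot$ is $O(\varep^2)$ rather than $O(\sqrt\varep)$, and this is what allows the coercivity constant to be $\rho$-proportional and $\varep$-independent.
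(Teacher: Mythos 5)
Your Steps 3--4 follow the same architecture as the paper (express the average of $\mbL w$, use the mass balance $\langle w\rangle_{L^2}=-\langle Q\rangle_{L^2}$, then the Pythagorean identity $\|\mbL w\|_{L^2}^2=\langle\Pi_0\mbL w,\mbL w\rangle_{L^2}+|\Omega|\langle\mbL w\rangle_{L^2}^2$ with absorption of the $\varep\|w\|_{L^2}^2$ term via the coercivity). But there are two genuine gaps.

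First, the core estimates \eqref{coer-bLp} are not actually proved. Your Step 1 is a plan to re-derive, from the whiskered expansion $\mbL_0+\varep\mbL_1+\varep^2\mbL_{\geq2}$, precisely the result the paper obtains by a different reduction: the paper writes $\mbL=(\mrL_1)^2+\varep\mathrm R$ with $\mrL_1=-\varep^2\Delta+W''(\Phi_\mbp)-\tfrac12\varep\eta_1$ and $\mathrm R$ a \emph{bounded multiplier}, and then invokes Theorem 2.5 of \cite{HP-15} (with $\mathcal Z$ replaced by $\mathcal Z_*$) under $\varep_0\ll\rho^2$. Your alternative has two unfixed problems: (i) transferring coercivity from $\mcZ^\bot$ to $\mcZ_*^\bot$ by writing $w=w_0+O(\varep\|w\|_{L^2})$ is not legitimate for a fourth-order unbounded operator — the cross terms involve $\mbL_0$ acting on the correction and need scaled $H^2$/$H^4$ control of it, which you do not supply; and (ii) the relative-boundedness claim $\|\varep\mbL_1 w\|_{L^2}\lesssim\varep\|\mathcal L w\|_{L^2}$ is false as stated, since $\mbL_1$ in \eqref{def-bLp,1} contains terms like $\kappa_\mbp\p_{z_\mbp}\mathcal L$ that are third order in the scaled variables; one must either interpolate against $\|\mbL_0 w\|_{L^2}$ or, as the paper does, choose a splitting whose remainder is a multiplier. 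In effect your last paragraph restates the difficulty (Weyl-type comparison on the complement of the modified space) rather than resolving it; that difficulty is exactly the content of the cited \cite{HP-15} theorem.

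Second, Step 3 rests on a false claim. Computing from \eqref{def-bLp}, $\mbL 1=-\varep^2\Delta W''(\Phi_\mbp)+(W''(\Phi_\mbp))^2-\varep\eta_1W''(\Phi_\mbp)-(\varep^2\Delta\Phi_\mbp-W'(\Phi_\mbp))W'''(\Phi_\mbp)+\varep\eta_dW''(\Phi_\mbp)$, whose term $(W''(\Phi_\mbp))^2$ tends to $(W''(b_-))^2+O(\varep)\neq0$ in the far field; so $\mbL 1$ is neither localized nor $O(\varep)$, and $|\langle w,\mbL1\rangle_{L^2}|\lesssim\varep^{1/2}\|w\|_{L^2}$ does not follow as you assert. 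The $\varep^{3/2}\|\mbq\|_{l^2}$ term in \eqref{est-bLpw-mass} arises precisely from this nonzero far-field constant paired with $\int_\Omega w=-\int_\Omega Q$ and the mass estimates \eqref{est-Z*-mass}--\eqref{est-Q-mass}; it does not come from ``$\langle\mbL Q\rangle$'', which is not part of the quantity being estimated. The fix is the paper's: subtract the far-field value (as in the splitting of $\mathcal I_2$ into a localized part, giving $\varep^{1/2}\|w\|_{L^2}$ from the whisker measure, plus a constant times $\int_\Omega w$), and treat the remaining terms, which are either localized or $O(\varep)$ multipliers, directly. With these repairs your outline would align with the paper, but as written Steps 1--3 do not constitute a proof.
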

\begin{proof}   
To establish \eqref{coer-bLp}, we introduce
\beqs
\mathrm L_{1} := -\varep^2\Delta +W''(\Phi_{\mathbf p} )-\frac{1}{2}\varep\eta_1,
\eeqs
and rewrite the linearized operator $\mbL  $ defined by \eqref{def-bLp} in the form $\mbL  =\left(\mrL_{1}  \right)^2+\varep \mathrm R$
where
\begin{equation*}
\quad \mathrm R=-\frac{\varep \eta_1^2}{4} -\frac{W'''(\Phi_{\mathbf p} )}{\varep} \Big(\varep^2\Delta \Phi_{\mathbf p} -W'(\Phi_{\mathbf p} )\Big)+ \eta_d W''(\Phi_{\mathbf p} ).
\end{equation*}
Since $\mathrm R$ is a multiplier operator with a finite $L^\infty$-norm, it follows that
\begin{equation*}
 \left<\mbL   w,w\right>_{L^2} \geq \left<\left(\mrL_{1}  \right)^2 w,w\right>_{L^2}-\varep \|\mathrm R\|_{L^\infty}\|w\|_{L^2}^2,
\end{equation*}
and moreover for some $C>0$ independent of $\varep$,
\begin{equation*}
 \|\mbL  w\|_{L^2}^2\geq  \left\|\left(\mrL_{1}  \right)^2 w\right\|_{L^2}^2-\varep^2 C\|\mathrm R\|_{L^\infty}^2 \|w\|_{L^2}^2.
 \end{equation*}
Imposing the condition $\varep_0\ll \rho^2=o(1)$, then the coercivity estimates   \eqref{coer-bLp} for $\mbL  $  follow from Theorem 2.5 of \cite{HP-15} by replacing the preliminary 
approximate slow space $\mathcal Z$ with the modified approximation $\mathcal Z_*$. It remains to obtain estimates  \eqref{est-bLpw-mass} and \eqref{coer-PibL-p}. From the definition of  $\Pi_0$,
\begin{equation}\label{coer-bLpw-mass-1}
\left<\Pi_0\mbL   w, \mbL   w\right>_{L^2}=\|\mbL   w\|_{L^2}^2-\frac{1}{|\Omega|}\left(\int_\Omega \mbL  w\dd   x\right)^2.
\end{equation}
To estimate the averaged term we turn to the definition, \eqref{def-bLp}, of $\mbL  $ which implies
\begin{equation}\label{bL-pw-mass-1}
\begin{aligned}
\int_\Omega \mbL  w\dd   x&=\int_\Omega \Big[\left(\varep^2\Delta-W''(\Phi_{\mathbf p}  ) +\varep \eta_1\right)\left( \varep^2\Delta -W''(\Phi_{\mathbf p} ) \right)w \\
&\qquad-\left(\varep^2 \Delta \Phi_{\mathbf p}  -W'(\Phi_{\mathbf p} )\right)W'''(\Phi_{\mathbf p} ) w 
  +\varep \eta_d W''(\Phi_{\mathbf p} ) w\Big]\dd   x.
\end{aligned}
\end{equation}
Since $w$ satisfies periodic boundary conditions, both $\Delta w$ and $\Delta^2 w$ has no mass
%\begin{equation}
%\int_\Omega \Delta w\dd x=0, \qquad \int_\Omega \Delta^2 w\dd x=0,
%\end{equation}
which allows us to rewrite \eqref{bL-pw-mass-1} as
\beqs
\int_\Omega \mbL  w\dd   x=\mathcal I_1+\mathcal I_2+\mathcal I_3,
\eeqs
where the terms $\mathcal I_k (k=1,2,3)$ are defined by
\beqs
\begin{aligned}
\mathcal I_1:=-2\varep^2\int_\Omega  W''(\Phi_{\mathbf p} )\Delta w\dd   x,\qquad \qquad 
\mathcal I_2:=\int_\Omega \left(W''(\Phi_{\mathbf p} )\right)^2w\dd   x,\\
 \qquad \mathcal I_3:=-\int_\Omega \left[ \left(\varep^2\Delta \Phi_{\mathbf p}  -W'(\Phi_{\mathbf p} )\right)W'''(\Phi_{\mathbf p} )- \varep( \eta_d -\eta_1)W''(\Phi_{\mathbf p} ) \right]w\dd   x.\quad
\end{aligned}
\eeqs
We address these terms one by one. For the first term we integrate by parts and add a zero term
\begin{equation*}
\mathcal I_1=-2\varep^2\int_\Omega  \left( \Delta W''(\Phi_{\mathbf p} )  \right)w\dd   x=-2 \varep^2\int_\Omega  \Delta\left(W''(\Phi_{\mathbf p} ) -W''(\phi_0^\infty) \right)w\dd   x .
\end{equation*}
Since $\varep^2\Delta(W''(\Phi_{\mbp} )-W''(\phi_0^\infty))$ is bounded in $L^\infty$ and exponentially localized near the interface $\Gamma_{\mbp}$  we obtain
\begin{equation}\label{bL-pw-est-I-1}
|\mathcal I_1|\lesssim \varep^{1/2}\|w\|_{L^2}.
\end{equation}
By the definition of $\Phi_{\mathbf p}$, the quantity $\varep^2 \Delta \Phi_{\mathbf p}- W'(\Phi_{\mathbf p})$ is order of $\varep$ in $L^\infty$, we deduce that
the part of the integrand in the brackets in $\mathcal I_3$ is of order of $\varep$ in $L^\infty$, hence
 \begin{equation}\label{bL-pwest-I-3}
| \mathcal I_3|\lesssim \varep\|w\|_{L^2}.
 \end{equation}
 Finally, to bound  $\mathcal I_2$ we decompose it into near and far-field parts
\beqs
\begin{aligned}
\mathcal I_2&=\int_\Omega \Big[\left(W''(\Phi_{\mathbf p} )\right)^2- \left(W''(\phi_0^\infty)\right)^2\Big]w \dd   x+\left(W''(\phi_0^\infty)\right)^2 \int_\Omega w\dd   x.
\end{aligned}
\eeqs
The mass of $w$ balances with the mass of $Q$, that is, $\left<w\right>_{L^2}=-\left<Q\right>_{L^2}$. From  \eqref{est-Q-mass} we deduce that
\begin{equation}\label{bL-pw-est-I-2}
|\mathcal I_2|\lesssim \varep^{1/2}\|w\|_{L^2} +\varep^{3/2}\|\mathbf q\|_{l^2}.
\end{equation}
 Combining  estimates for $\mathcal I_k(k=1, 2,3)$ in \eqref{bL-pw-est-I-1}-\eqref{bL-pw-est-I-2} yields \eqref{est-bLpw-mass}. We deduce
 \eqref{coer-PibL-p} from these results together with \eqref{coer-bLpw-mass-1}.
\end{proof}

%%%%%%%%%%%%%%%%%%%%%%%%%%%%%%%%%%%%%%%%%%%%%%%%%%%%%%%%%%%%%Nonlinear%%%%%%%%%%%%%%%%%%%%%%%%%%

\section{Orbital Stability of the Bilayer Manifold}
\label{s:nonstab-BLM}
The tangent plane of the bilayer manifold $\cMb$ lies approximately in the meander space $\mcZ_*^1$. In this section we construct a nonlinear projection that maps a tubular projection neighborhood of the bilayer manifold onto the bilayer manifold. The projection uniquely decomposes each $u$ in the projection neighborhood into a bilayer \muckmuck parameterized by the meander modes $\mbp$ plus an orthogonal perturbation $v^\bot\in(\mcZ_*^1)^\bot$. The FCH gradient flow \eqref{RCL-flow} weakly excites the pearling modes, which from the coercivity estimates of Lemma\,\ref{lem-eigen-bM} are weakly damped when the pearling stability condition \eqref{cond-P-stab} holds. Accommodating the weak damping necessitates extracting the pearling modes from the remainder and tracking their evolution dynamically. This is accomplished by further decomposing the orthogonal perturbation  $v^\bot$ in its components in the $Q=Q(\mbq)$ in the pearling slow space $\mcZ_*^0$ and the fast modes $w\in\mcZ_*^\bot.$

We rewrite the flow as an evolution in these variables, and show that for initial data sufficiently close to the the bilayer manifold whose projected meander parameters lie within a set $\cO_\delta \subset\cD_\delta$, then the solution $u=u(t)$ remains close to $\cM_b$ so long as $\mbp$ remains inside of  a slightly bigger set $\cO_{2,\delta}\subset \cD_\delta$. In a companion paper, \cite{CP-nonlinear}, we consider a circular base point interface associated to an equilibrium of the flow and construct classes of initial data for which $\mbp$ remains inside of $\cO_{2,\delta}$ for all time and derive a curvature driven flow that captures the leading order evolution of the meander parameters.

\subsection{%The bilayer manifold and d
Decomposition of the flow}
%The base manifold, constructed in Definition\,\ref{def-bM0} does not accommodate excursions of pearling modes. The FCH gradient flow \eqref{RCL-flow} weakly excites the pearling modes, which from the coercivity estimates of Lemma\,\ref{lem-eigen-bM} are weakly damped when the pearling stability condition \eqref{cond-P-stab} holds. Accommodating the weak damping necessitates extracting the pearling modes from the remainder and tracking their evolution dynamically. %This is accomplished by linearly extending the base manifold to incorporate the pearling modes. The result is the  bilayer manifold.

 We say that a base interface $\Gamma_0$ and a scaled system mass $M_0$ introduced in \eqref{def-M0} are an \textit{admissible base-point pair} if   $\Gamma_0\in \cG_{K_0, 2\ell_0}^4$ and  the system mass balances with the length of $\Gamma_0$ in the sense that 
 $$\left|M_0- m_0|\Gamma_0| \right|\lesssim  1,$$ 
 where $m_0$ is the mass per unit length of bilayer, defined in \eqref{def-sigma1*}. The collection of admissible pairs, the admissible set, is denoted $\mathcal A(K_0,\ell_0)$. This condition enforces that the far-field value of $\Phi_\mbp$ lies within $O(\varep)$ of $b_-$, and hence that the bulk parameter $|\sigma|\lesssim 1$, see \eqref{e:ConsMass}-\eqref{mass-Phi-p}.  
%Recalling the definition \eqref{A-00} of$\cD_\delta$ we introduce \beq\label{def-Dq}\cD_{\mrq}:=\{\}.\eeq

For each admissible pair $(\Gamma_0, M_0)$, we  introduce an $N_1$-dimensional  bilayer manifold $\cM_b=\cM_b(\Gamma_0, M_0;\rho)$ as given in Definition \ref{def-bM0}, where the $\rho$ dependence arises through $N_1=N_1(\rho)$, see Definition \ref{def-slow-space}. With the $H^2$ inner norm defined in \eqref{H2-inner}, we
 %for a given bilayer manifold $\cM_*=\cM_*(\Gamma_0,M_0;\rho)$ 
construct a projection onto the bilayer manifold $\cM_b$ defined on the tubular projection neighborhood $\mcU$ of the bilayer  manifold $\cM_b$, 
\beq 
\mcU (\cM_b):=\left\{u \in H^2(\Omega)\, \Bigl |\, \inf_{\mbp\in\cD_\delta} \|u-\Phi_\mbp(\sigma)\|_{\Htwoin} \leq \delta \varep,\,
 \langle u-b_-\rangle_{L^2} = \frac{\varep M_0}{|\Omega|} \right\},
\eeq
where $\Phi_\mbp(\sigma)$ is defined in Lemma \ref{lem-def-Phi-p}  with $\sigma=\sigma(\mbp)$ given by \eqref{def-hatla}. 
\begin{defn}\label{def-cM-projection}
For $u\in \mcU(\cM_b)$, we say $\Pi_{\cM_b}u:=\Phi_\mbp(\sigma)$ is the projection onto $\cMb$ and $\Pi_{\cM_b}^\bot u:= v^\bot $ is its compliment if there exist unique  $\mbp\in \cD_\delta$ and mass-free orthogonal perturbation $v^\bot\in (\mcZ_{*}^1)^\bot$ such that
\beq\label{decomp-u-Mb}
u=\Phi_\mbp+v^\bot.
\eeq
{In this case we introduce $Q(\mbq):=\Pi_{\mcZ^0_*}v^\bot $, the projection of the orthogonal perturbation onto $\mcZ^0_*$ and $ w:=\Pi_{\mcZ^0_*}^\bot  v^\bot$, the projection onto the fast modes. % where $Q=\Pi_{\mcZ_*^0}v^\bot$ and $w=\Pi_{\mcZ_*^0}^\bot v^\bot$ are the $L^2$-orthogonal decomposition of $v^\bot$ into pearling and fast modes.
 We call $(\mbp,\mbq)$ the projected parameters of $u$.}
\end{defn}
 The following lemma establishes the existence of a projection of $\mcU$ to $\cM_b$ and $\mcZ^0_*$.   
 %Given $u\in\mcB$ and associated parameters $(\mbp,\mbq)\in\cD\times\cD_\mrq$, we  introduce %the base point perturbation $v:=u-\Phi_0(\sigma_0)$, 
 %the orthogonal perturbation $v^\bot:=u-\Phi_\mbp(\sigma(\mbp))\in(\mcZ_*^1)^\bot$, and the fast perturbation or fast modes $w:=v^\bot-Q\in\mcZ_*^\bot$. We denote the nonlinear projections by $\Pi_{\cMb}u:=\Phi_\mbp(\sigma(\mbp))$ and $\Pi_{\cM_*}u:=\Phi_\mbp(\sigma(\mbp))+Q$ defined for $u\in\mcB,$ while  $\Pi_{\mcZ_*^0}$, $\Pi_{\mcZ_*^1},$ and $\Pi_{\mcZ_*}$ denote the usual $L^2$ orthogonal linear projections onto the $\mbp$ dependent pearling, meander, and slow spaces $\mcZ_*^0, \mcZ_*^1, \mcZ_*$ respectively. 

\begin{lemma}\label{lem-Manifold-Projection}
Let $\cM_b=\cM_b(\Gamma_0,M_0)$ be the bilayer manifold  as defined in Definition  \ref{def-bM0}. Then for $\delta,\varep_0>0$ sufficiently small the projection
$\Pi_{\cMb}$ is well posed on $\mcU$ for all $\varep\in(0,\varep_0)$. Moreover, for $u\in\mcU$ of the form $u=\Phi_{\mbp_0}+v$ with $\mbp_0\in\cD_\delta$ and massless perturbation $v\in H^2$ satisfying $\|v\|_{\Htwoin}\leq \delta \varep$, then $u$'s projected parameters $(\mbp, \mbq)$ and its orthogonal and fast perturbations, $v^\bot$ and $w$, satisfy
\begin{equation*}
\begin{aligned}
&  %\|v^\bot\|_{L^2}+\|w\|_{L^2}+
\|\mbq\|_{l^2}+\eps^{-1/2}\|\mbp-\mbp_0\|_{l^2} %+\varep^{\frac32} \|\mathbf p\|_{\mbV_{2}^2} 
\lesssim \|v\|_{L^2}; \qquad \|v^\bot\|_{\Htwoin}\lesssim \|w\|_{\Htwoin} +\|Q\|_{\Htwoin}\lesssim %\varep^{-\frac12}\|\mbp\|_{\mbV_2^2}+
\|v\|_{\Htwoin}. 
%&\|\mathbf p\|_{\mbV_{2}^2}\lesssim \varep^{-3/2}\|v_0\|_{L^2}.
\end{aligned}
\end{equation*}

%\begin{equation*}
%\begin{aligned}
%&  \|v^\bot\|_{H^{2k}}\lesssim \varep^{-2k-1/2}\|\mathbf p\|_{\mbV_0^2}+\varep^{-1/2}\|\mathbf p\|_{\mbV_{2k}^2} +\|v_0\|_{H^{2k}}, \quad \|\mathbf q\|_{l^2}\lesssim \|v^\bot\|_{L^2}; \\
%& \|w\|_{H^{2k}}\lesssim \|v^\bot\|_{H^{2k}}+\varep^{-2k}\|v^\bot\|_{L^2}, \quad   \|\mathbf p\|_{\mbV_{2k}^2}\lesssim \varep^{(1-4k)/2}\|v_0\|_{L^2}.
%\end{aligned}
%\end{equation*}
%The bulk density state $\sigma(\mrp_0)$ is near $\sigma_0$. Indeed,  \beqs |\sigma(\mbp(0)) -\sigma_0| \lesssim \varep^{-1} \|v_0\|_{L^2} +\|\mbp(0)\|_{l^2}.\eeqs
\end{lemma}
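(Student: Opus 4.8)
The plan is to set up the projection $\Pi_{\cMb}$ as the solution of a fixed-point/implicit-function problem and then extract the claimed quantitative bounds from the coercivity and orthogonality estimates established in Section\,\ref{sec-linear}. First I would observe that the decomposition \eqref{decomp-u-Mb} amounts to finding $\mbp\in\cD_\delta$ so that the residual $u-\Phi_\mbp$ lies in $(\mcZ_*^1)^\bot$, i.e.\ the $N_1$ scalar equations
\begin{equation*}
G_i(\mbp):=\left\langle u-\Phi_\mbp(\sigma(\mbp)),\, Z_{\mbp,*}^{1i}\right\rangle_{L^2}=0,\qquad i\in\Sigma_1,
\end{equation*}
are satisfied, subject to the mass constraint, which is automatically consistent since both $u$ and $\Phi_\mbp$ carry mass $b_-|\Omega|+\varep M_0$. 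Starting from the initial guess $\mbp_0$, for which $u-\Phi_{\mbp_0}=v$ with $\|v\|_{\Htwoin}\leq\delta\varep$, I would linearize $G$ in $\mbp$. The derivative $\partial_{\mrp_j}\Phi_\mbp$ is, to leading order, a multiple of the meander basis function $Z_{\mbp,*}^{1j}$ (cf.\ the translation/meander structure of Definition\,\ref{def-P-interface} and \eqref{proj-n0-E-basis}), so the Jacobian $DG(\mbp_0)$ is, modulo $O(\varep)$ and the approximate orthogonality \eqref{ortho-Z*}, a nonsingular matrix with norm comparable to $\varep^{-1}$ (the factor $\varep^{-1}$ coming from the $z_\mbp$-derivative in $\partial_{\mrp_j}\Phi_\mbp\sim-\varep^{-1}\phi_0'\,\mathbf n_\mbp\!\cdot\!\mathbf n_0\,\tilde\Theta_j$). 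The contraction mapping theorem on a ball of radius $C\varep^{1/2}\|v\|_{\Htwoin}$ in the $l^2$-metric on $\mbp$ then produces a unique solution $\mbp$, provided $\delta$ and $\varep_0$ are small enough to keep $\mbp\in\cD_\delta$ and to control the quadratic-in-$\mbp$ error terms from $\Phi_\mbp$ (these are governed by Lemmas\,\ref{lem-Gamma-p} and \ref{lem-def-Phi-p}).

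Once $\mbp$ is obtained, I would write $v^\bot=u-\Phi_\mbp=v+(\Phi_{\mbp_0}-\Phi_\mbp)$ and estimate the correction term. Since $\Phi_{\mbp_0}-\Phi_\mbp=-\int_0^1 D\Phi_{\mbp_t}(\mbp-\mbp_0)\,dt$ with $\mbp_t$ interpolating, and since $D\Phi$ maps $l^2$ into $\Htwoin$ with norm $O(\varep^{-1})$ in the $z$-direction but the relevant components are controlled because $\mbp-\mbp_0=O(\varep^{1/2}\|v\|_{L^2})$, we get $\|\Phi_{\mbp_0}-\Phi_\mbp\|_{\Htwoin}\lesssim\varep^{-1/2}\|v\|_{L^2}\cdot\eps^{1/2}$... more carefully one tracks that the $\Htwoin$-norm of $D\Phi_{\mbp_t}\mathbf a$ is $\sim\eps^{-1}\|\mathbf a\|_{l^2}$ while $\|\mbp-\mbp_0\|_{l^2}\lesssim\eps^{1/2}\|v\|_{L^2}$, which is not yet enough; the gain comes from noting that the \emph{projection} of $v^\bot$ onto $(\mcZ_*^1)^\bot$ kills the dominant $\eps^{-1}$ part of $D\Phi$, which lies in $\mcZ_*^1$ up to $O(\varep)$, so effectively $\|\Phi_{\mbp_0}-\Phi_\mbp\|_{\Htwoin}\lesssim\|v\|_{\Htwoin}$. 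Combining gives $\|v^\bot\|_{\Htwoin}\lesssim\|v\|_{\Htwoin}$. For the parameter bounds, projecting $G_i(\mbp)=0$ and using $u-\Phi_{\mbp_0}=v$ yields $DG(\mbp_0)(\mbp-\mbp_0)=\Pi_{\mcZ_*^1}v+(\text{h.o.t.})$, and inverting $DG$ (norm $\sim\eps^{-1}$... in the direction that matters the inverse has norm $\sim\eps$, hence the $\eps^{-1/2}$ prefactor balances correctly against the $\eps^{1/2}$) gives $\|\mbp-\mbp_0\|_{l^2}\lesssim\eps^{1/2}\|v\|_{L^2}$. Finally $\mbq$ is defined by $Q=\Pi_{\mcZ_*^0}v^\bot$, and Lemma\,\ref{lem-proj-Z0} gives $\|\mbq\|_{l^2}\lesssim\|v^\bot\|_{L^2}\lesssim\|v\|_{L^2}$, together with $\|Q\|_{\Htwoin}\sim\|\mbq\|_{l^2}$ and $\|w\|_{\Htwoin}=\|\Pi_{\mcZ_*^0}^\bot v^\bot\|_{\Htwoin}\lesssim\|v^\bot\|_{\Htwoin}$, whence $\|v^\bot\|_{\Htwoin}\lesssim\|w\|_{\Htwoin}+\|Q\|_{\Htwoin}\lesssim\|v\|_{\Htwoin}$.

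The main obstacle I anticipate is the careful bookkeeping of the $\varepsilon$-powers in the Jacobian $DG$ and its inverse: $\partial_{\mrp_j}\Phi_\mbp$ is $O(\varep^{-1})$ in $\Htwoin$, so a naive estimate would give $\|\mbp-\mbp_0\|_{l^2}\lesssim\varep\|v\|$, which is too crude — the correct $\varep^{1/2}$ scaling must come from separating the $\mcZ_*^1$-component of $\partial_{\mrp_j}\Phi$ (which is $O(\varep^{-1})$ and inverted against the slow-space geometry) from the genuinely orthogonal, $O(1)$-or-better, remainder, and from the fact that $v$ is massless and $O(\varep)$-small. This requires the refined expansions of $\partial_{\mrp_j}\Phi_\mbp$ analogous to the commented-out corollaries following Lemma\,\ref{lem-change-of-coord}, and the approximate-orthogonality estimates \eqref{ortho-Z*}, \eqref{est-Theta-2} to ensure $DG$ is diagonally dominant after the correct rescaling. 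A secondary technical point is verifying that the fixed-point iterates stay inside $\cD_\delta$ — this follows from the $\mbV_2/\mbV_1$ bounds in \eqref{A-00} once $\delta$ is fixed small, since $\|\mbp-\mbp_0\|_{\mbV_1}\lesssim\|\mbp-\mbp_0\|_{l^2}\cdot(\text{dim factor})\lesssim N_1^{1/2}\varep^{1/2}\|v\|\lesssim\delta$ by the choice $N_1\leq\varep^{-1}$ and $\|v\|_{\Htwoin}\leq\delta\varep$.
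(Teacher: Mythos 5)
Your overall strategy is the same as the paper's: impose the orthogonality conditions $\left\langle u-\Phi_\mbp,\,Z^{1k}_{\mbp,*}\right\rangle_{L^2}=0$ for $k\in\Sigma_1$, solve them by a contraction/implicit-function argument near $\mbp_0$, then bound $v^\bot$ by the mean value theorem and split off $Q$ and $w$ via Lemma\,\ref{lem-proj-Z0}. However, the step you yourself flag as the main obstacle --- the $\varep$-bookkeeping of the Jacobian --- is exactly where your argument goes wrong, and you never resolve it. The Jacobian entries are $\mathbb{T}_{kj}=\left\langle \p_{\mrp_j}\Phi_\mbp,\,Z^{1k}_{\mbp,*}\right\rangle_{L^2}$; although $\p_{\mrp_j}\Phi_\mbp\sim -\varep^{-1}\phi_0'\,\xi_j$ pointwise, pairing against $Z^{1k}_{\mbp,*}=\varep^{-1/2}\psi_1\tilde\Theta_k+\ldots$ and passing to local coordinates with Jacobian $\varep(1-\varep z_\mbp\kappa_\mbp)$ produces $\varep^{-1}\cdot\varep^{-1/2}\cdot\varep=\varep^{-1/2}$, not $\varep^{-1}$: the paper's Lemma\,\ref{lem-T} shows $\mathbb{T}(\bm 0)=-\varep^{-1/2}\bigl(m_1^2+O(\varep)\bigr)\mathbb{I}+O(\varep^{3/2})\mbE$, hence $\|\mathbb{T}(\bm0)^{-1}\|_{l^2_*}\lesssim\varep^{1/2}$, and the bound $\|\mbp-\mbp_0\|_{l^2}\lesssim\varep^{1/2}\|v\|_{L^2}$ drops out directly from the fixed point argument (Lemma\,\ref{lem-IFT-1}). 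Your assertion that $DG\sim\varep^{-1}$, followed by the hedge that ``in the direction that matters the inverse has norm $\sim\varep$'' so that an unexplained ``$\varep^{-1/2}$ prefactor balances against $\varep^{1/2}$,'' is not a derivation of the claimed scaling; as written, the inversion step --- the quantitative heart of the lemma --- is missing.

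A second, related confusion: your worry that the direct mean-value bound for $\|v^\bot\|_{\Htwoin}$ is insufficient, and your proposed fix that ``the projection onto $(\mcZ_*^1)^\bot$ kills the dominant part of $D\Phi$,'' are both off. No projection is applied to $\Phi_{\mbp_0}-\Phi_\mbp$ by itself: $v^\bot=u-\Phi_\mbp$ lies in $(\mcZ_*^1)^\bot$ only through the choice of $\mbp$, so the cancellation you invoke would have to be extracted from the equations $G(\mbp)=0$, which you do not do. In fact no such cancellation is needed. Since $\|\p_{\mrp_j}\Phi_\mbp\|_{L^2}\sim\varep^{-1/2}$ (the $\varep^{-1}$ amplitude is tempered by the $\varep^{1/2}$ localization of $\phi_0'(z_\mbp)$) and $\varep^2\|\p_{\mrp_j}\Phi_\mbp\|_{H^2}\lesssim\varep^{-1/2}$, the mean value theorem together with $\|\mbp-\mbp_0\|_{l^2}\lesssim\varep^{1/2}\|v\|_{L^2}$ gives $\|\Phi_{\mbp_0}-\Phi_\mbp\|_{\Htwoin}\lesssim\varep^{-1/2}\|\mbp-\mbp_0\|_{l^2}\lesssim\|v\|_{L^2}$, which is exactly how the paper concludes (using the expansion of $\p_{\mrp_j}\Phi_\mbp$ in Lemma\,\ref{lem-Phi_t-p}); the $Q$, $w$ bounds then follow from Lemma\,\ref{lem-proj-Z0} as you say. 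So the route is correct, but the two places where the precise $\varep$-powers must be produced are either asserted incorrectly or replaced by a non-rigorous heuristic; repairing them amounts to proving the analogue of Lemma\,\ref{lem-T}, i.e.\ computing the leading diagonal form of $\mathbb{T}$ and its Lipschitz dependence on $\mbp$.
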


The proof of this Lemma is postponed to the appendix.

%\bQues{I commented out requirement  $u_0\in \mcB_{\varep^2}(\Phi_0(\sigma_0)$, since $\mcB$ is not defined anymore, and its not clear to me that we need this yet? Also rephrased to remove explicit $T$ here, to avoid confusion with $T$ in main theorem.}

  Let $u=u(t)$ be a solution of the flow \eqref{eq-FCH-L2} corresponding to initial data $u_0 \in \mcU(\cM_b)$.
%\in\mcB_{\varep^2}(\Phi_0(\sigma_0)) \subset. 
 %Note that $\Phi_0(\sigma_0)\in \cM_b$ is the base point of the manifold.
So long as $u(t)\in \mcU(\cM_b)$ then $u$ admits the decomposition 
\begin{equation}\label{decomp-u}
u(  x,t) = \Phi_{\mathbf p} (  x;  \sigma) +v^\bot( x, t;\mbq),  \quad v^\bot\in (\mathcal Z_*^1)^\bot ,\quad \int_\Omega v^\bot\dd   x=0, %\quad \Phi_{\mathbf p}=\Phi_{\mathbf p}+\bar u:=\Phi_{\mathbf p}+\bar u_0 -\frac{1}{|\Omega|}\int_\Omega \Phi_{\mathbf p}\, \mrd   x.
\end{equation} 
where the projected parameters $(\mbp,\mbq)=(\mbp(t),\mbq(t))$ and the bulk density parameter $\sigma=\sigma(\mbp(t))$ defined by \eqref{def-hatla} are all time dependent. Substituting the ansatz \eqref{decomp-u} into 
the equation \eqref{eq-FCH-L2} leads to an equation for $\Phi_\mbp$ and $v^\bot$:
\begin{equation}\label{eq-v-Phi}
\p_t \Phi_{\mathbf p} +\p_t v^\bot=-\Pi_0 \mathrm F(\Phi_{\mathbf p} ) -\Pi_0 \mbL   v^\bot-\Pi_0  \mathrm N( v^\bot),
\end{equation}
where $\mbL$ is the linearization of $\mrF$ about $\Phi_\mbp$ introduced in \eqref{def-bLp}, and $\mathrm N(v^\bot)$ is the nonlinear term defined by
\begin{equation}\label{def-N}
\mathrm N( v^\bot):= \mathrm F(\Phi_{\mathbf p} + v^\bot)-\mathrm F(\Phi_{\mathbf p} ) - \mbL  v^\bot.
\end{equation}
To exploit the strong coercivity of $\Pi_0\mbL_\mbp$ on $\mcZ_*^\bot$ and its $O(\varep)$-weak coercivity on $(\mcZ_{*}^1)^\bot$ we follow Definition\,\ref{def-cM-projection} and decompose the orthogonal  perturbation $ v^\bot$ into its pearling and fast mode sub-components
\begin{equation}\label{decomp-v}
 v^\bot=Q(  x, t)+w(  x, t), %\quad \hbox{with}\;\int_{\Omega} w \, \mrd   x=0 \;\hbox{and}\;  
 \quad w\in \mathcal Z_*^\bot(\mbp, \rho).
\end{equation}

%\bQues{Do we need the bound on $\dot\mbp$ here? They are OK here if we lose them in the final result.}
In the following, we make a priori assumptions that bound the rate of change of $\mbp$ induced by the flow and norm estimates on $\mbp$ that subsume those of in $\cD_\delta$, defined in \eqref{A-00}:
\begin{equation}\label{A-0}
|\mathrm p_0(t)|+\|\hat\mbp\|_{\mbV_1} \leq   \delta, \qquad \|\hat\mbp\|_{\mbV_2}+\varep \|\hat\mbp\|_{\mbV_4^2}\leq  1  ,   \qquad \|\dot{\mathbf p}\|_{l^2}\leq     \varep^3.
\end{equation}
Here $\delta$ are is in the definition of $\cD_\delta.$  In sub-section\,\ref{ss-MTheom} these assumptions will be refined to eliminate the condition on $\dot{\mbp}.$ 

 In the remainder of this section we develop bounds on $w$ and $\mathbf q$, which require an $L^2$-bound on the nonlinear term $\mathrm N(v^\bot)$. The projection of the solution $u$ onto the manifold involves the approximate tangent spaces $\mathcal Z_{*}(\mbp)$. Although the flow of $\mbp$ is slow in the sense of \eqref{A-0}, it induces temporal variation of the tangent plane that must be accounted for.  We emphasize that the linear operator $\mbL=\mbL_\mbp$ and the spaces $\mathcal Z_{*}^0(\mbp)$ and $\mcZ_{*}^1(\mbp)$ are independent of $\mbq$. The linearization and tangent spaces are defined along the bilayer  manifold $\cMb$.  More significant is the fact that the space $\mathcal Z_{*}(\mbp)$ is only approximately invariant under the action of the linearized operator. This produces terms whose control is crucial to the closure of the estimates. Indeed these terms motivate the introduction of the modified approximate slow space $\mathcal Z_{*}(\mbp)$. 

\subsection{Energy estimate for $w$}
We derive an $H^2$-bound on $w$ under the flow induced by \eqref{eq-FCH-L2} assuming the a priori estimates \eqref{A-0} on $\mbp$ and $\dot{\mbp}$.  
We decompose $ v^\bot$ as in \eqref{decomp-v} to rewrite \eqref{eq-v-Phi} as an evolution for the fast modes $w$, 
\begin{equation}\label{eq-w}
\p_t w+\Pi_0 \mbL   w=-\p_t \Phi_{\mathbf p} -\p_t Q-\Pi_0 \mathrm F(\Phi_{\mathbf p} ) -\Pi_0\mbL   Q-\Pi_0\mathrm N( v^\bot).
\end{equation}
\begin{lemma}\label{lem-est-w} 
Let $\varep\in(0,\varep_0)$ and  the a priori assumptions \eqref{A-0} hold, the function $w\in\mathcal Z_{*}^\bot$, obeys
\begin{equation}\label{est-w-2}
\begin{aligned}
\frac{\mrd }{\mrd t}\left<\mbL   w, w\right>_{L^2}+\|\mbL  w\|_{L^2}^2 & \lesssim \varep^{-1} \|\dot{\mathbf p}\|_{l^2}^2 +\varep^2\rho^{-4}(\|\mathbf q\|_{l^2}^2+ \|\dot{\mathbf q}\|_{l^2}^2)+\varep^5|\sigma-\sigma^*|^2\\
&\qquad +\varep^{7}(1+ \|\hat\mbp\|_{\mbV_4^2}^2)+\|\mathrm N(v^\bot)\|_{L^2}^2.
\end{aligned}
\end{equation}
provided that $\varep_0$ small enough depending on $\rho$. %, domain, system parameters and $(\Gamma_0, M_0)$. 
\end{lemma}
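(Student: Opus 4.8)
The strategy is the standard energy estimate for the fast mode equation \eqref{eq-w}, using the coercivity of $\mbL$ on $\mcZ_*^\bot$ from Theorem \ref{lem-coer} together with the coupling bounds from Theorem \ref{thm-coupling est} and Corollary \ref{cor-est-w-Z*}. I will pair \eqref{eq-w} with $\mbL w$ in $L^2$. The key subtlety is that $w(t)\in\mcZ_*^\bot(\mbp(t))$ with the space itself moving in time, so $\partial_t w$ is not orthogonal to $\mcZ_*$; this is why the statement allows the factor $\varep^{-1}\|\dot\mbp\|_{l^2}^2$ on the right. First I would compute $\frac{\mrd}{\mrd t}\langle \mbL w,w\rangle_{L^2}$, which produces $2\langle \mbL w,\partial_t w\rangle_{L^2}$ plus the term $\langle (\partial_t\mbL)w,w\rangle_{L^2}$; the latter is controlled by $\|\dot\mbp\|_{l^2}$ times $\|w\|_{L^2}^2$ (since $\mbL=\mbL_\mbp$ depends smoothly on $\mbp$ and each $\partial_{\mrp_j}\mbL$ is a bounded multiplier up to the usual $\varep$-scalings), hence absorbable. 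For the main term, substitute \eqref{eq-w} for $\partial_t w$:
\begin{equation*}
\langle \mbL w, \partial_t w\rangle_{L^2} = -\langle \mbL w,\Pi_0\mbL w\rangle_{L^2} - \langle \mbL w, \partial_t\Phi_\mbp+\partial_t Q +\Pi_0\mrF(\Phi_\mbp)+\Pi_0\mbL Q+\Pi_0\mrN(v^\bot)\rangle_{L^2}.
\end{equation*}

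\textbf{Handling the individual terms.} The principal term $-\langle \mbL w,\Pi_0\mbL w\rangle_{L^2}$ equals $-\|\mbL w\|_{L^2}^2$ plus the averaged correction, which by \eqref{coer-PibL-p} is bounded by $C\varep^3\|\mbq\|_{l^2}^2$; this yields the $-\|\mbL w\|_{L^2}^2$ on the left (after moving it over) up to the harmless $\varep^3\|\mbq\|_{l^2}^2$. For each remaining inner product $\langle \mbL w, g\rangle_{L^2}$ I would use Cauchy--Schwarz in the form $|\langle \mbL w,g\rangle_{L^2}|\le \tfrac{1}{8}\|\mbL w\|_{L^2}^2 + C\|g\|_{L^2}^2$, so that the $\tfrac18\|\mbL w\|^2$ pieces are absorbed into the left-hand side (there are finitely many, and $\rho$-independent coefficients suffice since the $\|\mbL w\|^2$ term is not multiplied by $\rho$). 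It then remains to bound $\|g\|_{L^2}^2$ for each $g$: (i) $\|\partial_t\Phi_\mbp\|_{L^2}=\|\sum_j \dot{\mrp}_j\partial_{\mrp_j}\Phi_\mbp\|_{L^2}\lesssim \varep^{-1/2}\|\dot\mbp\|_{l^2}$ since $\partial_{\mrp_j}\Phi_\mbp$ carries a leading $\varep^{-1}\phi_0'$ times a unit-$L^2$ object scaled by $\varep^{1/2}$ (Jacobian), giving the $\varep^{-1}\|\dot\mbp\|_{l^2}^2$ term; (ii) $\|\Pi_0\mrF(\Phi_\mbp)\|_{L^2}\lesssim \varep^{5/2}|\sigma-\sigma_1^*|+\varep^{7/2}(1+\|\hat\mbp\|_{\mbV_4^2})$ by Lemma \ref{lem-est-residual}, contributing $\varep^5|\sigma-\sigma^*|^2+\varep^7(1+\|\hat\mbp\|_{\mbV_4^2}^2)$ (noting $|\sigma-\sigma_1^*|\lesssim |\sigma-\sigma^*|+\varep$ up to constants, absorbing the $\varep$ into the $\varep^7$ term); (iii) $\|\partial_t Q\|_{L^2}$: since $Q=\sum_{j\in\Sigma_0}\mrq_j Z_{\mbp,*}^{0j}$, we have $\partial_t Q = \sum_j \dot{\mrq}_j Z_{\mbp,*}^{0j} + \sum_j \mrq_j \partial_t Z_{\mbp,*}^{0j}$; the first piece is $\lesssim \|\dot\mbq\|_{l^2}$ by Lemma \ref{lem-proj-Z0}, the second is $\lesssim \|\dot\mbp\|_{l^2}\|\mbq\|_{l^2}$ (the $\mbp$-derivative of a basis function is again $O(1)$ in $L^2$), and under \eqref{A-0} $\|\dot\mbp\|_{l^2}\lesssim \varep^3$ so this is lower order than $\varep^2\rho^{-4}\|\mbq\|_{l^2}^2$; (iv) $\|\Pi_0\mbL Q\|_{L^2}\le \|\Pi_{\mcZ_*}\Pi_0\mbL Q\|_{L^2}+\|\Pi_{\mcZ_*}^\bot \mbL Q\|_{L^2}$ — wait, $\mbL Q$ is not mass-free — instead write $\Pi_0\mbL Q = \mbL Q - \langle \mbL Q\rangle_{L^2}$, whose $\mcZ_*^\bot$ component is governed by the slow-fast coupling bound of Theorem \ref{thm-coupling est}(2) applied with $v=Q$ (using $\|Q\|_{L^2}\sim\|\mbq\|_{l^2}$) giving $\lesssim (\varep^2+\varep^2\|\hat\mbp\|_{\mbV_4^2})\|\mbq\|_{l^2}$, hence $\lesssim \varep^2\rho^{-4}\|\mbq\|_{l^2}^2$ after using $\varep\|\hat\mbp\|_{\mbV_4^2}\lesssim 1$ and crudely inserting $\rho^{-4}\ge 1$; the averaged correction is handled by \eqref{est-Q-mass}; and (v) the nonlinear term gives $\|\mrN(v^\bot)\|_{L^2}^2$ directly. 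Combining and absorbing the $\tfrac18\|\mbL w\|^2$ contributions yields \eqref{est-w-2}, with $\varep_0$ chosen small (in terms of $\rho$) so that the $\varep$-small error multipliers from the coercivity and coupling estimates, and the $\langle(\partial_t\mbL)w,w\rangle$ and averaged-$\mbL w$ corrections, are dominated by the good $-\|\mbL w\|_{L^2}^2$ after one more application of \eqref{coer-bLp} to convert $\|w\|_{L^2}^2\lesssim \rho^{-2}\|\mbL w\|_{L^2}^2$ when needed.

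\textbf{Main obstacle.} The delicate point is the time-dependence of the projection space: producing the $\varep^{-1}\|\dot\mbp\|_{l^2}^2$ term requires carefully tracking that $\partial_t w = \Pi_{\mcZ_*(\mbp(t))}^\bot \partial_t v^\bot + (\partial_t \Pi_{\mcZ_*}^\bot) v^\bot$, and that the commutator term $(\partial_t\Pi_{\mcZ_*}^\bot)v^\bot$ — which lives in $\mcZ_*$ — pairs against $\mbL w$ only through the small slow-fast coupling, so it contributes at the level of $\varep^2\|\dot\mbp\|_{l^2}\|v^\bot\|_{L^2}$, safely below $\varep^{-1}\|\dot\mbp\|_{l^2}^2$ after Young. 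Equivalently, one works directly with equation \eqref{eq-w} as written (where $\partial_t Q$ and $\partial_t\Phi_\mbp$ already encode all the $\mbp$-motion) and simply notes that $\langle \mbL w,\partial_t w\rangle_{L^2}=\tfrac12\frac{\mrd}{\mrd t}\langle\mbL w,w\rangle_{L^2}-\tfrac12\langle(\partial_t\mbL)w,w\rangle_{L^2}$ is legitimate even though $w\notin$ a fixed subspace, because $w$ is an honest $H^2$-valued function of $t$ solving \eqref{eq-w}. The second real obstacle is bookkeeping the $\rho$ powers: the coercivity $\|\mbL w\|_{L^2}^2\gtrsim \rho^2\langle\mbL w,w\rangle_{L^2}\gtrsim \rho^4\|w\|_{\Htwoin}^2$ means every place where $\|w\|_{L^2}$ appears on the right must be converted at the cost of $\rho^{-4}$; one must check this only inflates the $\|\mbq\|_{l^2}^2$ and $\|\dot\mbq\|_{l^2}^2$ coefficients (matching the stated $\varep^2\rho^{-4}$), and that the $\|\dot\mbp\|^2$, residual, and $\|\mrN\|^2$ terms do not need it — which is exactly the form displayed in \eqref{est-w-2}.
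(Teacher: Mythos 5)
Your overall scheme is the paper's: differentiate $\left<\mbL w,w\right>_{L^2}$, control $\left<(\p_t\mbL)w,w\right>_{L^2}$ via the a priori bound on $\dot\mbp$ and coercivity, substitute \eqref{eq-w}, use \eqref{coer-PibL-p} for the principal term, and estimate the remaining pairings. The residual, $\p_t\Phi_\mbp$, and nonlinear terms are handled exactly as in the paper. However, there is a genuine gap in your treatment of the two $Q$-terms, and it is precisely where the stated coefficients $\varep^2\rho^{-4}$ on $\|\mathbf q\|_{l^2}^2+\|\dot{\mathbf q}\|_{l^2}^2$ come from. For $\left<\p_t Q,\mbL w\right>_{L^2}$ you bound $\|\p_t Q\|_{L^2}\lesssim\|\dot\mbq\|_{l^2}+\hbox{l.o.t.}$ and apply Cauchy--Schwarz against $\|\mbL w\|_{L^2}$; after Young this produces $C\|\dot\mbq\|_{l^2}^2$ with an $O(1)$ coefficient, which is strictly weaker than the claimed $\varep^2\rho^{-4}\|\dot\mbq\|_{l^2}^2$ (recall $\varep_0\ll\rho^2$, so $\varep^2\rho^{-4}\ll1$). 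Similarly, for $\Pi_0\mbL Q$ your claimed bound $\lesssim(\varep^2+\varep^2\|\hat\mbp\|_{\mbV_4^2})\|\mbq\|_{l^2}$ is only true for the component $\Pi_{\mcZ_*^\bot}\mbL Q$; the component in $\mcZ_*$ is of size $O((\rho+\varep)\|\mbq\|_{l^2})$ (it is essentially $\mbM^*\mbq$), and since it is being paired with $\mbL w$ rather than with $w$, it does not vanish by orthogonality. A direct Cauchy--Schwarz there yields at best $\rho^2\|\mbq\|_{l^2}^2$ after Young, again larger than $\varep^2\rho^{-4}\|\mbq\|_{l^2}^2$ in the relevant regime $\varep\ll\rho^3$. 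So your bookkeeping paragraph asserting that the conversion only ``inflates the $\|\mbq\|^2$, $\|\dot\mbq\|^2$ coefficients to $\varep^2\rho^{-4}$'' is not supported by the bounds you actually derived.

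The missing idea is the paper's: use the self-adjointness of $\mbL$ together with $w\in\mcZ_*^\bot$ to move one factor of $\mbL$ across the inner product, writing
\begin{equation*}
\left<\p_t Q,\mbL w\right>_{L^2}=\left<\Pi_{\mcZ_*^\bot}\mbL\,\p_t Q,\,w\right>_{L^2},\qquad
\left<\Pi_{\mcZ_*}\Pi_0\mbL Q,\mbL w\right>_{L^2}=\left<\Pi_{\mcZ_*^\bot}\mbL\,\Pi_{\mcZ_*}\Pi_0\mbL Q,\,w\right>_{L^2},
\end{equation*}
and only then invoke the slow-fast coupling estimate of Theorem\,\ref{thm-coupling est}(2) (plus \eqref{est-L2-p-tZ*} for the $\p_t Z_{\mbp,*}^{0j}$ piece). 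This gains a factor of $\varep$ (after $\varep\|\hat\mbp\|_{\mbV_4^2}\lesssim1$) on both the $\dot\mbq$ and $\mbq$ contributions, and the remaining $\|w\|_{L^2}$ is converted to $\rho^{-2}\|\mbL w\|_{L^2}$ by coercivity; Young's inequality then yields exactly the $\varep^2\rho^{-4}$ coefficients in \eqref{est-w-2}. Without this step your argument proves only a weaker inequality (with $O(1)$ and $O(\rho^2)$ coefficients on the pearling terms), which is not the statement of the Lemma and would also strain the closure of the bootstrap in Theorem\,\ref{thm:Main}.
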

\begin{proof}
 Since the linearized operator  $\mbL  $ depends on time through $\mbp$, we have
\begin{equation}\label{est-H2-w-1}
\frac{\mrd }{\mrd t}\left<\mbL  w, w\right>_{L^2}=2\left<\p_t w, \mbL  w\right>_{L^2}+\left<\p_t(\mbL )w, w\right>_{L^2}.
\end{equation}
Considering the last term on the right-hand side, the definition \eqref{def-bLp} of $\mbL $ provides the expansion
\begin{equation*}
\begin{aligned}
\p_t (\mbL  )=&-\left(\varep^2 \Delta -W''+\varep \eta_1\right)\left(W'''\p_t \Phi_{\mathbf p} \right)-W^{'''}\p_t \Phi_{\mathbf p} \left(\varep^2 \Delta  -W''\right) \\
&-\left(\varep^2 \Delta \Phi_{\mathbf p}  -W'\right)W^{(4)}\p_t\Phi_{\mathbf p} -W'''\left(\varep^2\Delta -W''\right)\p_t\Phi_{\mathbf p}  +\varep\eta_d W'''\p_t \Phi_{\mathbf p} ,
\end{aligned}
\end{equation*}
where the potential well  $W$ is evaluated at $\Phi_\mbp $. Since $\Phi_{\mbp} $ is uniformly bounded in $L^\infty$ and in $L^2$ after action by powers of $\varep^2\Delta$, we identify the
upper bound on the bilinear form generated by $ \p_t (\mbL  )$
\begin{equation}\label{est-p-tLw,w-1} 
\begin{aligned}
\left<\p_t(\mbL  )w, w\right>_{L^2}&\lesssim\left( \left\|\dot {\mathbf p}\cdot \nabla_{\mathbf p}\Phi_{\mathbf p} \right\|_{L^\infty}+\|\dot{\mathbf p}\cdot\nabla_{\mathbf p} (\varep^2\Delta\Phi_{\mathbf p} )\|_{L^\infty}\right)\left(\|w\|_{L^2}^2+\|\varep^2 \Delta w\|_{L^2}^2\right).
\end{aligned}
\end{equation}
Utilizing the bounds of $\Phi_\mbp$ established in the Appendix Lemma \ref{lem-Phi_t-p}, assumption \eqref{A-0} and the coercivity estimate  \eqref{coer-bLp}, we obtain the upper bound on the bilinear term
\beqs
\left<\p_t(\mbL  )w, w\right>_{L^2}\lesssim \varep \rho^{-4}\|\mbL   w\|_{L^2}^2\leq \varep^{1/2}\|\mbL   w\|_{L^2}^2
\eeqs
by choosing  $\varep_0$ small enough depending on $\rho$. Returning to \eqref{est-H2-w-1}, substituting  \eqref{eq-w} for $\p_t w$, using the coercivity estimate \eqref{coer-PibL-p} %and the a priori assumption $\|\hat\mbp\|_{\mbV_2^2}\lesssim \varep^{1/2}$ from \eqref{A-0}  on the first term on the right-hand side, 
and bounding the second term via the bilinear estimate above,  leads to 
\begin{equation}\label{est-H2-w-2}
\begin{aligned}
\frac{\mrd }{\mrd t}\left<\mbL   w,w\right>_{L^2}+ \|\mbL  w\|_{L^2}^2\leq &\,-2\left<\p_t\Phi_{\mathbf p} +\p_t Q+\Pi_0 \mbL  Q, \mbL  w\right>_{L^2}\\
&-2\left<\Pi_0 \mathrm F(\Phi_{\mathbf p} )+\Pi_0\mathrm N(v^\bot), \mbL  w\right>_{L^2}.
\end{aligned}
\end{equation}
Here we also used $\varep\in(0,\varep_0)$ with $\varep_0$ small enough depending on domain, system parameters and $(\Gamma_0, M_0)$. Considering the terms on the right-hand side of \eqref{est-H2-w-2}, we apply H\"older's inequality to the last term
\begin{equation}\label{est-w-RHS-1}
\left| \left<\Pi_0\mathrm N(v^\bot), \mbL   w\right>_{L^2}\right|\lesssim \|\mathrm N(v^\bot)\|_{L^2}\|\mbL   w\|_{L^2}.
\end{equation}
Utilizing H\"older's inequality and $L^2$-bound of $\p_t\Phi_\mbp$, we establish the bound
\begin{equation}\label{est-w-RHS-2}
\left|\left<\p_t\Phi_{\mathbf p}  , \mbL  w\right>_{L^2}\right|\lesssim \varep^{-1/2}\|\dot{\mbp}\|_{l^2}\|\mbL   w\|_{L^2}.
\end{equation}
For the $\Pi_0\mbL   Q$ term we project onto {$\mathcal Z_{*}$} and its complement,  use {$Q\in \mathcal Z_{*}$}, Theorem \ref{thm-coupling est}, and finally  the coercivity of Lemma \ref{lem-coer}
to establish
   \begin{equation}\label{est-w-RHS-3-1}
\begin{aligned}
\left|\left<\Pi_0 \mbL   Q, \mbL   w\right>_{L^2}\right|&=\left|\left<\Pi_{\mathcal Z_*^\bot} \Pi_0\mbL  Q, \mbL  w\right>_{L^2}+\left<\Pi_{\mathcal Z_*^\bot}\mbL  \Pi_{\mathcal Z_*} \Pi_0\mbL  Q, w\right>_{L^2}\right|\\
&\lesssim (\varep^2+\varep^2\|\hat{\mbp}\|_{\mbV_4^2})\|\mathbf q\|_{l^2} (\|w\|_{L^2} +\|\mbL  w\|_{L^2})  \\
&\lesssim \varep \rho^{-2} \|\mathbf q\|_{l^2} \|\mbL  w\|_{L^2}.%\rho^{-2}\|\mathbf q\|_{L^2}\|\mbL   w\|_{L^2};
\end{aligned}
\end{equation}
For the second term on right-hand side of \eqref{est-H2-w-2} requires an investigation of $\p_t Q$. By the definition of $Q$ we calculate
\begin{equation}\label{def-p-tQ}
\p_t Q=\sum_{j\in \Sigma_0} \dot{\mathrm q}_jZ_{\mathbf p, *}^{0j}+\sum_{j\in \Sigma_0} \mathrm q_j \p_t Z_{\mathbf p, *}^{0j}.
\end{equation}
Note that the second term can  be written as
\begin{equation}\label{est-w-RHS-3-2temp}
\begin{aligned}
&\quad \left<\p_t Q, \mbL   w\right>_{L^2}=\left<\Pi_{\mcZ_*^\bot} \mbL \p_t Q, w \right>_{L^2}.
%\lesssim \varep^2\|w\|_{L^2}\|\dot{\mathbf q}\|_{l^2}+\varep^{-1}N\|\dot{\mathbf p}\|_{l^2}\|\mathbf q\|_{l^2}\|w\|_{l^2}
%\lesssim \varep \rho^{-2}\|\mbL  w\|_{L^2}(\|\dot{\mathbf q}\|_{l^2}+\|\mathbf q\|_{l^2}).
\end{aligned}
\end{equation}
By employing  relation \eqref{def-p-tQ},  statement (2) of Theorem \ref{thm-coupling est} and estimate \eqref{est-L2-p-tZ*} we may bound
\begin{equation}\label{est-w-RHS-3-temp}
\|\Pi_{\mathcal Z_*^\bot}\mbL   \p_t Q\|_{L^2}\lesssim (\varep^2+\varep^2 \|\hat{\mbp}\|_{\mbV_4^2}) \|\dot{\mathbf q}\|_{l^2}+\varep^{2} \|\mathbf q\|_{l^1}.
\end{equation}
Here by the $l^2$-$l^1$ estimate and scaling of $N_0$ from \eqref{est-N0}, we have
\beq\label{est-l-12-q}
\|\mbq\|_{l^1}\leq \varep^{-1/2}\|\mbq\|_{l^2}. 
\eeq
Using Holder's inequality,  the a priori assumptions \eqref{A-0} and the coercivity to bound $\|w\|_{L^2}$ by $\|\mbL  w\|_{L^2}$ we deduce from \eqref{est-w-RHS-3-2temp} - \eqref{est-l-12-q}
\begin{equation}\label{est-w-RHS-3-2}
\begin{aligned}
&\quad \left| \left<\p_t Q, \mbL   w\right>_{L^2}\right|  \lesssim \varep \rho^{-2}\|\mbL  w\|_{L^2}(\|\dot{\mathbf q}\|_{l^2}+\|\mathbf q\|_{l^2}).
\end{aligned}
\end{equation}
Combining the estimates \eqref{est-w-RHS-1} and \eqref{est-w-RHS-2}-\eqref{est-w-RHS-3-2}  with \eqref{est-H2-w-2} and using Young's inequality, yields the
 estimate
 \beq\label{est-H2-w-3}
 \begin{aligned}
 \frac{\dd}{\dd t} \left<\mbL w, w\right>_{L^2} +\|\mbL w\|_{L^2}^2 \leq  &C\left( \varep^{-1/2}\|\dot{\mbp}\|_{L^2}+\varep \rho^{-2}(\|\mbq\|_{l^2} +\|\dot\mbq\|_{l^2})+\|\mrN(v^\bot)\|_{L^2}\right)\|\mbL w\|_{L^2} \\
 & -2\left<\Pi_0 \mrF(\Phi_\mbp), \mbL w\right>_{L^2}.
 \end{aligned}
 \eeq

It remains to bound the $\mrF(\Phi_\mbp)$ term on the right-hand side of the above inequality.  Using Lemma \ref{lem-est-residual} to bound the $L^2$-norm of $\Pi_0\mrF(\Phi_\mbp)$  terms yields
\begin{equation}\label{est-F-bLw}
\begin{aligned}
\left| \left<\Pi_0\mathrm F(\Phi_{\mathbf p} ), \mbL   w\right>_{L^2}\right| \lesssim &\varep^{5/2}|\sigma-\sigma^*|\|\mbL   w\|_{L^2}+\varep^{7/2}(\|\hat{\mathbf p}\|_{\mbV_4^2}+1)\|\mbL   w\|_{L^2}.
\end{aligned}
\end{equation}
Combining the above estimate  \eqref{est-F-bLw} with \eqref{est-H2-w-3} and using Young's inequality, yields the
 estimate \eqref{est-w-2}. The proof is complete. 
\end{proof}

%=============q(x, t)==============
\subsection{Estimates on the pearling parameters $\mathbf q(t)$} We derive  $l^2$ estimates of $\mathbf q$ and $\dot{\mathbf q}$ subject to the a priori assumptions \eqref{A-0}.
%We don't need to obtain any higher order or weighted estimates of $\mbq$ since they are all equivalent, i.e. $\|\varep^2 \Delta_{s_\mbp} Q\|_{} $ due to the approximation of  $\beta_j$ is in the order of $\varep^{-1}$ for all $j\in \Sigma_0$.
  We rewrite \eqref{eq-w} as an evolution for $Q$,
\begin{equation}\label{eq-Q}
\p_t Q+\Pi_0\mbL   Q=\mathscr R[\mbp, w, \mrN],
\end{equation}
where $\mathscr R[\mbp, w, \mrN]$ is the pearling remainder contributed by $\mbp, w,$ and the nonlinear terms $\mathrm N(v^\bot)$,  specifically
\beq
\label{e:R-def}
\mathscr R[\mbp, w, \mrN]:=-\p_t \Phi_{\mathbf p}  -\p_t w-\Pi_0 \mathrm F(\Phi_{\mathbf p} )-\Pi_0 \mbL   w-\Pi_0 \mathrm N( v^\bot).
\eeq
We derive the evolution of $\dot{\mbq}$ by projecting this system onto the slowly evolving space {$\mcZ_*^1(\mbp(t)).$}
\begin{lemma}\label{lem-est-q} Assuming the a priori estimates \eqref{A-0} and the pearling stability condition \eqref{cond-P-stab} hold, $\varep\in(0,\varep_0)$ with $\varep_0$ suitably small,  then there exists $C>0$ independent of $\varep, \rho$ such that the pearling parameters $\mathbf q=(\mathrm q_k(t))_{k\in \Sigma_0}$ obey
\begin{equation*}
\begin{aligned}
&\|\dot{\mathbf q}\|_{l^2}^2\lesssim \|\mbq\|_{l^2}^2+\varep^2\|w\|_{L^2}^2 +\varep \|\dot\mbp\|_{l^2}^2+\|\mathrm N(v^\bot)\|_{L^2}^2+\varep^9+\varep^{9}\|\hat{\mathbf p}\|_{\mbV_4^2}^2;\\
&\p_t\|\mathbf q\|_{l^2}^2+C\varep\|\mathbf q\|_{l^2}^2 \lesssim \varep\|w\|_{L^2}^2+\|\dot\mbp\|_{l^2}^2+\varep^{-1}\|\mathrm N(v^\bot)\|_{L^2}^2+\varep^8+\varep^8\|\hat{\mathbf p}\|_{\mbV_4^2}^2.
\end{aligned}
\end{equation*}
%Here $C$ is a strictly positive constant. %Here $\{C_i(\mbp)\}_{i\in \Sigma_0}$  depends smoothly and only on $\mbp$.
\end{lemma}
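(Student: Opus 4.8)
The two estimates come from projecting the evolution equation \eqref{eq-Q} for $Q$, equivalently the remainder identity \eqref{e:R-def}, onto the meander slow space $\mcZ_*^1(\mbp(t))$ via the $N_0$-dimensional coordinate vector $\mbq$. The plan is to take the $L^2$ inner product of \eqref{eq-Q} with each basis function $Z_{\mbp,*}^{0k}$, $k\in\Sigma_0$, and exploit the near-orthonormality \eqref{ortho-Z*} and the norm equivalences of Lemma \ref{lem-proj-Z0} to convert the PDE into a finite-dimensional ODE system for $\mbq$. The left-hand side $\langle \p_t Q + \Pi_0\mbL Q, Z_{\mbp,*}^{0k}\rangle_{L^2}$ produces $\dot{\mbq}$ plus the matrix $\mathbb M^*(0,0)$ acting on $\mbq$ (up to the $O(\varep^2)$ correction from \eqref{ortho-Z*}), plus a term $\sum_j \mrq_j\langle \p_t Z_{\mbp,*}^{0j}, Z_{\mbp,*}^{0k}\rangle$ coming from the time-dependence of the basis, which by \eqref{est-L2-p-tZ*} and \eqref{A-0} is controlled by $\|\dot\mbp\|_{l^2}\|\mbq\|_{l^2}$ (hence absorbed). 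The right-hand side is $\langle \mathscr R[\mbp,w,\mrN], Z_{\mbp,*}^{0k}\rangle_{L^2}$, which must be estimated term by term.

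For the first (``$\dot{\mbq}$'') estimate I would solve the resulting algebraic system for $\dot{\mbq}$: since $\mathbb M^*(0,0)$ has bounded entries (entries $O(\varep^2)$ off-diagonal, $O(\varep)$ on diagonal by Proposition \ref{prop-M*}), the $\mathbb M^* \mbq$ contribution is $O(\varep)\|\mbq\|_{l^2}$ and poses no difficulty; approximate orthonormality \eqref{ortho-Z*} with $\varep^2\|\hat\mbp\|_{\mbV_4^2}\le\delta$ makes the Gram matrix invertible with uniformly bounded inverse. Then $\|\dot{\mbq}\|_{l^2}\lesssim \|\mathbb M^*\mbq\|_{l^2}+\|\Pi_{\mcZ_*^1}\mathscr R\|_{L^2}+\|\dot\mbp\|_{l^2}\|\mbq\|_{l^2}$. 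I bound the pieces of $\mathscr R$: $\langle \p_t\Phi_\mbp,Z^{0k}_{\mbp,*}\rangle$ is handled by the $\p_t\Phi_\mbp$ estimate from Appendix Lemma \ref{lem-Phi_t-p} giving $\varepsilon^{1/2}\|\dot\mbp\|_{l^2}$ (note the mass-free and parity structure of $Z^{0k}_{\mbp,*}$ from \eqref{est-Z*-mass}); $\langle\p_t w, Z^{0k}_{\mbp,*}\rangle = -\langle w, \p_t Z^{0k}_{\mbp,*}\rangle$ via $w\in\mcZ_*^\bot$ plus \eqref{est-L2-p-tZ*}, contributing $\|\dot\mbp\|_{l^2}\|w\|_{L^2}$ which is lower order; $\langle \Pi_0\mbL w, Z^{0k}_{\mbp,*}\rangle$ is exactly the quantity controlled by Corollary \ref{cor-est-w-Z*}, giving $(\varep^2+\varep^2\|\hat\mbp\|_{\mbV_4^2})\|w\|_{L^2}+\varep^3\|\mbq\|_{l^2}$; $\langle \Pi_0\mrF(\Phi_\mbp), Z^{0k}_{\mbp,*}\rangle$ uses Lemma \ref{lem-est-residual} (and the mass estimate \eqref{est-Z*-mass}), giving $\varep^{5/2}|\sigma-\sigma^*|+\varep^{7/2}(1+\|\hat\mbp\|_{\mbV_4^2})$, but since $|\sigma-\sigma_1^*|\lesssim\delta\ll1$ this is $O(\varep^{7/2}(1+\|\hat\mbp\|_{\mbV_4^2}))$ hence $O(\varep^{9/2}(1+\cdots))$ when squared — wait, I must be careful: squaring $\varepsilon^{5/2}$ gives $\varepsilon^5$, not $\varepsilon^9$, so I should note that the $|\sigma - \sigma^*|$ contribution is actually subsumed because $|\sigma-\sigma^*|$ itself is being tracked and is $o(1)$; rechecking against the stated bound $\varepsilon^9+\varepsilon^9\|\hat\mbp\|_{\mbV_4^2}^2$, I see the residual must be treated as $O(\varepsilon^{9/2})$, meaning one uses that $Z^{0k}_{\mbp,*}$ has small \emph{mass}, paired with the improved residual estimate, or that the $\phi_0'$-directional part of $\mrF$ is orthogonal to $\psi_0$; the cleanest route is: the $\mcZ_*^0$ component is the $\psi_0$-projection, and $\mrF_2,\mrF_3$ project trivially onto $\psi_0$ to leading order by parity of $f_2$ and the structure in \eqref{F-234}, upgrading the residual bound by a factor of $\varepsilon$ in this projection; finally $\langle \Pi_0\mrN(v^\bot), Z^{0k}_{\mbp,*}\rangle\lesssim \|\mrN(v^\bot)\|_{L^2}$. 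Squaring and summing over $k\in\Sigma_0$, using \eqref{est-l-12-q} where needed, yields the first inequality.

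For the second (``$\p_t\|\mbq\|_{l^2}^2$'') estimate I would instead take the inner product of \eqref{eq-Q} with $Q$ itself, i.e. contract the coordinate ODE with $\mbq$: $\frac12\p_t\|\mbq\|_{l^2}^2 + \mbq^T\mathbb M^*(0,0)\mbq = \langle\mathscr R, Q\rangle_{L^2} + (\text{basis-drift terms})$, modulo the $(1+\mrp_0)$ factor and $O(\varepsilon^2)$ Gram corrections which only perturb coefficients harmlessly. Here the key structural input is the pearling coercivity Lemma \ref{lem-eigen-bM}: under $(\mathbf{PSC})$ we have $\mbq^T\mathbb M^*(0,0)\mbq \ge \tfrac{\varepsilon}{4}(\sigma S_1+\eta_d\la_0)\|\mbq\|_{l^2}^2 \ge C\varepsilon\|\mbq\|_{l^2}^2$, which moves to the left side and supplies the dissipative term $+C\varepsilon\|\mbq\|_{l^2}^2$. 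The right side $\langle\mathscr R,Q\rangle$ is bounded by $\|\mathscr R\|$-type quantities times $\|\mbq\|_{l^2}$ (using $\|Q\|_{L^2}\sim\|\mbq\|_{l^2}$): the $\p_t w$ and $\Pi_0\mbL w$ terms contract with $Q\in\mcZ_*^0$ using again Corollary \ref{cor-est-w-Z*} and $w\perp\mcZ_*$, the $\p_t\Phi_\mbp$ term gives $\varepsilon^{1/2}\|\dot\mbp\|_{l^2}\|\mbq\|_{l^2}$, the residual gives $\varepsilon^{7/2}(1+\|\hat\mbp\|_{\mbV_4^2})\|\mbq\|_{l^2}$ (improved in the $\psi_0$-projection as above to reach $\varepsilon^4$), and the nonlinearity gives $\|\mrN(v^\bot)\|_{L^2}\|\mbq\|_{l^2}$. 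Applying Young's inequality $ab\le \tfrac{C\varepsilon}{2}a^2 + \tfrac{1}{2C\varepsilon}b^2$ to split off a small multiple of $\|\mbq\|_{l^2}^2$ absorbed by the coercive term, and keeping the $\varepsilon^{-1}$ weight on $\|\mrN(v^\bot)\|_{L^2}^2$ as stated, produces the second inequality. \textbf{The main obstacle} is bookkeeping the residual $\Pi_0\mrF(\Phi_\mbp)$ projected onto $\mcZ_*^0$: the naive Lemma \ref{lem-est-residual} bound $\varepsilon^{5/2}$ is too weak to reach the stated $\varepsilon^9$ (resp.\ $\varepsilon^8$) right-hand sides, so one must exploit that $\mrF_2$ is odd in $z_\mbp$ (hence $L^2(\mbR)$-orthogonal to the even ground state $\psi_0$) and that the $\phi_0'$-leading part of $\mrF_3$ lives in the meander direction $\psi_1$, not $\psi_0$, gaining the extra powers of $\varepsilon$; this parity/orthogonality argument, combined with the small mass of $Z^{0k}_{\mbp,*}$ from \eqref{est-Z*-mass}, is the delicate point. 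The basis-drift terms $\langle\p_t Z^{0j}_{\mbp,*},Z^{0k}_{\mbp,*}\rangle$ also require care — they are skew to leading order by differentiating \eqref{ortho-Z*}, so their symmetric part is $O(\varepsilon^2\|\dot\mbp\|)$ — but this is routine given the a priori bound $\|\dot\mbp\|_{l^2}\le\varepsilon^3$.
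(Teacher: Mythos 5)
Your overall architecture matches the paper's: pair the $Q$-equation \eqref{eq-Q} with $Q$ itself and use the pearling coercivity $\mbq^T\mbM^*(0,0)\mbq\geq C\varep\|\mbq\|_{l^2}^2$ of Lemma \ref{lem-eigen-bM} for the decay estimate, pair with $\sum_j\dot{\mathrm q}_jZ_{\mbp,*}^{0j}$ (equivalently, invert the near-identity Gram matrix) for the $\dot\mbq$ estimate, control the basis-drift terms through \eqref{est-L2-p-tZ*} and \eqref{A-0}, and bound the remainder term by term via Lemma \ref{lem-Phi_t-p}, $w\perp\mcZ_*$, Corollary \ref{cor-est-w-Z*}, and Young's inequality. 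That part is sound and is essentially the paper's proof.

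The gap is in the step you yourself flag as the main obstacle: the projection of $\Pi_0\mrF(\Phi_\mbp)$ onto $\mcZ_*^0$. Parity of $f_2$ in $z_\mbp$ (orthogonality to the even $\psi_0$), the orthogonality $\phi_0'\perp\psi_0$ in the $\mrF_3$ term, and the small mass of $Z_{\mbp,*}^{0k}$ together buy you only \emph{one} factor of $\varep$ beyond the naive bound: after the parity cancellation the surviving contributions come from the $\varep z_\mbp\kappa_\mbp$ Jacobian correction, the $\varep\varphi_{1,j}$ corrections in $Z_{\mbp,*}^{0j}$, and the $z$-even pieces of $f_3$, and these give $|\langle\Pi_0\mrF(\Phi_\mbp),Q\rangle|\lesssim\varep^{7/2}\|\mbq\|_{l^2}$, hence $\varep^7$ (resp.\ $\varep^6$ after dividing by $\varep$ in Young) in the squared estimates --- short of the stated $\varep^9$ and $\varep^8$. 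The missing ingredient is the \emph{high in-plane wavenumber} of the pearling modes: for $j\in\Sigma_0$ one has $\beta_{\mbp,j}\sim\varep^{-1}$ (see \eqref{est-N0} and the gap \eqref{eq-wavenumgap}), so integrating any $h(\bm\gamma_\mbp'')$ against $\tilde\Theta_j$ and transferring derivatives as in Lemma \ref{lem-Theta} yields the extra factor
\begin{equation*}
\int_{\msI_\mbp} h(\bm\gamma_\mbp'')\,\tilde\Theta_j\,\dd\tilde s_\mbp=O\bigl(\varep,\varep\|\hat\mbp\|_{\mbV_4^2}\bigr)e_j,\qquad j\in\Sigma_0,
\end{equation*}
which is exactly the paper's Lemma \ref{lem-h-2} and is applied to every residual piece ($\mrF_2$, the $f_3$ part, $\mrF_{\geq4}$) in Lemma \ref{Lem-Q-resid} to reach $\varep^{9/2}(1+\|\hat\mbp\|_{\mbV_4^2})\|\mbq\|_{l^2}$; it is also the source of the $\|\hat\mbp\|_{\mbV_4^2}$ factors in the stated right-hand sides, which your parity-plus-mass mechanism does not produce. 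Without this wavenumber gain your argument proves a weaker lemma than stated, so you need to add this integration-by-parts-in-$\tilde s_\mbp$ step for $j\in\Sigma_0$ to close the claimed powers of $\varep$.
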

\begin{proof}
Taking the $L^2$-inner product of equation \eqref{eq-Q} with $Q$ yields
\begin{equation}\label{est-q-1}
\begin{aligned}
\left<\p_t Q, Q\right>_{L^2}+\left<\Pi_0 \mbL  Q, Q\right>_{L^2}=&\left<\mathscr R[\mbp, w], Q\right>_{L^2}.
\end{aligned}
\end{equation}
Using  \eqref{def-p-tQ} we rewrite the first term on the left-hand side as
\begin{equation}\label{est-p-tQ-Q-0}
\left<\p_t Q, Q\right>_{L^2}=\sum_{i,j\in \Sigma_0} \dot{\mathrm q}_i\mathrm q_j\left<Z_{\mathbf p, *}^{0i}, Z_{\mathbf p, *}^{0j}\right>_{L^2}+\sum_{i,j\in \Sigma_0} \mathrm q_i\mathrm q_j\left<\p_t Z_{\mathbf p, *}^{0i}, Z_{\mathbf p, *}^{0j}\right>_{L^2}.
\end{equation}  
The partial orthogonality of the basis $\{Z_{\mathbf p, *}^{0j}\}_{j\in \Sigma_0}$, from \eqref{ortho-Z*}, and the a priori estimate   $\|\hat{\mbp}\|_{\mbV_2^2}\lesssim 1$ yield
\begin{equation}\label{est-p-tQ-Q-1}
\begin{aligned}
\sum_{i,j\in \Sigma_0} \dot{\mathrm q}_i\mathrm q_j\left<Z_{\mathbf p, *}^{0i}, Z_{\mathbf p, *}^{0j}\right>_{L^2}\geq &(1+\mrp_0) \frac{\p_t \|\mathbf q\|_{l^2}^2}{2}-C\varep^2\|\mathbf q\|_{l^2}\|\dot{\mathbf q}\|_{l^2}.
\end{aligned}
\end{equation}
Applying H\"older's inequality to the second term on the right-hand side of \eqref{est-p-tQ-Q-0}, the estimate \eqref{est-L2-p-tZ*} on $\p_t Z_{\mathbf p, *}^{0j}$ and the $l^1$-$l^2$ estimate \eqref{est-l-12-q} yields the bound
\begin{equation}\label{est-p-tQ-Q-2}
\sum_{i,j\in \Sigma_0} \mathrm q_i\mathrm q_j\left<\p_t Z_{\mathbf p, *}^{0i}, Z_{\mathbf p, *}^{0j}\right>_{L^2}\lesssim \varep^{2}\|\mbq\|_{l^2}\|\mbq\|_{l^1}\lesssim \varep^{3/2}\|\mbq\|_{l^2}^2.
\end{equation}
 Combining estimates \eqref{est-p-tQ-Q-1}-\eqref{est-p-tQ-Q-2} with \eqref{est-p-tQ-Q-0} and applying Young's inequality yield
\begin{equation*}
\frac{ 1+\mrp_0}{2}\p_t \|\mathbf q\|_{l^2}^2-C\varep^{3/2}\|\mathbf q\|_{l^2}^2-C\varep^{3/2}\|\dot{\mathbf q}\|_{l^2}^2\leq \left<\p_t Q, Q\right>_{L^2},
\end{equation*}
which when substituted into \eqref{est-q-1} after multiplying by $2$ implies
\begin{equation}\label{est-l2-q-1}
\begin{aligned}
(1+\mrp_0)\p_t \|\mathbf q\|_{l^2}^2+2\left<\Pi_0 \mbL  Q, Q\right>_{L^2}\leq C\varep^{3/2}\|\mathbf q\|_{l^2}^2+C\varep^{3/2}\|\dot{\mathbf q}\|_{l^2}^2+2\left<\msR[\mbp, w, \mrN], Q\right>_{L^2}.
\end{aligned}
\end{equation}
The last term on the right hand side can be bounded by H\"older's inequality 
\beqs
\left<\msR[\mbp, w, \mrN], Q\right>_{L^2}\leq \|\Pi_{\mcZ_*^0} \msR[\mbp, w, \mrN]\|_{L^2}\|\mbq\|_{l^2}.
\eeqs 
With  the projection of  the remainder $\msR[\mbp,  w, \mrN]$ bounded in next Lemma \ref{lem-est-R-q}, we derive
\begin{equation}\label{est-l2-q-2}
\begin{aligned}
(1+\mrp_0)\p_t \|\mathbf q\|_{l^2}^2+2\left<\Pi_0 \mbL   Q, Q\right>_{L^2}\lesssim& \varep^{3/2} \|\dot{\mathbf q}\|_{l^2}^2+\varep^{3/2} \|\mathbf q\|_{l^2}^2+\varep \|w\|_{L^2}\|\mbq\|_{l^2} \quad  \\
\qquad  &+\Big(\varep^{1/2}\|\dot\mbp\|_{l^2}+ \|\mathrm N(v^\bot)\|_{L^2}+\varep^{9/2} +\varep^{9/2}\|\hat{\mathbf p}\|_{\mbV_4^2}\Big)\|\mathbf q\|_{l^2}.
\end{aligned}
\end{equation}
Since the system is pearling stable, Lemma \ref{lem-eigen-bM} implies the existence of $C>0$ independent of $\varep$ for which,
\begin{equation*}
\left<\Pi_0 \mbL  Q, Q\right>_{L^2} = \mathbb \mbq^T \mbM^*(0,0) \mbq \geq C\varep\|\mathbf q\|_{l^2}^2.
\end{equation*} 
 The a priori estimates imply that $|\mrp_0| $ is  small and hence $(1+\mrp_0)$ is bounded away from zero. Dividing both sides of \eqref{est-l2-q-2} by $(1+\mrp_0)>0$ and applying Young's inequality to the right-hand side of the resulting inequality yields
\begin{equation}\label{est-l2-q-3}
\p_t \|\mathbf q\|_{l^2}^2+C\varep\|\mathbf q\|_{l^2}^2\lesssim \varep^{3/2}\|\dot{\mathbf q}\|_{l^2}^2+\varep \|w\|_{L^2}^2+\|\dot \mbp\|_{l^2}^2+\varep^{-1}\|\mathrm N(v^\bot)\|_{L^2}^2+\varep^8+\varep^8\|\hat{\mathbf p}\|_{\mbV_4^2}^2
\end{equation}
for $\varep\in(0,\varep_0)$ provided that $\varep_0$ small enough depending on domain, system parameters and $K_0,\ell_0$.  

It remains to bound $\|\dot{\mathbf q}\|_{l^2}$.  Taking the $L^2$ inner product of equation \eqref{eq-Q} with $\sum_{j\in \Sigma_0}\dot{\mathrm q}_j Z_{\mathbf p, *}^{0j}$ implies
\beq\label{est-qdot-1}
\left<\p_t Q, \sum_{j\in \Sigma_0} \dot\mrq_jZ_{\mbp,*}^{0j} \right>_{L^2} =  -\left<\Pi_0\mbL   Q, \sum_{j\in \Sigma_0}\dot{ \mathrm q}_jZ_{\mathbf p,*}^{0j}\right>_{L^2} +\left<\msR[\mbp, w, \mrN], \sum_{j\in \Sigma_0} \dot\mrq_jZ_{\mbp,*}^{0j}\right>_{L^2}. 
\eeq
The term on the left can be dealt with similarly as we deal with $\left<\p_t Q, Q\right>_{L^2}$ in \eqref{est-p-tQ-Q-0}-\eqref{est-p-tQ-Q-2}, which gives us
\beqs
(1+\mrp_0)\|\dot\mbq\|_{l^2}^2 \leq \left<\p_t Q, \sum_{j\in \Sigma_0} \dot\mrq_jZ_{\mbp,*}^{1j} \right>_{L^2} +C\varep^{3/2}\|\dot \mbq\|_{l^2}\|\mbq\|_{l^2}
\eeqs
for some numerical constant $C$ independent of $\varep$ and $\rho$. 
In light of Lemma \ref{lem-est-R-q}, the last term of \eqref{est-qdot-1} which includes the remainder projection, can be bounded by 
\beqs
\begin{aligned}
\left<\msR[\mbp, w, \mrN], \sum_{j\in \Sigma_0} \dot\mrq_jZ_{\mbp,*}^{1j}\right>_{L^2} &\leq \|\Pi_{\mcZ_*^0} \msR[\mbp, w, \mathrm N]\|_{L^2} \|\dot \mbq\|_{l^2}.
\end{aligned}
\eeqs 
Adding the   two estimates above with \eqref{est-qdot-1},  and applying Lemma \ref{lem-est-R-q}  we obtain
\begin{equation}\label{est-q'-1}
\begin{aligned}
(1+\mrp_0)\|\dot{\mathbf q}\|_{l^2}^2\leq&-\left<\Pi_0\mbL   Q, \sum_{j\in \Sigma_0}\dot{ \mathrm q}_jZ_{\mathbf p,*}^{0j}\right>_{L^2}+C\Big(  \varep^{3/2}\|\mathbf q\|_{l^2}+\varep \|w\|_{L^2}+\|\mathrm N(v^\bot)\|_{L^2}+\varep^{9/2}\\
&\h{140pt}+\varep^{1/2}\|\dot\mbp\|_{l^2}+\varep^{9/2}\|\hat{\mathbf p}\|_{\mbV_4^2}\Big)\|\dot\mbq\|_{l^2}.
\end{aligned}
\end{equation}
%Here we also used $\|\hat\mbp\|_{\mbV_3^2}\lesssim \varep^{-1}\|\hat\mbp\|_{\mbV_2^2}\lesssim \varep^{-1/2}$ due to \eqref{A-0}.
From the definition \eqref{def-Q}  of $Q$  and the estimate \eqref{est-bM*ij} on  $\mathbb M^*$ we rewrite the term involving $\Pi_0\mbL   Q$  as
\begin{equation}\label{est-LQ-p-tQ-0}
\left| \left<\Pi_0\mbL   Q, \sum_{j\in \Sigma_0}\dot{\mathrm q}_jZ_{\mathbf p,*}^{0j}\right>_{L^2}\right|=\left| \sum_{i,j\in \Sigma_0}\mathrm q_i\dot{\mathrm q}_j\mathbb M^*_{ij}(0,0)\right|\lesssim \|\mbq\|_{l^2}\|\dot \mbq\|_{l^2}.
\end{equation}
We establish the $l^2$-estimate of $\dot{\mathbf q}$ by returning this bound to \eqref{est-q'-1},  applying Young's inequality and  assumption \eqref{A-0} on $\mrp_0$. The estimate on $\p_t\|\mathbf q\|_{l^2}^2$ follows from \eqref{est-l2-q-3}. 
\end{proof}

The proof of Lemma \ref{lem-est-q} requires the following estimate on the projection of the remainder to {$\mcZ_{*}^0$}. %The key point is the meandering space is far from pearling space which gives us better estimate.

\begin{lemma}\label{lem-est-R-q} Under the assumptions of \eqref{A-0}, the projection of the remainder, defined in \eqref{e:R-def}, to the pearling slow space can be bounded by 
\beqs
\|\Pi_{\mcZ_*^0} \mathscr R[\mbp, w, \mrN]\|_{L^2} \lesssim  \varep \|w\|_{L^2} +\varep^3\|\mbq\|_{l^2}+\varep^{1/2} \|\dot\mbp\| +\varep^{9/2} +\varep^{9/2}\|\hat\mbp\|_{\mbV_4^2} +\|\mathrm N(v^\bot)\|_{L^2}.
\eeqs
\end{lemma}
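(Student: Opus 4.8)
The plan is to bound each of the five contributions to $\mathscr R[\mbp,w,\mrN]$ after projecting onto $\mcZ_*^0$, using the estimates already assembled in Section\,\ref{sec-linear}. Write
\beqs
\Pi_{\mcZ_*^0}\mathscr R[\mbp,w,\mrN] = -\Pi_{\mcZ_*^0}\p_t\Phi_\mbp - \Pi_{\mcZ_*^0}\p_t w - \Pi_{\mcZ_*^0}\Pi_0\mrF(\Phi_\mbp) - \Pi_{\mcZ_*^0}\Pi_0\mbL w - \Pi_{\mcZ_*^0}\Pi_0\mrN(v^\bot),
\eeqs
and treat the terms in turn. The nonlinear term is immediate: $\|\Pi_{\mcZ_*^0}\Pi_0\mrN(v^\bot)\|_{L^2}\le\|\mrN(v^\bot)\|_{L^2}$ since $\Pi_{\mcZ_*^0}\Pi_0$ is a contraction. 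The residual term is controlled by Lemma\,\ref{lem-est-residual}, which gives $\|\Pi_0\mrF(\Phi_\mbp)\|_{L^2}\lesssim\varep^{5/2}|\sigma-\sigma_1^*|+\varep^{7/2}(1+\|\hat\mbp\|_{\mbV_4^2})$; since $|\sigma-\sigma_1^*|\lesssim 1$ under the admissibility/a priori hypotheses (Lemma\,\ref{lem-sigma} and \eqref{A-0}), this is $\lesssim\varep^{5/2}$, well inside the claimed bound. The slow--fast coupling term $\Pi_{\mcZ_*^0}\Pi_0\mbL w$ is bounded by Corollary\,\ref{cor-est-w-Z*}, since $\mcZ_*^0\subset\mcZ_*$ so $\|\Pi_{\mcZ_*^0}\Pi_0\mbL w\|_{L^2}\le\|\Pi_{\mcZ_*}\Pi_0\mbL w\|_{L^2}\lesssim(\varep^2+\varep^2\|\hat\mbp\|_{\mbV_4^2})\|w\|_{L^2}+\varep^3\|\mbq\|_{l^2}$, which contributes the $\varep\|w\|_{L^2}$ and $\varep^3\|\mbq\|_{l^2}$ terms (using $\varep^2\le\varep$ and absorbing $\varep^2\|\hat\mbp\|_{\mbV_4^2}\|w\|_{L^2}\lesssim\varep\|w\|_{L^2}$ via $\varep\|\hat\mbp\|_{\mbV_4^2}\lesssim 1$ from \eqref{A-0}).

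The two remaining terms involve time derivatives of the ansatz. For $\p_t\Phi_\mbp$, the chain rule gives $\p_t\Phi_\mbp=\sum_j\dot{\mathrm p}_j\p_{\mathrm p_j}\Phi_\mbp$; the key point is that the meander-mode derivatives $\p_{\mathrm p_j}\Phi_\mbp$ lie, to leading order, in the meander space $\mcZ_*^1$ (their dominant part is proportional to $(\phi_0'+\cdots)\mbn_\mbp\cdot\mbn_0\,\tilde\Theta_j$, which is $\mcZ_*^1$-type), and are \emph{nearly orthogonal} to the pearling space $\mcZ_*^0$ because the latter is built on the ground-state eigenmode $\psi_0$ which has opposite $z_\mbp$-parity to $\phi_0'$ and because of the in-plane wave-number gap \eqref{eq-wavenumgap}. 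Thus $\|\Pi_{\mcZ_*^0}\p_{\mathrm p_j}\Phi_\mbp\|_{L^2}\lesssim\varep$ (uniformly in $j$, as a unit-vector estimate in $l^2$), and combined with $\|\dot\mbp\|_{l^2}\le\varep^3$ from \eqref{A-0} plus the $l^1$--$l^2$ estimate $\|\dot\mbp\|_{l^1}\lesssim N_1^{1/2}\|\dot\mbp\|_{l^2}$, one obtains $\|\Pi_{\mcZ_*^0}\p_t\Phi_\mbp\|_{L^2}\lesssim\varep^{1/2}\|\dot\mbp\|_{l^2}$ after accounting for the $N_1^{1/2}\sim\varep^{-1/2}\rho^{1/8}$ factor; this is the source of the $\varep^{1/2}\|\dot\mbp\|_{l^2}$ term in the statement. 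For $\Pi_{\mcZ_*^0}\p_t w$, I would use that $w\in\mcZ_*^\bot(\mbp(t))$ for all $t$, so differentiating the orthogonality relation $\langle w,Z_{\mbp,*}^{I(j)j}\rangle_{L^2}=0$ in time gives $\langle\p_t w,Z_{\mbp,*}^{I(j)j}\rangle_{L^2}=-\langle w,\p_t Z_{\mbp,*}^{I(j)j}\rangle_{L^2}$; then the bound $\|\p_t Z_{\mbp,*}^{I(j)j}\|_{L^2}$ (the estimate I am calling \eqref{est-L2-p-tZ*}, which is used repeatedly above) together with $\|\dot\mbp\|_{l^2}\le\varep^3$ makes $\|\Pi_{\mcZ_*^0}\p_t w\|_{L^2}\lesssim\varep^3\|w\|_{L^2}\lesssim\varep\|w\|_{L^2}$, harmlessly absorbed.

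Collecting all five pieces yields
\beqs
\|\Pi_{\mcZ_*^0}\mathscr R[\mbp,w,\mrN]\|_{L^2}\lesssim \varep\|w\|_{L^2}+\varep^3\|\mbq\|_{l^2}+\varep^{1/2}\|\dot\mbp\|_{l^2}+\varep^{9/2}+\varep^{9/2}\|\hat\mbp\|_{\mbV_4^2}+\|\mrN(v^\bot)\|_{L^2},
\eeqs
where the $\varep^{9/2}$ and $\varep^{9/2}\|\hat\mbp\|_{\mbV_4^2}$ terms come from a sharper bookkeeping of the residual: projecting $\Pi_0\mrF(\Phi_\mbp)$ onto $\mcZ_*^0$ gains an extra factor of $\varep^2$ relative to the raw $L^2$-norm, because $\mrF_2$ and $\mrF_3$ have the wrong $z_\mbp$-parity or the wrong in-plane frequency content to correlate with the $\mcZ_*^0$ basis functions at leading order (the $\phi_0'$ component of $\mrF_3$ lives in the $\psi_1$-sector, and $\kappa_\mbp(\sigma-\sigma_1^*)f_2$ with $f_2$ odd is killed against the even $\psi_0$), so the surviving contribution is $O(\varep^{5/2+2})=O(\varep^{9/2})$ times $(1+\|\hat\mbp\|_{\mbV_4^2})$. \textbf{The main obstacle} is precisely this last gain-of-$\varep^2$ argument for the residual projection onto the pearling space, together with the analogous near-orthogonality of $\p_{\mathrm p_j}\Phi_\mbp$ to $\mcZ_*^0$: one must track parity in $z_\mbp$ and exploit the wave-number gap \eqref{eq-wavenumgap} and the curvature projection identities (as in Lemma\,\ref{lem-Theta}, estimate \eqref{est-Theta-2}) carefully enough to see that the naive $O(\varep^{5/2})$ and $O(\varep)$ estimates each improve by $\varep^2$ and $\varep^{1/2}$ respectively; everything else is a routine application of Hölder's inequality, Corollary\,\ref{cor-est-w-Z*}, and the a priori bounds \eqref{A-0}.
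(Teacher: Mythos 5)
Your proposal follows the same route as the paper: the same five-term split of $\mathscr R$, duality against $Q\in\mcZ_*^0$, the parity-based bound $\|\Pi_{\mcZ_*^0}\p_t\Phi_\mbp\|_{L^2}\lesssim\varep^{1/2}\|\dot\mbp\|_{l^2}$ (this is exactly the paper's Lemma\,\ref{lem-Phi_t-p}), the identity $\langle\p_t w,Q\rangle_{L^2}=-\langle w,\p_t Q\rangle_{L^2}$ combined with \eqref{est-L2-p-tZ*} and \eqref{est-l-12-q}, Corollary\,\ref{cor-est-w-Z*} for $\Pi_0\mbL w$, and H\"older for the nonlinearity. So structurally you have reproduced the paper's argument.

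The one place where your write-up is not yet a proof is precisely the term that gives the lemma its content: the bound $|\langle\Pi_0\mrF(\Phi_\mbp),Q\rangle_{L^2}|\lesssim\varep^{9/2}(1+\|\hat\mbp\|_{\mbV_4^2})\|\mbq\|_{l^2}$. Your first pass — that the raw estimate $\varep^{5/2}$ from Lemma\,\ref{lem-est-residual} is ``well inside the claimed bound'' — is false: under \eqref{A-0} the constant part of the claimed bound is at most $O(\varep^{7/2})$, so an $O(\varep^{5/2})$ contribution cannot be absorbed, and indeed your later sentence contradicts this by (correctly) demanding a gain of $\varep^2$. You identify the right mechanism for that gain (oddness of $f_2$ against the even $\psi_0$, $\phi_0'\perp\psi_0$ for the $\Delta_{s_\mbp}\kappa_\mbp$ term, and a high in-plane wavenumber gain for $j\in\Sigma_0$), but you leave its execution as ``the main obstacle,'' whereas the paper carries it out in Lemma\,\ref{Lem-Q-resid}: Lemma\,\ref{lem-h-2} supplies the factor $\varep(1+\|\hat\mbp\|_{\mbV_4^2})$ for projections of $h(\bm\gamma_\mbp'')$ onto $\tilde\Theta_j$ with $j\in\Sigma_0$, the $H^2(\msI_\mbp)$ curvature bound of Lemma\,\ref{lem-Gamma-p} controls the $\Delta_{s_\mbp}\kappa_\mbp$ piece, and the zero-mass projection $\Pi_0$ is handled separately through Lemma\,\ref{lem-mass-F} and \eqref{est-Q-mass} — a step you omit, and without which the parity argument does not apply directly to $\Pi_0\mrF$. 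A smaller bookkeeping point: your per-mode claim $\|\Pi_{\mcZ_*^0}\p_{\mrp_j}\Phi_\mbp\|_{L^2}\lesssim\varep$ is stronger than what the parity argument actually yields (it gives $O(\varep^{1/2})$ per mode, which with your $l^1$--$l^2$ summation would only give $\|\dot\mbp\|_{l^2}$); the paper instead reaches $\varep^{1/2}\|\dot\mbp\|_{l^2}$ by using the orthogonality of the $\tilde\Theta_j$ across modes. None of this changes the final inequality, but as written the decisive $\varep^{9/2}$ estimate is asserted rather than proved.
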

\begin{proof}
Since  $\|Q\|_{L^2}\sim \|\mbq\|_{l^2}$, see Theorem \ref{thm-coupling est}, it suffices to establish the following inequality for any $Q=\sum_{j\in \Sigma_0} \mrq_j Z_{\mbp,*}^{0j}\in\mcZ_*^0$,
\beq\label{est-R-q-1}
\left<\msR[\mbp, w, \mrN], Q\right>_{L^2}\lesssim \left( \varep \|w\|_{L^2} +\varep^3\|\mbq\|_{l^2}+\varep^{1/2} \|\dot\mbp\|  +\varep^{9/2}\|\hat\mbp\|_{\mbV_4^2} +\|\mathrm N(v^\bot)\|_{L^2}\right) \|\mbq\|_{l^2}.
\eeq
By the definition of $\msR[\mbp, w, \mrN]$, we expand
\beqs
\left<\msR[\mbp, w, \mrN], Q\right>_{L^2}=- \left<\p_t \Phi_\mbp, Q\right>_{L^2}- \left<\Pi_0 \mrF(\Phi_\mbp), Q\right>_{L^2} - \left<\p_t w, Q\right>_{L^2} - \left<\Pi_0 \mbL w, Q\right>_{L^2} - \left<\Pi_0 \mrN(v^\bot), Q\right>_{L^2}.
\eeqs
We deal with these terms one by one in the following. First, in order to bound the term involving $\p_t\Phi_\mbp$ we use its $L^2$ projection estimate to $\mcZ_*^0$. We use Lemma \ref{lem-Phi_t-p} from the appendix.  Since $Q\in \mcZ_*^0$, we have   
\begin{equation}\label{est-p-tPhi-p-Q}
\begin{aligned}
\left|\left<\p_t \Phi_{\mbp} , Q\right>_{L^2}\right| & \lesssim  \|\Pi_{\mcZ_*^0} \p_t\Phi_\mbp\|_{L^2}\|Q\|_{L^2}\lesssim \varep^{1/2}  \|\dot{\mbp}\|_{l^2}\|\mathbf q\|_{l^2}.
%&\lesssim  \left(\varep^{7/2}|\ln \varep|\|\hat{\mbp}\|_{\mbV_2^2}+ \varep^{9/2}\|\hat\mbp\|_{\mbV_4^2}\right)\|\mathbf q\|_{l^2}.
\end{aligned}
\end{equation}
 Second,  since $w\in \mathcal Z_*^\bot$ we may apply the expansion  \eqref{def-p-tQ}  of $\p_t Q$ to deduce
\beqs
\left<\p_t w, Q\right>_{L^2} =-\left<w,\p_tQ\right>_{L^2}=\sum_{j\in \Sigma_0} \left<w, \mrq_j\p_t Z_{\mbp,*}^{0j}\right>_{L^2}.
\eeqs
From the estimate \eqref{est-L2-p-tZ*}, H\"older's inequality, and the $l^2$-$l^1$ estimate of $\mbq$  \eqref{est-l-12-q} we obtain
\beq\label{est-p-tw-Q}
\left|\left<\p_t w, Q\right>_{L^2}\right| \lesssim \varep^{-1}\|\dot\mbp\|_{l^2}\|w\|_{L^2}\|\mbq\|_{l^1}\lesssim \varep^{3/2}\|w\|_{L^2}\|\mbq\|_{l^2}.
\eeq
For the inner product with the nonlinear term,   so H\"older's inequality yields
\begin{equation}\label{est-p-tPhi-N-Q-1}
\left| \left<\Pi_0 \mathrm N(v^\bot), Q\right>_{L^2}\right| \lesssim \|\mathrm N(v^\bot)\|_{L^2} \|\mathbf q\|_{l^2}.
\end{equation}
For the $\mbL  w$ term, Corollary \ref{cor-est-w-Z*} and a priori assumptions \eqref{A-0} yield
\begin{equation}\label{est-bLw-Q}
\begin{aligned}
\left| \left<\Pi_0 \mbL  w, Q\right>_{L^2}\right| & \lesssim \left(\varep^2+\varep^2 \|\hat{\mathbf p}\|_{\mbV_4^2}\right) \|w\|_{L^2}\|\mathbf q\|_{l^2} +\varep^3\|\mathbf q\|_{l^2}^2\\
&\lesssim \varep  \|w\|_{L^2}\|\mbq\|_{l^2}+\varep^3\|\mbq \|_{l^2}^2.
\end{aligned}
\end{equation}
The estimate of the term involving the residual, $\mrF(\Phi_\mbp)$, is deferred to \eqref{proj-PiF-Q} of Lemma \ref{Lem-Q-resid}.  As a consequence of \eqref{est-p-tw-Q}--\eqref{est-bLw-Q} and  \eqref{proj-PiF-Q}, the estimate \eqref{est-R-q-1} follows, and hence the proof is complete.
\end{proof}
To complete the estimation of the projection of $Q$ and $\mrN$ to the pearling space  we require the following simple lemma which exploits the high in-plane wave number of the pearling modes. This affords better bounds on the coupling of the residual to the pearling modes that compensates for the weaker coercivity they experience. 
\begin{lemma}[High pearling wavenumber]\label{lem-h-2}
Let $h=h(\bm \gamma_\mbp'')$ in the sense of Notation \ref{Notation-h}, then   there exists a unit vector $(e_j) $ such that
\beqs
\int h(\bm \gamma_\mbp'') \tilde \Theta_j \dd \tilde s_\mbp = O(\varep, \varep \|\hat\mbp\|_{\mbV_4^2}) e_j, \qquad j\in \Sigma_0.
\eeqs
\end{lemma}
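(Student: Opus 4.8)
The statement asserts that for $h=h(\bm\gamma_\mbp'')$ in the sense of Notation~\ref{Notation-h} and for indices $j\in\Sigma_0$ (the high-wave-number pearling block), the projection $\int_{\msI_\mbp} h\,\tilde\Theta_j\dd\tilde s_\mbp$ gains an extra factor of $\varep$ relative to the naive bound $O(1)e_j$ from Lemma~\ref{Notation-e_i,j}. The mechanism is exactly the one used in Lemma~\ref{lem-Theta}: the eigenmodes $\tilde\Theta_j$ for $j\in\Sigma_0$ have in-plane wave number $\beta_{\mbp,j}\sim\varep^{-1}\sqrt{-\la_0}$, which is asymptotically large, whereas $h$ varies on an $O(1)$ scale. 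Integrating by parts transfers derivatives onto $h$ and each integration by parts produces a factor $\beta_{\mbp,j}^{-1}\sim\varep$.

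\textbf{Key steps.} First I would write $\tilde\Theta_j=-\beta_{\mbp,j}^{-2}\tilde\Theta_j''$ using \eqref{tTheta''} (valid for $j\in\Sigma_0$ since then $\beta_{\mbp,j}>0$; note $0\notin\Sigma_0$ because $\la_0<0$ forces the lower bound $\beta_{\mbp,j}\gtrsim\varep^{-1}$ in \eqref{est-N0}). Then integrate by parts twice in $\tilde s_\mbp$ over the closed curve $\msI_\mbp$ (no boundary terms):
\beqs
\int_{\msI_\mbp} h\,\tilde\Theta_j\dd\tilde s_\mbp=-\frac{1}{\beta_{\mbp,j}^2}\int_{\msI_\mbp}h\,\tilde\Theta_j''\dd\tilde s_\mbp=-\frac{1}{\beta_{\mbp,j}^2}\int_{\msI_\mbp}(\p_{\tilde s_\mbp}^2 h)\,\tilde\Theta_j\dd\tilde s_\mbp.
\eeqs
Since $\p_{\tilde s_\mbp}=\nabla_{s_\mbp}$ up to the metric factor, the function $\p_{\tilde s_\mbp}^2 h$ is, by Lemma~\ref{lem-e_ij} (or directly from Lemma~\ref{lem-Gamma-p} via the bounds \eqref{est-h-gamma''-de} on $\nabla_{s_\mbp}^2 h$), in $L^2(\msI_\mbp)$ with norm $\lesssim\varep^{-1}(1+\|\hat\mbp\|_{\mbV_4^2})$ — here one extra $s$-derivative pushes the control from $\mbV_3^2$ to $\mbV_4^2$ and introduces the factor $\varep^{-1}$ from the $\varep^{l-1}$ weights in \eqref{est-h-gamma''-de}. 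Applying \eqref{def-e-i} of Lemma~\ref{Notation-e_i,j} then gives
\beqs
\left|\int_{\msI_\mbp}h\,\tilde\Theta_j\dd\tilde s_\mbp\right|\lesssim\frac{1}{\beta_{\mbp,j}^2}\cdot\varep^{-1}(1+\|\hat\mbp\|_{\mbV_4^2})\,e_j.
\eeqs
Finally, using $\beta_{\mbp,j}\gtrsim\varep^{-1}$ for all $j\in\Sigma_0$ (equivalently $\beta_{\mbp,j}^{-2}\lesssim\varep^2$) yields $O(\varep,\varep\|\hat\mbp\|_{\mbV_4^2})e_j$, which is the claim. One should check that the residual unit-vector structure survives: since $\beta_{\mbp,j}^{-2}$ varies with $j$ but is bounded by $\varep^2$ uniformly, one can absorb the $j$-dependence into the unit vector by rescaling, exactly as in the proof of Lemma~\ref{lem-Theta} where $|\beta_{\mbp,i}-\beta_{\mbp,j}|^{-1}$ was handled.

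\textbf{Main obstacle.} There is no deep obstacle here — the lemma is a one-line integration-by-parts argument riding on the wave-number lower bound for $\Sigma_0$. The only point requiring a moment's care is the bookkeeping of which $\mbp$-norm appears: one must verify that two $s_\mbp$-derivatives of $h(\bm\gamma_\mbp'')$ are controlled in $L^2(\msI_\mbp)$ by $1+\|\hat\mbp\|_{\mbV_4^2}$ (rather than $\mbV_3^2$), which follows from \eqref{est-h-gamma''-de} with $l=2$ together with the curvature derivative bounds $\|\kappa_\mbp\|_{H^2(\msI_\mbp)}\lesssim 1+\|\hat\mbp\|_{\mbV_4^2}$ from \eqref{eq-kappa-est}. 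A secondary subtlety is confirming $0\notin\Sigma_0$ so that division by $\beta_{\mbp,j}^2$ is legitimate; this is immediate from \eqref{est-N0} since $\la_0<0$.
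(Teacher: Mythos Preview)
Your proposal is correct and essentially follows the paper's own approach. The paper simply states that the proof ``closely follows that of Lemma~\ref{lem-Theta} for the case $i=0$, $j\in\Sigma_0$'' and omits the details; specializing that argument to $\tilde\Theta_0$ constant amounts to exactly the integration-by-parts-plus-high-wavenumber mechanism you describe. The only cosmetic difference is that the paper's identity \eqref{est-h-Theta-i,j-1} with $i=0$ effectively uses one integration by parts and bounds $\nabla_{s_\mbp}h$, whereas you integrate by parts twice and bound $\nabla_{s_\mbp}^2 h$; both routes land on the same $O(\varep,\varep\|\hat\mbp\|_{\mbV_4^2})$ estimate via $\beta_{\mbp,j}^{-1}\lesssim\varep$ for $j\in\Sigma_0$.
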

\begin{proof}
The proof closely follows that of Lemma  \ref{lem-Theta} for the case $i=0, j\in \Sigma_0$, we omit the details.
\end{proof}
%Since $\tilde \Theta_j$ are eigenfunctions of the Laplace-Beltrami operator $-\Delta_{s_\mbp}=-\p_{\tilde s_\mbp}^2$ associated with eigenvalue $\beta_{\mbp,j}^2$, i.e. \eqref{tTheta''}, and $\beta_{\mbp, j}$  is nonzero for $j\in \Sigma_0$, we have the equality  \beqs\int_{\msI_\mbp} h(\bm \gamma_\mbp'') \tilde \Theta_j \dd \tilde s_\mbp = - \frac{1}{\beta_{\mbp, j}^2} \int_{\msI_\mbp} h(\bm \gamma_\mbp'') \Delta_{s_\mbp} \tilde \Theta_j \dd \tilde s_\mbp.\eeqsIntegrating by parts twice implies \beqs\int_{\msI_\mbp} h(\bm \gamma_\mbp'') \tilde \Theta_j \dd \tilde s_\mbp = - \frac{1}{\beta_{\mbp, j}^2} \int_{\msI_\mbp} \Delta_{s_\mbp}h(\bm \gamma_\mbp'')  \tilde \Theta_j \dd \tilde s_\mbp.\eeqsBecause $h(\bm\gamma''_\mbp)$ depends only upon the pearling parameters $\mbp$, we may apply Lemma \ref{lem-e_ij} with  to deduce the uniform bound\beqs\|\Delta_{s_\mbp}h(\bm \gamma_\mbp'')\|_{L^2(\msI_\mbp)}\lesssim 1+ \|\hat\mbp\|_{\mbV_4^2}.\eeqsThe Lemma now follows from Lemma \ref{Notation-e_i,j}, the assumption $|\mrp_0|\ll 1$ from \eqref{A-0}, and the asymptotically large value of the pearling eigenvalues,  \beqs\beta_{\mbp, j}=\frac{\beta_j}{(1+\mrp_0)R_0}\sim \varep^{-1}\eeqswhich holds for $j\in \Sigma_0$ by  \eqref{est-N0}.

With Lemma\,\ref{lem-h-2} we obtain an improved bound on the coupling of the residual with the pearling modes.
%\bQues{We are using the exponentially small term $F_e$ here -- don't think we need them. If you agree Yuan, can you edit them out?}

\begin{lemma}\label{Lem-Q-resid}  Assuming \eqref{A-0}, the projection of the residual to pearling space satisfies the estimate
\begin{equation}\label{proj-PiF-Q}
\begin{aligned}
\left|\left<\Pi_0\mathrm F(\Phi_{\mathbf p} ), Q\right>_{L^2}\right|\lesssim &\varep^{9/2}(1+\|\hat{\mathbf p}\|_{\mbV_4^2} )\|\mathbf q\|_{l^2}.
\end{aligned}
\end{equation}
\end{lemma}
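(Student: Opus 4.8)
The plan is to improve on the naive Cauchy--Schwarz bound $|\langle\Pi_0\mathrm F(\Phi_\mbp),Q\rangle_{L^2}|\le\|\Pi_0\mathrm F(\Phi_\mbp)\|_{L^2}\|Q\|_{L^2}\lesssim\varep^{5/2}\|\mbq\|_{l^2}$, which follows from Lemma~\ref{lem-est-residual}, Lemma~\ref{lem-proj-Z0} and the a priori bound \eqref{A-0}, by extracting two further powers of $\varep$ from the parity and the high in-plane wave number of the pearling modes. First I would remove the mean: writing $\Pi_0\mathrm F(\Phi_\mbp)=(\mathrm F(\Phi_\mbp)-\mathrm F^\infty)+(\mathrm F^\infty-\langle\mathrm F(\Phi_\mbp)\rangle_{L^2})$, where $\mathrm F^\infty$ denotes the constant value the residual takes outside $\Gamma_\mbp^{2\ell}$ (absorbing the $\varep\sigma$ term of \eqref{exp-mF} together with $\varep^3\mathrm F_3^\infty+\varep^4\mathrm F_{\ge4}^\infty$), the second summand is a spatial constant with $|\mathrm F^\infty-\langle\mathrm F(\Phi_\mbp)\rangle_{L^2}|\le|\Omega|^{-1}\|\mathrm F(\Phi_\mbp)-\mathrm F^\infty\|_{L^1}\lesssim\varep^{5/2}$, so by \eqref{est-Q-mass} its contribution is $O(\varep^{5/2}\cdot\varep^3\|\mbq\|_{l^2})=O(\varep^{11/2}\|\mbq\|_{l^2})$, which is negligible. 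Since both $\mathrm F(\Phi_\mbp)-\mathrm F^\infty$ and $Q$ are exponentially localized in $\Gamma_\mbp^{2\ell}$, it remains to bound $\langle\mathrm F(\Phi_\mbp)-\mathrm F^\infty,Q\rangle_{L^2}$ by passing to local coordinates, in which $\dd x=\varep(1-\varep z_\mbp\kappa_\mbp)\,\dd\tilde s_\mbp\dd z_\mbp$.

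Next I would insert the expansions $\mathrm F(\Phi_\mbp)-\mathrm F^\infty=\varep^2\mathrm F_2+\varep^3(\mathrm F_3-\mathrm F_3^\infty)+\varep^4(\mathrm F_{\ge4}-\mathrm F_{\ge4}^\infty)+O(e^{-\ell\nu/\varep})$ from \eqref{exp-mF}--\eqref{F-234}, and $Q=\varep^{-1/2}\sum_{j\in\Sigma_0}\mrq_j\bigl[(\psi_0+\varep\varphi_{1,j})\tilde\Theta_j+\varep^2\varphi_{2,j}\tilde\Theta_j'\bigr]$ from \eqref{eq-def-Z*}, and evaluate each resulting product by integrating in $z_\mbp$ first. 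Two mechanisms generate an extra power of $\varep$ beyond the naive count. \emph{Parity}: $\psi_0$ is even in $z_\mbp$ while $f_2$ and $\phi_0'$ are odd, and for $I(j)=0$ the correctors $\varphi_{1,j},\varphi_{2,j}$ are odd, so the leading $z_\mbp$-integrals in $\mathrm F_2\cdot\psi_0\tilde\Theta_j$ and $(\phi_0'\Delta_{s_\mbp}\kappa_\mbp)\cdot\psi_0\tilde\Theta_j$ vanish and only the Jacobian correction $-\varep z_\mbp\kappa_\mbp$ --- or a pairing against $\varep\varphi_{1,j}$ --- survives. \emph{High wave number}: after the $z_\mbp$-integral one is left with sums $\sum_{j\in\Sigma_0}\mrq_j\int_{\msI_\mbp}h(\bm\gamma_\mbp'')\tilde\Theta_j\,\dd\tilde s_\mbp$ for bounded $h=h(\bm\gamma_\mbp'')$, and since $j\in\Sigma_0$ forces $\beta_{\mbp,j}\sim\varep^{-1}$, Lemma~\ref{lem-h-2} bounds each such integral by $O(\varep,\varep\|\hat\mbp\|_{\mbV_4^2})e_j$. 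In the one place where the $s_\mbp$-integral involves $\Delta_{s_\mbp}\kappa_\mbp$, so that Lemma~\ref{lem-h-2} does not apply, I would instead use Cauchy--Schwarz, the approximate $L^2(\msI_\mbp)$-orthonormality of $\{\tilde\Theta_j\}$, and the bound $\|\kappa_\mbp\Delta_{s_\mbp}\kappa_\mbp\|_{L^2(\msI_\mbp)}\lesssim1+\|\hat\mbp\|_{\mbV_4^2}$ from Lemma~\ref{lem-Gamma-p}.

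Carrying this out term by term: for $\varep^2\mathrm F_2$ the naive order $\varep^{5/2}$ must be improved by two powers, which is achieved by parity (killing $\int f_2\psi_0\,\dd z_\mbp$ and forcing the $-\varep z_\mbp\kappa_\mbp$ correction) together with Lemma~\ref{lem-h-2} applied to the resulting $\int\kappa_\mbp^2\tilde\Theta_j\,\dd\tilde s_\mbp$, giving $O(\varep^{9/2}(1+\|\hat\mbp\|_{\mbV_4^2})\|\mbq\|_{l^2})$ once $|\sigma-\sigma_1^*|\lesssim1$ is used; for $\varep^3\mathrm F_3$ only one extra power is needed, supplied by parity for the $-\phi_0'\Delta_{s_\mbp}\kappa_\mbp$ piece and for the odd part of $f_3$, and by Lemma~\ref{lem-h-2} for the even part of $f_3$; for $\varep^4(\mathrm F_{\ge4}-\mathrm F_{\ge4}^\infty)$ no refinement is needed, since the naive estimate $\varep^4\cdot\varep^{1/2}(1+\|\hat\mbp\|_{\mbV_4^2})\cdot\|Q\|_{L^2}$ already gives $O(\varep^{9/2}(1+\|\hat\mbp\|_{\mbV_4^2})\|\mbq\|_{l^2})$, the $L^2$ bound on $\mathrm F_{\ge4}-\mathrm F_{\ge4}^\infty$ being as in Lemma~\ref{lem-est-residual}. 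Pairings against the $\varep\varphi_{1,j}$ and $\varep^2\varphi_{2,j}\tilde\Theta_j'$ parts of $Q$ carry an additional $\varep$ up front and are handled identically. Summing the three blocks yields \eqref{proj-PiF-Q}. The main obstacle is the $\mathrm F_2$ contribution, which alone requires squeezing out two powers of $\varep$ simultaneously --- one from parity, one from the pearling wave-number gap via Lemma~\ref{lem-h-2} --- together with the bookkeeping that confirms, in each block, that the factor $\Delta_{s_\mbp}\kappa_\mbp$ enters at most linearly, so that the final bound stays proportional to $1+\|\hat\mbp\|_{\mbV_4^2}$.
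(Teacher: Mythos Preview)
Your proposal is correct and follows essentially the same route as the paper: remove the constant via the mass estimate \eqref{est-Q-mass}, then treat $\varep^2\mathrm F_2$ by parity plus Lemma~\ref{lem-h-2}, the $\phi_0'\Delta_{s_\mbp}\kappa_\mbp$ piece of $\varep^3\mathrm F_3$ by parity plus the $H^2(\msI_\mbp)$ bound on $\kappa_\mbp$, the $f_3$ piece by Lemma~\ref{lem-h-2}, and $\varep^4\mathrm F_{\ge4}$ directly. The only cosmetic differences are that the paper invokes Lemma~\ref{lem-mass-F} rather than your cruder $\|\mathrm F-\mathrm F^\infty\|_{L^1}$ bound for the constant term, and applies Lemma~\ref{lem-h-2} uniformly to $f_3$ rather than splitting it by parity; neither affects the outcome.
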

\begin{proof}
Subtracting off the far-field value $\mrF^\infty$ of the residual and using the definition of  $\Pi_0$,  we have
\begin{equation}\label{proj-PiF-Q-0}
\left<\Pi_0\mathrm F(\Phi_{\mathbf p}  ), Q\right>_{L^2}=\left<\mathrm F(\Phi_{\mbp} )-\mrF^\infty, Q\right>_{L^2}-\frac{1}{|\Omega|}\int_\Omega \left(\mathrm F(\Phi_{\mbp} )-\mrF^\infty\right)\dd x \int_\Omega Q\dd x.
\end{equation}
Using Lemma \ref{lem-mass-F} and the estimate \eqref{est-Q-mass}, the second term on the right-hand side satisfies 
\begin{equation}\label{proj-PiF-Q-1}
\frac{1}{|\Omega|}\int_\Omega \left(\mathrm F(\Phi_{\mbp} )-\mrF_m^\infty\right)\dd x \int_\Omega Q\dd x=O\left(\varep^{11/2}\|\mathbf q\|_{l^2}\right).
\end{equation}
We use the expansion of $\mathrm F(\Phi_{\mathbf p} )$ given in Lemma \ref{lem-def-Phi-p} to estimate the first term on the right-hand side of \eqref{proj-PiF-Q-0}. 
Examining the $L^2$-inner product of $\mrF_2$  and $Q$,
%\begin{equation}\label{proj-F2-Q-0}\begin{aligned}-\left<\mathrm F_2, Q\right>_{L^2}&=\varep\sum_{j\in \Sigma_0}\mathrm q_j\bigg(\int_{\mathbb R_{2\ell}}\int_{\msI} \mathrm F_2 Z_{\mathbf p}^{0j}  z_{\mathbf p}\kappa_{\mathbf p} \mathrm J_{\mathbf p,0}\dd s_{\mathbf p}\mrd z_{\mathbf p}\\&\qquad -\varep\int_{\mathbb R_{2\ell}}\int_{\msI}\mathrm F_2\left( \tilde \varphi_{1,j}\tilde \Theta_j +\tilde \varphi_{2,j}\varep\tilde \Theta_j'\right)(1-\varep z_{\mbp}\kappa_{\mbp})\mathrm J_{\mathbf p,0}\dd s_{\mathbf p}\mrd z_{\mathbf p}\bigg).\end{aligned}\end{equation}Here we used $\mathrm F_2$ is localized near the interface and odd with respect to $z_{\mathbf p}$, while $Z_{\mathbf p}^{0j}$ is even. 
since $\mathrm F_2=(\sigma_1^*-\sigma)\kappa_{\mathbf p}f_2(z_{\mathbf p})$ with $f_2$ odd, the leading order vanishes since $\psi_0$ has even parity in $z_{\mathbf p}$. Integrating out $z_{\mathbf p}$ we then deduce from Lemma \ref{lem-h-2} that
\begin{equation}\label{est-proj-F2-Q}
\begin{aligned}
\left|\left<\mathrm F_2, Q\right>_{L^2}\right|&=\varep^{3/2}\left|(\sigma_1^*-\sigma) \sum_{j\in \Sigma_0}\int_{\msI_\mbp}h(\bm \gamma_{\mathbf p}'') \mathrm q_j \tilde \Theta_j\dd \tilde s_{\mathbf p}\right|\lesssim \varep^{5/2}|\sigma_1^*-\sigma| (1+ \|\hat{\mathbf p}\|_{\mbV_4^2})\|\mathbf q\|_{l^2}.
\end{aligned}
\end{equation}
Using the form of $\mrF_3$ for Lemma \ref{lem-def-Phi-p} we rewrite 
\begin{equation}\label{est-proj-F3-Q-0}
 \left<\mathrm F_3-\mrF_3^\infty, Q\right>_{L^2}=-\left<\phi_0'\Delta_{s_{\mbp}}\kappa_{\mbp}, Q\right>_{L^2}+\left<f_3(z_{\mbp}, \bm \gamma_{\mbp}'')-f_3^\infty, Q\right>_{L^2}.
 \end{equation} 
Applying Lemma \ref{lem-h-2} again, we bound the  lower order  term on the right
  \begin{equation}\label{est-F3-Q-1}
  \begin{aligned}
\left| \left<f_3(z_{\mbp}, \bm \gamma_{\mbp}'')-f_3^\infty, Q\right>_{L^2}\right|
%=&\varep^{1/2}\sum_{j\in \Sigma_0}\mathrm q_j\left(\int_{\msI} (h_1(\bm \gamma_{\mbp}'')-h_1(\bm \gamma_{\mbp,0}'')) \tilde \Theta_j\dd s_{\mbp}+
% \int_{\msI} (h_2(\bm \gamma_{\mbp}'')-h_2(\bm \gamma_{\mbp,0}''))\varep \tilde \Theta_j'\dd s_{\mbp}\right)\\
 \lesssim &\varep^{3/2}\|\mathbf q\|_{l^2}(1+\|\hat{\mbp}\|_{\mbV_4^2}).
 \end{aligned}
 \end{equation}
For the higher order term including the  second derivative of curvature $\Delta_{s_\mbp}\kappa_\mbp$, since $\phi_0'$ is perpendicular to $\psi_0$ in $L^2(\mathbb R_{2\ell})$, the leading order vanishes yielding
\begin{equation}\label{est-Deltakappa-Z0-F3}
\left<\phi_0' \Delta_{s_{\mathbf p}}\kappa_{\mathbf p}, Q \right>_{L^2}=\varep^{3/2}\sum_{j\in \Sigma_0}\mathrm q_j\int_{\msI_\mbp} \left( h_1(\bm \gamma''_{\mathbf p})\Delta_{s_{\mathbf p}}\kappa_{\mathbf p}\tilde \Theta_j + h_2(\bm \gamma''_{\mathbf p})\Delta_{s_{\mathbf p}}\kappa_{\mathbf p}\varep \tilde \Theta_j' \right)\dd \tilde s_{\mathbf p}%=\varep^{1.5}h(\kappa_0)\int_{\msI} \Delta_{s_{\mathbf p}}\kappa_{\mathbfp}\Theta_j\mathrm J_{\mathbf p,0}\dd s_{\mathbf p}+\varep^{1.5}\|\hat{\mathbf p}\|_{\mbV_2}\|\hat{\mathbf p}\|_{\mbV_4^2}\|{\mathbf q}\|_{l^2}
\end{equation}  
for some $h_1, h_2$ satisfying Notation \ref{Notation-h}. Note that $h_k(\bm \gamma_\mbp'')$ for $k=1,2$ lies in $L^\infty$ since $\hat\mbp\in \mbV_2$, and  utilizing the curvature $H^2(\msI_\mbp)$ bound in Lemma \ref{lem-Gamma-p} yields
\begin{equation}\label{est-Deltakappa-Z0-0}
\begin{aligned}
\left|\left<\phi_0' \Delta_{s_{\mathbf p}}\kappa_{\mathbf p}, Q \right>_{L^2}\right|\lesssim \varep^{3/2}(1+\|\hat{\mathbf p}\|_{\mbV_4^2})\|{\mathbf q}\|_{l^2}.
\end{aligned}
\end{equation} 
Combining the above  estimate and  \eqref{est-F3-Q-1}  with \eqref{est-proj-F3-Q-0} and multiplying by $\varep^3$ implies
\begin{equation}\label{est-proj-F3-Q}
\left|\left<\varep^3(\mathrm F_3-\mrF_3^\infty), Q\right>_{L^2}\right|\lesssim \varep^{9/2}(1+ \|\hat{\mbp}\|_{\mbV_4^2})\|\mathbf q\|_{l^2}.
\end{equation}
In a similar manner we have
\begin{equation*}
\left|\left<\varep^{4}(\mathrm F_4-\mrF_4^\infty), Q\right>_{L^2}\right|  %\lesssim \varep^{1/2}\|\hat{\mbp}\|_{\mbV_2} \|\hat{\mathbf p}\|_{\mbV_4^2}\|\mathbf q\|_{l^2}
\lesssim \varep^{9/2}(1+\|\hat{\mathbf p}\|_{\mbV_4^2})\|\mathbf q\|_{l^2},
\end{equation*}
which combined with the estimates on $\mathrm F_2$ given by \eqref{est-proj-F2-Q}  and $\mathrm F_3$ from \eqref{est-proj-F3-Q} yields
\begin{equation}\label{est-proj-F-Q}
\begin{aligned}
\left| \left<\mathrm F(\Phi_{\mathbf p} )-\mrF_m^\infty, Q\right>_{L^2}\right|\lesssim &\varep^{9/2}(1+\|\hat{\mathbf p}\|_{\mbV_4^2})\|\mathbf q\|_{l^2}.
 \end{aligned}
\end{equation}
 Combining estimates \eqref{est-proj-F-Q} and \eqref{proj-PiF-Q-1} with \eqref{proj-PiF-Q-0} completes the Lemma.

\end{proof}
\begin{remark}

It is essential to separate the pearling modes $Q$ from the fast modes, $w$.   The linear operator has a weaker coercivity on the pearling slow space, which is compensated for by the high-wave number estimates available for the pearling modes in Lemma\,\ref{lem-h-2}. These decrease the coupling of the residual to the pearling modes.  It is instructive to compare \eqref{est-F-bLw} with \eqref{proj-PiF-Q}.
\end{remark}
%===================N(v)===================

\subsection{Estimates on the Nonlinearity}
The estimates of Lemmas \ref{lem-est-w}, \ref{lem-est-q}, incorporate $L^2$-bounds of the nonlinear term $\mathrm N(v^\bot)$. The following lemma affords these bounds on $\mathrm N(v^\bot)$  in terms of $w$ and $\mbq$. 
\begin{lemma}\label{lem-est-N} If $\|v^\bot\|_{L^\infty(\Omega)}$ is bounded independent of $\varep$, then
% the nonlinear term $\mathrm N(v^\bot)$ can be estimated in $L^2$ by
\begin{equation}
\label{eq-Nest}
\|\mathrm N( v^\bot)\|_{L^2}\lesssim\varep^{-1}\Big(\rho^{-2}\left<\mbL   w, w\right>_{L^2} +\|\mbq\|_{l^2}^2\Big),
\end{equation}
Moreover, decomposing $v^\bot=w+Q$ as in \eqref{decomp-v}, we have the bound
\begin{equation*}
\| v^\bot\|_{L^\infty}\lesssim \varep^{-1}\left(\rho^{-1}\left<\mbL   w, w\right>_{L^2}^{1/2} +\|\mathbf q(t)\|_{l^2}\right).
\end{equation*}
\end{lemma}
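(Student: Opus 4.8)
The plan is to prove the two estimates of Lemma~\ref{lem-est-N} in sequence, first the $L^2$ bound on $\mathrm N(v^\bot)$, then bootstrap it with an $L^\infty$ bound on $v^\bot$; the order matters because the $L^\infty$ bound is the hypothesis under which the $L^2$ bound is clean, so I will actually prove the $L^\infty$ bound as a self-contained consequence of the coercivity and projection estimates, and then feed it back in. First I would recall from \eqref{def-N} that $\mathrm N(v^\bot) = \mathrm F(\Phi_\mbp + v^\bot) - \mathrm F(\Phi_\mbp) - \mbL v^\bot$, and Taylor expand $\mrF$ about $\Phi_\mbp$. Since $\mrF$ is a fourth-order differential operator that is quadratic through quartic in the nonlinearity coming from $W$ and its derivatives (see \eqref{eq-FCH-L2-p}), the remainder $\mathrm N(v^\bot)$ is a sum of terms each carrying at least two factors of $v^\bot$ (or $\varep^2 \Delta v^\bot$) together with bounded coefficients built from $W^{(k)}(\Phi_\mbp)$ and $\varep^2\Delta\Phi_\mbp$, plus a possible factor of $\varep^{-1}$ from the scaling in \eqref{eq-FCH-L2-p}. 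Schematically, $\|\mathrm N(v^\bot)\|_{L^2} \lesssim \varep^{-1}\big( \|v^\bot\|_{L^\infty} + \|v^\bot\|_{L^\infty}^2 + \|v^\bot\|_{L^\infty}^3\big)\big(\|v^\bot\|_{L^2} + \|\varep^2\Delta v^\bot\|_{L^2}\big) \lesssim \varep^{-1}\|v^\bot\|_{L^\infty}\|v^\bot\|_{\Htwoin}$ once $\|v^\bot\|_{L^\infty}$ is controlled, using that the highest-order contributions are of the form $(\varep^2\Delta v^\bot)\cdot(\hbox{bounded})\cdot v^\bot$ and $W'''$-type products.

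Next, decompose $v^\bot = w + Q$ as in \eqref{decomp-v}. For $w \in \mcZ_*^\bot$, the coercivity Theorem~\ref{lem-coer} gives $\|w\|_{\Htwoin}^2 \lesssim \rho^{-2}\langle \mbL w, w\rangle_{L^2}$ (combining $\langle \mbL w,w\rangle \geq C\rho^2\|w\|_{\Htwoin}^2$). For $Q = Q(\mbq) \in \mcZ_*^0$, Lemma~\ref{lem-proj-Z0} (equivalently part (1) of Theorem~\ref{thm-coupling est}) gives $\|Q\|_{\Htwoin} \sim \|\mbq\|_{l^2}$. Hence $\|v^\bot\|_{\Htwoin} \lesssim \rho^{-1}\langle \mbL w,w\rangle_{L^2}^{1/2} + \|\mbq\|_{l^2}$. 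For the $L^\infty$ bound I would invoke a Gagliardo--Nirenberg/Agmon-type interpolation in $\mbR^2$: in two dimensions $\|f\|_{L^\infty} \lesssim \|f\|_{L^2}^{1-\theta}\|f\|_{H^2}^{\theta}$ for suitable $\theta$, or more simply $\|f\|_{L^\infty}\lesssim \|f\|_{H^2}$ via Sobolev embedding; rescaling with the $\varep$-weights in $\|\cdot\|_{\Htwoin} = \|\cdot\|_{L^2} + \varep^2\|\cdot\|_{H^2}$ produces the factor $\varep^{-1}$, giving $\|v^\bot\|_{L^\infty} \lesssim \varep^{-1}\|v^\bot\|_{\Htwoin} \lesssim \varep^{-1}\big(\rho^{-1}\langle\mbL w,w\rangle_{L^2}^{1/2} + \|\mbq\|_{l^2}\big)$, which is the second claimed estimate. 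Feeding this into the schematic bound above and squaring appropriately — using $\|v^\bot\|_{L^\infty}\|v^\bot\|_{\Htwoin} \lesssim \varep^{-1}\|v^\bot\|_{\Htwoin}^2 \lesssim \varep^{-1}(\rho^{-2}\langle \mbL w,w\rangle_{L^2} + \|\mbq\|_{l^2}^2)$ — yields \eqref{eq-Nest} after absorbing constants (note the stated bound has only a single power of $\varep^{-1}$ in front, so one uses the $L^\infty$-boundedness hypothesis, not the $\varep^{-1}$-scaled $L^\infty$ bound, for the low-order multiplicative factor, and the $\varep$-weighted $H^2$ norm for the remaining quadratic part).

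The step I expect to be the main obstacle is the careful bookkeeping of $\varep$-powers: one must verify that the worst term in $\mathrm N(v^\bot)$, which comes from the product $(\varep^2\Delta v^\bot)\,W'''(\Phi_\mbp)\,v^\bot$ or from $W'''(\Phi_\mbp)(\varep^2\Delta v^\bot)v^\bot$ inside the $\varep^{-1}$-scaled chemical potential, really only costs one net power of $\varep^{-1}$ and not more, so that $\|\mathrm N(v^\bot)\|_{L^2}\lesssim \varep^{-1}\|v^\bot\|_{L^\infty}(\|v^\bot\|_{L^2}+\|\varep^2\Delta v^\bot\|_{L^2})\lesssim \varep^{-1}\|v^\bot\|_{L^\infty}\|v^\bot\|_{\Htwoin}$; the quadratic-in-$v^\bot$ terms with two derivatives distributed as $(\varep\nabla v^\bot)\cdot(\varep\nabla v^\bot)$ must be handled by a product estimate $\|\varep\nabla f\,\varep\nabla g\|_{L^2}\lesssim \|\varep\nabla f\|_{L^4}\|\varep\nabla g\|_{L^4}\lesssim \|f\|_{\Htwoin}\|g\|_{\Htwoin}$ in $\mbR^2$ via Gagliardo--Nirenberg, and one checks this also fits under the single $\varep^{-1}$. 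The cubic and quartic terms are strictly lower order in $v^\bot$ and hence absorbed once $\|v^\bot\|_{L^\infty}=O(1)$. Once this power counting is pinned down the rest is routine application of Theorems~\ref{thm-coupling est} and \ref{lem-coer} and Lemma~\ref{lem-proj-Z0}.
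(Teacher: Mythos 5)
Your overall architecture matches the paper's: expand $\mathrm N(v^\bot)$ explicitly about $\Phi_\mbp$, obtain a pointwise/product bound, use two-dimensional Gagliardo--Nirenberg to control $\|v^\bot\|_{L^\infty}$ and $\|\varep\nabla v^\bot\|_{L^4}^2$, decompose $v^\bot=w+Q$, and finish with the coercivity of Theorem\,\ref{lem-coer} and $\|Q\|_{\Htwoin}\sim\|\mbq\|_{l^2}$. The second estimate of the Lemma you obtain exactly as the paper does. The problem is in the step you yourself flag as the main obstacle: your power counting for the $L^2$ bound is wrong, and your proposed repair does not yield \eqref{eq-Nest}. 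There is no ``factor of $\varep^{-1}$ from the scaling in \eqref{eq-FCH-L2-p}'': $\mathrm F$ is already the $\varep^3$-rescaled variational derivative, and when $\mathrm N$ is rearranged as in the paper its three groups of terms are $(W''(u)-W''(\Phi_\mbp))(\varep^2\Delta v^\bot-W''(\Phi_\mbp)v^\bot)$, $(\varep^2\Delta-W''+\varep\eta_2)$ applied to the quadratic Taylor remainder of $W'$, and a quadratic remainder multiplying $\varep^2\Delta\Phi_\mbp-W'(\Phi_\mbp)$ (which is itself $O(\varep)$). The resulting pointwise bound is $|\mathrm N(v^\bot)|\lesssim \varep^2|\nabla v^\bot|^2+\varep^2|\Delta v^\bot||v^\bot|+|v^\bot|^2$, with \emph{no} $\varep^{-1}$ prefactor; the $L^\infty$-boundedness hypothesis enters only qualitatively, to keep $u=\Phi_\mbp+v^\bot$ in a compact set so the $W^{(k)}$ coefficients are bounded.

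Because of the spurious $\varep^{-1}$ in your schematic $\|\mathrm N\|_{L^2}\lesssim \varep^{-1}\|v^\bot\|_{L^\infty}\|v^\bot\|_{\Htwoin}$, you are forced into a dichotomy, and neither branch proves the Lemma: if you substitute the $L^\infty$ bound $\|v^\bot\|_{L^\infty}\lesssim\varep^{-1}\|v^\bot\|_{\Htwoin}$ you get $\varep^{-2}\|v^\bot\|_{\Htwoin}^2$, one power of $\varep^{-1}$ too many; if instead, as you propose, you use only $\|v^\bot\|_{L^\infty}=O(1)$ for the multiplicative factor, you get $\varep^{-1}\|v^\bot\|_{\Htwoin}$, which is \emph{linear} in $\|v^\bot\|_{\Htwoin}$ and does not imply the quadratic right-hand side $\varep^{-1}\big(\rho^{-2}\langle\mbL w,w\rangle_{L^2}+\|\mbq\|_{l^2}^2\big)$ in the regime of interest ($\|v^\bot\|_{\Htwoin}=O(\varep^{5/2}\rho^{-2})$, where the quadratic bound is far smaller); the loss of the quadratic structure would also break the closure in Theorem\,\ref{thm:Main}. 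The correct accounting is: $\|\mathrm N(v^\bot)\|_{L^2}\lesssim \|v^\bot\|_{L^\infty}\big(\|v^\bot\|_{L^2}+\varep^2\|v^\bot\|_{H^2}\big)$ (using $\varep^2\|\nabla v^\bot\|_{L^4}^2\lesssim \varep^2\|\nabla^2 v^\bot\|_{L^2}\|v^\bot\|_{L^\infty}$), and then the single $\varep^{-1}$ of \eqref{eq-Nest} is generated by the interpolation $\|v^\bot\|_{L^\infty}\lesssim\|v^\bot\|_{L^2}^{1/2}\|v^\bot\|_{H^2}^{1/2}=\varep^{-1}\|v^\bot\|_{L^2}^{1/2}\big(\varep^2\|v^\bot\|_{H^2}\big)^{1/2}\leq\varep^{-1}\|v^\bot\|_{\Htwoin}$, so that $\|\mathrm N(v^\bot)\|_{L^2}\lesssim\varep^{-1}\|v^\bot\|_{\Htwoin}^2$, after which your decomposition and coercivity steps give \eqref{eq-Nest} exactly as stated.
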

\begin{proof}
From the definition  \eqref{def-N} of the nonlinear term $\mathrm N(v^\bot)$, with $\mathrm F$ given by \eqref{eq-FCH-L2-p} and $\mbL  $ given by \eqref{def-bLp}, 
 some rearrangements  lead to the equality 
\begin{equation*}
\begin{aligned}
\mathrm N(  v^\bot)=&-\Big(W''(u)-W''\Big)\Big(\varep^2 \Delta  v^\bot-{W'' v^\bot} \Big) -(\varep^2\Delta -W''+\varep\eta_2)\Big( W'(u) -W' -W'' v^\bot\Big) \\
&-\Big(W''(u)-W''-W''' v^\bot\Big) \Big(\varep^2\Delta \Phi_{\mathbf p} - W'(\Phi_{\mathbf p} )\Big),
\end{aligned}
\end{equation*}
where $W', W'', W'''$ are evaluated at $\Phi_{\mathbf p}$ unless  otherwise specified  and $u=\Phi_{\mathbf p} +v^\bot$.  The function $u$ is uniformly bounded in $L^\infty$ since $v^\bot$ is by assumption and $\varep^k\nabla^k\Phi_\mbp\in L^\infty$ is uniformly bounded for $k=1,\ldots 4,$ since $\Phi_\mbp$ is smooth in the inner variables. We deduce that the nonlinear term $\mathrm N$ satisfies the pointwise bound
\begin{equation*}
|\mathrm N( v^\bot)|\lesssim \|W\|_{C^6_c} \Big( \varep^2 |\nabla  v^\bot|^2+\varep^2|\Delta  v^\bot|| v^\bot|+|v^\bot|^2\Big),
\end{equation*}
which yields  the $L^2$ estimate
\begin{equation*}
\|\mathrm N( v^\bot)\|_{L^2}\lesssim  \varep^2\|v^\bot\|_{L^4}^2+\|v^\bot\|_{L^\infty} \varep^2\|\Delta v^\bot\|_{L^2}+\|v^\bot\|_{L^4}^2.
\end{equation*}
%\begin{equation*}\|\mathrm N( v^\bot)\|_{L^2}\lesssim  \varep^2\left(\int_\Omega |\nabla  v^\bot|^4\,\mrd   x\right)^{1/2}+\|v^\bot\|_{L^\infty} \left(\int_\Omega \varep^4|\Delta  v^\bot|^2 \, \mrd    x\right)^{1/2}+\left(\int_\Omega |v^\bot|^4\dd   x\right)^{1/2}\end{equation*}
 In two space dimensions the Gargliardo-Nirenberg inequalities imply
\begin{equation}\label{ineq-G-N}
\|\nabla  v^\bot\|_{L^4}^2\lesssim \|\nabla^2  v^\bot\|_{L^2}\| v^\bot\|_{L^\infty}\quad \hbox{and}\quad \| v^\bot\|_{L^\infty}\lesssim \| v^\bot\|_{L^2}^{1/2}\| v^\bot\|_{H^2}^{1/2},
\end{equation}
and the $L^2$-estimate of $\mathrm N(v^\bot)$ reduces to
\begin{equation}\label{est-L2-N-1}
\begin{aligned}
\|\mathrm N( v^\bot)\|_{L^2} &\leq C\| v^\bot\|_{L^\infty}\left(\varep^2\|\nabla^2  v^\bot\|_{L^2}+\|v^\bot\|_{L^2}\right)\\
&\leq C\| v^\bot\|_{L^2}^{1/2}\varep^2\|\Delta  v^\bot\|_{L^2}^{3/2} +C\|v^\bot\|_{L^2}^{3/2}\|v^\bot\|_{H^2}^{1/2}\\
&\leq C\varep^{-1} \Big(\| v^\bot\|_{L^2}+\varep^2\|\Delta  v^\bot\|_{L^2}\Big)^2.
%&\leq C(1+\|v\|_{L^\infty}^2)\varep^{-1} \Big(\|w\|_{L^2}^2 +\Big)
\end{aligned}
\end{equation}
From the decomposition $ v^\bot=w+Q$, we have
\begin{equation*}
\| v^\bot\|_{L^2}\lesssim \|w\|_{L^2}+ \|\mathbf q(t)\|_{l^2}, \quad \varep^2 \|\Delta  v^\bot\|_{L^2}\lesssim  \varep^2\|\Delta w\|_{L^2}+\|\mathbf q(t)\|_{l^2},
\end{equation*}
where we used the fact that $\varep^2\Delta$  is a uniformly bounded operator on $\mathcal Z_*^0$ in $L^2$ and hence on $Q$, see \eqref{eq-Lap-induced} and \eqref{def-Sigma}. 
The estimate \eqref{eq-Nest} follows from the coercivity Lemma \ref{lem-coer}. Applying the estimate \eqref{ineq-G-N} leads to
\begin{equation*}
\begin{aligned}
\|v^\bot\|_{L^\infty}&\leq  \varep^{-1}(\|w\|_{L^2}+\|\mathbf q\|_{l^2})^{1/2}(\varep^2 \|\Delta w\|_{L^2}+\|\mathbf q\|_{l^2})^{1/2}\\
&\lesssim \varep^{-1}\left(\rho^{-1} \left<\mbL  w, w\right>_{L^2}^{1/2}+\|\mathbf q\|_{l^2}\right).
\end{aligned}
\end{equation*}
The proof is complete.
\end{proof}

\subsection{Main Theorem}
\label{ss-MTheom}
In this sub-section we introduce thinner tubular neighborhoods $\mcV_R(\cM_b,\cO_\delta)\subset\mcU(\cM_b)$ of thickness $R$ defined over the open base $\cO_\delta \subset\cD_\delta$. We show that solutions of the gradient flow \eqref{eq-FCH-L2} that start inside of $\mcV_{R_1}(\cMb, \cO_\delta)$ remain in a slightly thicker neighborhood $\mcV_{R_2}(\cM_b, \cO_{2,\delta})$  so long as $\mbp$ remains in the slightly larger base $\cO_{2,\delta}$.   For $R\in(0,\varep^2]$, the tubular neighborhood with width $R$ and domain $\cO_{\delta}$ is defined as
\beq
\label{def-B-circ}
\mcV_R(\cM_b, \cO_{\delta}) =\left\{ u\in H^2(\Omega)\,\Bigl |\,
\inf_{\mbp\in\cO_{\delta} } \|u-\Phi_\mbp(\sigma)\|_{\Htwoin} <   R,\, \langle u-b_-\rangle_{L^2} = \frac{\varep M_0}{|\Omega|} \right\}.
\eeq
We introduce the nested base domains $\cO_{m,\delta}$ as the subsets of $ \cD_\delta$ that satisfy
\beq\label{assump-Ap}
%(\mathbf A_\mbp)\qquad  
\begin{aligned}
%&\|\dot \mbp\|_{l^2}\lesssim  \varep^3; \quad
%\qquad  \varep \|\hat\mbp\|_{\mbV_4^2}\ll  1; \\
  \cO_{m,\delta} :=\left\{\mbp\in \mbR^{N_1} \,\bigl |\,  |\mrp_0| + \|\hat\mbp\|_{\mbV_1}< m\delta; \quad \|\hat\mbp\|_{\mbV_2}+ \varep\|\hat\mbp\|_{\mbV_4^2}< m \right\}, \qquad m=1,2.
\end{aligned}
\eeq
%Note $\cO\subset \cD_\delta$ by recalling \eqref{A-00}, in which the assumptions on $\mbp$ insure the validity of the implicit construction of the perturbed curves -- in particular their non-self intersection. The conditions on $\mrp_0$ and $\varep \|\hat\mbp\|_{\mbV_4^2}$ are taken to insure the pearling stability condition. Indeed, 
When $m=1$, we denote $\cO_{1,\delta}$ by $\cO_\delta$. The parameter $\delta$ will be chosen sufficiently small that Lemma \ref{lem-sigma-2} holds.  The condition on $\mrp_0$ insures that the pearling stability condition $(\mathbf{PSC})$ holds uniformly, see Lemma \ref{lem-sigma-2}; the uniform bound on $\varep\|\hat\mbp\|_{\mbV_4^2}$ insures the smoothness of the perturbed curve $\Gamma_\mbp$. 

From Lemma \ref{lem-est-V}, each of the a priori bounds on $\hat\mbp$ in \eqref{assump-Ap} are inferred from the single, stronger bound 
$\|\hat\mbp\|_{\mbV_3^2}\leq m\delta. $ Hence we introduce a parallel set of smaller but more easily defined domains,
\beq\label{assump-Ap'}
\cO^\circ_{m,\delta}:= \begin{aligned}
\left\{\mbp\in \mbR^{N_1} \;\bigl |\; |\mrp_0| + \|\hat\mbp\|_{\mbV_3^2}<  m  \delta\right\}\subset \cO_{m,\delta}.
\end{aligned}
\eeq

The equilibrium pearling stability condition arises from replacing $\sigma$ in the pearling stability condition \eqref{cond-P-stab} with its leading order equilibrium value $\sigma_1^*$, defined in \eqref{def-sigma1*}, 
\beq
\label{equil-PSC}
(\mathbf{PSC^*}) \qquad \sigma_1^* S_1+\eta_d \la_0>0.
\eeq 
The next lemma shows that if $(\mathbf{PSC^*})$ holds, then for a suitable admissible pair $(\Gamma_0, M_0)$ the $(\mathbf{PSC})$ holds uniformly for all $\mbp\in \cO_{2,\delta}$ provided that $\delta$ is sufficiently small. %and $K$ fixed. 

\begin{lemma}\label{lem-sigma-2}
Suppose that the equilibrium pearling stability condition \eqref{equil-PSC} holds and that $(\Gamma_0, M_0)$ is a admissible pair satisfying 
\beq\label{PSC-M0}
\Bigl|M_0- m_0|\Gamma_0| -B_2^\infty |\Omega|\sigma_1^*\Bigr|\leq \delta. 
\eeq
Then for $\mbp\in \cO_{2,\delta}$, the bulk parameter
 $\sigma=\sigma(\mbp)$ defined in \eqref{def-hatla} is uniformly bounded, i.e. $|\sigma|\lesssim 1$, and
 the pearling stability condition $(\mathbf{PSC})$ from \eqref{cond-P-stab}  holds uniformly for all $\mbp\in\cO_{2,\delta}$ provided that $\delta, \varep_0$  is sufficiently small, in terms of the domain, the system parameters, and $K_0,\ell_0$. 
\end{lemma}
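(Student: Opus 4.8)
The plan is to use Lemma~\ref{lem-sigma}, which gives the leading-order expansion of $\sigma(\mbp)$ over $\cD_\delta$, together with the admissibility condition \eqref{PSC-M0}, to show that $\sigma(\mbp)$ is $O(\delta)+O(\varep)$-close to $\sigma_1^*$, and then invoke continuity of the left-hand side of the pearling stability condition in $\sigma$.

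First I would record from Lemma~\ref{lem-sigma} that for all $\mbp\in\cO_{2,\delta}\subset\cD_\delta$,
\begin{equation*}
\sigma(\mbp) = \frac{M_0-m_0|\Gamma_0|}{B_2^\infty|\Omega|} - \frac{m_0|\Gamma_0|}{B_2^\infty|\Omega|}\mrp_0 + O(\varep).
\end{equation*}
The admissibility hypothesis \eqref{PSC-M0} is precisely the statement that $\bigl|M_0-m_0|\Gamma_0|-B_2^\infty|\Omega|\sigma_1^*\bigr|\leq\delta$, so dividing by $|B_2^\infty|\Omega||$ gives $\bigl|\tfrac{M_0-m_0|\Gamma_0|}{B_2^\infty|\Omega|}-\sigma_1^*\bigr|\lesssim\delta$. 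On $\cO_{2,\delta}$ the bound $|\mrp_0|<2\delta$ controls the linear term, and $|\Gamma_0|$, $m_0$, $B_2^\infty$, $|\Omega|$ are all fixed $O(1)$ quantities determined by the admissible pair and the well $W$. Combining these, $|\sigma(\mbp)-\sigma_1^*|\lesssim \delta + \varep$ uniformly on $\cO_{2,\delta}$; in particular $|\sigma(\mbp)|\lesssim 1$, which is the first assertion (and also supplies the boundedness hypothesis needed in Lemmas~\ref{lem-eigen-bM} and~\ref{lem-proj-Z0}).

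Next I would transfer the closeness of $\sigma$ to $\sigma_1^*$ into the pearling stability condition. The map $\sigma\mapsto \sigma S_1+\eta_d\la_0$ is affine in $\sigma$ with fixed coefficients $S_1$, $\eta_d$, $\la_0$ depending only on $W$ and $\eta_1,\eta_2$; by the equilibrium condition \eqref{equil-PSC} it takes the strictly positive value $c_*:=\sigma_1^* S_1+\eta_d\la_0>0$ at $\sigma=\sigma_1^*$. Hence
\begin{equation*}
\sigma(\mbp) S_1 + \eta_d\la_0 = c_* + S_1\bigl(\sigma(\mbp)-\sigma_1^*\bigr) \geq c_* - |S_1|\,C(\delta+\varep),
\end{equation*}
and choosing $\delta$ and then $\varep_0$ small enough (depending only on $c_*$, $S_1$, and the $O(1)$ geometric constants, hence ultimately on the domain, the system parameters, and $K_0,\ell_0$) makes the right-hand side $\geq c_*/2>0$. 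This is the uniform $(\mathbf{PSC})$ over $\cO_{2,\delta}$.

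\textbf{Main obstacle.} The argument is essentially a continuity/perturbation estimate, so the only real subtlety is bookkeeping the dependence of constants: one must verify that the implied constant in Lemma~\ref{lem-sigma} and the coefficients $S_1,\eta_d,\la_0$ are genuinely independent of $\varep,\rho,\delta$ and of $\mbp\in\cD_\delta$, so that the smallness choices of $\delta$ and $\varep_0$ can be made once and for all. Here one should double-check that the $O(\varep)$ remainder in Lemma~\ref{lem-sigma} is in fact uniform over $\cO_{2,\delta}$ — it is, since it only uses the $\cD_\delta$-type bounds on $\hat\mbp$ and the uniform smoothness and non-self-intersection of $\Gamma_\mbp$ from Lemma~\ref{lem-Gamma-p} — after which the rest is a one-line affine estimate.
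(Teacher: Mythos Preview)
Your proposal is correct and follows essentially the same approach as the paper: apply Lemma~\ref{lem-sigma} together with the admissibility condition \eqref{PSC-M0} and the bound $|\mrp_0|<2\delta$ on $\cO_{2,\delta}$ to get $|\sigma(\mbp)-\sigma_1^*|\lesssim\delta+\varep$, then conclude $(\mathbf{PSC})$ by continuity of the affine map $\sigma\mapsto\sigma S_1+\eta_d\la_0$. Your write-up is in fact more explicit than the paper's, which compresses the final step into one line.
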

\begin{proof}
%We first note the assumption on $\sigma_0$ also implies it's a bounded constant. 
From the bound of Lemma \ref{lem-sigma}, we estimate  
\beqs
 \left|\sigma(\mbp)-\frac{M_0-m_0|\Gamma_0|}{B_2^\infty|\Omega|}\right| \lesssim |\mrp_0| +\varep.
\eeqs
The uniform bound on $\sigma$ follows from the assumption on $\mrp_0$ since $\mbp\in \cO_{2,\delta}$. By assumption \eqref{PSC-M0},  $|\sigma(\mbp)-\sigma_1^*|\leq \delta$ and the pearling stability condition \eqref{cond-P-stab} holds uniformly. 
\end{proof}

\begin{lemma}\label{lem-pdot-l2}

For $\mbp\in \cO_{2,\delta}$, the temporal derivative of $\mbp$ satisfies the bound 
 \beqs
 \|\dot\mbp\|_{l^2}\lesssim  \varep^3%|\sigma-\sigma_1^*|
 +\varep^{3/2}\|v^\bot\|_{L^2} +\varep^{1/2}\|\mathrm N(v^\bot)\|_{L^2}+ \varep^{-1}\|v^\bot\|_{L^2} \|\dot\mbp\|_{l^2}. 
 \eeqs
\end{lemma}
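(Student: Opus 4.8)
The plan is to differentiate the defining relations of the projection $\Pi_{\cMb}$ with respect to $t$ and solve for $\dot\mbp$. Recall from Definition \ref{def-cM-projection} that $u = \Phi_\mbp + v^\bot$ with $v^\bot \in (\mcZ_*^1)^\bot$ mass-free, so that $v^\bot$ is characterized by the orthogonality conditions $\langle v^\bot, Z_{\mbp,*}^{1i}\rangle_{L^2}=0$ for all $i\in\Sigma_1$. Differentiating in time, using $\p_t u = -\Pi_0\mrF(u)$ from \eqref{eq-FCH-L2} and the chain rule $\p_t\Phi_\mbp = \sum_j \dot{\mathrm p}_j \p_{\mathrm p_j}\Phi_\mbp = \dot\mbp\cdot\nabla_\mbp\Phi_\mbp$ (keeping in mind $\sigma=\sigma(\mbp)$ contributes through the same sum), yields
\begin{equation*}
\sum_{j} \dot{\mathrm p}_j \left\langle \p_{\mathrm p_j}\Phi_\mbp, Z_{\mbp,*}^{1i}\right\rangle_{L^2} = -\left\langle \p_t v^\bot, Z_{\mbp,*}^{1i}\right\rangle_{L^2} - \left\langle v^\bot, \p_t Z_{\mbp,*}^{1i}\right\rangle_{L^2},
\end{equation*}
and on the left-hand side one substitutes $\p_t v^\bot = \p_t u - \p_t\Phi_\mbp = -\Pi_0\mrF(u) - \dot\mbp\cdot\nabla_\mbp\Phi_\mbp$. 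Rearranging gives a linear system $\mathbb A\,\dot\mbp = \mbb$ where $\mathbb A_{ij} = \langle \p_{\mathrm p_j}\Phi_\mbp, Z_{\mbp,*}^{1i}\rangle_{L^2} - \langle \p_{\mathrm p_j}\Phi_\mbp \text{ acting through } \p_t v^\bot\rangle$; more precisely, after collecting the $\dot\mbp$ terms arising from $\p_t v^\bot$, the matrix is a perturbation of the Gram-type matrix of the approximate tangent vectors $\{\p_{\mathrm p_j}\Phi_\mbp\}$ against the meander basis $\{Z_{\mbp,*}^{1i}\}$.

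\textbf{Key steps.} First I would record, from the expansions of $\p_{\mathrm p_j}\Phi_\mbp$ (the $\p_{z_\mbp}\Phi_\mbp$-type leading term scales like $\varep^{-1}\phi_0'$ times a normal-component factor, recovering $\tilde\Theta_j$ up to $O(\varep)$), that the leading contribution of $\langle \p_{\mathrm p_j}\Phi_\mbp, Z_{\mbp,*}^{1i}\rangle_{L^2}$ is $c(1+\mrp_0)\varep^{-1/2}\delta_{ij}$ for a fixed nonzero constant $c$, using the orthonormality \eqref{ortho-tTheta} of $\{\tilde\Theta_j\}$ and the normalization of $\psi_1=\phi_0'/m_1$; the off-diagonal and $j\in\{0,1,2\}$ corrections are lower order in $\varep$ and controlled by the $\cD_\delta$-bounds on $\hat\mbp$. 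Hence $\mathbb A$ is (after scaling rows/columns by $\varep^{1/2}$) diagonally dominant and boundedly invertible with $\|\mathbb A^{-1}\|_{l^2_*}\lesssim \varep^{1/2}$ once $\delta,\varep_0$ are small. Second, I would bound the right-hand side: the term $-\langle\Pi_0\mrF(u), Z_{\mbp,*}^{1i}\rangle$ splits via $\mrF(u) = \mrF(\Phi_\mbp) + \mbL v^\bot + \mrN(v^\bot)$ into (i) the residual, estimated through the projection properties \eqref{est-proj-rF2,3} and Lemma \ref{lem-est-residual}, giving an $O(\varep^{7/2})\,e_i$ contribution plus, using the high-derivative bounds of Lemma \ref{lem-Gamma-p}, an $O(\varep^{7/2}\|\hat\mbp\|_{\mbV_4^2})$ piece; (ii) $\langle\Pi_0\mbL v^\bot, Z_{\mbp,*}^{1i}\rangle$ — here $v^\bot = Q + w$ and I would invoke statement (1) of Theorem \ref{thm-coupling est} for the pearling part $Q$ and Corollary \ref{cor-est-w-Z*} for the fast part $w$, each giving $O(\varep^2)\|v^\bot\|_{L^2}$ after noting $\|Q\|_{L^2}+\|w\|_{L^2}\lesssim\|v^\bot\|_{L^2}$; (iii) $\langle\mrN(v^\bot), Z_{\mbp,*}^{1i}\rangle$, bounded by $\|\mrN(v^\bot)\|_{L^2}$ via Hölder and $\|Z_{\mbp,*}^{1i}\|_{L^2}\sim 1$. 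The remaining right-hand side term $-\langle v^\bot, \p_t Z_{\mbp,*}^{1i}\rangle$ uses the estimate on $\p_t Z_{\mbp,*}^{1i}$ (the appendix estimate \eqref{est-L2-p-tZ*}, whose $L^2$-norm is $O(\varep^{-1}\|\dot\mbp\|_{l^2})$), producing the term $\varep^{-1}\|v^\bot\|_{L^2}\|\dot\mbp\|_{l^2}$ — this is the term that must be carried along implicitly on the right-hand side rather than absorbed, since $v^\bot$ is not yet known to be small enough. Finally, multiply the solved system $\dot\mbp = \mathbb A^{-1}\mbb$ by $\|\mathbb A^{-1}\|_{l^2_*}\lesssim \varep^{1/2}$ and collect: $\varep^{1/2}\cdot\varep^{7/2} = \varep^4 \lesssim \varep^3$, $\varep^{1/2}\cdot\varep^2\|v^\bot\|_{L^2} = \varep^{5/2}\|v^\bot\|_{L^2}\lesssim\varep^{3/2}\|v^\bot\|_{L^2}$, $\varep^{1/2}\|\mrN(v^\bot)\|_{L^2}$, and $\varep^{1/2}\cdot\varep^{-1}\|v^\bot\|_{L^2}\|\dot\mbp\|_{l^2} = \varep^{-1/2}\|v^\bot\|_{L^2}\|\dot\mbp\|_{l^2}$, which is the stated bound (noting $\varep^{-1/2}\lesssim\varep^{-1}$).

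\textbf{Main obstacle.} The delicate point is establishing the uniform boundedness of $\mathbb A^{-1}$ with the sharp $\varep^{1/2}$ scaling, independent of the asymptotically large dimension $N_1\sim\varep^{-1}\rho^{1/4}$. This is precisely where the implicit construction of $\Gamma_\mbp$ (Definition \ref{def-P-interface}) pays off: because the length change is uncoupled into $\mrp_0$ alone and the perturbed eigenmodes $\tilde\Theta_j$ are mutually orthogonal in $L^2(\mscI_\mbp)$ by \eqref{ortho-tTheta}, the Gram matrix of the tangent vectors is genuinely diagonally dominant with off-diagonal entries controlled by $\|\hat\mbp\|_{\mbV_4^2}$ and powers of $\varep$ via Lemma \ref{lem-Theta}, rather than merely $O(1)$; one then applies a Neumann-series / Gershgorin argument uniformly in $N_1$. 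A secondary subtlety is bookkeeping the contribution of $\sigma(\mbp)$ to $\p_{\mathrm p_j}\Phi_\mbp$ — its $\mrp_j$-sensitivity is $O(1)$ in the relevant norm (Corollary-type estimates on $\p_{\mathrm p_j}\sigma$), so it perturbs $\mathbb A$ only at lower order and is harmless, but it must be tracked to confirm the $O(\varep^3)$ leading term on the right is not degraded.
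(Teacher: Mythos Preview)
Your approach is essentially the paper's: project the evolution equation onto $\mcZ_*^1$, use the equivalence $\|\Pi_{\mcZ_*^1}\p_t\Phi_\mbp\|_{L^2}\sim\varep^{-1/2}\|\dot\mbp\|_{l^2}$ (which you phrase as diagonal dominance of $\mathbb A$ with $\|\mathbb A^{-1}\|_{l^2_*}\lesssim\varep^{1/2}$; this is exactly the content of Lemma~\ref{lem-Phi_t-p} and Lemma~\ref{lem-T}), and then bound the projected residual, linear, nonlinear, and $\p_t Z$ contributions via Lemma~\ref{lem-est-residual}, Theorem~\ref{thm-coupling est}, and \eqref{est-L2-p-tZ*} just as you outline. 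One bookkeeping correction: your per-component estimate $|\langle v^\bot,\p_t Z_{\mbp,*}^{1i}\rangle|\lesssim\varep^{-1}\|\dot\mbp\|_{l^2}\|v^\bot\|_{L^2}$ is an $l^\infty$ bound in $i$, so passing to $l^2$ over $i\in\Sigma_1$ costs an additional factor $N_1^{1/2}\lesssim\varep^{-1/2}$, and this is what produces exactly the stated $\varep^{-1}$ coefficient after applying $\mathbb A^{-1}$, rather than the $\varep^{-1/2}$ you obtain (your inequality $\varep^{-1/2}\lesssim\varep^{-1}$ masks the slip).
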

\begin{proof}
We rewrite the equation \eqref{eq-v-Phi} as
\beqs
\p_t \Phi_\mbp =-\Pi_0 \mrF(\Phi_\mbp) -\mathrm{Re}(v^\bot), \qquad \mathrm{Re}[v^\bot]:= \p_t v^\bot + \Pi_0 \mbL v^\bot +\Pi_0\mathrm N(v^\bot). 
\eeqs
With a use of Lemmas \ref{lem-Phi_t-p}, \ref{lem-est-residual} and the a priori assumption on $\|\mbp\|_{\mbV_4^2}$ we derive
\beq\label{dotp-l^2}
\begin{aligned}
\|\dot\mbp\|_{l^2}&\lesssim \varep^{1/2}\|\Pi_{\mcZ_*^1}\p_{t}\Phi_\mbp\|_{L^2}\\
&\lesssim \varep^{1/2}\|\Pi_0\mrF(\Phi_\mbp)\|_{L^2}  +\varep^{1/2}\|\Pi_{\mcZ_*^1}\mathrm{Re}[v^\bot]\|_{L^2}\\
&\lesssim \varep^3%|\sigma-\sigma_1^*|
+\varep^{1/2}\|\Pi_{\mcZ_*^1}\mathrm{Re}[v^\bot]\|_{L^2}. 
\end{aligned}
\eeq
To estimate the projection of the remaidner $\mathrm{Re}[v^\bot]$, we deal with its terms one by one. First, we rewrite the projection of $\partial_tv^\bot$ as
\beqs
\left<\p_t v^\bot, Z_{\mbp, *}^{1k}\right>_{L^2} = \p_t \left<v^\bot, Z_{\mbp,*}^{1k}\right>_{L^2} - \left<v^\bot, \p_t  Z_{\mbp,*}^{1k}\right>_{L^2}.
\eeqs
The first term on the right hand side is zero since $v^\bot$ is perpendicular to the meandering slow space $\mcZ_{*}^1$; and the second term can be bounded with the aid of \eqref{est-L2-p-tZ*}. Combining these, we deduce 
\beqs
|\left<\p_t v^\bot, Z_{\mbp, *}^{1k}\right>_{L^2}| \lesssim  \varep^{-1} \|\dot\mbp\|_{l^2}\|v^\bot\|_{L^2} \qquad \forall k\in\Sigma_1,
\eeqs
which combined with a typical $l^2$-$l^\infty$ estimate and the  $N_1\lesssim \varep^{-1}$  implies
\beqs
\|\Pi_{\mcZ_*^1}\p_t v^\bot\|_{L^2}\lesssim  \varep^{-1}N_1^{1/2}\|\dot\mbp\|_{l^2}\|v^\bot\|_{L^2}\lesssim \varep^{-3/2}\|\dot\mbp\|_{l^2}\|v^\bot\|_{L^2}.
\eeqs
 Second,  we apply  Lemma \ref{thm-coupling est} to bound the projection of the linear term $\Pi_0 \mbL_\mbp v^\bot$,
\beqs
\left\|\Pi_{\mcZ_*^1}\Pi_0 \mbL_\mbp v^\bot\right\| \lesssim (\varep^2 + \varep^2 \|\hat\mbp\|_{\mbV_{4}^2}) \|v^\bot\|_{L^2}.
\eeqs
Finally, the projection of the nonlinear term can be  estimated trivially,
\beqs
\|\Pi_{\mcZ_*^1} \Pi_0 \mathrm N(v^\bot)\|_{L^2}\lesssim \|\mathrm N(v^\bot)\|_{L^2}.
\eeqs
These three estimates imply
\beqs
\|\Pi_{\mcZ_*^1}\mathrm{Re}[v^\bot]\|_{L^2}\lesssim \varep^{-3/2}\|v^\bot\|_{L^2}\|\dot\mbp\|_{l^2} + (\varep^2 + \varep^2 \|\hat\mbp\|_{\mbV_{4}^2}) \|v^\bot\|_{L^2}+\|\mathrm N(v^\bot)\|_{L^2}, 
\eeqs
which combined with \eqref{dotp-l^2} completes the proof. 
\end{proof}

\begin{lemma}\label{lem-IC-est}
Fix $K_0,\ell_0$, and assume $(\Gamma_0,M_0)$ is a admissible pair from $\mathcal A(K_0,\ell_0)$. Then there exists $\varep_0$ sufficiently small depending on $\delta$, and   a positive $T_0$ independent of $\delta, \rho$ and $\eps_0$ such that for all initial data $u_0\in \mcV_{\varep^{5/2}}(\cM_b, \cO_{\delta})$,
the projection parameter $\mbp(t)$ corresponding to the solution $u=u(t)$ remains in the open set $\cO_{2,\delta}$ for all $t\in[0,T_0\rho^{-1}]$ so long as $u$ remains in the tubular projection neighborhood $\mcU(\cM_b)$ for which the projection $\Pi_{\cM_b}$ is well-defined.
\end{lemma}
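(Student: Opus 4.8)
The plan is to run a continuity (bootstrap) argument on the scalar quantity that controls how far the projected meander parameters $\mbp(t)$ have drifted from the admissible base. First I would fix notation: for $u\in\mcU(\cM_b)$ write its projection data $(\mbp,\mbq)$, orthogonal perturbation $v^\bot$, and fast-mode/pearling split $v^\bot=w+Q$ as in Definition~\ref{def-cM-projection} and \eqref{decomp-v}. The relevant Lyapunov-type functional is
\begin{equation*}
\mcE(t):=\left<\mbL w(t),w(t)\right>_{L^2}+C_*\|\mbq(t)\|_{l^2}^2
\end{equation*}
for a constant $C_*$ to be chosen, which by the coercivity Theorem~\ref{lem-coer} and Lemma~\ref{lem-proj-Z0} is comparable to $\rho^2\|w\|_{\Htwoin}^2+\|\mbq\|_{l^2}^2$, hence to $\rho^2\|v^\bot\|_{\Htwoin}^2$ up to the factor $\rho$. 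The initial data hypothesis $u_0\in\mcV_{\varep^{5/2}}(\cM_b,\cO_\delta)$ together with Lemma~\ref{lem-Manifold-Projection} gives $\|v^\bot(0)\|_{\Htwoin}\lesssim\varep^{5/2}$, so $\mcE(0)\lesssim\varep^5$, and $\mbp(0)\in\cO_\delta$ (strictly interior to $\cO_{2,\delta}$).

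The core of the argument is the differential inequality obtained by combining Lemma~\ref{lem-est-w} and the second estimate of Lemma~\ref{lem-est-q}. Adding these with a suitable weight $C_*$ absorbs the cross terms $\varep\|w\|_{L^2}\|\mbq\|_{l^2}$ and the $\|\dot\mbq\|_{l^2}^2$ term (using the first estimate of Lemma~\ref{lem-est-q} to eliminate $\dot\mbq$), uses the nonlinearity bound $\|\mrN(v^\bot)\|_{L^2}\lesssim\varep^{-1}(\rho^{-2}\left<\mbL w,w\right>_{L^2}+\|\mbq\|_{l^2}^2)\lesssim\varep^{-1}\rho^{-2}\mcE$ from Lemma~\ref{lem-est-N}, and eliminates $\dot\mbp$ via Lemma~\ref{lem-pdot-l2}, which bounds $\|\dot\mbp\|_{l^2}\lesssim\varep^3+\varep^{3/2}\|v^\bot\|_{L^2}+\cdots$ and can be solved for $\|\dot\mbp\|_{l^2}$ once $\|v^\bot\|_{L^2}=o(\varep)$. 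The outcome should be
\begin{equation*}
\frac{\dd}{\dd t}\mcE(t)+c\varep\,\mcE(t)\lesssim \varep^{7}\bigl(1+\|\hat\mbp\|_{\mbV_4^2}^2\bigr)+\varep^{5}|\sigma-\sigma_1^*|^2
\end{equation*}
valid so long as $\mbp\in\cO_{2,\delta}$ (which on $\cO_{2,\delta}$ gives $\varep^2\|\hat\mbp\|_{\mbV_4^2}\lesssim1$, hence the forcing is $\lesssim\varep^5$) and $|\sigma-\sigma_1^*|\lesssim\delta$ by Lemma~\ref{lem-sigma-2}. Gr\"onwall then yields $\mcE(t)\lesssim\varep^5+\varep^4\lesssim\varep^4$ on the time horizon $[0,T_0\rho^{-1}]$; more precisely, the homogeneous decay keeps $\mcE(t)\lesssim\max\{\mcE(0),\varep^4\}$. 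Translating back, $\|v^\bot(t)\|_{\Htwoin}\lesssim\rho^{-1}\varep^2$, which after shrinking $\varep_0$ relative to $\rho$ is $o(\varep)$, in fact $\ll\delta\varep$, so $u(t)$ stays well inside $\mcU(\cM_b)$.

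To close the bootstrap on $\mbp$ itself I would integrate Lemma~\ref{lem-pdot-l2}: with $\|v^\bot\|_{L^2}\lesssim\rho^{-1}\varep^2$ and $\|\mrN(v^\bot)\|_{L^2}\lesssim\varep^{-1}\rho^{-2}\mcE\lesssim\varep^3\rho^{-2}$, one gets $\|\dot\mbp(t)\|_{l^2}\lesssim\varep^3+\varep^{7/2}\rho^{-1}+\varep^{5/2}\rho^{-2}\lesssim\varep^{5/2}\rho^{-2}$, so over $[0,T_0\rho^{-1}]$ the total displacement $\|\mbp(t)-\mbp(0)\|_{l^2}\lesssim T_0\varep^{5/2}\rho^{-3}$, which is $\ll\delta$ once $\varep_0$ is small relative to $\delta$ and $\rho$. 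The same integration controls the stronger $\mbV_3^2$-type norm (one must check that the vector $\dot\mbp$ has the weighted decay, which follows because each component $\dot{\mrp}_k$ carries the factor $\beta_k^{-\cdot}$ inherited from the $\La_{1k}$-weighting of the meander basis and from the residual projection estimates of Lemma~\ref{lem-def-Phi-p}), so one verifies $\|\hat\mbp(t)\|_{\mbV_3^2}<2\delta$, i.e.\ $\mbp(t)\in\cO^\circ_{2,\delta}\subset\cO_{2,\delta}$. A standard open/closed argument on the maximal time where $\mbp(t)\in\cO_{2,\delta}$ and $u(t)\in\mcU(\cM_b)$ then shows this time cannot be less than $T_0\rho^{-1}$: the estimates above show the constraints are satisfied with strict inequality whenever they are assumed, so the exit set is empty. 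The main obstacle I expect is the careful accounting of the $\rho$ powers and the $\varep$ powers simultaneously — the coercivity only gives $\rho^2$, the nonlinearity costs $\varep^{-1}\rho^{-2}$, and $\dot\mbp$ elimination costs another $\varep^{-1}\|v^\bot\|$ — so one must verify that the hierarchy of smallness (first $\delta$, then $\rho$, then $\varep_0=\varep_0(\rho,\delta)$) really does make every error term beat its target, and in particular that $T_0$ can be chosen independent of $\delta,\rho,\varep_0$ as claimed; this is where the exponential decay $e^{-c\varep t}$ over a window of length $\rho^{-1}$ rather than a fixed length is essential, since $\varep\rho^{-1}\to0$ forces us to rely on the smallness of the forcing rather than on damping.
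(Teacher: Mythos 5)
Your route is much heavier than the paper's and, as written, does not close. The paper proves this lemma under the standing a priori assumption \eqref{A-0}, which already contains $\|\dot\mbp\|_{l^2}\leq\varep^3$ (the energy machinery validating that bound lives in Theorem \ref{thm:Main}, not here): it invokes Lemma \ref{lem-Manifold-Projection} to get $\|\mbp(0)-\mbp_0\|_{l^2}\lesssim\varep^{3}$, integrates $\dot\mbp$ in time to get $\|\mbp(t)-\mbp_0\|_{l^2}\lesssim\varep^3(1+t)$, and then converts this $l^2$ drift into the norms that actually define $\cO_{2,\delta}$ via the crude dimension-dependent embeddings of Lemma \ref{lem-est-V} with $N_1\sim\varep^{-1}\rho^{1/4}$: the $|\mrp_0|$ and $\mbV_1$ drifts are $O(\varep^{3/2}(1+t))\ll\delta$, while $\|\hat\mbp(t)-\hat\mbp_0\|_{\mbV_2}+\varep\|\hat\mbp(t)-\hat\mbp_0\|_{\mbV_4^2}\lesssim\varep^{-3}\rho\,\|\mbp(t)-\mbp_0\|_{l^2}\lesssim\rho(1+t)$, which stays below the $O(1)$ slack between $\cO_\delta$ and $\cO_{2,\delta}$ precisely on a window $t\leq T_0/\rho$. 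Re-deriving the $\dot\mbp$ bound from scratch by your Lyapunov/Gr\"onwall argument essentially duplicates Theorem \ref{thm:Main} and also introduces a circularity you would have to untangle, since Lemmas \ref{lem-est-w} and \ref{lem-est-q} are themselves stated under \eqref{A-0} (Lemma \ref{lem-pdot-l2} is the one tool here that is free of that assumption).

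The concrete gaps are in your closing step. First, you verify $\|\hat\mbp(t)\|_{\mbV_3^2}<2\delta$ in order to place $\mbp(t)\in\cO^\circ_{2,\delta}\subset\cO_{2,\delta}$, but the hypothesis only gives $\mbp_0\in\cO_\delta$, which carries no $\delta$-smallness in $\mbV_3^2$ (on $\cO_\delta$ one only has bounds of size $\varep^{-1}$ on such higher weighted norms), so there is no small initial value to perturb from; and your justification that each component $\dot\mrp_k$ inherits a decaying weight $\beta_k^{-\cdot}$ from the $\La_{1k}$-weighting is not established anywhere in the paper — Lemma \ref{lem-pdot-l2} gives only an unweighted $l^2$ bound, and no component-wise decay of $\dot\mbp$ is proved. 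The membership that must be checked is in $\cO_{2,\delta}$ itself, i.e. drift $<\delta$ in $|\mrp_0|+\|\cdot\|_{\mbV_1}$ and drift $<1$ in $\|\cdot\|_{\mbV_2}+\varep\|\cdot\|_{\mbV_4^2}$. Second, with only the crude embedding available, your stated bound $\|\dot\mbp\|_{l^2}\lesssim\varep^{5/2}\rho^{-2}$ is too weak: it produces an accumulated $\varep\mbV_4^2$ drift of order $\varep^{-3}\rho\cdot\rho^{-1}\cdot\varep^{5/2}\rho^{-2}=\varep^{-1/2}\rho^{-2}$ over the window, which diverges as $\varep\to0$. In fact your own intermediate estimates give $\|\dot\mbp\|_{l^2}\lesssim\varep^3+\varep^{7/2}\rho^{-2}+\varep^{7/2}\rho^{-1}\lesssim\varep^3$ once $\varep_0$ is small relative to $\rho$ (the $\varep^{5/2}\rho^{-2}$ is an arithmetic slip), and with the $\varep^3$ bound the weighted drift is $O(T_0)$, exactly what is needed; so the repair is to tighten that computation and then verify the defining inequalities of $\cO_{2,\delta}$ directly, as the paper does, rather than aiming at $\cO^\circ_{2,\delta}$.
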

\begin{proof}
Since $u_0\in \mcV_{\varep^{5/2}}(\cM_b, \cO_\delta)$, then there exists $\mbp_0 \in \cO_\delta$ and  $v_0\in L^2$ satisfying $\|v_0\|_{\Htwoin}\leq  \varep^{5/2}$ such that $u_0=\Phi_{\mbp_0}+v_0$. We first note that for $\varep_0$ small enough, Lemma \ref{lem-Manifold-Projection} applies to $u_0$, and hence there exists $\mbp(0)\in \cD_\delta$ such that $\Phi_{\mbp(0)}=\Pi_{\cM_b} u_0$ satisfying 
\beq\label{IC-est-1}
\|\mbp(0)-\mbp_0\|_{l^2}\lesssim \varep^{3}. 
\eeq 
By the Fundamental Theorem of Calculus we bound the difference
\beqs
|\mrp_k(t)-\mrp_k(0)|\leq \int_0^t |\dot\mrp_k| \dd \tau \qquad t>0, 
\eeqs
for any  $k\in \Sigma_1$, which together  with the H\"older's inequality and the a priori assumption \eqref{A-0} implies
\beqs
\|\mbp(t)-\mbp(0)\|_{l^2} \lesssim  t\|\dot\mbp\|_{l^2}\lesssim \varep^3t. 
\eeqs
Combining with \eqref{IC-est-1} with the aid of triangle inequality implies
\beqs
\begin{aligned}
\|\mbp(t)-\mbp_0\|_{l^2} &\leq  \|\mbp(t)-\mbp(0)\|_{l^2} +\|\mbp_0-\mbp(0)\|_{l^2}\\
&\lesssim \varep^{3}+ \varep^3 t. 
\end{aligned}
\eeqs
Note that $\mbp_0\in\cO_\delta$, from the estimate above the length parameter $\mrp_0(t)$, as the first component of $\mbp(t)$, satisfies $|\mrp_0(t)|< 2\delta$ for $\varep_0$ small enough. It suffices to bound the difference of $\hat\mbp$ and $\hat\mbp_0$ in $\mbV_1$, $\mbV_2$ and $\mbV_4^2.$ By the embedding Lemma \ref{lem-est-V} with $N_1\lesssim \varep^{-1}\rho^{1/4}$ from  \eqref{est-N1}, we derive 
\beqs
\|\hat \mbp(t)-\hat \mbp_0\|_{\mbV_1} \lesssim \varep^{-1} \rho^{1/4} \|\hat \mbp(t)-\hat \mbp_0\|_{l^1} \lesssim \varep^{-3/2}\rho^{1/4}\| \mbp(t)- \mbp_0\|_{l^2}\lesssim \varep^{3/2} +\varep^{3/2}t,
\eeqs
 and  the following higher weighted estimate 
 \beqs
 \|\hat \mbp -\hat\mbp_0\|_{\mbV_2}+\varep\|\hat\mbp-\hat\mbp_0\|_{\mbV_4^2} \lesssim \varep^{-3}\rho \|\hat\mbp-\hat\mbp_0\|_{l^2} \lesssim \rho  +\rho t. 
 \eeqs
Noting $\mbp_0\in \cO_\delta$,  there exists $T_0>0$, independent of $\varep, \rho$, such that $\mbp \in \cO_{2,\delta}$ for any $t\in[0, T_0/\rho]$.  
\end{proof}

The following theorem presents the stability of the bilayer manifold up to its boundary. We recall that $(\Gamma_0, M_0)$ is a admissible pair with associated $N_1(\rho)$-dimensional bilayer manifold  $\cM_b(\Gamma_0, M_0; \rho)$ defined in Definition \ref{def-bM0}, $\rho$ is the spectral cut-off introduced in Definition \ref{def-slow-space} and is sufficiently small as required by Theorem\,\ref{thm-coupling est} and Lemma \ref{lem-IC-est}. The slow spaces $\mcZ_*^k,\mcZ_*$ are defined in \eqref{def-Z*} and $\mcV_R(\cM_b, \cO_{\delta})$ are the tubular neighborhoods with $\Htwoin$-width $R$ and base $\cO_\delta$ defined in \eqref{def-B-circ}-\eqref{assump-Ap}. The parameter $\delta>0$ is a fixed sufficiently small as required by Lemma\,\ref{lem-Manifold-Projection}.

\begin{thm}
\label{thm:Main}  
 Consider the mass-preserving flow \eqref{eq-FCH-L2} subject to periodic boundary  conditions on the  domain $\Omega=[-L, L]^2$.
  Assume that the equilibrium pearling stability condition $(\mathbf{PSC^*)}$--\eqref{equil-PSC}, holds for the given system parameters. Fix $K_0,\ell_0$, then there exists an $\varep_0$ and a $C>0$ such that %for all $\Gamma_0\in\cG_{K_0,\ell_0}^4$, 
  for each admissible pair $(\Gamma_0,M_0)$ from $\mathcal A(K_0, \ell_0)$, and for all $\varep\in(0,\varep_0)$, the bilayer manifold $\cM_b(\Gamma_0,M_0)$ has the following properties. Each solution $u=u(t)$ corresponding to initial data   $u_0 \in  \mcV_{\varep^{5/2} }(\cM_b, \cO_\delta)$ remains in the slightly larger tubular neighborhood $\mcV_{C\varep^{5/2}}(\cM_b, \cO_{2,\delta})\subset \mcU(\cM_b)$ 
  so long as its projected meander parameters $\mbp$ remain in $\cO_{2,\delta}$.  
Denoting this interval of residency as $[0,T]$, then $T>0$ and during this interval $u$ admits the dynamic decomposition
\beqs
u(t)=\Phi_\mbp(t;\sigma)+v^\bot, \qquad v^\bot= Q(t; \mbq)+ w(t), \qquad \forall t\in [0,T],
\eeqs
 where $Q=\Pi_{\mcZ_*^0}v^\bot\in \mcZ_{*}^0(\mbp,\rho), w\in \mcZ_*^\bot(\mbp,\rho)$.  In particular, the orthogonal perturbation $v^\bot$ and its fast and pearling decomposition satisfy  
\beq\label{MT-est-v}
\|v^\bot\|_{\Htwoin}\lesssim \|w\|_{\Htwoin}+ \|Q\|_{\Htwoin}\leq C\varep^{5/2} \rho^{-2}.
\eeq
%where the constant $C$ depends only upon the domain, the system parameters, and the admissible pair $(\Gamma_0,M_0)$.
\end{thm}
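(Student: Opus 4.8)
The plan is to run a Gr\"onwall-type argument on the composite energy
\[
\mathcal{E}(t):=\langle \mbL w,w\rangle_{L^2}+\Lambda\,\varep\|\mbq\|_{l^2}^2
\]
for a suitable fixed constant $\Lambda>0$, using the differential inequalities of Lemma\,\ref{lem-est-w} and Lemma\,\ref{lem-est-q}. Before doing so I would establish a bootstrap framework: define $T^*$ as the supremum of times $t$ for which $u$ remains in $\mcU(\cM_b)$, the projected $\mbp(t)\in\cO_{2,\delta}$, and the a priori bounds \eqref{A-0} hold with, say, the last bound relaxed to $\|\dot\mbp\|_{l^2}\le 2\varep^3$. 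By Lemma\,\ref{lem-IC-est} the residency interval $[0,T]$ with $\mbp\in\cO_{2,\delta}$ is nonempty (contains $[0,T_0\rho^{-1}]$ intersected with the $\mcU$-residency), and by Lemma\,\ref{lem-Manifold-Projection} the decomposition $u=\Phi_\mbp+v^\bot$, $v^\bot=Q+w$, is well-defined there; the a priori smoothness bounds on $\hat\mbp$ in \eqref{A-0} follow from membership in $\cO_{2,\delta}$ via \eqref{assump-Ap}, and I must close the loop on $\|\dot\mbp\|_{l^2}$ using Lemma\,\ref{lem-pdot-l2}.

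The core estimate: feed the nonlinearity bound \eqref{eq-Nest}, $\|\mrN(v^\bot)\|_{L^2}\lesssim \varep^{-1}(\rho^{-2}\langle\mbL w,w\rangle_{L^2}+\|\mbq\|_{l^2}^2)$, and the $\dot\mbp$ bound of Lemma\,\ref{lem-pdot-l2} (which, after absorbing the $\varep^{-1}\|v^\bot\|_{L^2}\|\dot\mbp\|_{l^2}$ term for $\varep_0$ small, gives $\|\dot\mbp\|_{l^2}\lesssim \varep^3+\varep^{3/2}\|v^\bot\|_{L^2}+\varep^{1/2}\|\mrN(v^\bot)\|_{L^2}$) into \eqref{est-w-2} and into the $\p_t\|\mbq\|_{l^2}^2$ inequality of Lemma\,\ref{lem-est-q}. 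Also use $\dot\mbq$: substitute the $\|\dot\mbq\|_{l^2}^2$ bound of Lemma\,\ref{lem-est-q} into the right side of \eqref{est-w-2} to eliminate $\dot\mbq$, and likewise eliminate $\dot\mbp$ from both. After these substitutions, using the coercivity $\langle\mbL w,w\rangle_{L^2}\gtrsim\rho^2\|w\|_{\Htwoin}^2$ and $\|\mbL w\|_{L^2}^2\gtrsim \rho^2\langle\mbL w,w\rangle_{L^2}$ from Theorem\,\ref{lem-coer} to convert $\|\mbL w\|_{L^2}^2$ on the left of \eqref{est-w-2} into a damping term $\sim\rho^2\langle\mbL w,w\rangle_{L^2}$, and combining with $\varep\|\mbq\|_{l^2}^2$ damping from Lemma\,\ref{lem-est-q} (whose coercivity constant is $C\varep>0$ by pearling stability, Lemma\,\ref{lem-sigma-2} guaranteeing $(\mathbf{PSC})$ on $\cO_{2,\delta}$), I expect an inequality of the shape
\[
\frac{\mrd}{\mrd t}\mathcal{E}\;+\;c\rho^2\,\mathcal{E}\;\lesssim\;\varep^5|\sigma-\sigma_1^*|^2+\varep^7(1+\|\hat\mbp\|_{\mbV_4^2}^2)+(\text{quadratic-in-}\mathcal E\ \text{terms})\cdot\varep^{-c'},
\]
where the forcing is $O(\varep^5)$ since $|\sigma-\sigma_1^*|\le\delta$ and $\varep^2\|\hat\mbp\|_{\mbV_4^2}\le 2$ on $\cO_{2,\delta}$ give $\varep^7\|\hat\mbp\|_{\mbV_4^2}^2\lesssim \varep^5$. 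The quadratic terms (from $\|\mrN(v^\bot)\|_{L^2}^2\lesssim \varep^{-2}(\rho^{-2}\langle\mbL w,w\rangle+\|\mbq\|_{l^2}^2)^2\lesssim \varep^{-2}\rho^{-4}\mathcal E^2$) are absorbed into the linear damping term as long as $\mathcal E\ll \varep^2\rho^6$, which is exactly the bootstrap ceiling to aim for.

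The bootstrap closes as follows: at $t=0$, $\|v_0\|_{\Htwoin}\le\varep^{5/2}$ gives $\mathcal E(0)\lesssim \varep^5$ (using $\|w\|_{\Htwoin},\|Q\|_{\Htwoin}\lesssim\|v_0\|_{\Htwoin}$ and $\|\mbq\|_{l^2}\lesssim\|v_0\|_{L^2}$ from Lemma\,\ref{lem-Manifold-Projection}, plus $\langle\mbL w,w\rangle\lesssim\|w\|_{\Htwoin}^2$). Then Gr\"onwall on the displayed inequality, with the quadratic terms treated as a small perturbation via a continuity argument, yields $\mathcal E(t)\lesssim \varep^5\rho^{-4}$ uniformly on the residency interval (the $\rho^{-4}$ is the price of dividing the $\varep^5$ forcing by the $\rho^2$ damping rate, squared in the relevant norm — more precisely $\mathcal E\lesssim \varep^5/\rho^2$ from $\dot{\mathcal E}+c\rho^2\mathcal E\lesssim\varep^5$, which I then convert to the stated $\|v^\bot\|_{\Htwoin}\lesssim \varep^{5/2}\rho^{-2}$ by writing $\|v^\bot\|_{\Htwoin}^2\lesssim \|w\|_{\Htwoin}^2+\|Q\|_{\Htwoin}^2\lesssim \rho^{-2}\langle\mbL w,w\rangle+\|\mbq\|_{l^2}^2\lesssim \rho^{-2}\mathcal E \lesssim \varep^5\rho^{-4}$). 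Since $\varep^5\rho^{-4}\ll\varep^2\rho^6$ for $\varep_0$ small (depending on the now-fixed $\rho$), this strictly improves the bootstrap hypothesis $\mathcal E\ll\varep^2\rho^6$, and the improved $\mathcal E$ bound fed back into Lemma\,\ref{lem-pdot-l2} strictly improves $\|\dot\mbp\|_{l^2}\le 2\varep^3$ to $\le\tfrac32\varep^3$; hence $T^*$ coincides with the residency-in-$\mcO_{2,\delta}$ interval and $u$ stays in $\mcV_{C\varep^{5/2}}(\cM_b,\cO_{2,\delta})$. The bound \eqref{MT-est-v} is then read off.

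\textbf{Main obstacle.} The delicate point is the ordering and self-consistency of the absorptions: $\dot\mbp$ and $\dot\mbq$ appear on the right-hand sides of \emph{both} Lemma\,\ref{lem-est-w} and Lemma\,\ref{lem-est-q}, and $\dot\mbp$ itself (Lemma\,\ref{lem-pdot-l2}) contains $\|\mrN(v^\bot)\|_{L^2}$ and $\varep^{-1}\|v^\bot\|_{L^2}\|\dot\mbp\|_{l^2}$, so one must solve a small linear system for the time-derivative quantities, verify the implicit $\dot\mbp$-inequality can be solved for $\|\dot\mbp\|_{l^2}$ (needs $\|v^\bot\|_{L^2}\ll\varep$, which is exactly the bootstrap regime), and only then substitute. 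Getting the powers of $\varep$ and $\rho$ to line up so that (i) the forcing is genuinely $O(\varep^5)$, (ii) the quadratic nonlinear terms are strictly subcritical relative to the $\rho^2$-damping, and (iii) the resulting $\mathcal E$ bound beats the bootstrap threshold and improves the $\dot\mbp$ hypothesis — all simultaneously, with $\rho$ fixed first and $\varep_0$ chosen last — is where all the care resides; the individual inequalities are already supplied by the preceding lemmas.
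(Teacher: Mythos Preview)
Your overall architecture (bootstrap on $\mbp\in\cO_{2,\delta}$, feed Lemma\,\ref{lem-pdot-l2} in to control $\dot\mbp$, use Lemmas\,\ref{lem-est-w} and \ref{lem-est-q}, close via continuity) matches the paper. The specific execution, however, has a gap: the composite energy $\mathcal E=\langle\mbL w,w\rangle+\Lambda\varep\|\mbq\|_{l^2}^2$ does \emph{not} obey $\frac{\mrd}{\mrd t}\mathcal E+c\rho^2\mathcal E\lesssim\varep^5$. The obstruction is the mismatch of damping rates: the $w$-piece has $\rho^2$-damping via Theorem\,\ref{lem-coer}, but the $\mbq$-piece has only $C\varep$-damping from the pearling coercivity in Lemma\,\ref{lem-est-q}, and the ordering $\varep\ll\rho^2$ (since $\varep_0$ is chosen after $\rho$) means any single Gr\"onwall on $\mathcal E$ can use at best $c\varep$ as its rate. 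With $c\varep$ damping and $\varep^5$ forcing you would obtain $\mathcal E\lesssim\varep^4$, hence $\|\mbq\|_{l^2}^2\lesssim\varep^3/\Lambda$ and $\|w\|_{\Htwoin}^2\lesssim\varep^4\rho^{-2}$, which is strictly weaker than \eqref{MT-est-v}. Relatedly, your extraction $\|\mbq\|_{l^2}^2\lesssim\rho^{-2}\mathcal E$ is incorrect: from the definition of $\mathcal E$ one only gets $\|\mbq\|_{l^2}^2\le\mathcal E/(\Lambda\varep)$, and $(\Lambda\varep)^{-1}\lesssim\rho^{-2}$ fails for fixed $\Lambda$.

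The paper's fix is to decouple the two Gr\"onwall estimates and run a bootstrap with explicit targets $\langle\mbL w,w\rangle\le K_1\varep^5\rho^{-2}$ and $\|\mbq\|_{l^2}^2\le K_2\varep^5\rho^{-4}$. Under these, the nonlinear and cross terms become genuine forcing: in the $w$-inequality the term $\varep^2\rho^{-4}\|\mbq\|_{l^2}^2\lesssim K_2\varep^7\rho^{-8}$ is subdominant to the residual forcing $\varep^5$, and integrating against the $\rho^2$-damping yields $\langle\mbL w,w\rangle\le C_1\varep^5\rho^{-2}$ plus lower order. In the $\mbq$-inequality the dominant forcing is $\varep\|w\|_{L^2}^2\lesssim K_1\varep^6\rho^{-4}$ (the residual projection onto pearling modes is only $\varep^8$ by Lemma\,\ref{Lem-Q-resid}, one full power better than for $w$), and integrating against the $\varep$-damping yields $\|\mbq\|_{l^2}^2\le C_2 K_1\varep^5\rho^{-4}$ plus lower order. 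Choosing $K_1=2C_1$, $K_2=2C_2K_1$ closes the loop. The point is that the weaker $\varep$-damping on $\mbq$ is exactly compensated by the extra factor of $\varep$ in its forcing, and this asymmetry is lost when the two quantities are lumped into a single energy.
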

\begin{proof}
Since $u_0\in \mcV_{\varep^{5/2}}(\cM_b,\cO_\delta)\subset \mcU(\cM_b)$, Lemma \ref{lem-Manifold-Projection} implies the existence of the decomposition $u_0=\Phi_{\mbp(0)}+v_0^\bot=\Phi_{\mbp(0)}+Q_0(\mbq(0))+w_0$ with $Q_0\in \mcZ_*^0(\Gamma_{\mbp(0)})$ and $w_0\in\mcZ_*^\perp(\Gamma_{\mbp(0)})$ satisfying,
\begin{equation}
\label{MT-init-est}
\begin{aligned}
 \|w_0\|_{\Htwoin}\lesssim \varep^{5/2}, \qquad \|\mathbf q(0)\|_{l^2}\lesssim \varep^{5/2}.\h{20pt}
\end{aligned}
\end{equation} 
We establish the existence of  positive constants  $K_1, K_2$ independent of $\varep,\rho,\delta$ and $T>0$ for which the bounds 
\begin{equation}\label{assump-A}
({\bf A})\qquad 
\begin{aligned}
& \left<\mbL  w, w\right>_{L^2}\leq K_1 \varep^5\rho^{-2}, \qquad \|\mbq\|_{l^2}^2\leq  K_2 \varep^{5}\rho^{-4},  
\end{aligned}
\end{equation}
 hold uniformly for all $t\in[0,T]$ as long as $\mbp(t)\in \cO_{2,\delta}$ on the interval.  In the argument below we modify the notation of Section\,\ref{ssec-Notation} writing `$A\lesssim B$'  to denote '$A\leq CB$' for a constant $C$ that is independent of $K_1, K_2$ as well as the small parameters $\varep, \rho, \delta.$  The existence of a $T>0$ is assured by  \eqref{MT-init-est} and Lemma \ref{lem-IC-est}. %Assumptions \eqref{assump-A} and \eqref{assump-Ap} imply that the a priori assumptions \eqref{A-0} hold.
\medskip

First, applying the coercivity Theorem \ref{lem-coer} and assumption $(\mathbf A)$ implies
\beq\label{main-est-w-L2}
\|w\|_{\Htwoin}^2\lesssim \rho^{-2}\left<\mbL w, w\right>_{L^2} \lesssim K_1\varep^5\rho^{-4}. 
\eeq
Then from  the relation  $\|Q\|_{\Htwoin}\sim \|\mbq\|_{l^2}$, Lemma \ref{lem-est-N} and assumption (${\bf A}$) we   bound the $L^2$-norm of $v^\bot=Q+w$ and nonlinear term $\mathrm N(v^\bot)$ as %asymptotically negligible
\begin{equation}\label{main-est-N}
\begin{aligned}
%\|w\|_{L^2}^2&\lesssim K_1 \varep^5\rho^{-4}; \\
\|v^\bot\|_{\Htwoin}^2&\lesssim \|w\|_{\Htwoin}^2 + \|\mbq\|_{l^2}^2,
%\lesssim (K_1+K_2)\varep^5\rho^{-4};  
\qquad \|\mathrm N(v^\bot)\|_{L^2}^2 %&\lesssim  \varep^{-2}\left(\rho^{-4}\left<\mbL   w,w\right>_{L^2}^2+\|\mathbf q\|_{l^2}^4\right)
\lesssim  (K_1^2+K_2^2) \varep^{8}\rho^{-8}.
\end{aligned}
\end{equation}
It suffices to verify the assumption $(\mathbf A)$ for all $t\in[0,T]$, on which $\mrp(t)\in \cO_{2,\delta}$, in order to establish the main estimate \eqref{MT-est-v} in the Theorem.  
We first note from Lemma \ref{lem-pdot-l2}  for $K_1, K_2>1$, %independent of $\varep$, and $\varep_0$ small enough 
\beq%\label{est-pdot-l2}
\|\dot\mbp\|_{l^2}^2\lesssim \varep^6+ (K_1^2+K_2^2)\varep^8\rho^{-8} +(K_1+K_2)\varep^3\rho^{-4}\|\dot\mbp\|_{l^2}^2. 
\eeq
Since $K_1, K_2$ are independent of $\varep$, for $\varep\in(0,\varep_0)$ with  $\varep_0$ small enough depending on $\rho$ we have 
\beq \label{est-pdot-l2}
\|\dot\mbp\|_{l^2}^2\lesssim \varep^6,
\eeq 
independent of $K_1, K_2$, and the a priori assumption \eqref{A-0} holds for $\mbp\in \cO_{2,\delta}$.  The pearling stability condition $(\mathbf{PSC})$ \eqref{cond-P-stab} holds uniformly for all $t\in[0,T]$ by Lemma \ref{lem-sigma-2}. In particular Lemma \ref{lem-est-q} applies.  We restate the key estimates of Lemmas \ref{lem-est-w} and  \ref{lem-est-q} as
\begin{equation}\label{main-est-w-q-1}
\begin{aligned}
&\frac{\mrd }{\mrd  t}\left<\mbL w,w\right>_{L^2}+\|\mbL   w\|_{L^2}^2\lesssim  \varep^{-1}\|\dot\mbp\|_{l^2}^2+\varep^5+\varep^2\rho^{-4}(\|\mbq\|_{l^2}^2+\|\dot{\mbq}\|_{l^2}^2) +\varep^7(1+\|\hat\mbp\|_{\mbV_4^2}^2) +\|\mathrm N(v^\bot)\|_{L^2}^2;\\
&\p_t \|\mbq \|_{l^2}^2+C\varep\|\mbq\|_{l^2}^2\lesssim \varep \|w\|_{L^2}^2+\|\dot \mbp\|_{l^2}^2+\varep^{-1}\|\mathrm N(v^\bot)\|_{L^2}^2+\varep^8+\varep^8\|\hat\mbp\|_{\mbV_4^2}^2,
\end{aligned}
\end{equation}
and  $\|\hat \mbp\|_{\mbV_{4}^2} \lesssim \varep^{-1} $ for $\mbp\in \cO_{2,\delta}$. From the $l^2$-bound of the pearling modes $\mbq$ from (${\bf A}$), the estimates of the fast modes $w$ and the nonlinear terms $\mathrm N$ from \eqref{main-est-w-L2}-\eqref{main-est-N},  and $\mbp\in \cO_{2,\delta}$ we reduce the $l^2$ bound of $\dot\mbq$ in Lemma \ref{lem-est-q} to 
\begin{equation}\label{main-est-q'-2}
\begin{aligned}
\|\dot{\mbq}\|_{l^2}^2%&\lesssim \|\mbq\|_{l^2}^2 +\varep^2 \|w\|_{L^2}^2+\varep\|\dot\mbp\|_{l^2}^2+\|\mathrm N(v^\bot)\|_{L^2}^2 + \varep^9\|\hat\mbp\|_{\mbV_4^2}^2\\
&\lesssim \varep^7+(K_1+K_2)\varep^5\rho^{-4} +(K_1^2+K_2^2)\varep^8\rho^{-8},
\end{aligned}
\end{equation}
 where  the first  term  on the right-hand side comes from the a priori assumptions on $\|\hat\mbp\|_{\mbV^2_4}^2$ and estimate of $\|\dot \mbp\|_{l^2}$ in \eqref{est-pdot-l2}. 
Combining this with the first inequality in \eqref{main-est-w-q-1}, and reusing \eqref{assump-A}-\eqref{main-est-N}  and $\mbp\in \cO_{2,\delta}$  yields for $K_1, K_2>1$
\begin{equation}\label{main-est-w-1}
\frac{\mrd }{\mrd t}\left<\mbL   w,w\right>_{L^2} + C\rho^2\left<\mbL   w,w\right>_{L^2}\lesssim  \varep^5+(K_1^2+K_2^2)\varep^7\rho^{-12}, %+ \|\mathrm N(v^\bot)\|_{L^2}^2,
\end{equation}
 where the term with the dominant power of $\varep$ arises from the inhomogeneous term on the right-hand  side of \eqref{main-est-w-q-1}. 
% The estimate \eqref{main-est-w-1} reduces to \begin{equation}\label{main-est-w-2}\frac{\mrd }{\mrd t}\left<\mbL   w,w\right>_{L^2} + C\rho^2\left<\mbL   w,w\right>_{L^2}\lesssim \varep^5 +(K_1^2+K_2^2) \varep^7 \rho^{-8}.\end{equation}
Integrating this estimate in time we obtain
 \begin{equation}\label{main-est-w}
 \begin{aligned}
 \left<\mbL   w,w\right>_{L^2} \leq & \left< \mbL_{\mbp(0)}  w_0,w_0\right>_{L^2} \, e^{-C \rho^2 t} +C \varep^5\rho^{-2} +C(K_1^2+K_2^2) \varep^7 \rho^{-14},\\
 \lesssim & \|w_0\|_{\Htwoin}^2  +  \varep^5\rho^{-2}+(K_1^2+K_2^2) \varep^7 \rho^{-14}\\
  \leq & \, C_1( \varep^5\rho^{-2}+(K_1^2+K_2^2) \varep^7 \rho^{-14}),
 \end{aligned}
 \end{equation}
for some positive constant $C_1$ independent of $\varep, \rho$ and $K_1, K_2$. Here we used  \eqref{MT-init-est} to bound $w_0$. %Choosing $K_1=2C$ and $\varep$ sufficiently small,  we  close the a priori estimate ({\bf A}) on $w$  for all $t\in[0,T].$
 
Turning to the $\mbq$ estimate in  \eqref{main-est-w-q-1} and utilizing   \eqref{main-est-w-L2}-\eqref{main-est-N}, \eqref{est-pdot-l2}  to bound the first three terms on the right-hand side,  we obtain the bound, valid for $\mbp\in \cO_{2,\delta}$,
\beqs
\p_t \|\mbq\|_{l^2}^2+C \varep\|\mbq\|_{l^2}^2\lesssim K_1 \varep^6\rho^{-4}+(K_1^2+K_2^2) \varep^7,
\eeqs
 where the dominant $\varep$-term arises from $\|w\|_{L^2}$.
We integrate this inequality and apply the initial value estimates \eqref{MT-init-est} to $\|\mbq_0\|_{l^2}$ to obtain 
%Multiplying the inequality by 
%\beq
%e^{C\varep t}\|\mbq\|_{l^2}^2+C\varep \int_0^te^{C\varep \tau}\|\mbq\|_{l^2}^2\dd \tau\leq \varep^6 \rho^{-4}\int_0^t e^{C\varep \tau}\dd \tau +\varep^{8} \int_0^t e^{C\varep \tau} \|\hat\mbp\|_{\mbV_4^2}^2\dd \tau.
%\eeq 
%Dividing both sides by $e^{C\varep t/2}$ and utilizing $e^{C\varep(\tau-t)/2}\leq 1$ for $\tau \leq t$ yields
\begin{equation}\label{main-est-q}
\begin{aligned}
\|\mbq\|_{l^2}^2 \leq e^{-C\varep t}\|\mbq(0)\|_{l^2}^2+C K_1 \varep^5\rho^{-4}  +C(K_1^2+K_2^2)\varep^6  \leq C_2\Big(K_1 \varep^5 \rho^{-4}  +(K_1^2+K_2^2) \varep^6\Big).
 \end{aligned}
\end{equation} 
Here $C_2$ is a positive constant independent of $\varep, \rho$ and $K_1, K_2$. Taking $K_1=2C_1, K_2=2C_2K_1$ and $\varep_0$ small enough, we combine \eqref{main-est-w} and \eqref{main-est-q} to establish ({\bf A}). Together with the first inequality in \eqref{main-est-N}, this completes the proof. 
\end{proof}

 \section*{Acknowledgement}  The second author thanks the NSF DMS for their support through grant 1813203. Both authors thank Gurgen Hayrapetyan for sharing preliminary results on this problem for the weak FCH that arose out of his thesis.

\section{Appendix}
We present some technical and intermediate results.    
\subsection{The variation of  the local coordinates with respect to $\mbp$}

\begin{lemma}\label{lem-A}
Assume  $\|\hat\mbp\|_{\mbV_1}\ll 1$ and $\Gamma_0\in \cG_{K_0,2\ell_0}^4$, the normalized length constant $A(\mbp)$ defined in  \eqref{def-A(p)}  admits the approximation
\beqs
A(\mbp)=  1 + O(\|\hat\mbp\|_{\mbV_1})
\eeqs
Furthermore, the rate of change of $A(\mbp)$ with respect to $\mbp$ can be bounded by 
\beqs
\|\nabla_\mbp A(\mbp)\|_{l^2} \lesssim \|\hat\mbp\|_{\mbV_2^2}.
\eeqs
If $\Gamma_0$ is a circle then we have the isopermetric bound
\beqs
A(\mbp)=  1 + O(\|\hat\mbp\|_{\mbV_1}^2). % \qquad \|\nabla_\mbp A\|_{l^2} \lesssim \|\hat\mbp\|_{\mbV_2^2}.
\eeqs
\end{lemma}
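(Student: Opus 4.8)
\textbf{Proof plan for Lemma~\ref{lem-A}.}
The plan is to compute $A(\mbp)$ directly from the definition \eqref{def-A(p)} using the explicit formula \eqref{est-|gamma'|-0} for the metric $|\bm\gamma_{\bar p}'|$, expand the square root in powers of the small quantities $\bar p$ and $\bar p'$, and integrate term-by-term over $\msI$, exploiting the orthogonality \eqref{ortho-tTheta} of the eigenmodes $\{\tilde\Theta_j\}$ to kill linear terms. First I would recall that for $\bm\gamma_{\bar p}=\bm\gamma_0+\bar p(\tilde s)\mbn_0(s)$ the analogue of \eqref{est-|gamma'|-0} gives $|\bm\gamma_{\bar p}'|=(1-\kappa_0\bar p)(1-|\bar p'|^2)^{-1/2}$ up to the $A(\mbp)$-scaling (which is $1$ here since we are defining $A$), and then write
\begin{equation*}
A(\mbp)=\frac{1}{|\Gamma_0|}\int_\msI (1-\kappa_0\bar p)\left(1+\tfrac12|\bar p'|^2+O(|\bar p'|^4)\right)\dd s.
\end{equation*}
The term $\int_\msI 1\,\dd s=|\Gamma_0|$ gives the leading $1$; the term $\int_\msI \kappa_0\bar p\,\dd s$ is $O(\|\hat\mbp\|_{\mbV_1})$ in general (bounded using $\|\bar p\|_{L^\infty}\lesssim\|\hat\mbp\|_{\mbV_1}$ from \eqref{re-barp-hatp}); and the quadratic terms are $O(\|\bar p'\|_{L^2}^2)=O(\|\hat\mbp\|_{\mbV_1}^2)$. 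This yields the general estimate $A(\mbp)=1+O(\|\hat\mbp\|_{\mbV_1})$.

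For the circular case, the key observation is that when $\Gamma_0$ is a circle, $\kappa_0\equiv -1/R_0$ is constant, so $\int_\msI\kappa_0\bar p\,\dd s=\kappa_0\int_\msI\bar p\,\dd s$; but $\bar p=\sum_{i\geq 3}\mrp_i\tilde\Theta_i$ is a combination of nonconstant Laplace--Beltrami eigenmodes, hence by \eqref{ortho-tTheta} (with $k=0$, using $\tilde\Theta_0$ constant) its integral over $\msI_\mbp$, and up to the metric correction also over $\msI$, vanishes to leading order. The residual from changing between $\dd s$ and $\dd\tilde s$ and from the difference between $\tilde\Theta_i$ and the true eigenmodes is itself quadratic in $\mbp$, so the linear term is entirely absorbed and we get $A(\mbp)=1+O(\|\hat\mbp\|_{\mbV_1}^2)$; this is exactly the isoperimetric-type cancellation that motivates the implicit construction in Definition~\ref{def-P-interface}.

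For the derivative bound, I would differentiate the integral representation of $A(\mbp)$ with respect to each $\mrp_j$, $j\geq 3$. Since $\p_{\mrp_j}\bar p=\tilde\Theta_j$ to leading order (modulo the implicit $\tilde s$-dependence, whose variation contributes higher order by Lemma~\ref{lem-def-gamma-p}), we get $\p_{\mrp_j}A(\mbp)=|\Gamma_0|^{-1}\int_\msI(-\kappa_0\tilde\Theta_j+\bar p'\tilde\Theta_j'\cdot(\text{metric factors})+\cdots)\dd s$. The $\kappa_0\tilde\Theta_j$ term integrates against a non-constant mode; if $\Gamma_0$ is not circular, $\kappa_0\tilde\Theta_j$ still has a nonzero projection, but the $l^2$-norm of the vector $(\int\kappa_0\tilde\Theta_j)_j$ is controlled by $\|\kappa_0\|_{L^2}$ via Bessel's inequality, giving an $O(1)$ contribution — however, this must be combined with the fact that the leading linear piece is multiplied by the remaining $O(\|\hat\mbp\|_{\mbV_1})$ factors coming from the square-root expansion, or alternatively one uses that the genuinely $\mbp$-independent part of $\p_{\mrp_j}A$ must be handled by the quadratic structure; the cleanest route is to note $\nabla_\mbp A$ has no constant-in-$\mbp$ term because $A(\bm 0)=1$ is a critical-type value is \emph{not} automatic, so instead I bound each component using $|\p_{\mrp_j}A|\lesssim \beta_{\mbp,j}\|\bar p'\|_{L^\infty}+\cdots \lesssim \beta_{\mbp,j}\|\hat\mbp\|_{\mbV_1}$ and sum: $\|\nabla_\mbp A\|_{l^2}^2\lesssim\sum_j\beta_j^2|\cdots|^2\lesssim\|\hat\mbp\|_{\mbV_2^2}^2$ after absorbing one power of $\hat\mbp$. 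The main obstacle I anticipate is precisely this derivative estimate: tracking the implicit $\tilde s=\tilde s(s;\mbp)$ dependence carefully enough to see that the apparently $O(1)$ linear-in-$\tilde\Theta_j$ contributions either cancel (circular case) or come paired with a compensating factor of $\hat\mbp$ and an extra wave-number $\beta_j$, so that the weighted sum closes at the $\mbV_2^2$ level rather than only $\mbV_1$; the orthogonality relations \eqref{ortho-tTheta} and the embedding Lemma~\ref{lem-est-V} will be the essential tools.
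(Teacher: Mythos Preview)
Your treatment of the two approximations $A(\mbp)=1+O(\|\hat\mbp\|_{\mbV_1})$ and, for circular $\Gamma_0$, $A(\mbp)=1+O(\|\hat\mbp\|_{\mbV_1}^2)$ matches the paper's: expand $|\bm\gamma_{\bar p}'|$ from \eqref{def-|gamma-barp|}, integrate, and in the circular case use that $\kappa_0$ is constant together with the zero $\dd\tilde s$-mean of $\bar p$ to kill the linear piece, the change of measure from $\dd s$ to $\dd\tilde s$ being itself a $1+O(\|\hat\mbp\|_{\mbV_1})$ correction.

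For the gradient bound your argument has a gap. The componentwise estimate $|\p_{\mrp_j}A|\lesssim\beta_{\mbp,j}\|\hat\mbp\|_{\mbV_1}$ does not sum to $\|\hat\mbp\|_{\mbV_2^2}$: it gives $\|\nabla_\mbp A\|_{l^2}^2\lesssim\|\hat\mbp\|_{\mbV_1}^2\sum_j\beta_j^2\sim N_1^3\|\hat\mbp\|_{\mbV_1}^2$, which is far too large. The paper instead differentiates the closed form for $|\bm\gamma_{\bar p}'|^2$ directly; the dangerous quadratic contribution is $\bar p'\,\p_{\mrp_j}\bar p'\sim\bar p'\tilde\Theta_j'$, and the key step you are missing is to \emph{integrate by parts} in $\tilde s$, converting $\int\bar p'\tilde\Theta_j'\,\dd\tilde s$ into $-\int\bar p''\tilde\Theta_j\,\dd\tilde s$, after which the Plancherel estimate \eqref{def-e-i} of Lemma~\ref{Notation-e_i,j} yields $l^2$-norm $\lesssim\|\bar p''\|_{L^2(\msI_\mbp)}\sim\|\hat\mbp\|_{\mbV_2^2}$. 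Your concern about the remaining linear term $\int\kappa_0\tilde\Theta_j\,\dd s$ is legitimate: for a non-circular base curve its $l^2$-norm over $j$ equals $\|\Pi_{\geq 3}\kappa_0\|_{L^2}=O(1)$ at $\hat\mbp=0$, and the paper's terse conclusion does not explain how it is removed. The stated strength of the gradient bound appears to require $\Gamma_0$ circular, so that $\kappa_0$ is constant and this term vanishes by the same zero-mean mechanism as in the isoperimetric part; this is consistent with the circle-specific identities $\bm\gamma_0=R_0\mbn_0$ and $\kappa_0=-1/R_0$ invoked elsewhere in the appendix (see the end of the proof of Lemma~\ref{lem-change-of-coord}). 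So supply the integration-by-parts plus Lemma~\ref{Notation-e_i,j} argument for the $\bar p'\tilde\Theta_j'$ piece, and treat the $\kappa_0$-term under the circular hypothesis rather than trying to force it in general.
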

\begin{proof}
The function  $A(\mbp)$ is the proportional change in length of $\bm\gamma_0$ to the perturbation $\bm \gamma_{\bar p}$  given in \eqref{def-gamma-barp} that excluded the radial perturbation. In light of \eqref{def-ts}, taking the derivative of \eqref{def-gamma-barp} we find
\beq
\bm \gamma_{\bar p}'=(1-\kappa_0 \bar p(\tilde s)) \bm \gamma_0' +  \bar p'(\tilde s) |\bm \gamma_\mbp'| \mbn_0(s).
\eeq
Taking absolute value, and using the orthogonality between and tangent $\bm \gamma_0'$ and normal $\mbn_0$, we deduce
\beq\label{def-|gamma-barp|}
|\bm \gamma_{\bar p}'|= \Big((1-\kappa_0 \bar p(\tilde s))^2 +|\bar p'(\tilde s)|^2 |\bm \gamma'_\mbp|^2\Big)^{1/2}
\eeq
Considering terms including $\bar p$ as small, the right-hand side has leading order term $1-\kappa_0 \bar p(\tilde s)$, and hence
\beq\label{est-A(p)-1}
A(\mbp)=\frac{1}{|\Gamma_0|} \int_{\msI}  (1-\kappa_0(s) \bar p(\tilde s))  \dd s  +O(\|\hat\mbp\|_{\mbV_1}^2).
\eeq
The approximation of $A(\mbp)$ follows if $\Gamma_0$ is a general smooth curve.  

When $\Gamma_0$ is a circle we return to \eqref{est-A(p)-1} and remark that the curvature $\kappa_0(s)=\kappa_0$ is a constant while $\bar p(\tilde s)=\sum_{i=3}^{N_1-1} \mrp_i \tilde \Theta_i(\tilde s)$ inherits a zero-integral with respect to $\dd\tilde s= |\bm \gamma_\mbp'|\dd s$ from the Laplace-Beltrami eigenmodes $\{\tilde \Theta_i(\tilde s)\}_{i\geq 3}$. 
From the definition of $\bm \gamma_\mbp$ given in \eqref{def-gamma-p}, identity \eqref{def-|gamma-barp|}, and the general form expansion \eqref{est-A(p)-1} of $A(\mbp)$ we find
\beqs
\begin{aligned}
|\bm \gamma_\mbp'|&=\frac{1+\mrp_0}{A(\mbp)} | \bm \gamma_{\bar p}'| =1+O(\|\hat\mbp\|_{\mbV_1}). 
\end{aligned}
\eeqs
Changing variables from $\dd s$ to $\dd\tilde s$ and using the zero
average of $\bar p$ with respect to $\dd\tilde s$ we derive the isoperimetric  bound which shows that circles are critical points of perturbations $\bar p$ that do not change the effective radius. 

%*******************************************************
\begin{comment}
we rewrite \eqref{est-A(p)-1} as
\beqs
\frac{1}{|\Gamma_0|} \int_{\msI}  (1-\kappa_0 \bar p(\tilde s))  \dd s =1-\kappa_0 \int_{\msI_\mbp} \frac{\bar p(\tilde s)}{|\bm \gamma_\mbp'|} \dd\tilde s.
\eeqs
By the definition of $\bm \gamma_\mbp$ in \eqref{def-gamma-p} and identity \eqref{def-|gamma-barp|}, we drive
\beqs
\begin{aligned}
|\bm \gamma_\mbp'|&=\frac{1+\mrp_0}{A(\mbp)} | \bm \gamma_{\bar p}'| =1+O(\|\hat\mbp\|_{\mbV_1}). 
\end{aligned}
\eeqs
Here we also used $A(\mbp)$ approximation for  the general curve case in the Lemma which also holds for a circle.   We hence  derive
\beqs
\frac{1}{|\Gamma_0|} \int_{\msI}  (1-\kappa_0 \bar p(\tilde s))  \dd s =1-\kappa_0 \int_{\msI_\mbp} \bar p(\tilde s) \dd\tilde s +O(\|\hat\mbp\|_{\mbV_1}^2),
\eeqs
 in which the integration of $\bar p$ is zero. And consequently, plugging into the right hand side of \eqref{est-A(p)-1} completes the proof of the approximation of $A(\mbp)$ for a circle.
 \end{comment}
 %######################################################
 
 It remains to  estimate the rate of change of $A$ with respect to $\mrp_j$. By plugging  $|\bm \gamma_\mbp'|=(1+\mrp_0) |\bm \gamma_{\bar p}'|$ into the right hand side of \eqref{def-|gamma-barp|} and solving for $|\bm \gamma_\mbp'|$ we find
\beqs
|\bm \gamma_{\bar p}'|^2 = \frac{(1-\kappa_0 \bar p)^2}{1-(1+\mrp_0)^2|\bar p'|^2}.
\eeqs
To calculate the derivative with respect to $\mrp_j$, we need the derivative of $\bar p$ and $\bar p'$. In fact,
\beq\label{de-gamma-barp}
2|\bm \gamma_{\bar p}'| \p_{\mrp_j}|\bm \gamma_{\bar p}'| =  \frac{(1-\kappa_0 \bar p) \p_{\mrp_j} \bar p}{1-(1+\mrp_0)^2|\bar p'|^2} +\frac{(1-\kappa_0 \bar p)^2}{(1-(1+\mrp_0)^2|\bar p'|^2)^2} \Big[2\delta_{j0}(1+\mrp_0) |\bar p'|^2 +(1+\mrp_0)^2 2\bar p' \p_{\mrp_j} \bar p'  \Big].
\eeq
On the other hand, by the definition of $\bar p$, 
\beqs
\begin{aligned}
\p_{\mrp_j} \bar p  &= \tilde \Theta_j \bm 1_{\{j\geq 3\}} +\bar p'   (\p_{\mrp_j} \tilde s - \delta_{j0}\tilde s/(1+\mrp_0)); \\
\p_{\mrp_j} \bar p' &= \tilde \Theta_j' \bm 1_{\{j\geq 3\}} +\bar p''  (\p_{\mrp_j} \tilde s - \delta_{j0}\tilde s/(1+\mrp_0)). 
\end{aligned}
\eeqs
We note $\|\nabla_\mbp\tilde s\|_{l^2}\lesssim 1$ by its definition in \eqref{def-ts}.  The gradient estimate of $A$ follows by combining  these identities with \eqref{de-gamma-barp},  integrating by parts, the a priori bound on $\hat\mbp$, and Lemma \ref{Notation-e_i,j}. 
 \end{proof}

The following lemma estimates the $\mbp$-variation of the  whiskered coordinates  associated to  $\Gamma_{\mathbf p}$. It controls the difference between the local coordinates 
$(s_{\mathbf p}, z_{\mathbf p})$ for $\Gamma_\mbp$ and $(s,z)$ for $\Gamma_0$  in terms of the perturbation $\mathbf p$.  
\begin{lemma}%[Change of coordinates on $ \Gamma^{2\ell}_{\mathbf p}$]
\label{lem-change-of-coord}
Let $(s_{\mathbf p}, z_{\mathbf p})$ be the local coordinates subject to $\Gamma_{\mathbf p}$ with whisker length $2\ell$. Under assumption \eqref{A-00} the tangent coordinate $s_\mbp$ has bounded variation on the domain $|\varep z_\mbp|\ll 1$, satisfying 
\beqs
\|\nabla_\mbp s_{\mbp}\|_{L^2(\msI)}\lesssim 1.
\eeqs 
The  normal local coordinate $z_\mbp$ varies quickly with respect to $\mrp_j$ satisfying

%\frac{\p s_{\mathbf p}}{\p \h{0.5pt} \mathrm p_0}&=h_1\left(\tilde \Theta_j-\p_{\mrp_j}\left( \ln |\Gamma_\mbp|\right) \tilde s_\mbp\, \bar p'\right) +h_2\varep\left(\tilde \Theta_j'-\p_{\mrp_j}\left( \ln |\Gamma_\mbp|\right)  \tilde s_\mbp\, \bar p''\right); \\
$$ \frac{\p z_\mbp}{\p \mathrm p_0} =-\frac{1}{\varep}\left(\frac{R_0+\bar p }{A} \left(1-(1+\mrp_0)\p_{\mrp_0}\ln A  \right)- \frac{\tilde s_\mbp \bar p'}{A(1+\mrp_0)} \right)\mathbf n_0\cdot \mathbf n_{\mathbf p},\quad  j=0,
$$

%\frac{\p s_{\mathbf p}}{\p \h{0.5pt} \mathrm p_j}=h(z_{\mbp}, \bm\gamma''_{\mbp})\bE_j\cdot \bm \gamma_{\mbp}'; \qquad 

$$\hspace{0.1in} \frac{\p z_{\mathbf p}}{\p \h{0.5pt} \mathrm p_j}= -\frac{\bE_j\cdot{\mathbf n_{\mathbf p}}}{\varep\sqrt{2\pi R_0}},\hspace{2.9in} j=1, 2,
$$

%\frac{\p s_{\mathbf p}}{\p \h{0.5pt} \mathrm p_0}&=h_1\left(\tilde \Theta_j-\p_{\mrp_j}\left( \ln |\Gamma_\mbp|\right) \tilde s_\mbp\, \bar p'\right) +h_2\varep\left(\tilde \Theta_j'-\p_{\mrp_j}\left( \ln |\Gamma_\mbp|\right)  \tilde s_\mbp\, \bar p''\right); \\
 
$$\frac{\p z_{\mathbf p}}{\p \h{0.5pt} \mathrm p_j}   =-\frac{1}{\varep}\left(\tilde \Theta_j-\frac{(1+\mrp_0)\p_{\mrp_j}\ln A}{A} (R_0+\bar p) \right)\mathbf n_0\cdot \mathbf n_{\mathbf p},  \hspace{0.9in} j\geq 3. \hspace{0.0in}
$$

Moreover, 
%if $\|\hat{\mbp}\|_{\mbV_2}\leq 1$, then
we have the estimates
\begin{equation}\label{est-diff-s-z-p}
\|s_{\mathbf p}- s\|_{L^\infty(\Gamma_\mbp^{2\ell})}\lesssim \|\mathbf p\|_{\mbV_0},\quad \|z_{\mathbf p}-z\|_{L^\infty(\Gamma_\mbp^{2\ell})}\leq \varep^{-1}\|\mathbf p\|_{\mbV_0}.
\end{equation}
%\begin{equation}\left|\frac{\p \h{0.5pt}(s_{\mathbf p}, z_{\mathbf p})}{\p\h{0.5pt} (s,z)}\right|= |\mathrm J_{\mathbf p}/\mathrm J| =1+O()\end{equation}
\end{lemma}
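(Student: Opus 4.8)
The plan is to differentiate the fundamental identity \eqref{eq-szp}, namely $\bm\gamma_0(s)+\varep z\mathbf n_0(s)=\bm\gamma_\mbp(s_\mbp)+\varep z_\mbp\mathbf n_\mbp(s_\mbp)$, with respect to each meander parameter $\mrp_j$, exactly as in the (commented-out) Lemma preceding Lemma \ref{lem-def-Phi-p}. Taking $\p_{\mrp_j}$ of both sides and using the chain rule one obtains
\begin{equation*}
0=\frac{\p\bm\gamma_\mbp}{\p\mrp_j}+\bm\gamma_\mbp'\frac{\p s_\mbp}{\p\mrp_j}+\varep\frac{\p z_\mbp}{\p\mrp_j}\mathbf n_\mbp+\varep z_\mbp\frac{\p\mathbf n_\mbp}{\p\mrp_j}+\varep z_\mbp\mathbf n_\mbp'\frac{\p s_\mbp}{\p\mrp_j}.
\end{equation*}
Since $\bm\gamma_\mbp'\perp\mathbf n_\mbp$ and $\mathbf n_\mbp'\parallel\bm\gamma_\mbp'$, dotting with $\mathbf n_\mbp$ isolates $\p_{\mrp_j}z_\mbp=-\frac1\varep\,\p_{\mrp_j}\bm\gamma_\mbp\cdot\mathbf n_\mbp$ (up to terms that vanish because $\p_{\mrp_j}(\bm\gamma_\mbp'\cdot\mathbf n_\mbp)=0$), while dotting with $\bm\gamma_\mbp'$ and dividing by $(1-\varep z_\mbp\kappa_\mbp)|\bm\gamma_\mbp'|^2$ gives $\p_{\mrp_j}s_\mbp$ in terms of $\p_{\mrp_j}\bm\gamma_\mbp$ and $\p_{\mrp_j}\bm\gamma_\mbp'$.

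Next I would compute $\p_{\mrp_j}\bm\gamma_\mbp$ from the explicit formula \eqref{def-gamma-p}. For $j=1,2$ this is simply $\bE_j/\sqrt{2\pi R_0}$, which plugged into the $z_\mbp$ formula yields $\p_{\mrp_j}z_\mbp=-\bE_j\cdot\mathbf n_\mbp/(\varep\sqrt{2\pi R_0})$ directly; since these components produce no normal displacement at $z_\mbp=0$ beyond a rigid translation, $\p_{\mrp_j}s_\mbp$ has the stated bounded form. For $j=0$ and $j\ge3$ one differentiates the implicit expression $\bm\gamma_\mbp=\frac{1+\mrp_0}{A(\mbp)}\bm\gamma_{\bar p}+\cdots$, recalling $\bm\gamma_{\bar p}=\bm\gamma_0+\bar p(\tilde s)\mathbf n_0$ and that $\bar p(\tilde s)=\sum_{i\ge3}\mrp_i\tilde\Theta_i(2\pi R_0\tilde s/|\Gamma_\mbp|)$ depends on $\mrp_j$ both directly and through $\tilde s=\tilde s(s;\mbp)$ and $|\Gamma_\mbp|=(1+\mrp_0)|\Gamma_0|$. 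Here $A(\mbp)$ and its gradient are controlled by Lemma \ref{lem-A}, and $\|\nabla_\mbp\tilde s\|_{l^2}\lesssim1$ follows from \eqref{def-ts}. Collecting the dominant $\tilde\Theta_j$ term and the correction from the length rescaling (which is where the $(1+\mrp_0)\p_{\mrp_j}\ln A$ and the $\tilde s_\mbp\bar p'$ terms enter for $j=0$) gives the displayed formulas; the $L^2(\msI)$ bound on $\nabla_\mbp s_\mbp$ on the region $|\varep z_\mbp|\ll1$ then follows from the uniform lower bound $|\bm\gamma_\mbp'|\ge\tfrac12$ from \eqref{Ubd-de-gamma-0}, the a priori bounds on $\hat\mbp$ in \eqref{A-00}, and Lemma \ref{Notation-e_i,j} together with the embeddings of Lemma \ref{lem-est-V}.

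Finally, the $L^\infty$ estimates \eqref{est-diff-s-z-p} follow from the mean value theorem applied along a path in parameter space from $\mbp$ to $\bm0$: integrating the derivative formulas just obtained and using $\|\nabla_\mbp s_\mbp\|\lesssim1$, $\|\nabla_\mbp z_\mbp\|\lesssim\varep^{-1}$ (uniformly on $\Gamma_\mbp^{2\ell}$, since $|\mathbf n_0\cdot\mathbf n_\mbp|\le1$ and the coefficients are bounded), against $\|\mbp\|_{\mbV_0}$. I expect the main obstacle to be the $j=0$ case: the length-scaling parameter $\mrp_0$ enters $\bm\gamma_\mbp$ through the prefactor $(1+\mrp_0)/A(\mbp)$, through $A(\mbp)$ itself, and through $|\Gamma_\mbp|=(1+\mrp_0)|\Gamma_0|$ inside the arc-length-rescaled eigenmodes $\tilde\Theta_i$, so bookkeeping all three dependencies correctly — and verifying that the implicit dependence through $\tilde s(s;\mbp)$ assembles into the stated $\tilde s_\mbp\bar p'$ term — requires care. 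The remaining cases and the final $L^\infty$ bounds are routine once the derivative identities are in hand.
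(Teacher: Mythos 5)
Your proposal follows the paper's own proof essentially verbatim: differentiate the identity $\bm\gamma_0(s)+\varep z\mathbf n_0(s)=\bm\gamma_\mbp(s_\mbp)+\varep z_\mbp\mathbf n_\mbp(s_\mbp)$ in $\mrp_j$, project onto $\mathbf n_\mbp$ and $\bm\gamma_\mbp'$ to isolate $\p_{\mrp_j}z_\mbp$ and $\p_{\mrp_j}s_\mbp$, insert the explicit $\p_{\mrp_j}\bm\gamma_\mbp$ computed from \eqref{def-gamma-p} (with the $j=0$ bookkeeping through $|\Gamma_\mbp|=(1+\mrp_0)|\Gamma_0|$ producing the $\tilde s_\mbp\,\bar p'$ correction, exactly as in the paper), and conclude \eqref{est-diff-s-z-p} by the mean value theorem. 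The only detail worth making explicit is that the displayed factors $R_0+\bar p$ in the final formulas come from substituting $\bm\gamma_0=R_0\mathbf n_0$ and $\kappa_0=-1/R_0$, which is the last step of the paper's computation.
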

\begin{proof}
%Note that $\Gamma_0$ is a circle, the whiskered coordinates of $\Gamma_0$ valid on the whole plane $\mathbb R^2$ except the center. 
Any $x\in \Gamma_{\mathbf p}^{2\ell}$ can be expressed in the local coordinates of both $\Gamma_\mbp$ and $\Gamma_{0}$. Equating these yields
%
%in terms of $(s_{\mathbf p},z_{\mathbf p})$, i.e. $  x=\bm \gamma_{\mathbf p}(s_{\mathbf p})+\varep z_{\mathbf p}\mathbf n_{\mathbf p}(s_{\mathbf p})$. Also, in terms of $(s,z)$, $  x$ admits expression: $  x=\bm \gamma_0(s)+\varep z\mathbf n_0(s)$ since the whiskered coordinates of $\Gamma_0$ is valid on the whole plane except the center. Thus,
\begin{equation}\label{eq-szp}
\bm\gamma_0(s)+\varep z\mathbf n_0(s)=\bm \gamma_{\mathbf p}(s_{\mathbf p})+\varep z_{\mathbf p}\mathbf n_{\mathbf p}(s_{\mathbf p}).
\end{equation}
 Taking the derivative of \eqref{eq-szp} with respect to $\mathrm p_j$, the $j$-th component of the vector $\mathbf p$, yields
\begin{equation}\label{eq-szp-1}
\begin{aligned}
0&=\frac{\p\bm \gamma_{\mathbf p}}{\p \h{0.5pt}\mathrm p_j}(s_{\mathbf p})+\bm \gamma_{\mathbf p}'\frac{\p s_{\mathbf p}}{\p \h{0.5pt}\mathrm p_j} +\varep \frac{\p z_{\mathbf p}}{\p \h{0.5pt}\mathrm p_j} \mathbf n_{\mathbf p}(s_{\mathbf p})+\varep z_{\mathbf p}\frac{\p\h{0.5pt} \mathbf n_{\mathbf p}}{\p\h{0.5pt} \mathrm p_j}(s_{\mathbf p})+\varep z_{\mathbf p} \mathbf n_{\mathbf p}'  \frac{\p s_{\mathbf p}}{\p \h{0.5pt} \mathrm p_j}.
\end{aligned}
\end{equation}
The vectors $\bm \gamma_{\mathbf p}'$ and $\mathbf n_{\mathbf p}$ are perpendicular to each other since $\bm \gamma_{\mathbf p}'$ lies in 
the tangent space while $\mathbf n_{\mathbf p}$ is the normal vector. Taking the dot product of \eqref{eq-szp-1}) with $\bm \gamma_{\mathbf p}'$ and rearranging, we obtain 
%\begin{equation*}(1-\varep z_{\mathbf p}\kappa_{\mathbf p})|\bm \gamma_{\mathbf p}'|^2\frac{\p s_{\mathbf p}}{ \p \h{0.5pt} \mathrm p_j} %= -\frac{\p \bm \gamma_{\mathbf p}}{\p \h{0.5pt}\mathrm p_j}\cdot \bm\gamma'_{\mathbf p}-\varep z_{\mathbf p}\frac{\p \h{0.5pt}\mathbf n_{\mathbf p}}{\p \h{0.5pt} \mathrm p_j}\cdot \bm \gamma'_{\mathbf p}=-\frac{\p \bm \gamma_{\mathbf p}}{\p \h{0.5pt}\mathrm p_j}\cdot \bm\gamma'_{\mathbf p}+\varep z_{\mathbf p}\frac{\p \bm\gamma_{\mathbf p}'}{\p \h{0.5pt} \mathrm p_j}\cdot\mathbf n_{\mathbf p}.\end{equation*}
\begin{equation}\label{eq-sp-1}
\frac{\p s_{\mathbf p}}{\p \h{0.5pt} \mathrm p_j}=\frac{1}{(1-\varep z_{\mathbf p} \kappa_{\mathbf p})|\bm \gamma_{\mathbf p}'|^2} \left(-\frac{\p \bm \gamma_{\mathbf p}}{\p \h{0.5pt}\mathrm p_j}\cdot \bm\gamma'_{\mathbf p}+\varep z_{\mathbf p}\frac{\p \bm\gamma_{\mathbf p}'}{\p \h{0.5pt} \mathrm p_j}\cdot\mathbf n_{\mathbf p}\right).
\end{equation}
Here we used that  $\bm \gamma_{\mathbf p}'\cdot \mathbf n_{\mathbf p}$ is zero so that $\p_{\mathrm p_j}(\bm \gamma_{\mathbf p}'\cdot \mathbf n_{\mathbf p})=0$,  and definition of 
$\kappa_{\mathbf p}$ given in Lemma \ref{lem-Gamma-p}.  Taking the dot product of identity \eqref{eq-szp-1} with $\mathbf n_{\mathbf p}(s_{\mathbf p})$ and arguing as above  we find 
\begin{equation}\label{id-z'-p}
\frac{\p z_{\mathbf p}}{\p \h{0.5pt} \mathrm p_j}=-\frac{1}{\varep}\frac{\p \bm\gamma_{\mathbf p}}{\p\h{0.5pt} \mathrm p_j} \cdot \mathbf n_{\mathbf p}. %=\left\{\begin{aligned}&-\frac{1}{\varep}\Theta_j(s_{\mathbf p}) \mathbf n_0(s_{\mathbf p})\cdot \mathbf n_{\mathbf p} \qquad & \hbox{for $j\neq 1,2$};\\&-\frac{\bE_j\cdot \mathbf n_{\mathbf p}}{\varep\sqrt{2\pi R_0}} &\hbox{for $j=1,2$}.\end{aligned}\right.
\end{equation} 
From the definition of $\bm \gamma_{\mathbf p}$ in \eqref{def-gamma-p} and \eqref{def-barp},  for $j\geq 3$ it holds that
\begin{equation}\label{de-gamma-p-j}
\begin{aligned}
\frac{\p \bm\gamma_{\mathbf p}}{\p\h{0.5pt} \mathrm p_j}&= \tilde \Theta_j(\tilde s_{\mathbf p}) \mathbf n_0 - \frac{\bm \gamma_0+\bar p(\tilde s_\mbp) \mbn_0}{A^2}(1+\mrp_0) \p_{\mrp_j} A;\\
 \frac{\p \bm \gamma'_{\mathbf p}}{\p \h{0.5pt} \mathrm p_j}&=\left(  \tilde \Theta_j'(\tilde s_{\mathbf p}) |\bm \gamma_\mbp'|  -\frac{\bar p'(\tilde s_\mbp) }{A^2}(1+\mrp_0) \p_{\mrp_j}A  \right)\mbn_0 \\
 &\qquad - \left( \kappa_0 \tilde \Theta_j(\tilde s_\mbp) + \frac{1-\kappa_0 \bar p(\tilde s_\mbp)}{A^2}\right) (1+\mrp_0)\p_{\mrp_j}A\bm \gamma_0'.
\end{aligned}
\end{equation}
Here to obtain the second identity, we can take derivative with respect to $s_\mbp$ directly and $\mbn_0'=-\kappa_0 \mbn_0$. Secondly, it's a bit more complicate for the case $j=0$ due to the dependence of $\tilde \Theta_j$ on $|\Gamma_\mbp|=(1+\mrp_0)|\Gamma_0|$. Indeed,  $\p_{\mrp_0}\tilde \Theta_j=- \tilde s_\mbp \tilde \Theta_j'/(1+\mrp_0)^2$ by  its definition in \eqref{def-barp} which  furthermore implies $\p_{\mrp_0} \bar p=-\tilde s_\mbp\bar p'/(1+\mrp_0)^2$, and hence
\beq\label{de-gamma-p-0}
\begin{aligned}
\frac{\p \bm \gamma_\mbp}{\p \mrp_0}= (1-(1+\mrp_0) \p_{\mrp_0} \ln A)\frac{\bm \gamma_0 +\bar p(\tilde s_\mbp)\mbn_0}{A} -\frac{1+\mrp_0}{A} \frac{1}{(1+\mrp_0)^2} \tilde s_\mbp \bar p' \mbn_0. %- \frac{\bm \gamma_0+\bar p(\tilde s_\mbp) \mbn_0}{A^2}
\end{aligned}
\eeq
Since $\bm \gamma_{\mathbf p}'$ is independent of $\mathrm p_1$ and $\mathrm p_2$, we have $\p_{\mathrm p_j}\bm \gamma_\mbp'=0$ for $j=1,2$. 
In addition, from \eqref{def-gamma-p} we have  
\beq\label{de-gamma-p-j=1,2}
\p_{\mathrm p_j}\bm \gamma_{\mathbf p}=\bE_j/\sqrt{2\pi R_0}, \qquad j=1,2.
\eeq
The expressions for the derivatives of $s_{\mathbf p}$ and  $z_\mbp$  with respect to $\mathrm p_j$ follow by plugging \eqref{de-gamma-p-j}, \eqref{de-gamma-p-0} or \eqref{de-gamma-p-j=1,2} into \eqref{eq-sp-1}-\eqref{id-z'-p} with the aid of $\bm \gamma_0=R_0\mbn_0$ and $\kappa_0=-1/R_0$. The  estimates \eqref{est-diff-s-z-p} follow directly from the Mean Value Theorem.
\end{proof}

\begin{lemma}\label{lem-de-hatla}
For $\mbp\in \cD_\delta$ introduced in \eqref{A-00},  the sensitivity of $\sigma$ defined in\eqref{def-hatla} to $\mbp$  can be bounded by
\begin{equation*}
\begin{aligned}
 \|\nabla_{\mbp} \sigma(\mbp)\|_{l^2}\lesssim 1.
\end{aligned}
\end{equation*}
\end{lemma}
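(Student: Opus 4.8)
The plan is to differentiate the explicit formula \eqref{def-hatla} for $\sigma(\mbp)$ term by term with respect to $\mrp_j$, bound each contribution using the $\mbp$-variation estimates for the local coordinates from Lemma \ref{lem-change-of-coord}, and then assemble the pieces into an $l^2$ bound over all $j\in\{0,1,\dots,N_1-1\}$. Recall that
\beqs
\sigma(\mathbf p)= \frac{1}{\oB_{\mathbf p,2} } \Bigg\{ M_0- \int_\Omega \bigg[\frac{1}{\varep}\Big(  \phi_0(z_{\mathbf p}) -b_-  +\varep^2\phi_{2} (s_{\mathbf p}, z_{\mathbf p})\Big) +\frac{\eta_d}{2} \mathrm L_{\mathbf p, 0}^{-1}(z_{\mathbf p}\phi_0') \bigg]\, \mrd   x\Bigg\},
\eeqs
so $\p_{\mrp_j}\sigma$ splits into (i) a term from differentiating the prefactor $\oB_{\mbp,2}^{-1}$, and (ii) terms from differentiating the integrand. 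For (i), since $\oB_{\mbp,2}$ depends on $\mbp$ only through the reach $\Gamma_\mbp^{2\ell}$ (its far field $B_2^\infty$ is $\mbp$-independent, cf. \eqref{est-Bp,2-B}), its $\mrp_j$-derivative carries an extra factor of $\varep$ and is controlled using Lemma \ref{lem-change-of-coord}; since $\oB_{\mbp,2}$ is bounded away from zero and $\sigma$ is itself bounded (by Lemma \ref{lem-sigma}), this contributes $O(1)e_j$.

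For (ii), I would use the chain rule: each integrand is a smooth, exponentially localized function of $(z_\mbp,s_\mbp)$, so its $\mrp_j$-derivative produces factors of $\p_{\mrp_j}z_\mbp$ and $\p_{\mrp_j}s_\mbp$. From Lemma \ref{lem-change-of-coord}, $\p_{\mrp_j}z_\mbp = -\varep^{-1}(\tilde\Theta_j+\text{l.o.t.})\,\mathbf n_0\cdot\mathbf n_\mbp$ for $j\ge 3$, with analogous expressions for $j=0,1,2$, while $\|\nabla_\mbp s_\mbp\|_{L^2}\lesssim 1$. The key cancellation is the factor $\varep^{-1}$ in front of $\phi_0(z_\mbp)-b_-$ in \eqref{def-hatla}: differentiating gives $\varep^{-1}\phi_0'(z_\mbp)\p_{\mrp_j}z_\mbp \sim \varep^{-2}\phi_0'(z_\mbp)\tilde\Theta_j$, but after the change of variables $\dd x = \tilde{\mathrm J}\dd\tilde s_\mbp\dd z_\mbp$ with $\tilde{\mathrm J}=\varep(1-\varep z_\mbp\kappa_\mbp)$ we recover one power of $\varep$, and the $z_\mbp$-integral of $\phi_0'(z_\mbp)$ against the even weight vanishes by parity, leaving only the $\varep z_\mbp\kappa_\mbp$ correction — so the net contribution is $\int_{\msI_\mbp} h(\bm\gamma_\mbp'')\tilde\Theta_j\,\dd\tilde s_\mbp$ times $O(1)$, which is $O(1)e_j$ by Lemma \ref{Notation-e_i,j}. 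The $\phi_2$ and $\mathrm L_{\mbp,0}^{-1}(z_\mbp\phi_0')$ terms come with explicit factors $\varep^2$ and $\varep$ respectively, which more than compensate the $\varep^{-1}$ from $\p_{\mrp_j}z_\mbp$, so these are lower order; their $s_\mbp$-dependence is handled by the $\|\nabla_\mbp s_\mbp\|_{L^2}\lesssim 1$ bound together with the $L^\infty$ bounds of Lemma \ref{lem-e_ij}. Summing the squares over $j$ and using Parseval for the $\{\tilde\Theta_j\}$ expansions (the vectors $(e_j)$ are unit in $l^2$) gives $\|\nabla_\mbp\sigma\|_{l^2}\lesssim 1$.

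The main obstacle I anticipate is bookkeeping the $j=0$ case carefully: the derivative $\p_{\mrp_0}z_\mbp$ has the extra $\tilde s_\mbp\bar p'$ term coming from the dependence of the rescaled eigenmodes $\tilde\Theta_j$ on $|\Gamma_\mbp|=(1+\mrp_0)|\Gamma_0|$, and one must check that this term — which grows linearly in $\tilde s_\mbp$ but is weighted by $\bar p'$, hence small under \eqref{A-00} — does not spoil the uniform bound. Likewise one must verify that the dependence of $\oB_{\mbp,2}$ and of the normalizing length $A(\mbp)$ on $\mbp$ (Lemma \ref{lem-A} gives $\|\nabla_\mbp A\|_{l^2}\lesssim\|\hat\mbp\|_{\mbV_2^2}\lesssim 1$) stays uniformly controlled. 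Everything else is a routine application of Hölder's inequality, the exponential localization of $\phi_0,\phi_1,\phi_2$, and the Fourier-theoretic estimates of Lemmas \ref{lem-e_ij}, \ref{Notation-e_i,j}, and \ref{lem-Theta}, so I would state the decomposition, dispatch each term with a one-line estimate, and omit the repetitive details.
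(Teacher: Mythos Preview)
Your proposal is correct and takes essentially the same approach as the paper: the paper's proof is the single sentence ``This is a direct result of the Lemma \ref{lem-change-of-coord} and the definition of $\sigma(\mbp)$, details are omitted,'' and what you have written is precisely the filled-in version of those omitted details (chain rule through $z_\mbp,s_\mbp$ via Lemma \ref{lem-change-of-coord}, the parity cancellation on the $\varep^{-1}\phi_0'$ term, and Parseval/Lemma \ref{Notation-e_i,j} to assemble the $l^2$ bound). Your concern about the $j=0$ term is mild --- $\tilde s_\mbp\in[0,|\Gamma_\mbp|]$ is bounded and it is a single index, so no $l^2$ summation is needed there.
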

\begin{proof}
This is a direct result of the Lemma \ref{lem-change-of-coord} and the definition of $\sigma(\mbp)$, details are omitted.
\end{proof}

\begin{lemma}\label{lem-Phi_t-p} The bilayer \muckmuck $\Phi_\mbp$ defined in  Lemma \ref{lem-def-Phi-p}, satisfies the expansion 
\beqs
\frac{\p \Phi_\mbp}{\p \mrp_j} = \frac{1}{\varep}  (\phi_0' +\varep \phi_1')\xi_j(s_\mbp)+\varep \mathrm R_j,
\eeqs
where $\xi_j(s_\mbp):=\varep \frac{\p z_\mbp}{ \p \mrp_j}$  and the remainder $\mathrm R=(\mathrm R_j)$ can be  bounded as a vector function in $L^2$   
\beqs
\|\mathrm R\|_{L^2}\lesssim 1; \qquad \|\Pi_{\mcZ_*^1}\mathrm R\|_{L^2}\lesssim  \varep^{1/2}.
\eeqs
Moreover, the quantity $\p_t \Phi_\mbp$ satisfies the estimates $L^2$ and $L^\infty$ estimates
 \beqs
 \|\p_t \Phi_\mbp\|_{L^2}\sim \|\Pi_{\mcZ_*^1}\p_t\Phi_\mbp\| \sim \varep^{-1/2} \|\dot\mbp\|_{l^2}; \qquad \|\Pi_{\mathcal Z_{ *}^0} \p_t \Phi_\mbp\|_{L^2}\lesssim \varep^{1/2}\|\dot \mbp\|_{l^2},
 \eeqs
 %Here $\mcZ_{*}^0$ is the modified pearling slow space. Furthermore we can also bound the $L^\infty$-norm
 \begin{equation*} 
\begin{aligned}
\left\|\dot {\mathbf p}\cdot \nabla_{\mathbf p}\Phi_{\mathbf p} \right\|_{L^\infty}+\|\dot{\mathbf p}\cdot\nabla_{\mathbf p} (\varep^2\Delta\Phi_{\mathbf p} )\|_{L^\infty} \lesssim  \varep^{-3/2}\|\dot{\mbp}\|_{l^2}.
\end{aligned}
\end{equation*}
 \end{lemma}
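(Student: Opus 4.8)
\textbf{Proof plan for Lemma \ref{lem-Phi_t-p}.} The plan is to differentiate the explicit ansatz $\Phi_\mbp = \phi_0(z_\mbp) + \varep\phi_1(z_\mbp;\sigma) + \varep^2\phi_2(s_\mbp, z_\mbp;\sigma,\sigma_1^*)$ from \eqref{def-Phi-p} with respect to $\mrp_j$ via the chain rule, keeping careful track of the three sources of dependence: through $z_\mbp$, through $s_\mbp$, and through the slaved bulk parameter $\sigma = \sigma(\mbp)$. Writing $\xi_j(s_\mbp) := \varep\,\p z_\mbp/\p\mrp_j$, the term $\p_{z_\mbp}\phi_0\cdot\p_{\mrp_j}z_\mbp = \varep^{-1}\phi_0'\,\xi_j$ is the dominant contribution, of size $\varep^{-1}$; the analogous $\varep\,\p_{z_\mbp}\phi_1\cdot\p_{\mrp_j}z_\mbp = \phi_1'\,\xi_j$ term is of size $O(1)$ and I fold it into the stated leading expression $\varep^{-1}(\phi_0'+\varep\phi_1')\xi_j$. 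Everything else — the $\varep^2\p_{z_\mbp}\phi_2\cdot\p_{\mrp_j}z_\mbp$ term, the $\varep^2\p_{s_\mbp}\phi_2\cdot\p_{\mrp_j}s_\mbp$ term, and the $\varep(\p_\sigma\phi_1 + \varep\p_\sigma\phi_2)\cdot\p_{\mrp_j}\sigma$ term — is collected into $\varep\,\mathrm R_j$. To justify that these are genuinely $O(\varep)\mathrm R_j$ with the claimed $L^2$ bound, I would invoke Lemma \ref{lem-change-of-coord} for the bounds $\|\nabla_\mbp s_\mbp\|_{L^2}\lesssim 1$ and the explicit (and $\varep^{-1}$-singular) formulae for $\p_{\mrp_j}z_\mbp$, Lemma \ref{lem-de-hatla} for $\|\nabla_\mbp\sigma\|_{l^2}\lesssim 1$, and the smoothness and exponential localization of $\phi_1,\phi_2$ in the inner variables together with Lemma \ref{lem-Gamma-p} for the curvature bounds. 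The factor $\varep^2$ multiplying $\phi_2$ and $\p_\sigma\phi_2$ absorbs one power of the $\varep^{-1}$ coming from $\p_{\mrp_j}z_\mbp$, leaving the net $O(\varep)$.

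For the refined projection bound $\|\Pi_{\mcZ_*^1}\mathrm R\|_{L^2}\lesssim\varep^{1/2}$, the point is that $\mathrm R_j$, after integrating over a whisker, pairs against the meander basis functions $Z_{\mbp,*}^{1i} = \varep^{-1/2}[(\psi_1+\varep\varphi_{1,i})\tilde\Theta_i + \varep\varphi_{2,i}\varep\tilde\Theta_i']$. The dominant piece of $\mathrm R_j$ has $z_\mbp$-profile built from $\p_{z_\mbp}\phi_2$ and $\p_\sigma\phi_1 = B_2$, and $\psi_1 = \phi_0'/m_1$ is odd in $z_\mbp$; the relevant $z_\mbp$-integrals either vanish by parity or produce an extra $\varep$ from the Jacobian $\tilde{\mrJ} = \varep(1-\varep z_\mbp\kappa_\mbp)$ expansion, exactly as in the proof of \eqref{ortho-Z} and Lemma \ref{lem-def-Z*}. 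Combined with the $\varep^{-1/2}$ normalization and an $l^2$-sum over $i\in\Sigma_1$ with $N_1\lesssim\varep^{-1}$, this yields the $\varep^{1/2}$ gain. I would use Lemma \ref{Notation-e_i,j} and Lemma \ref{lem-Theta} to package the $\tilde s_\mbp$-integrals into norm-one vectors/matrices.

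The $\p_t\Phi_\mbp$ estimates then follow immediately by the chain rule $\p_t\Phi_\mbp = \sum_j\dot{\mrp}_j\,\p_{\mrp_j}\Phi_\mbp = \varep^{-1}(\phi_0'+\varep\phi_1')\sum_j\dot{\mrp}_j\xi_j + \varep\,\dot{\mbp}\cdot\mathrm R$. For the $L^2$ estimate, the leading term has $L^2$-norm comparable to $\varep^{-1/2}\|\dot{\mbp}\|_{l^2}$ because $\{\xi_j\}$ essentially reproduce (up to $O(\varep)$ and the geometric factor $\mbn_0\cdot\mbn_\mbp$) the rescaled eigenmodes $\tilde\Theta_j$ and the $\bE_j\cdot\mbn_\mbp$ translation modes, which are approximately orthonormal by \eqref{ortho-tTheta}; and this leading term lies (to leading order) in $\mcZ_*^1$, which gives both $\|\p_t\Phi_\mbp\|_{L^2}\sim\|\Pi_{\mcZ_*^1}\p_t\Phi_\mbp\|$ and, using the parity/Jacobian cancellation above for the remainder and the $\psi_1$-odd structure to kill the leading pairing with $\mcZ_*^0$, the bound $\|\Pi_{\mcZ_*^0}\p_t\Phi_\mbp\|_{L^2}\lesssim\varep^{1/2}\|\dot{\mbp}\|_{l^2}$. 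For the $L^\infty$ bound, $\|\dot{\mbp}\cdot\nabla_\mbp\Phi_\mbp\|_{L^\infty}$: the leading term is $\varep^{-1}\|\phi_0'+\varep\phi_1'\|_{L^\infty}\|\sum_j\dot{\mrp}_j\xi_j\|_{L^\infty}$, and $\|\sum_j\dot{\mrp}_j\xi_j\|_{L^\infty}\lesssim\|\dot{\mbp}\|_{l^1}\max_j\|\xi_j\|_{L^\infty}\lesssim N_1^{1/2}\|\dot{\mbp}\|_{l^2}\cdot\varep^{-1}\cdot\varep \lesssim\varep^{-1/2}\|\dot{\mbp}\|_{l^2}$ — wait, here one must be careful: $\xi_j = \varep\,\p_{\mrp_j}z_\mbp$ is $O(1)$ in $L^\infty$ (the $\varep^{-1}$ in $\p_{\mrp_j}z_\mbp$ is cancelled by the explicit $\varep$), so $\|\sum\dot{\mrp}_j\xi_j\|_{L^\infty}\lesssim\|\dot{\mbp}\|_{l^1}\lesssim N_1^{1/2}\|\dot{\mbp}\|_{l^2}\lesssim\varep^{-1/2}\|\dot{\mbp}\|_{l^2}$, whence the $\varep^{-1}\cdot\varep^{-1/2} = \varep^{-3/2}$ prefactor; the $\varep^2\Delta\Phi_\mbp$ term is handled the same way using \eqref{eq-Lap-induced} since $\varep^2\Delta$ acts boundedly on the inner profiles. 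I expect the main obstacle to be the bookkeeping for the refined bound $\|\Pi_{\mcZ_*^1}\mathrm R\|_{L^2}\lesssim\varep^{1/2}$ and $\|\Pi_{\mcZ_*^0}\p_t\Phi_\mbp\|_{L^2}\lesssim\varep^{1/2}\|\dot{\mbp}\|_{l^2}$: one must correctly identify which $z_\mbp$-integrals vanish by parity and which gain a power of $\varep$ from the Jacobian, and ensure the $\mathbb V_k^2$-norm losses incurred when differentiating $\phi_2$ through $\kappa_\mbp$ (controlled via Lemma \ref{lem-Gamma-p}) are absorbed by the available powers of $\varep$ under the standing assumption \eqref{A-00}.
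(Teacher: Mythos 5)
Your proposal follows essentially the same route as the paper's proof: chain-rule differentiation of the ansatz with the coordinate-variation Lemma \ref{lem-change-of-coord} and the $\sigma$-sensitivity Lemma \ref{lem-de-hatla}, the odd/even parity of $\phi_0'$ against $\psi_0$ for the $\mcZ_*^0$ projection, and the $l^1$--$l^2$ bound with $N_1\lesssim\varep^{-1}$ for the $L^\infty$ estimate. The only cosmetic difference is in the $\|\Pi_{\mcZ_*^1}\mathrm R\|_{L^2}\lesssim\varep^{1/2}$ step, where the paper emphasizes that the dominant non-localized term $B_{\mbp,2}\,\p_{\mrp_j}\sigma$ gains $\varep^{1/2}$ purely from the exponential localization (and $\varep^{-1/2}$ normalization) of the meander basis, while you lean on parity/Jacobian cancellations; both mechanisms give the stated bound.
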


\begin{proof}
From the definition of $\Phi_\mbp$ in Lemma \ref{lem-def-Phi-p}, we calculate 
\beqs
\frac{\p \Phi_\mbp}{\p \mrp_j} = (\phi_0'+\varep \phi_1'+\varep^2 \p_{z_\mbp} \phi_2 +\varep^3\phi_3') \frac{\p z_\mbp}{\p \mrp_j} +\varep^2 \p_{s_\mbp} \phi_2 \frac{\p s_\mbp}{\p \mrp_j} +\varep \p_{\sigma} (\phi_1+\varep \phi_2) \frac{\p \sigma}{\p \mrp_j}.
 \eeqs
Combining with Lemmas \ref{lem-change-of-coord} and \ref{lem-de-hatla}  we obtain the expressions for derivatives of $\Phi_\mbp$ with respect to $\mrp_j$,
\beqs
\mathrm R_j:=\varep^2( \p_{z_\mbp} \phi_2 +\varep\phi_3') \frac{\p z_\mbp}{\p \mrp_j} +\varep^2 \p_{s_\mbp} \phi_2 \frac{\p s_\mbp}{\p \mrp_j} +\varep \p_{\sigma} (\phi_1+\varep \phi_2) \frac{\p \sigma}{\p \mrp_j}. 
\eeqs
We remark that the leading order term comes from $\p_\sigma \phi_1=B_{\mbp,2}$ which is nonzero as $|z_\mbp|\to \infty$. This fact combined with Lemma \ref{lem-de-hatla} and Lemma \ref{lem-change-of-coord} yields the $L^2$ estimate of $\mathrm R$. The estimate on the projection of $\mathrm R$ to the meandering slow space is similar with the exception that the functions in $\mcZ_*^1$ are localized, decaying exponentially fast to zero as $|z_\mbp|\to \infty$. This contributes an extra factor of $\varep^{1/2}$ to the bound. 

The time derivative of $\Phi_{\mathbf p}$ satisfies the chain rule  
\begin{equation}\label{def-ptPhi}
\p_t \Phi_{\mathbf p} =\dot{\mathbf p}\cdot \nabla _{\mathbf p}\Phi_{\mathbf p}.
\end{equation} 
The first $L^2$ estimate on this quantity follows directly from the expressions of $\frac{\p\Phi_\mbp}{\p \mrp_j}$, the orthogonality of  $\{\tilde \Theta_j\}$, and $l^2$ bound of $\nabla_\mbp A$ in Lemma \ref{lem-A}. Considering the $L^2$ estimate of the projection to the pearling space,  the leading order of  $\p_{\mrp_j}\Phi_\mbp$ has odd parity in $z_\mbp$ while the leading order terms in  $Z_{\mbp,*}^{0j}$ for $j\in \Sigma_0$ have even parity. This renders the projection higher order of $\varep$, and establishes the bound. 

The final $L^\infty$-norm bound is obtained from the form of $\Phi_\mbp$  and the fact that $\varep^2 \Delta_{s_\mbp}$ is a bounded operator when acting on  $\Phi_\mbp$. More explicitly, 
\begin{equation*} 
\begin{aligned}
\left\|\dot {\mathbf p}\cdot \nabla_{\mathbf p}\Phi_{\mathbf p} \right\|_{L^\infty}+\|\dot{\mathbf p}\cdot\nabla_{\mathbf p} (\varep^2\Delta\Phi_{\mathbf p} )\|_{L^\infty} \lesssim  \varep^{-1} \|\dot{\mbp}\|_{l^1}.
\end{aligned}
\end{equation*}
Using the $l^1$-$l^2$ estimate, $\|\dot \mbp\|_{l^1}\lesssim N_1^{1/2} \|\dot \mbp\|_{l^2}$ and bounding $N_1$ by $\varep^{-1}$ finishes the proof.
\end{proof}

\begin{lemma}\label{lem-Z*} The rate of change of the basis functions of the slow space $\mcZ_*$ with respect to $\mbp$ can be bounded by
\beqs
\|\nabla_\mbp Z_{\mbp, *}^{I(j)j}\|_{L^2}\lesssim \varep^{-1}.
\eeqs
Under the assumption $\|\dot\mbp\|_{l^2}\lesssim \varep^3$ of \eqref{A-0} we have
 \begin{equation}\label{est-L2-p-tZ*}
\|\p_t Z_{\mathbf p, *}^{I(j)j}\|_{L^2}\lesssim \varep^{-1}\|\dot{\mathbf p}\|_{l^2}\lesssim \varep^2.
\end{equation}
 
\end{lemma}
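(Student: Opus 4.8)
**Proof plan for Lemma~\ref{lem-Z*}.**

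The plan is to differentiate the explicit formula \eqref{eq-def-Z*} for $Z_{\mbp,*}^{I(j)j}$ with respect to each meander parameter $\mrp_k$, collect the pieces, and estimate the $L^2(\Omega)$ norm of the resulting sum term by term, using that the variations of the local coordinates $(s_\mbp,z_\mbp)$ with respect to $\mbp$ are controlled by Lemma~\ref{lem-change-of-coord}. Recall that
\[
Z_{\mbp,*}^{I(j)j}=\varep^{-1/2}\left[\bigl(\psi_{I(j)}(z_\mbp)+\varep\varphi_{1,j}(z_\mbp,\bm\gamma_\mbp'')\bigr)\tilde\Theta_j(\tilde s_\mbp)+\varep\varphi_{2,j}(z_\mbp,\bm\gamma_\mbp'')\,\varep\tilde\Theta_j'(\tilde s_\mbp)\right],
\]
so $\p_{\mrp_k}Z_{\mbp,*}^{I(j)j}$ produces: (i) terms where $\p_{z_\mbp}$ hits $\psi_{I(j)}$ or $\varphi_{k,j}$, multiplied by $\p_{\mrp_k}z_\mbp$; (ii) terms where $\p_{\tilde s_\mbp}$ hits $\tilde\Theta_j$ or $\tilde\Theta_j'$, multiplied by $\p_{\mrp_k}\tilde s_\mbp$; (iii) terms where $\p_{\mrp_k}$ hits the $\bm\gamma_\mbp''$-dependence of $\varphi_{k,j}$; and (iv) the term $\tilde\Theta_j\cdot\p_{\mrp_k}\tilde\Theta_j$ (and its primed analogue) from the dependence of $\tilde\Theta_j$ on $|\Gamma_\mbp|=(1+\mrp_0)|\Gamma_0|$, which is nonzero only for $k=0$.

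The key estimate is type (i): by Lemma~\ref{lem-change-of-coord}, $\|\p_{\mrp_k}z_\mbp\|_{L^\infty(\Gamma_\mbp^{2\ell})}\lesssim \varep^{-1}(|\tilde\Theta_k|+\ldots)$, with the worst contribution being the overall $\varep^{-1}$ factor; combined with the $\varep^{-1/2}$ prefactor in $Z_{\mbp,*}$ and the unit $L^2$ size of $\psi_{I(j)}'\tilde\Theta_j$ (using $\|\psi\|_{L^2(\mbR)}=1$ and the normalization \eqref{ortho-tTheta} of $\tilde\Theta_j$ together with the exponential localization that turns the $L^2(\Omega)$ integral into $\varep$ times an $L^2(\mbR)\times L^2(\msI_\mbp)$ integral, the two $\varep^{\pm1/2}$ powers cancelling), one obtains $\|\p_{\mrp_k}Z_{\mbp,*}^{I(j)j}\|_{L^2}\lesssim\varep^{-1}$. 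The type (ii) terms carry a factor $\varep\tilde\Theta_j'\sim\varep\beta_{\mbp,j}\tilde\Theta_{j\pm1}$ by \eqref{Theta'}, and $\varep\beta_{\mbp,j}$ is uniformly bounded on $\Sigma$, so after using $\|\p_{\mrp_k}\tilde s_\mbp\|_{L^2}\lesssim1$ these are lower order (or at worst $O(\varep^{-1})$). Types (iii) and (iv) are similarly bounded using $\|\nabla_\mbp A\|_{l^2}\lesssim\|\hat\mbp\|_{\mbV_2^2}\lesssim1$ from Lemma~\ref{lem-A}, the smoothness bounds on $\varphi_{k,j}$ in the sense of Notation~\ref{Notation-h}, and the curvature bounds of Lemma~\ref{lem-Gamma-p}; none exceeds $O(\varep^{-1})$. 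Summing (there is no sum — this is a per-$j$ bound) gives the first claim.

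For the second claim, $\p_t Z_{\mbp,*}^{I(j)j}=\dot\mbp\cdot\nabla_\mbp Z_{\mbp,*}^{I(j)j}$ by the chain rule, so Cauchy--Schwarz in $l^2$ and the first claim give $\|\p_t Z_{\mbp,*}^{I(j)j}\|_{L^2}\le\|\dot\mbp\|_{l^2}\,\sup_k\|\p_{\mrp_k}Z_{\mbp,*}^{I(j)j}\|_{l^2_k}$; since $\nabla_\mbp Z_{\mbp,*}^{I(j)j}$ has its mass concentrated in the few components $k\in\{0,1,2,j,j\pm1\}$, the vector norm over $k$ is still $O(\varep^{-1})$, and under \eqref{A-0}, $\|\dot\mbp\|_{l^2}\lesssim\varep^3$, so $\|\p_t Z_{\mbp,*}^{I(j)j}\|_{L^2}\lesssim\varep^{-1}\|\dot\mbp\|_{l^2}\lesssim\varep^2$, which is \eqref{est-L2-p-tZ*}. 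The main obstacle I anticipate is bookkeeping the precise $\varep$-powers in the type (i) terms — in particular verifying that the $\varep^{-1}$ from $\p_{\mrp_k}z_\mbp$ and the $\varep^{-1/2}$ prefactor combine with the $\sqrt\varep$ Jacobian from changing to whiskered coordinates to land exactly at $\varep^{-1}$ and no worse — and making sure the $\bm\gamma_\mbp''$-derivatives of $\varphi_{k,j}$, which can a priori bring in $\|\hat\mbp\|_{\mbV_3}$-type growth, are genuinely $O(1)$ under the standing assumption $\mbp\in\cD_\delta$ (they are, because only up to two $s_\mbp$-derivatives of $h(\bm\gamma_\mbp'')$ appear with no compensating $\varep$, cf.\ \eqref{up-bd-h}). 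These are routine but require care to keep uniform in $j\in\Sigma$.
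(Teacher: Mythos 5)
Your computation of the individual derivatives $\p_{\mrp_k}Z_{\mbp,*}^{I(j)j}$ is sound and matches the paper: the dominant contribution is the type (i) term, which in the paper's notation is $\varep^{-1}\tilde\Theta_k\,\tilde\psi_{I(j)}'\,\tilde\Theta_j+O(1)$, and each such component indeed has $L^2(\Omega)$ norm of order $\varep^{-1}$ after the Jacobian factor $\varep$ cancels the $\varep^{-1}$ from $|\tilde\psi'|^2$. The genuine gap is in your passage to \eqref{est-L2-p-tZ*}. You assert that $\nabla_\mbp Z_{\mbp,*}^{I(j)j}$ has its mass concentrated in the few components $k\in\{0,1,2,j,j\pm1\}$, so that the vector norm over $k$ is still $O(\varep^{-1})$. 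That is false: for \emph{every} $k$, Lemma~\ref{lem-change-of-coord} gives $\p_{\mrp_k}z_\mbp\sim-\varep^{-1}\tilde\Theta_k\,\mbn_0\cdot\mbn_\mbp$, so every component carries an $O(\varep^{-1})$ piece proportional to $\tilde\Theta_k\tilde\psi_{I(j)}'\tilde\Theta_j$, whose $L^2$ norm is not small when $k$ is far from $j$ (the $L^2(\msI_\mbp)$ norm of the product $\tilde\Theta_k\tilde\Theta_j$ is $O(1)$ for all $k$). Consequently $\bigl(\sum_k\|\p_{\mrp_k}Z_{\mbp,*}^{I(j)j}\|_{L^2}^2\bigr)^{1/2}\sim\varep^{-1}N_1^{1/2}\sim\varep^{-3/2}$, and your Cauchy--Schwarz step only yields $\|\p_t Z_{\mbp,*}^{I(j)j}\|_{L^2}\lesssim\varep^{-3/2}\|\dot\mbp\|_{l^2}$, which is weaker than the claimed $\varep^{-1}\|\dot\mbp\|_{l^2}$ and would not suffice where the lemma is used (e.g.\ in \eqref{est-p-tQ-Q-2}, \eqref{est-w-RHS-3-temp} and Lemma~\ref{lem-pdot-l2}).

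The fix, and the paper's actual argument, is to resum the leading-order structure before taking norms: write
\begin{equation*}
\p_t Z_{\mbp,*}^{I(j)j}=\varep^{-1}\Bigl(\sum_k\dot\mrp_k\tilde\Theta_k\Bigr)\tilde\psi_{I(j)}'\tilde\Theta_j+\sum_k\dot\mrp_k\,O(1),
\end{equation*}
and use the $L^2(\msI_\mbp)$ orthogonality \eqref{ortho-tTheta} of $\{\tilde\Theta_k\}$ to get $\bigl\|\sum_k\dot\mrp_k\tilde\Theta_k\bigr\|_{L^2(\msI_\mbp)}\lesssim\|\dot\mbp\|_{l^2}$, so the first term contributes $\varep^{-1}\|\dot\mbp\|_{l^2}$; the $O(1)$ remainders are summed with the $l^1$--$l^2$ estimate \eqref{est-l1-l2}, $\|\dot\mbp\|_{l^1}\lesssim N_1^{1/2}\|\dot\mbp\|_{l^2}\lesssim\varep^{-1/2}\|\dot\mbp\|_{l^2}$, which is subordinate. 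This also clarifies how the first displayed bound of the lemma should be read: it is an operator-type bound (pairing $\nabla_\mbp Z_{\mbp,*}^{I(j)j}$ against an $l^2$ vector), not the $l^2$-in-$k$ norm of the componentwise $L^2$ norms, which is genuinely of size $\varep^{-3/2}$. Your remaining bookkeeping (types (ii)--(iv), uniformity in $j$, use of \eqref{up-bd-h} and Lemma~\ref{lem-A}) is fine.
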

\begin{proof}
The basis functions of the tangent plane satisfy
 \beqs
 \p_{\mrp_i} Z_{\mathbf p, *}^{I(j)j}=\varep^{-1} \tilde \Theta_i\tilde\psi_{I(j)}' \tilde \Theta_j+O(1).
\eeqs
The result follows from the orthogonality of $\tilde \Theta_i$ on $L^2(\msI_\mbp)$ and $l^1$-$l^2$ estimate with $N\lesssim \varep^{-1}$. 
%The a priori assumption \eqref{A-0}, combined with the $l^2$-$l^1$ estimate \eqref{est-l1-l2} and the scaling  \eqref{est-N1} of $N_1$, yields\begin{equation}\label{est-L2-p-tZ*}\|\p_t Z_{\mathbf p, *}^{I(j)j}\|_{L^2}\lesssim \varep^{-1}\|\dot{\mathbf p}\|_{l^2}\lesssim \varep^2.\end{equation}
\end{proof}

\begin{lemma}\label{lem-mass-F} For $\mbp\in \cD_\delta$,  the mass of residual can be estimated by 
\begin{equation*}
\int_\Omega \left(\mathrm F(\Phi_{\mbp} ) -\mrF^\infty\right)\dd x= O\left(\varep^4, \varep^5\|\hat{\mbp}\|_{\mbV_4^2}\right).
\end{equation*}
\end{lemma}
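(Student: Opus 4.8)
The plan is to use the expansion of the residual $\mathrm F(\Phi_{\mbp})$ established in Lemma\,\ref{lem-def-Phi-p}, subtract the far-field constant value $\mrF^\infty$, and integrate term by term over $\Omega$, exploiting the localization of the error terms near $\Gamma_\mbp$ and the oddness of $\phi_0'$ and $f_2$ in $z_\mbp$. First I would write, using \eqref{exp-mF},
\begin{equation*}
\int_\Omega \left(\mathrm F(\Phi_{\mbp})-\mrF^\infty\right)\dd x = \varep^2\int_\Omega \mathrm F_2\dd x+\varep^3\int_\Omega(\mathrm F_3-\mrF_3^\infty)\dd x+\varep^4\int_\Omega(\mathrm F_{\geq 4}-\mrF_{\geq 4}^\infty)\dd x + O(e^{-\ell\nu/\varep}),
\end{equation*}
the exponential error arising from replacing $\Omega$-integration by integration over $\Gamma_\mbp^{2\ell}$ against the local Jacobian $\tilde{\mrJ}=\varep(1-\varep z_\mbp\kappa_\mbp)$.

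Next I would treat each term. For the $\mathrm F_2$ term, recall from \eqref{F-234} that $\mathrm F_2=\kappa_\mbp(\sigma-\sigma_1^*)f_2(z_\mbp)$ with $f_2$ odd in $z_\mbp$ and decaying to zero; writing $\dd x = \varep(1-\varep z_\mbp\kappa_\mbp)\dd \tilde s_\mbp\dd z_\mbp$, the leading $\varep$-order contribution vanishes by oddness of $f_2$, and the surviving term carries an extra factor $\varep z_\mbp\kappa_\mbp$, so that
\begin{equation*}
\varep^2\int_\Omega \mathrm F_2\dd x = -\varep^4(\sigma-\sigma_1^*)\int_{\mathbb R_{2\ell}}\int_{\msI_\mbp}\kappa_\mbp^2 z_\mbp f_2(z_\mbp)\dd \tilde s_\mbp\dd z_\mbp + O(e^{-\ell\nu/\varep}) = O(\varep^4),
\end{equation*}
where I use $\|\kappa_\mbp\|_{L^\infty}\lesssim 1$ from Lemma\,\ref{lem-Gamma-p}, the uniform bound $|\sigma|\lesssim 1$, and $|\Gamma_\mbp|\lesssim 1$. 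For the $\mathrm F_3$ term, \eqref{F-234} gives $\mathrm F_3=-\phi_0'\Delta_{s_\mbp}\kappa_\mbp+f_3(z_\mbp,\bm\gamma_\mbp'')$; the $\phi_0'\Delta_{s_\mbp}\kappa_\mbp$ piece integrates to $O(e^{-\ell\nu/\varep})$ at leading $z_\mbp$-order by oddness of $\phi_0'$, with the remaining $\varep z_\mbp$ correction contributing $\varep\cdot\varep^3\int_{\msI_\mbp}h(\bm\gamma_\mbp'')\Delta_{s_\mbp}\kappa_\mbp\dd \tilde s_\mbp = O(\varep^4\|\Delta_{s_\mbp}\kappa_\mbp\|_{L^2(\msI_\mbp)}) = O(\varep^4\|\hat\mbp\|_{\mbV_4^2})$ after Cauchy–Schwarz and the bound $\|\Delta_{s_\mbp}\kappa_\mbp\|_{L^2(\msI_\mbp)}\lesssim 1+\|\hat\mbp\|_{\mbV_4^2}$ from \eqref{eq-kappa-est}; meanwhile $\varep^3\int_\Omega(f_3-f_3^\infty)\dd x = O(\varep^4)$ by localization and the $L^\infty$ bounds on $f_3$ in the sense of Notation\,\ref{Notation-h}. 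The $\mathrm F_{\geq 4}$ term is bounded similarly: by the explicit form in \eqref{F-234}, $\mathrm F_{\geq 4}-\mrF_{\geq4}^\infty$ is localized near $\Gamma_\mbp$ with an $L^1$ norm controlled by $1+\|\hat\mbp\|_{\mbV_4^2}$ (the $\Delta_g$ acting on $f_{4,2}$ generates at most two $s_\mbp$ derivatives of curvature-type quantities), so $\varep^4\int_\Omega(\mathrm F_{\geq4}-\mrF_{\geq4}^\infty)\dd x = O(\varep^5, \varep^5\|\hat\mbp\|_{\mbV_4^2})$.

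Collecting the three estimates, the dominant contributions are $O(\varep^4)$ from the $\varep z_\mbp\kappa_\mbp$-corrections in $\mathrm F_2$ and $\mathrm F_3$ together with the localized $f_3$ and $f_{4,\cdot}$ integrals, plus an $O(\varep^5\|\hat\mbp\|_{\mbV_4^2})$ contribution from the $\Delta_{s_\mbp}\kappa_\mbp$ pieces, which yields the claimed bound $O(\varep^4, \varep^5\|\hat\mbp\|_{\mbV_4^2})$. The main obstacle I anticipate is organizing the $\mathrm F_{\geq 4}$ term: its explicit expression in the proof of Lemma\,\ref{lem-def-Phi-p} is lengthy and involves $\Delta_g$ (hence $\Delta_{s_\mbp}$ and the relatively bounded perturbation $D_{s,2}$) acting on $\phi_2=\phi_2(s_\mbp,z_\mbp)$, so one must check carefully that at most two $s_\mbp$-derivatives fall on curvature-dependent coefficients, keeping the $\mbV_4^2$-norm (and no higher) of $\hat\mbp$ in the bound; the parity cancellations for $\mathrm F_2$ and the $\phi_0'$-term of $\mathrm F_3$ are routine once the Jacobian is expanded, and the localization arguments are standard applications of the exponential decay built into Definition\,\ref{def-dressing}.
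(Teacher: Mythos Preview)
Your proposal is correct and follows the paper's proof essentially line by line: expand $\mathrm F(\Phi_\mbp)$ via Lemma~\ref{lem-def-Phi-p}, exploit the odd parity of $f_2$ and $\phi_0'$ to gain an extra $\varep$ from the Jacobian correction $\varep z_\mbp\kappa_\mbp$, and bound the localized $f_3$ and $f_{4,\cdot}$ pieces via Notation~\ref{Notation-h} and the curvature estimates \eqref{eq-kappa-est}. The only wrinkle is an $\varep$-counting slip in your $\mathrm F_3$ computation---the $\varep z_\mbp$ correction contributes $\varep^5\int_{\msI_\mbp}h\Delta_{s_\mbp}\kappa_\mbp\,\mathrm d\tilde s_\mbp$, not $\varep\cdot\varep^3$, since the Jacobian $\tilde{\mrJ}=\varep(1-\varep z_\mbp\kappa_\mbp)$ already carries a leading factor of $\varep$---but your concluding summary already records the correct $O(\varep^5\|\hat\mbp\|_{\mbV_4^2})$ power.
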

\begin{proof}
We expand $\mathrm F(\Phi_{\mbp} )$  in  Lemma \ref{lem-def-Phi-p}, subtract $\mrF_m^\infty$ and integrate,
\begin{equation}
\label{Lemma3-5-eqn}
\begin{aligned}
\int_\Omega \left(\mathrm F(\Phi_{\mbp} )-\mrF^\infty\right)\dd x=&\varep^2\int_\Omega \mathrm F_2(\Phi_{\mbp} )\dd x+\varep^3\int_\Omega ( \mathrm F_3(\Phi_{\mbp} ) -\mrF_3^\infty)\dd x+\varep^4\int_\Omega (\mathrm F_{\geq 4}(\Phi_{\mbp} )-\mrF_{\geq 4}^\infty)\dd x.  %+e^{-\ell\nu/\varep} \int_\Omega \mrF_e \dd x.
\end{aligned}
\end{equation}
Recalling form of $\mathrm F_2$ in \eqref{F-234}, where $f_2(z_{\mbp})$ is odd in $z_{\mbp}$, we deduce
\begin{equation}\label{F-mass}
\begin{aligned}
\varep^2 \int_\Omega  \mathrm F_2(\Phi_{\mbp} )\dd x&=-\varep^4(\sigma-\sigma_1^*)\int_{\mathbb R_{2\ell}}\int_{\msI}  \kappa_{\mbp}^2 z_{\mbp}f_2(z_{\mbp})\dd \tilde s_{\mbp}\mrd z_{\mbp},\\
&=C\varep^4(\sigma-\sigma_1^*)\int_{\msI} h(\bm \gamma_{\mbp}'')\dd \tilde s_{\mbp},
\end{aligned}
\end{equation}
where the function $h=h(\bm \gamma_{\mbp}'')$ follows Notation\,\ref{Notation-h}. Since $h$ is bounded in $L^\infty$ by Lemma \ref{lem-e_ij}, we have
\begin{equation}\label{F2-mass}
\begin{aligned}
\varep^2 \int_\Omega \mathrm F_2(\Phi_{\mbp} )\dd x=O(\varep^4|\sigma-\sigma_1^*|).
\end{aligned}
\end{equation}
From the form of $\mathrm F_3$ given in \eqref{F-234}, the odd parity of $\phi_0'$ with respect to $z_{\mbp}$ implies
\begin{equation}\label{F3-mass}
\begin{aligned}
\varep^3 \int_\Omega ( \mathrm F_3(\Phi_{\mbp} ) -\mrF_3^\infty)\dd x&=\varep^4\int_{\mathbb R_{2\ell}}\int_{\msI} \left( \phi_0'\Delta_{s_{\mbp}}\kappa_{\mbp}+(f_3(z_{\mbp},\bm \gamma_{\mbp}'') -f_3^\infty)\right)\mrJ_\mbp\dd s_{\mbp}\mrd z_{\mbp},\\
&=\varep^4 \left(\varep \int_{\msI} h(\bm \gamma_{\mbp}'')\Delta_{s_{\mbp}}\kappa_{\mbp} \dd s_{\mbp} +\int_{\msI} h(\bm \gamma_{\mbp}'') \dd s_{\mbp} \right),\\
&=O\left(\varep^5\|\hat{\mbp}\|_{\mbV_4^2}\right)+O(\varep^4).
\end{aligned}
\end{equation}
Here we used the $H^2(\msI_\mbp)$ bound of the curvature from Lemma \ref{lem-def-gamma-p}. Similar estimates show that
\begin{equation}\label{F4-mass}
\begin{aligned}
&\varep^4 \int_\Omega ( \mathrm F_{ 4}(\Phi_{\mbp} ) -\mrF_4^\infty)\dd x %+e^{-\ell\nu/\varep} \int_\Omega \mrF_e\dd x
=O\left(\varep^5\|\hat{\mbp}\|_{\mbV_4^2},\varep^4\right).
\end{aligned}
\end{equation}
Combining  \eqref{F2-mass}-\eqref{F4-mass}  with \eqref{F-mass} yields \eqref{Lemma3-5-eqn}.
\end{proof}

\subsection{The Decomposition}

In the section, we prove Lemma\,\ref{lem-Manifold-Projection}. It suffices to consider  $\mbp_0=0$. 
Define $\{\msF_k\}_{k=0}^{N_1-1}$ to be real-valued  functionals of $v$ and  the parameters $\mathbf p$, explicitly  given by
 \begin{equation*}
 \mathscr F_k(v,\mathbf p)= \int_\Omega \Big(\Phi_0 +v(  x)-\Phi_{\mathbf p}  \Big) Z_{\mathbf p, *}^{1k}\, \mrd   x, %quad\hbox{for}\quad  \bar u= \bar u(0)+\frac{1}{|\Omega|}\int_\Omega (\Phi_{\mathbf p}-\Phi_{\mathbf p_0})\, \mrd   x
 \end{equation*}
 for $i\in \Sigma_1(\Gamma_{\mathbf p}, \rho)$. Note that $\msF_k(0, \bm 0)=0$ and from mean value theorem 
 \beqs
 \Phi_0-\Phi_\mbp = -\mbp\cdot \nabla_\mbp \Phi_{\la \mbp}, \qquad \hbox{for some $\la=\la(\mbp)\in[0,1]$}.  
  \eeqs
If we introduce $\{\mathbf e_k\}$ as the canonical basis of  $\mbR^{N_1}$ and the $\mbp$ dependent notation $\mbT_{kj}^\la=\mbT_{kj}^\la(\mbp)$ as
  \beq\label{def-T}
  \mathbb T_{kj}^\la(\mbp):= \int_\Omega \p_{\mrp_j}\Phi_{\la \mbp} \, Z_{\mbp,*}^{1k} \dd x, 
  \eeq
 then the functional $\msF_k$ can be rewritten as 
 \beq\label{def-msF-T}
 \msF_k(v,\mathbf p)= \int_\Omega v(  x) \, Z_{\mathbf p, *}^{1k}\, \mrd x - \left<\mbT^\la(\mbp) \mbp,\mathbf e_k \right>, 
 \eeq
 and the gradient of $\msF:=(\msF_k)$ with respect to $\mbp$ at $(v,\mbp)=(0, \bm 0)$ can be represented by
 \beqs
\nabla_\mbp \msF(0, \bm 0)= - \mbT^T(\bm 0). 
 \eeqs
 Here $\mbT(\bm 0)= \mbT^1(\bm 0)$ and the superscript $T$ denotes matrix transpose. We will show in Lemma \ref{lem-T} that $\mbT(\bm0)$ is invertible.   We use the contraction mapping theorem to establish the existence of $\mbp$ such that $\msF_k(v, \mbp)=0$ for some $v$. We define the function
 \beqs
 \mathscr G(\mbp; v)= (\mbT(\bm 0))^{-1} \Big(\mbT(\bm 0)\mbp- \msF(v;\mbp)\Big), 
 \eeqs
and show that $\mathscr G$ is a contraction of $\mbp$ near the origin when $v$ is small in $L^2$. We observe $\msG(\mbp; v)=\mbp$ is equivalent to $\msF(v, \mbp)=\bm0$, that is, any fixed point of $\msG(\mbp)$ for a given $v$ is a zero  of $\msF(v, \mbp)$.   With $\msF$  written in terms of $\mbT$ as in \eqref{def-msF-T}, we rewrite 
 \beq\label{msg-k}
 \mathscr G_k(\mbp; v)= \left<(\mbT(\bm 0))^{-1} \Big(\mbT(\bm 0)- \mbT^\la(\mbp)\Big) \mbp, \mathbf e_k \right>_{l^2} + (\mbT(\bm 0))^{-1} \int_\Omega v(x) \, Z_{\mbp, *}^{1k} \dd x
 \eeq
The following properties of $\mbT^\la(\mbp)$  show that $\msG(\mbp; v)$ is a contraction mapping of $\mbp$ near zero.   

\begin{lemma}\label{lem-T}
$\mbT(\bm 0)$ is invertible and satisfies the bound
\beqs
\|(\mbT(\bm0))^{-1}\|_{l^2_*}\lesssim \varep^{1/2}.
\eeqs
Moreover the difference between  $\mbT^{\la_1}(\mbp_1)$ and $\mbT^{\la_2}(\mbp_2)$, for $\la_k:=\la(\mbp_k)$,  
%as an operator from $l^2(\mbR^{N_1})$ to $l^2(\mbR^{N_1})$ can be 
satisfies
 \beqs
 \|\left(\mbT^{\la_1}(\mbp_1)-\mbT^{\la_2}(\mbp_2)\right)\mbp_l\|_{l^2}\lesssim \varep^{-2} \|\mbp_l\|_{l^2}\|\mbp_1-\mbp_2\|_{l^2}, \quad l=1,2. 
 \eeqs
\end{lemma}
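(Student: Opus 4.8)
The plan is to compute the matrix $\mbT(\bm 0)$ explicitly to leading order, show it is a small perturbation of a multiple of the identity, and then bound its inverse; the Lipschitz bound for $\mbT^\la(\mbp)$ follows from differentiating the entries in $\mbp$ and $\la$. First I would use Lemma~\ref{lem-Phi_t-p} to expand, for $j\in\Sigma_1$,
\beqs
\frac{\p \Phi_{\bm 0}}{\p\mathrm p_j}=\frac{1}{\varep}\bigl(\phi_0'+\varep\phi_1'\bigr)\xi_j(s)+\varep\mathrm R_j,
\eeqs
where $\xi_j=\varep\,\p_{\mathrm p_j}z$. When $\Gamma_\mbp=\Gamma_0$ (i.e. $\mbp=\bm 0$), Lemma~\ref{lem-change-of-coord} gives $\xi_j = -\tilde\Theta_j\,\mathbf n_0\cdot\mathbf n_0=-\tilde\Theta_j$ for $j\ge 3$ (and the analogous expressions, with $\Theta_1,\Theta_2$ appearing, for $j=1,2$ via \eqref{proj-n0-E-basis}). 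Meanwhile $Z^{1k}_{\bm 0,*}=\varep^{-1/2}\bigl[(\psi_1+\varep\varphi_{1,k})\tilde\Theta_k+\varep\varphi_{2,k}\varep\tilde\Theta_k'\bigr]$ and $\psi_1=\phi_0'/m_1$. Inserting these into \eqref{def-T}, changing to local coordinates with $\dd x=\tilde{\mrJ}\dd\tilde s\,\dd z$, integrating out $z$ against $\phi_0'\psi_1=|\phi_0'|^2/m_1$ which has total mass $m_1$, and using the orthonormality \eqref{ortho-tTheta} of $\{\tilde\Theta_j\}$, the leading term of $\mbT_{kj}(\bm 0)$ is $\varepsilon^{-1/2}\cdot\varepsilon^{-1}\cdot\varepsilon\cdot(-m_1)\,\delta_{kj}\cdot(\text{const})$, i.e.
\beqs
\mbT_{kj}(\bm 0)= -\varepsilon^{-1/2}c_*\,\delta_{kj}+O(\varepsilon^{1/2})\mbE_{kj},
\eeqs
for a nonzero $\mbp$-independent constant $c_*$ (here the $\phi_1'$ and $\mathrm R_j$ contributions, and the $\varphi$-corrections, are $O(\varepsilon^{1/2})$ by the localization and the $L^2$ bounds of Lemma~\ref{lem-Phi_t-p}, estimated via Lemma~\ref{Notation-e_i,j}). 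Thus $\varepsilon^{1/2}\mbT(\bm 0)=-c_*\mathrm{Id}+O(\varepsilon)$ in $l^2_*$, which is invertible by Neumann series for $\varepsilon$ small, giving $\|(\mbT(\bm 0))^{-1}\|_{l^2_*}\le C\varepsilon^{1/2}$.

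For the Lipschitz estimate, I would write $\mbT^{\la_1}(\mbp_1)-\mbT^{\la_2}(\mbp_2)$ as an integral of its $\mbp$-gradient along the segment joining $(\la_1,\mbp_1)$ to $(\la_2,\mbp_2)$ (using that $\la(\mbp)$ is Lipschitz in $\mbp$, from $\la\in[0,1]$ and the smooth dependence coming from the mean value construction — in fact one only needs $|\la_1-\la_2|\lesssim\|\mbp_1-\mbp_2\|_{l^2}$, which is immediate since any choice in $[0,1]$ is admissible and one may select $\la$ smoothly). The entries of $\nabla_{(\la,\mbp)}\mbT^\la$ involve $\nabla_\mbp\nabla_\mbp\Phi_{\la\mbp}$ paired against $Z^{1k}_{\mbp,*}$, plus $\nabla_\mbp\Phi_{\la\mbp}$ paired against $\nabla_\mbp Z^{1k}_{\mbp,*}$. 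The first factor is controlled by differentiating the expansions of Lemmas~\ref{lem-change-of-coord}, \ref{lem-de-hatla}, \ref{lem-Phi_t-p} once more — each $\mbp$-derivative of $\Phi_\mbp$ costs at worst a factor $\varepsilon^{-1}$ — and $\|\nabla_\mbp Z^{1k}_{\mbp,*}\|_{L^2}\lesssim\varepsilon^{-1}$ by Lemma~\ref{lem-Z*}. Contracting the resulting bilinear forms in the indices against the vector $\mbp_l$ and applying Lemma~\ref{Notation-e_i,j} and the $l^1$--$l^2$ estimate \eqref{est-l1-l2} with $N_1\lesssim\varepsilon^{-1}$, one collects the stated $\varepsilon^{-2}$ prefactor: schematically, two $\mbp$-derivatives give $\varepsilon^{-2}$ and the dimensional sums are absorbed into the constant. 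This yields $\|(\mbT^{\la_1}(\mbp_1)-\mbT^{\la_2}(\mbp_2))\mbp_l\|_{l^2}\lesssim\varepsilon^{-2}\|\mbp_l\|_{l^2}\|\mbp_1-\mbp_2\|_{l^2}$.

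The main obstacle I anticipate is the bookkeeping for the Lipschitz bound: one must verify that no single term in $\nabla_\mbp^2\Phi_{\la\mbp}$, after pairing with the slow-space basis and summing over the $O(\varepsilon^{-1})$ indices of $\Sigma_1$, produces a worse power than $\varepsilon^{-2}$. This requires carefully tracking where the $\xi_j$'s (which carry the $\tilde\Theta_j$ oscillation and hence an extra $\beta_{\mbp,j}\sim\varepsilon^{-1}\rho^{1/4}$ when differentiated in $s_\mbp$) interact with the Laplace--Beltrami derivatives implicit in $\varepsilon^2\Delta$, and using that such high-derivative terms are localized and appear only with compensating factors of $\varepsilon$ from the $\tilde{\mrJ}$ expansion. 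The invertibility of $\mbT(\bm 0)$ is comparatively routine once the leading-order diagonal structure is pinned down, which in turn rests on the already-established facts that $\phi_0'\perp\psi_0$ in $z$ and $\{\tilde\Theta_j\}$ are orthonormal on $\msI_\mbp$.
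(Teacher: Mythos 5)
Your invertibility argument is essentially the paper's: both reduce $\mbT(\bm 0)$ to local coordinates, integrate out $z$ against $(\phi_0'+\varep\phi_1')\phi_0'$, use the orthonormality of the $\tilde\Theta_j$ to produce a diagonal leading term of size $\varep^{-1/2}$, and invert by diagonal dominance/Neumann series; the only difference is cosmetic (the paper keeps the explicit constant $m_1^2+\varep(\sigma m_2+\eta_d m_3^2)$ and gets an $O(\varep^{3/2})$ off-diagonal error where you settle for $O(\varep^{1/2})$, either of which suffices). For the Lipschitz bound the routes diverge slightly in organization though not in substance: you differentiate the abstract pairing, which requires $\nabla_\mbp^2\Phi_{\la\mbp}$ and $\nabla_\mbp Z^{1k}_{\mbp,*}$ (Lemma~\ref{lem-Z*}) and then the $l^1$--$l^2$ count over $\Sigma_1$, whereas the paper first reduces $\mbT^{\la}_{kj}(\mbp)$ to the explicit integrand in \eqref{est-bT-0}, whose $\mbp$-dependence enters only through the local coordinates $(s_{\la\mbp},z_{\la\mbp})$, and then invokes the coordinate Lipschitz estimate $|z_{\la\mbp}-z|+\varep^{-1}|s_{\la\mbp}-s|\lesssim \varep^{-3/2}\|\mbp\|_{l^2}$ from Lemma~\ref{lem-change-of-coord}; this avoids second derivatives of the profile (and of $\sigma(\mbp)$), for which no estimates are actually supplied in the paper, so the paper's reduction is the more economical path even though your power counting lands on the same $\varep^{-1/2}\cdot\varep^{-3/2}=\varep^{-2}$. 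One point in your write-up is not rigorous as stated: the claim that $|\la_1-\la_2|\lesssim\|\mbp_1-\mbp_2\|_{l^2}$ is ``immediate since any choice in $[0,1]$ is admissible and one may select $\la$ smoothly''---$\la(\mbp)$ is fixed by the mean value theorem, not freely selectable, and need not be Lipschitz; the difference of integrands is naturally controlled by $\|\la_1\mbp_1-\la_2\mbp_2\|_{l^2}$ rather than $\|\mbp_1-\mbp_2\|_{l^2}$. The paper glosses over the same point (``standard calculations''), and the clean repair for either argument is to replace the pointwise mean value form by the integral form $\Phi_{\bm 0}-\Phi_{\mbp}=-\int_0^1\mbp\cdot\nabla_\mbp\Phi_{t\mbp}\,\mathrm{d}t$, i.e.\ to define $\mbT$ with an average over $t\in[0,1]$, after which no $\la$ appears and the stated Lipschitz bound follows from the coordinate estimates alone.
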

\begin{proof} 
By the definition of $\mbT^\la_{kj}(\mbp)$ in \eqref{def-T} with $Z_{\mbp,*}^{1k}$ replaced by its definition and $\p_{\mrp_j}\Phi_\mbp$ given in Lemma \ref{lem-Phi_t-p}, there exists a matrix with $l_*^2$-norm one such that
 \beq\label{est-bT-0}
 \mbT_{kj}^\la(\mbp)=\frac{1}{\varep^{1/2}} \int_{\mbR_{2\ell}} \int_{\msI_\mbp} (\phi_0'(z_{\la\mbp}) +\varep \phi_1'(z_{\la\mbp}))\frac{\phi_0'(z_\mbp)}{m_1} \xi_j(s_{\la\mbp})\tilde \Theta_k(s_\mbp)\dd \tilde s_\mbp \dd z_\mbp +\varep^{3/2}\mbE_{ij}. 
 \eeq
 Here $(s_{\la \mbp}, z_{\la \mbp})$ denotes the scaled local coordinates near $\Gamma_{\la \mbp}$ for $\la \in [0,1]$. Applying Lemma \ref{lem-change-of-coord} and the $l^1$-$l^2$ estimate with $N_1\lesssim \varep^{-1}$ implies
 \beqs
 |z_{\la \mbp}-z|+\varep^{-1}|s_{\la \mbp}-s|\lesssim \varep^{-1} \|\mbp\|_{\mbV_0}\lesssim \varep^{-1}N_1^{-1/2}\|\mbp\|_{l^2}\lesssim\varep^{-3/2}\|\mbp\|_{l^2}, 
 \eeqs 
 where we recall $\mbV_0$ is equivalent to $l^1(\mbR^{N_1})$.  Thus the estimate on the  difference of $\mbT^{\la_1}(\mbp_1)$ and $\mbT^{\la_2}(\mbp_2)$ follows from standard calculations.
 %as an operator from $l^2$ to $l^2$ follows by some algebraic deduction. 
% \beqs\|\left(\mbT^\la(\mbp) -\mbT(\bm0)\right)\mbp\|_{l^2}\lesssim \varep^{-2} \|\mbp\|_{l^2}^2. \eeqs
We now estimate the inverse of $\mbT(\bm 0)$, i.e.  $\mbT^\la(\bm 0)$ for $\la=1$.  
Fomr the definition of $\phi_0, \phi_1$ in \eqref{def-phi0n2}, \eqref{def-phi1},  we deduce the useful but straightforward identity
\beq\label{est-bT-1}
\int_{\mathbb R} (\phi_0'+\varep\phi_1')\phi_0' \dd z%=m_1^2+\frac{\varep}{2} \left(\sigma \int _{\mathbb R} \mrL_0^{-1}(z\phi_0')\dd z+\frac{\eta_d}{2}\int_{\mathbb R}|z\phi_0'|^2 \dd z\right)
= m_1^2+\varep(\sigma m_2+\eta_d m_3^2), 
\eeq
 where $m_1$ is introduced  in \eqref{def-m1}, and $m_2, m_3$ are constants defined by \beq\label{def-m23}
m_2=\frac{1}{2} \int_{\mathbb R} \mrL_0^{-1}(z\phi_0') \dd z; \qquad m_3= \frac{1}{2} \int_{\mathbb R} |z\phi_0'|^2 \dd z.
\eeq 
 Applying \eqref{est-bT-1} and taking $\mbp=\bm0$, \eqref{est-bT-0} implies  
  \beq\label{est-bT-2}
 \mathbb T_{kj}(\bm 0)=\frac{m_1^2 +\varep(\sigma m_2+\eta_d m_3^2)}{\varep^{1/2}}  \int_{\msI}  \xi_j(s) \tilde \Theta_k(\tilde s)   \dd \tilde s +O(\varep^{3/2} \mbE_{kj}),
 \eeq
 where $\xi_j(s)=-\Theta_j $ and $\tilde \Theta_k(\tilde s)=\Theta_k(s)$ since $\tilde s=s$.  Hence the  orthogonality of $\Theta_j$ in $L^2(\msI)$ implies
 \beqs
\mbT(\bm 0)= -\frac{m_1^2+\varep (\sigma m_2+\eta_d m_3^2)}{\varep^{1/2}}\mathbb I+O(\varep^{3/2}) \mbE.
\eeqs
Since $\mbE$ has $l^2_*$-norm one, the first term dominates and hence $\mbT(\bm 0)$ is invertible. In fact, for some uniform constant $C$ it holds that
\beqs
\|\mbT(\bm 0)\mbp\|_{l^2} \geq C\varep^{-1/2}\|\mbp\|_{l^2}. 
\eeqs
The $l^2_*$-bound of the inverse $\mbT(\bm 0)^{-1}$ follows, which completes the proof. 
\end{proof}

\begin{lemma}\label{lem-IFT-1} Let $\Gamma_0\in\cG^4_{K_0,2\ell_0}$ with local coordinates $(s, z)$ and $\varep\in(0,\varep_0)$. Let $0\leq r \leq 1$ and $\|v\|_{L^2} \leq  \delta \varep$ for some $\delta, \varep_0$ small enough, % \varep \wedge \varep^{(r+1)/2}$,
 then  there exists $\mathbf p=\mathbf p(v)\in l^2$ such that $\msF(v, \mathbf p(v))=0$ and 
\begin{equation*}
\|\mathbf p(v)\|_{l^2}\lesssim \varep^{1/2}  \|v\|_{L^{2}}.
\end{equation*}
The smallness of $\delta,\varep_0$ depend on domain, system parameter and $(\Gamma_0,M_0)$
\end{lemma}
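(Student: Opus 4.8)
The plan is to apply the Banach contraction mapping theorem to the map $\mathscr G(\cdot\,;v)$ defined in \eqref{msg-k}, on a small ball of $l^2(\mathbb R^{N_1})$, with $v$ held fixed and small in $L^2$. As noted before the statement, a fixed point of $\mathscr G(\cdot\,;v)$ is precisely a zero of $\mathscr F(v,\cdot)$, so it suffices to produce such a fixed point and to track its size. First I would fix a radius $r_0 = C_* \varepsilon^{1/2}\|v\|_{L^2}$ for a constant $C_*$ to be chosen, and let $\mathcal B_{r_0}\subset l^2(\mathbb R^{N_1})$ be the closed ball of radius $r_0$ about the origin. I would then verify the two hypotheses of the contraction principle: (i) $\mathscr G(\cdot\,;v)$ maps $\mathcal B_{r_0}$ into itself, and (ii) $\mathscr G(\cdot\,;v)$ is a contraction on $\mathcal B_{r_0}$.

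For the self-mapping property, I would bound the two terms in \eqref{msg-k} separately. For the second term, Lemma\,\ref{lem-T} gives $\|(\mathbb T(\bm 0))^{-1}\|_{l^2_*}\lesssim \varepsilon^{1/2}$, while the vector with components $\int_\Omega v\, Z_{\mbp,*}^{1k}\dd x$ has $l^2$-norm $\lesssim \|v\|_{L^2}$ because $\{Z_{\mbp,*}^{1k}\}_{k\in\Sigma_1}$ is approximately orthonormal by \eqref{ortho-Z*} (this uses $\mbp\in\cB_{r_0}\subset\cD_\delta$, which follows from $r_0$ small and the embedding $\|\hat\mbp\|_{\mbV_2}\lesssim N_1\|\hat\mbp\|_{l^1}\lesssim \varepsilon^{-2}r_0$, hence $\le C$ and $\le C\delta$ once $\varepsilon_0,\delta$ are small). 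So the second term is $\lesssim \varepsilon^{1/2}\|v\|_{L^2}$. For the first term, the Lipschitz estimate of Lemma\,\ref{lem-T} gives $\|(\mathbb T(\bm0)-\mathbb T^\lambda(\mbp))\mbp\|_{l^2}\lesssim \varepsilon^{-2}\|\mbp\|_{l^2}^2 \lesssim \varepsilon^{-2}r_0^2$, and multiplying by $\|(\mathbb T(\bm0))^{-1}\|_{l^2_*}\lesssim\varepsilon^{1/2}$ yields $\lesssim \varepsilon^{-3/2} r_0^2 = C_*^2\varepsilon^{-1/2}\|v\|_{L^2}^2$. Since $\|v\|_{L^2}\le\delta\varepsilon$, this is $\lesssim C_*^2\delta\,\varepsilon^{1/2}\|v\|_{L^2}$, which is a small fraction of $r_0$ once $\delta$ is small. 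Adding the two contributions and choosing $C_*$ larger than the implied constant of the second term shows $\mathscr G(\mathcal B_{r_0};v)\subset\mathcal B_{r_0}$.

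For the contraction property, given $\mbp_1,\mbp_2\in\mathcal B_{r_0}$ I would estimate $\mathscr G(\mbp_1;v)-\mathscr G(\mbp_2;v)$ using the same two ingredients. The difference of the first terms of \eqref{msg-k} is controlled by the Lipschitz bound of Lemma\,\ref{lem-T} together with the triangle inequality $\|\mathbb T^{\lambda_1}(\mbp_1)\mbp_1 - \mathbb T^{\lambda_2}(\mbp_2)\mbp_2\|_{l^2}\lesssim \varepsilon^{-2}(\|\mbp_1\|_{l^2}+\|\mbp_2\|_{l^2})\|\mbp_1-\mbp_2\|_{l^2}\lesssim \varepsilon^{-2}r_0\|\mbp_1-\mbp_2\|_{l^2}$, giving a factor $\lesssim \varepsilon^{1/2}\cdot\varepsilon^{-2}r_0 = \varepsilon^{-3/2}r_0 \lesssim \delta$ after using $r_0\lesssim \varepsilon^{1/2}\|v\|_{L^2}\lesssim \delta\varepsilon^{3/2}$. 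The difference of the second terms requires the Lipschitz dependence of $Z_{\mbp,*}^{1k}$ on $\mbp$; by Lemma\,\ref{lem-Z*}, $\|\nabla_\mbp Z_{\mbp,*}^{1k}\|_{L^2}\lesssim\varepsilon^{-1}$, so $\|Z_{\mbp_1,*}^{1k}-Z_{\mbp_2,*}^{1k}\|_{L^2}\lesssim \varepsilon^{-1}\|\mbp_1-\mbp_2\|_{l^2}$, and after an $l^1$--$l^2$ conversion over the $N_1\lesssim\varepsilon^{-1}$ indices and multiplication by $\|(\mathbb T(\bm0))^{-1}\|_{l^2_*}\lesssim\varepsilon^{1/2}$, this contributes $\lesssim \varepsilon^{1/2}\cdot\varepsilon^{-1}\cdot\varepsilon^{-1/2}\|v\|_{L^2}\|\mbp_1-\mbp_2\|_{l^2}\lesssim \delta\|\mbp_1-\mbp_2\|_{l^2}$. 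Choosing $\delta$ (and $\varepsilon_0$) small makes the total Lipschitz constant at most $\tfrac12$, so $\mathscr G(\cdot\,;v)$ is a contraction. The contraction principle then yields a unique fixed point $\mbp=\mbp(v)\in\mathcal B_{r_0}$, i.e. $\mathscr F(v,\mbp(v))=\bm0$, and the bound $\|\mbp(v)\|_{l^2}\le r_0\lesssim \varepsilon^{1/2}\|v\|_{L^2}$ is exactly the claim. The main obstacle I anticipate is bookkeeping the interplay of the $\varepsilon$-powers with the dimensional factors $N_1\lesssim\varepsilon^{-1}$ in the $l^1$--$l^2$ conversions — one must make sure every such conversion is absorbed by a genuine power of $\varepsilon$ or by the smallness of $\delta$, and in particular that the a priori membership $\mbp(v)\in\cD_\delta$ (needed to invoke \eqref{ortho-Z*}, Lemma\,\ref{lem-T}, Lemma\,\ref{lem-Z*}) is self-consistent with the ball $\mathcal B_{r_0}$; this forces the restriction $\|v\|_{L^2}\le\delta\varepsilon$ rather than a weaker bound.
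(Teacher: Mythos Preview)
Your proposal is correct and follows essentially the same contraction-mapping argument as the paper, using Lemma~\ref{lem-T} for the invertibility and Lipschitz bounds on $\mathbb T$ and Lemma~\ref{lem-Z*} for the Lipschitz dependence of $Z_{\mbp,*}^{1k}$ on $\mbp$. The only cosmetic difference is that you work directly on the ball of radius $C_*\varepsilon^{1/2}\|v\|_{L^2}$ whereas the paper uses the (slightly larger) ball $B_{\delta\varepsilon^{3/2}}(\bm 0)$ and extracts the sharper bound $\|\mbp(v)\|_{l^2}\lesssim\varepsilon^{1/2}\|v\|_{L^2}$ at the end from the self-map estimate.
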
  
 \begin{proof}
 We check $\msG(\mbp; v)=(\msG_k(\mbp; v)):l^2\to l^2$ is a contraction mapping  for $\mbp\in l^2$ satisfying $\|\mbp\|_{l^2}\leq \delta  \varep^{3/2}$ for some $\delta$ small enough independent of $\varep$. In fact, employing Lemma \ref{lem-T} yields
 \beq\label{est-msG}
 \|\msG(\mbp; v)\|_{l^2}\lesssim \varep^{1/2}\|v\|_{L^2} +\varep^{-3/2}\|\mbp\|_{l^2}^2, 
 \eeq
 which lies in the ball $B_{\delta \varep^{3/2}}(\bm 0)\subset l^2$ provided that $\delta, \varep$ are suitably small. Hence $\msG$ is a closed map on the small ball  $B_{\delta\varep^{3/2}}(\bm 0)$. It remains to show the mapping is contractive. Indeed, for any $\mbp_1, \mbp_2\in B_{\delta \varep^{3/2}}(\bm 0)$ from  \eqref{msg-k} we have 
 \beqs
 \begin{aligned}
 \msG_k(\mbp_1) -\msG_k (\mbp_2)=&\left<  \left(\mbT(\bm 0)\right)^{-1} \left[-\left(\mbT^{\la_1}(\mbp_1) -\mbT(\bm0)\right)(\mbp_1-\mbp_2) +\left( \mbT^{\la_2}(\mbp_2) -\mbT^{\la_1}(\mbp_1)\right)\mbp_2\right], \mathbf e_k \right>_{l^2}  \\
 & + \left(\mbT(\bm 0)\right)^{-1} \int_\Omega v(x) \left(Z_{\mbp_1, *}^{1k}-Z_{\mbp_2, *}^{1k}\right) \dd x
 \end{aligned}
 \eeqs 
From the gradient estimate of $Z_{\mbp,*}^{I(k)k}$ in Lemma \ref{lem-Z*}, we bound
\beqs
\|Z_{\mbp_1, *}^{1k}-Z_{\mbp_2, *}^{1k}\|_{L^2} \lesssim \varep^{-1} \|\mbp_1-\mbp_2\|_{l^2}, \qquad \forall k\in \Sigma_1.   
\eeqs
 Combining this with Lemma \ref{lem-T} and $\mbp_1,\mbp_2\in B_{\delta\varep^{3/2}}$ with $\delta$ suitably small  yields
 \beqs
 \begin{aligned}
 \|\msG(\mbp_1)-\msG(\mbp_2)\|_{l^2} &\lesssim \left(\varep^{-3/2}\|\mbp_1\|_{l^2} +\varep^{-3/2}\|\mbp_2\|_{l^2} +\varep^{-1/2}N_1^{1/2}\|v\|_{L^2}\right) \|\mbp_1-\mbp_2\|_{l^2}\\
 &\leq \frac{1}{2}\|\mbp_1-\mbp_2\|_{l^2}.
 \end{aligned}
 \eeqs
Hence $\msG$ is a contraction mapping on the space $B_{\delta\varep^{3/2}}(\bm 0)\subset l^2$. The existence of $\mbp \in B_{\delta\varep^{3/2}}(\bm 0)$ such that
\beqs
\msG(\mbp; v)=\mbp
\eeqs
follows from contraction mapping principle. And then the bound of $\mbp$ in terms of $L^2$-norm of $v$  follows from \eqref{est-msG} provided with $\varep_0$ suitably small.
 \end{proof}

 %\begin{prop}  Let $u=\Phi_{\mbp_0}+v$ with $\Phi_{\mbp_0}\in\mathcal M$  and $v\in H^2$ satisfying  $\|v\|_{L^2}\ll \varep$, then there exists $\mbp$ such that\beq\label{decomp-u-initial} u=\Phi_{\mbp} +v^\bot,  \eeq where $v^\bot \in (\mcZ_*^1)^\bot$ and mass free can be further decomposed as\beq\label{decomp-v-initial}v^\bot=Q + w, \quad \hbox{where} \quad Q=\sum_{j\in \Sigma_0}\mrq_jZ_{\mbp,*}^{0j}\in \mcZ_*^0, \quad w \in \mcZ_*^\bot.  \eeq Moreover, in terms of $v_0$ we have the bounds \beqs\varep^{-1/2} \|\mbp-\mbp_0\|_{l^2} +\|\mbq\|_{l^2} \lesssim \|v\|_{L^2}; \qquad  \|w\|_{H^{2k}}\lesssim \varep^{-2k} \|v\|_{L^2} +\|v\|_{H^{2k}}. \eeqs\end{prop}

 Now we prove the projection Lemma \ref{lem-Manifold-Projection}. 
 \begin{proof}[Proof of Lemma \ref{lem-Manifold-Projection}] Without loss of generality we assume $\mbp_0=\bm0$. 
With a use of Lemma \ref{lem-IFT-1}, there exists $\mbp=\mbp(v)$ such that 
 \beqs
 u=\Phi_{\mbp}+v^\bot, \qquad v^\bot \in (\mcZ_*^1)^\bot, \qquad \int_\Omega v^\bot \dd x=0
 \eeqs
 and $\mbp$ in $l^2$ can be bounded by 
 \beqs
\|\mbp\|_{l^2}\lesssim  \varep^{1/2}\|v\|_{L^2}.
 \eeqs
 While $u=\Phi_0 +v^\bot$, the mean value theorem and the bound of $\mbp$ afford the estimate
 \begin{equation*}
\begin{aligned}
\|v^\bot\|_{H^{2k}}&\lesssim \sup_{\la\in [0,1]}\sum_j\left(\left\|\mathrm p_j\frac{\p \Phi_{\mathbf p} }{\p \h{0.5pt}  \mathrm p_j}\right\|_{H^{2k}}\right)\bigg|_{\mathbf p=\la\mathbf p}+\|v\|_{H^{2k}}\\
&\lesssim \varep^{-2k-1/2}\|\mathbf p\|_{l^2}%+\varep^{-1/2}\|\mathbf p(0)\|_{\mbV_{2k}^2}
+\|v_0\|_{H^{2k}}\\
&\lesssim \varep^{-2k}\|v_0\|_{L^2} +\|v_0\|_{H^{2k}}.
\end{aligned}
\end{equation*}
 By Lemma \ref{lem-proj-Z0}, $v^\bot$ can be further decomposed as
 \beqs
 v^\bot=Q + w, \qquad Q=\sum_{j\in \Sigma_0}\mrq_j \in \mcZ^0_*, 
 \eeqs
 where the coefficient vector $\mbq$ satisfy 
 \beqs
 \|\mbq\|_{l^2}\lesssim \|v^\bot\|_{L^2}.%\lesssim \|v\|_{L^2}. 
 \eeqs
 Finally, with $w=v^\bot-Q$ we bound
 \beqs
 \|w\|_{H^{2k}}\lesssim \|v^\bot\|_{H^{2k}} +\|Q\|_{H^{2k}}\lesssim \|v\|_{H^{2k}} +\varep^{-2k}\|\mbq\|_{l^2}\lesssim \varep^{-2k}\|v\|_{L^2}+\|v\|_{H^{2k}}. 
 \eeqs
 The proof is completed by translating the base point from $0$ to $\mbp_0$ and $\mbp$ to $\mbp-\mbp_0$.  
  \end{proof}

%------------------------------------------------------------------------------%
\bibliographystyle{amsplain}

\end{document}